\makeatletter\@addtoreset {equation}{section}\makeatother
\newtheorem{theorem}{Theorem}[section]
\newtheorem{lemma}{Lemma}[section]
\newtheorem{remark}{Remark}[section]
\newtheorem{corollary}{Corollary}[section]
\newtheorem{definition}{Definition}[section]
\newtheorem{proposition}{Proposition}[section]
\def\R{\mathbb{R}}
\def\C{\mathbb{C}}
\def\N{\mathbb{N}}
\makeatletter\@addtoreset {figure}{section}\makeatother
\newcommand{\Ascr} {{\cal A}}
\newcommand{\Bscr} {{\cal B}}
\newcommand{\Cscr} {{\cal C}}
\newcommand{\Iscr} {{\cal I}}
\newcommand{\Jscr} {{\cal J}}
\newcommand{\Mscr} {{\cal M}}
\newcommand{\Tscr} {T}
\newcommand{\m} {{\hbox{\hskip 1pt}}}
\newcommand{\FORALL} {{\hbox{$\hskip 11mm \forall \;$}}}
\newcommand{\e}{\varepsilon}
\newcommand{\Gscr} {{\cal G}}
\newcommand{\Lscr} {{\cal L}}
\newcommand{\Nscr} {{\cal N}}
\newcommand{\Uscr} {{\cal U}}
\newcommand{\Sscr} {{\cal S}}
\newcommand{\FFF} {{\bf F}}
\newcommand{\GGG} {{\bf G}}
\newcommand{\HHH} {{\bf H}}
\newcommand{\dd}  {{\rm d}\hbox{\hskip 0.5pt}}
\begin{document}

\title{\bf The global bifurcation picture for ground states in nonlinear
Schr\"{o}dinger equations}

\author{E. Kirr$^1$, and V. Natarajan$^2$  \\
{\small $^{1}$ Department of Mathematics, University of Illinois,
Urbana--Champaign, Urbana, Illinois 61801} \\
{\small $^{2}$ Systems and Control Engineering Group, Indian Institute
of Technology Bombay, Mumbai, India 400076} }

\maketitle

\begin{abstract}
In this paper, we propose a method of finding all coherent structures supported by a given nonlinear wave equation. It relies on enhancing the recent global bifurcation theory as developed by Dancer, Toland, Buffoni and others, by determining all the limit points of the coherent structure manifolds at the boundary of the Fredholm domain. Local bifurcation theory is then used to trace back these manifolds from their limit points into the interior of the Fredholm domain identifying the singularities along them. This way all coherent structure manifold are discovered except may be the ones which form loops and hence never reach the boundary. The method is then applied to the Schr\"odinger equation with a power nonlinearity for which all ground states are identified.
\end{abstract}

\section{Introduction}
The long standing conjecture in dispersive wave equations is the asymptotic decomposition of any initial data into coherent structures. The conjecture is true in the linear case, in nonlinear integrable systems and also certain particular cases of small amplitude regimes, see for example \cite{SS:nls,swc,ty3,gnt,kz2,km:as3d}. However, the set of all coherent structures is not completely, which is a big obstacle in even starting to approach the conjecture in large amplitude regimes for non-integrable systems. In this paper we propose an original method for finding all coherent structures of a given equation, see Section \ref{sse:conc}. We apply it to the case of nonlinear Schr\"odinger equation with potential, which is sometimes called Gross-Pitaevski equation:
\begin{equation}\label{GP}
i u_t = -\Delta u + V(x) u + \sigma |u|^{2p} u,
\end{equation}
where $u(x,t) : \R^n \times \R \to \mathbb{C}$ is the wave function, $\Delta$ is the Laplacian,
$0<p<2/(n-2)$ is the nonlinearity power, $\sigma \in \R$ determines the
defocusing (repulsive), respectively focusing (attractive),
character of the nonlinearity when $\sigma >0,$ respectively
$\sigma<0,$ and $V(x) : \R^n\mapsto\mathbb{R}$ is an external,
real-valued potential satisfying:
\begin{itemize}
\item[(H1)] $V(x)\in L^{\infty}(\R^n),$
\item[(H2)] $\lim_{|x|\rightarrow\infty}V(x)=0,$
\end{itemize}
Hypothesis (H1) implies that $-\Delta+V(x)$ is a self-adjoint
operator on $L^2(\R^n)$ with domain $H^2(\R^n),$ while (H1)-(H2) make $V$ a relatively compact perturbation of $-\Delta,$ see \cite{RS:bk4}.  At the expense of complicating some of the proofs without adding new ideas we can recover all our result by just assuming $V$ is a relative compact perturbation of the Laplacian, for example $V\in L^q(\R^n),\ q\geq 1,\ q>n/2,$ see the discussion in \cite{Kir:dcs}. In certain instances we will also make the following spectral assumption:
\begin{itemize}
\item[(H3)] $L_0=-\Delta+V(x)$ has the lowest eigenvalue $-E_0<0.$
\end{itemize}
It is well known from second order elliptic theory that the lowest
eigenvalue of $L_0$ is simple, the corresponding eigenfunction,
$\psi_0,$ can be chosen to be real valued and normalized in $L^2:$
\begin{equation}\label{linear-mode}
-\Delta\psi_0(x) + V(x) \psi_0(x) + E_0 \psi_0(x) = 0, \quad
\psi_0(x)>0,\ x \in \R^n,\ \|\psi_0\|_{L^2}=1.
\end{equation}

The equation is an ubiquitous model for wave propagation in optics, fluids and statistical quantum physics. Its most striking features are the coherent structures (bound states) which propagate without changing shape leading to important application such as propagating modes in optical wave guides, solitary waves at the interfaces between fluids, Bose-Einstein condensates. This paper focuses on the bound states of \eqref{GP} which are solutions of the form
$$u(t,x)=e^{iEt}\psi(x).$$
By plugging in we get
\begin{equation}\label{stationary}
 -\Delta\psi(x) + V(x)\psi(x) + \sigma |\psi(x)|^{2p} \psi(x)  + E \psi(x) = 0, \quad x \in \R^n,
\end{equation}
The latter is usually studied in the weak (distributional) sense i.e., a solution $\psi\in H^1(\R^n,\C)$ satisfies:
\begin{equation} \label{eq:weak}
 \langle \nabla\psi, \nabla\phi\rangle + \langle (V+E)\psi+ \sigma|\psi|^{2p}\psi,\phi\rangle=0 \FORALL \phi\in H^1(\R^n).
\end{equation}
\begin{remark}\label{rmk:h1toh2}
A standard elliptic regularity argument, see the Appendix, shows that any $H^1$ solution of \eqref{eq:weak} is a strong, $H^2,$ solution of \eqref{stationary} i.e., the latter holds for almost all $x\in\R^n.$
\end{remark}

This paper is inspired by the fairly recent theory of global bifurcations, see for example \cite{Rab:gbt,BT:gbt}, which states that, in this particular case, the solutions of \eqref{stationary} bifurcating from the trivial solution at $E_0$ form a $C^1$ manifold which can be continued until $E=0$ or $E=\infty$ or $H^2$ norm of the solutions blows up at finite $E>0$.
We develop new techniques to study the limit points of this manifold at the above boundary sets. Our results show that in the attractive case, $\sigma<0$, there are no limit points at $E=0$ or at $0<E<\infty, \|\psi\|_{H^2}=\infty,$ hence this manifold can be extended for all $E>E_0.$ Moreover, we show that the limit point at the $E=\infty$ boundary must be a subset of a rather precise set defined with the help of solutions of a limiting equation
\begin{equation} \label{uinf2}
 -\Delta\Psi + \Psi + |\Psi|^{2p}\Psi = 0.
\end{equation}
In the case of the ground states, see Definition \ref{def:gs}, we actually fully determine their limit points, see Remark \ref{rmk:limitp}. Since the limit points depend on the critical points of the potential, we analyze a few well known examples such as one-well and double-well potentials, where we determine the rather complicated bifurcation diagram of the ground states, see Figure \ref{fig:2well2}.

Our point is that the techniques we have developed can be extended to other equations in order to determine, as in the above mentioned examples, all coherent structures supported by them, see Section \ref{sse:conc}. There are no such results that we know of in the existing literature. However, the existence of unique ground state branch in one space dimension $(n=1)$ in the case of an even and strictly monotonic potential (for $x>0$) has been proven in \cite{js:ubs} using monotonicity techniques for ordinary differential equations. Arguments using the behavior of a branch of ground states approaching the $E=\infty$ boundary have again been used in one space dimension in \cite{KKP:sb}, where, along the branch starting at $E_0$  a symmetry-breaking bifurcation point is found and analyzed. We actually generalize that result to any space dimension and more complicated symmetries of the potential in Section \ref{se:sb}. The result in this section is related to the one in \cite{AFGST:02}, where the authors studied Hartree equation using variational methods. We think that our techniques can be extended to the latter equation in which case they prove not only that the shape of ground states change from small amplitudes to large amplitudes, but also that there is a bifurcation point that can be analyzed.

One of the first results showing that ground states for our problem \eqref{stationary} with attractive nonlinearity $(\sigma<0)$ exists for all $E>E_0$ is \cite{RW:88}. In it the authors used variational methods and conjectured that the profile of the ground state will converge to the ground state of the limiting equation \eqref{uinf2}. In this paper, we not only rigorously prove their conjecture but we also show that depending on the number of critical points of the potential there are many other ``ground states'' in the limit $E\to\infty$, with complicated bifurcation diagram, see for example Figure \ref{fig:2well2}, where by ground states we no longer mean the global minimizer of a certain functional, but also other local minimizers or saddle points.

There are similarities between our equation \eqref{stationary} as $E\to\infty$ and the semi-classical limit for nonlinear Schr\"odinger equation studied in \cite{FW} and \cite{Oh:scl}. Their approach which combines Lyapunov-Schmidt decomposition with degree theory for the reduced finite-dimensional system leaves open the question of uniqueness of the solutions they find. However, our arguments in Theorem \ref{th:bfelarge} can be adapted to their problem to show not only that the single profile ground states in \cite{FW} and the multiprofile ground states in \cite{Oh:scl} exist but also that they are unique at each value of the parameter $h$ (the Planck constant).

Our the paper is organized as follows. In Section \ref{se:small}, we recall a few results on small amplitudes nonlinear bound states of \eqref{stationary} which, it turns out, bifurcate from the zero solutions at eigenvalues of the linear part $-\Delta+V$ and organize themselves in $C^1$ branches. Then we study how far these branches can be extended without passing through singular points i.e., we determine their maximal extension. We also generalize these results to arbitrary i.e., not necessarily small amplitude, branches of bound states. In Section \ref{se:bif} we study the finite end points of these maximal branches and via a compactness result we show that they are actually bound states themselves. In particular, these allows us to determine all the ground states in the defocusing case $(\sigma>0)$, see Theorem \ref{th:defocusing}. In doing this we generalize global bifurcation result due to Jeanjean, Lucia and Stuart, see \cite{jls:gsd} and Remark \ref{rmk:defocusing}. In the focusing case $(\sigma<0)$, we prove for the first time to the best of our knowledge, the compactness result given by Theorem \ref{th:comp}. It is a delicate result which involves concentration compactness and spectral properties of Schr\"odinger type linear operators with potentials separated by a large distance, but it allows us not only to identify the limit points of the maximal branches, see Corollary \ref{cor:comp}, but also to apply global bifurcation techniques to continue these branches until they reach the $E=\infty$ boundary. In Section \ref{se:large}, we study the behavior of bound state branches in the limit $E\to\infty$. In particular, we show that there exists a unique renormalization under which each of these branches converges to a superposition of solutions (profiles) of the limiting equation \eqref{uinf2}, see Theorem \ref{correct_scaling}. Since the ground state of the latter is unique up to translations, we fully determine all the ground state branches at this limit, see Theorems \ref{th:brelarge} and \ref{th:bfelarge}. Similar to the compactness result at finite $E,$ Theorem \ref{th:brelarge} uses an original argument based on local bifurcation tools to prevent the drifting of profiles to $x=\infty$.

In Section \ref{se:sb}, we combine all the results in the previous sections to generalize the symmetry-breaking bifurcation result in \cite{KKP:sb} to any dimensions and more complicated symmetries. Moreover, in Section \ref{se:examples} we show how our results can be combined with global bifurcation tools to determine all ground states of our problem. Certain implications about bound states are also discussed. This section concludes by showing that the algorithm for identifying all coherent structures of a large class of dispersive wave equations, proposed by one of the authors in \cite{Kir:dcs}, is successful for the nonlinear Schr\"{o}dinger equation with a power nonlinearity and by discussing possible generalizations.

In the Appendix, we recall or prove slight generalizations of known results about regularity and exponential decay of solutions of elliptic equations and about compactness in Sobolev spaces. We end this section by proving the existence of a natural decomposition of branches of solutions approaching a singularity or the limit $E=\infty$, based on invariant subspaces of the linearized operator. In comparison to similar decompositions, for example the one in \cite{Oh:scl}, this one enables us to refine the analysis of the limit point and establish the uniqueness of the multi profile solution.

\section{Maximal branches of solutions}\label{se:small}

In this section we show that the set of solutions of
\eqref{stationary} can be organized in $C^1$ manifolds and we
identify some of their properties near singular points. In
particular we construct the manifolds of nontrivial solutions
bifurcating from zero at the isolated eigenvalues of $-\Delta + V.$

We rewrite \eqref{stationary} as the problem of finding the zeroes of the
functional
\begin{align}
F&:H^2(\R^n,\C)\times \R\mapsto L^2(\R^n,\C)\nonumber\\
F(\psi,E)&=(-\Delta+V+E)\psi+\sigma|\psi|^{2p}\psi \label{def:F}
\end{align}
where $L^2(\R^n,\C)$ is the Hilbert space of the complex valued square integrable functions on $\R^n$ and  $H^2(\R^n,\C)\hookrightarrow L^2(\R^n,\C),$ is the Hilbert space of the complex valued functions on $\R^n$ which are square integrable together with their first and second weak derivatives. Note that the functional $F$ is not Fr\'echet differentiable over the natural complex structure of the Hilbert spaces $H^2(\R^n,\C),\ L^2(\R^n,\C)$ due to the absolute value term. It is however differentiable over the reals, hence, in what follows, the above Hilbert spaces will be considered over the real field. In particular the scalar product in $L^2(\R^n,\C)$  is: 
$$
\langle\phi,\psi\rangle=\Re\int_{\mathbb{R}^n}\overline\phi(x)\psi(x)dx=\int_{\mathbb{R}^n}\phi_1(x)\psi_1(x)dx+\int_{\mathbb{R}^n}\phi_2(x)\psi_2(x)dx,\ \phi=\phi_1+i\phi_2,\ \psi=\psi_1+i\psi_2,
$$
and any orthogonality statement will refer to the above scalar
product unless specified otherwise. In some situations it is also useful to separate the real and imaginary parts of the functions involved $\psi=\psi_1+i\psi_2=[\psi_1\ \psi_2]^\top$ and identify the following isomorphic \emph{real} Banach spaces:
$$
L^2=L^2(\R^n,\C)\cong L^2(\R^n,\R)\times L^2(\R^n,\R), \qquad
 H^2=H^2(\R^n,\C) \cong H^2(\R^n,\R)\times H^2(\R^n,\R). $$
Then \eqref{stationary} is equivalent to:
$$ (-\Delta+V+E)\psi + \sigma |\psi|^{2p} \psi = 0\quad \Leftrightarrow\quad F(\psi,E)=0, $$
where now
$$ F(\psi=\psi_1+i\psi_2,E)=[F_1\ F_2]^\top=\left[\begin{array}{c}(-\Delta+
 V+E)\psi_1+\sigma(|\psi_1|^2+|\psi_2|^2)^{p}\psi_1 \\ (-\Delta+
 V+E)\psi_2+\sigma(|\psi_1|^2+|\psi_2|^2)^{p}\psi_2\end{array}\right] $$
\begin{remark}\label{symmetries} $F$ is equivariant with respect to rotations in the complex plane and complex conjugation:
\begin{eqnarray}
F(e^{i\theta}\psi,E)&=&e^{i\theta}F(\psi,E),\qquad {\rm for}\ 0\leq\theta<2\pi\label{rotation}\\
F(\overline \psi,E)&=&\overline{F(\psi,E)}. \nonumber
\end{eqnarray}
\end{remark}

Note that $F$ is $C^1$ i.e., continuous with continuous Frechet
derivative. For real valued $\psi\in H^2$ we have
\begin{equation}\label{deriv}
D_\psi F(\psi,E)=L_+(\psi,E)+ iL_-(\psi,E)
 \equiv \left[\begin{array}{lr} L_+(\psi,E) & 0 \\ 0 & L_-(\psi,E)\end{array}\right]
\end{equation}
where,
\begin{eqnarray}
L_+(\psi,E)&=&-\Delta+ V+E+(2p+1)\sigma |\psi|^{2p}\label{defL+}\\
L_-(\psi,E)&=&-\Delta+ V+E+\sigma |\psi|^{2p}\label{defL-}
\end{eqnarray}
\begin{remark}\label{spL}
For $\psi\in H^2$ (actually $H^1$ suffices) and $0<2p<4/(n-2)$ we have $|\psi|^{2p}\in L^q$ for some $q>\max\{1,n/2\}.$ Hence the effective potentials $V+(2p+1)\sigma |\psi|^{2p},$ respectively $V+\sigma |\psi|^{2p},$ are relatively compact, symmetric perturbations of $-\Delta+E.$ Therefore, $L_\pm$ remain self adjoint operators on $L^2$ with domain $H^2$ and, Via Weyl's Theorem, their essential spectrum is $[E,\infty),$  see \cite{RS:bk4}. Moreover, by differentiating the symmetry relation \eqref{rotation} with respect to $\theta$ at $\theta=0,$ we get
$$ D_\psi F(\psi,E)[i\psi]=iF(\psi,E).$$
Therefore, at zeroes $(\psi_E,E)$ of $F$ i.e., solutions of \eqref{stationary}, zero is an e-value of the linearized operator with corresponding eigenfunction $i\psi_E$. In the case $\psi_E$ is real valued, zero becomes an e-value of $L_-(\psi_E,E)$ with corresponding eigenfunction $\psi_E.$
\end{remark}

Since $(0,E),\ E\in\mathbb{R}$ is always a zero of $F$ and the
Fr\'echet derivative in the first variable at these solutions is:
$$D_\psi F(0,E)=\left[\begin{array}{lr} L_+(0,E) & 0 \\ 0 & L_-(0,E)\end{array}\right]=\left[\begin{array}{lr} (-\Delta+
 V+E) & 0 \\ 0 & (-\Delta+
 V+E)\end{array}\right],$$
the implicit function theorem states:

\begin{proposition}\label{pr:nosol} If $-E_*$ is not in the spectrum of the self adjoint operator $-\Delta+V$ then there exist $\epsilon,\ \delta>0$ such that for $|E-E_*|<\delta$ and $\|\psi\|_{H^2}<\epsilon,$ \eqref{stationary} has only the trivial solution.
\end{proposition}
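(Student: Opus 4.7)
The plan is to deduce this directly from the implicit function theorem, which the author flags in the sentence immediately preceding the statement. The whole content lies in verifying that $D_\psi F(0,E_*)$ is a bounded linear isomorphism from $H^2$ to $L^2$; once that is in hand, IFT produces a neighborhood on which zeros of $F$ are unique, and since $F(0,E)=0$ identically in $E$, that unique zero must be $\psi\equiv 0$.

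First I would read off from \eqref{deriv}--\eqref{defL-} that
\[
 D_\psi F(0,E_*) = \begin{bmatrix} -\Delta+V+E_* & 0 \\ 0 & -\Delta+V+E_*\end{bmatrix},
\]
viewed on the real Banach space $H^2\cong H^2(\R^n,\R)\times H^2(\R^n,\R)$ with target $L^2\cong L^2(\R^n,\R)\times L^2(\R^n,\R)$. Hypothesis (H1) ensures $L_0=-\Delta+V$ is self-adjoint on $L^2$ with domain $H^2$, so $L_0+E_*\mathrm{Id}$ is self-adjoint with the same domain. The assumption that $-E_*$ is not in the spectrum of $-\Delta+V$ means $0$ lies in the resolvent set of $L_0+E_*\mathrm{Id}$, hence $L_0+E_*\mathrm{Id}\colon H^2\to L^2$ is a bijection with bounded inverse (this is the standard equivalence between the resolvent and the bijectivity of the graph-norm realization; see \cite{RS:bk4}). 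The block-diagonal operator $D_\psi F(0,E_*)$ inherits this bijectivity component-wise.

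Next I would invoke the implicit function theorem applied to $F\colon H^2\times\R\to L^2$ at the point $(0,E_*)$: since $F$ is $C^1$ and $D_\psi F(0,E_*)$ is a topological linear isomorphism, there exist open neighborhoods $U\subset H^2$ of $0$ and $V\subset\R$ of $E_*$, and a $C^1$ map $E\mapsto \psi(E)\in U$, such that for $(\psi,E)\in U\times V$ the equation $F(\psi,E)=0$ holds if and only if $\psi=\psi(E)$. The trivial branch $\psi\equiv 0$ satisfies $F(0,E)=0$ for every $E$, so by the uniqueness clause $\psi(E)\equiv 0$ on $V$. Choosing $\delta>0$ with $(E_*-\delta,E_*+\delta)\subset V$ and $\epsilon>0$ with $\{\psi\in H^2:\|\psi\|_{H^2}<\epsilon\}\subset U$ yields the conclusion.

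There is essentially no obstacle; the only bookkeeping point worth flagging is that $F$ is differentiable only as a real-analytic map (because of the $|\psi|^{2p}$ term), which is why the IFT is applied on the real Hilbert-space structure of $H^2$ and $L^2$ as emphasized in the paper. The isomorphism property of the real block-diagonal realization is immediate from that of the complex self-adjoint operator $-\Delta+V+E_*$, so no additional spectral analysis is needed.
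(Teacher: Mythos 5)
Your proof is correct and follows exactly the route the paper intends: the paper states the proposition as an immediate consequence of the implicit function theorem applied at $(0,E_*)$, where the block-diagonal derivative $D_\psi F(0,E_*)=-\Delta+V+E_*$ is an isomorphism from $H^2$ to $L^2$ precisely because $-E_*$ is in the resolvent set, and uniqueness of the local solution branch forces it to coincide with the trivial one. (Minor wording point: $F$ is $C^1$ over the reals, not "real-analytic" in general — analyticity would require $p$ to be an integer — but this does not affect the argument.)
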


Next subsection discusses small, nontrivial zeroes of $F$ i.e., solutions of equation \eqref{stationary} as they bifurcate from the trivial ones at eigenvalues of $-\Delta + V.$

\subsection{Bifurcations from zero}

The following local bifurcation result describes the existence of
solutions of the stationary problem \eqref{stationary} near $E_0$
defined by \eqref{linear-mode} or any other negative, simple
eigenvalue of $-\Delta+V.$
\begin{proposition}\label{th:ex} Let $-E_0<0$ be a simple eigenvalue of
$-\Delta+ V$ and let $\psi_0$ with $\|\psi_0\|_{L^2}=1$ be a corresponding real valued eigenfunction. Then there exists $\epsilon ,\delta>0$ such that:
 \begin{itemize}
 \item[(i)] If $\sigma >0$, for each $E \in {\cal I} \equiv (E_0-\delta,E_0)$, the stationary problem \eqref{stationary}
 has exactly two, nonzero, real valued solutions $\pm\psi_E$ satisfying $\|\pm\psi_E\|_{H^2}<\epsilon,$ and we choose $\psi_E$ to be the one with positive $L^2$ projection on $\psi_0.$  If $\sigma <0$, for each $E \in {\cal I} \equiv (E_0,E_0+\delta)$, the stationary problem \eqref{stationary} has exactly two, nonzero, real valued solutions $\pm\psi_E$ satisfying $\|\psi_E\|_{H^2}<\epsilon,$ and $\psi_E$ is chosen as above.
 \item[(ii)] Any solution $(\psi,E)$ of \eqref{stationary}
 satisfying $|E-E_0|<\delta$ and $\|\psi\|_{H^2}<\epsilon$ is of
 the form $(\psi,E)=(e^{i\theta}\psi_E,E)$ where $E$ and $\psi_E$
 are given in (i) and $0\le\theta<2\pi.$
 \end{itemize}
Moreover, the function $E\mapsto\psi_E$ is $C^1$ from ${\cal I}$ to
$H^2,$ and $\lim_{E\rightarrow E_0}\|\psi_E\|_{H^2}=0.$
\end{proposition}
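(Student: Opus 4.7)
The plan is a Lyapunov--Schmidt reduction at $(\psi,E)=(0,E_0)$, with the $U(1)$-equivariance \eqref{rotation} collapsing the two-dimensional kernel to a scalar bifurcation equation.

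First observe that $D_\psi F(0,E_0)=(-\Delta+V+E_0)\oplus(-\Delta+V+E_0)$ is self-adjoint and Fredholm of index zero as an operator $H^2\to L^2$, with two-dimensional kernel $N=\mathrm{span}_{\R}\{\psi_0,i\psi_0\}$ and range $N^\perp$. Split $H^2=N\oplus(N^\perp\cap H^2)$, write $\psi=\alpha\psi_0+\phi$ with $\alpha\in\C$ and $\phi\in N^\perp\cap H^2$, and decompose $F(\psi,E)=0$ into a range-equation for $\phi$ and a kernel-equation valued in $N\cong\C$. Since $D_\phi\bigl[(I-P)F\bigr](0,0,E_0)$ is the restriction of $-\Delta+V+E_0$ to $N^\perp\cap H^2$, an isomorphism onto $N^\perp\cap L^2$, the implicit function theorem yields a $C^1$ map $\phi=\phi(\alpha,E)$ near $(0,E_0)$ with $\phi(0,E_0)=0$; tracking the leading nonlinear term refines this to $\phi=O(|\alpha|^{2p+1})$.

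Substituting into the kernel projection produces $g(\alpha,E):=P F(\alpha\psi_0+\phi(\alpha,E),E)$. By \eqref{rotation} and uniqueness in the implicit function theorem, $\phi(e^{i\theta}\alpha,E)=e^{i\theta}\phi(\alpha,E)$, so $g$ is $U(1)$-equivariant and therefore of the form $g(\alpha,E)=\alpha\,h(|\alpha|,E)$ for a real $C^1$ function $h$. A direct evaluation of $\langle\psi_0,F(r\psi_0,E)\rangle$, together with the bound on $\phi$, gives
\[
h(r,E)=(E-E_0)+\sigma\,\|\psi_0\|_{L^{2p+2}}^{2p+2}\,r^{2p}+o(r^{2p}),\qquad r\to 0^+.
\]
Since $\|\psi_0\|_{L^{2p+2}}^{2p+2}>0$, the equation $h(r,E)=0$ has a unique small solution $r=r(E)$ on precisely one side of $E_0$, namely $E<E_0$ when $\sigma>0$ and $E>E_0$ when $\sigma<0$, with $r(E)\to 0$ as $E\to E_0$. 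Setting $\psi_E:=r(E)\psi_0+\phi(r(E),E)$ yields the real branch in (i) (realness follows from the conjugation equivariance $F(\bar\psi,E)=\overline{F(\psi,E)}$ forcing $\overline{\phi(\alpha,E)}=\phi(\bar\alpha,E)$), and $-\psi_E$ comes from the antipodal slice $\alpha=-r(E)$. Part (ii) is then immediate: any solution with $|E-E_0|<\delta$ and $\|\psi\|_{H^2}<\epsilon$ corresponds by Lyapunov--Schmidt to a unique $\alpha$ with $g(\alpha,E)=0$, hence either $\alpha=0$ or $|\alpha|=r(E)$, and writing $\alpha=r(E)e^{i\theta}$ recovers $\psi=e^{i\theta}\psi_E$ via the equivariance of $\phi$.

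The main technical obstacle is that $F$ is only $C^1$ for general $p$ (since $|\psi|^{2p}$ fails to be smoother when $2p<1$), so the leading behavior of $h(r,E)$ cannot be read from a higher-order Taylor expansion and must be extracted by direct substitution of $r\psi_0$ into $F$; relatedly, the inversion $r\mapsto E$ at $r=0$ is degenerate, and the existence and $C^1$ regularity of $r(E)$ on $\Iscr\setminus\{E_0\}$ comes from the strict monotonicity of the leading $\sigma r^{2p}$ term combined with continuity of $\partial_E h$, rather than from a direct application of the implicit function theorem at the bifurcation point.
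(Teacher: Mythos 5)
Your proposal is correct and follows essentially the same route as the paper: Lyapunov--Schmidt at $(0,E_0)$, inversion of the range equation by the implicit function theorem, and use of the $U(1)$ and conjugation equivariance to reduce the kernel equation to a real scalar equation in the amplitude. The one place you diverge is the final step. You keep $E$ as the parameter and solve $h(r,E)=0$ for $r=r(E)$, which, as you note, is degenerate at $r=0$ and forces you to lean on strict monotonicity of $r\mapsto\sigma b\,r^{2p}+o(r^{2p})$ — an argument that requires controlling the $r$-derivative of the remainder (showing it is $o(r^{2p-1})$), since $F$ is only $C^1$. The paper sidesteps this entirely by dividing the scalar equation by $a$ and observing that $\tilde G(a,E)=(E-E_0)+\sigma\langle\psi_0,\frac{1}{a}(a\psi_0+h(a,E))^{2p+1}\rangle$ extends to a $C^1$ function at $a=0$ with $\partial_E\tilde G(0,E_0)=1$, so a \emph{non-degenerate} implicit function theorem gives $E=E(a)$; the branch is then parametrized by the amplitude rather than by $E$. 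Both arguments close, but the paper's swap of the roles of $a$ and $E$ is the cleaner way to handle the degeneracy you correctly identified.
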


\begin{proof} Variants of this theorem have already appeared in Pillet-Wayne \cite{PW},
as well as in many recent publications. We sketch the main steps for
the proof of the result in this form.

Let $\psi_0$ be the eigenfunction of $-\Delta+V$ corresponding to
the simple eigenvalue $-E_0.$ Because of invariance of $(-\Delta+
V)\psi_0=-E_0\psi_0$ under complex conjugation we can choose
$\psi_0$ to be real valued and of $L^2$ norm 1. Then $D_\psi
F(0,E_0)$ is Fredholm of index zero with
kernel=span$_{\mathbb{R}}\{\psi_{0}\equiv[\psi_0\ 0]^\top, i\psi_{0}\equiv [0\ \psi_0]^\top\}$
and range equal to the orthogonal complement in
$L^2$ of its kernel. Let
 $$P_\parallel\phi=\langle\psi_{0},\phi\rangle\psi_{0}+\langle i\psi_{0},\phi\rangle i\psi_{0}=\langle \psi_0,\phi\rangle_\C \psi_0,\qquad
 P_\perp=I-P_\parallel$$
be the two projections associated to this Lyapunov-Schmidt decomposition where, depending on the context, $I$ is the identity on $H^2$ respectively on $L^2.$ Then
 $$F(\psi,E)=0$$ is equivalent to
 \begin{eqnarray}
 P_\perp F(P_\parallel\psi+P_\perp\psi,E)&=&0,\label{eq:perp}\\
 P_\parallel F(P_\parallel\psi+P_\perp\psi,E)&=&0.\label{eq:parallel}
 \end{eqnarray}
But the implicit function theorem can now be applied to
\eqref{eq:perp} which will have a unique solution:
 $$P_\perp\psi=h(P_\parallel\psi,E)$$
for $|E-E_0|$ and $P_\parallel\psi$ sufficiently small. Hence the
system \eqref{eq:perp}-\eqref{eq:parallel} is now equivalent with:
 $$
 P_\parallel F(P_\parallel\psi+h(P_\parallel\psi,E),E)=0.
 $$
Identifying now the range of $P_\parallel$ with $\R^2=\mathbb{C}$
via its basis $\{\psi_{0},i\psi_{02}\},$ the above equation has the
form $G(a,E)=0\in\mathbb{C},\ a\in\mathbb{C},\ E\in\R$ Moreover,
from the implicit function theorem
$h:\mathbb{C}\times\mathbb{R}\mapsto H^2$ is a $C^1$ map which
inherits the equivariance properties of $F,$ see Remark
\ref{symmetries}, because the two projections are also equivariant
with respect to these symmetries. Consequently $G$ is $C^1$ and
$G(\cdot, E)$ is equivariant with respect to rotations in complex
plane and complex conjugation. In particular if $(a,E)$ is a zero
for $G$ so is $(e^{i\theta} a,E),\ 0\leq\theta<2\pi.$ Hence we can
first find solutions with $a$ real, nonzero, and obtain all the
others by rotations. Now:
$$\tilde G(a,E)=G(a,E)/a=(E-E_0)+\sigma\langle\psi_0,\frac{1}{a}(a\psi_0+h(a,E))^{2p+1}\rangle=0,\ a\in\R\backslash{0},\ E\in\R ,$$
can be extended to a $C^1$ function at $a=0$ and
$$\tilde G(0,E_0)=0\qquad \partial_E\tilde G(0,E_0)=1.$$
Via implicit function theorem the above equation has a unique
solution $E=E(a)$ for $a\in\R$ sufficiently close to zero and
$E\in\R$ sufficiently close to $E_0.$ From it we get the $C^1$ curve
of solutions:
$$E=E(a),\qquad \psi_E=a\psi_0+h(a,E(a))\equiv a\psi_0+h(a),$$
where $\psi_E$ is real valued, since $\psi_0$ is and
$\overline{h(a)}=h(\overline{a})=h(a),\ a\in\R.$ All other solutions
are obtained by rotating $a,$ i.e. are of the form
$(e^{i\theta}\psi_E,E).$
\end{proof}

The above Proposition combined with regular perturbation theory allows us to determine the spectrum of the linearization $D_\psi F(\psi_E,E)$ along the nontrivial branch bifurcating from $-E_0.$ \begin{remark}\label{rm:lpme0} On the $C^1$ curve of bound states $E\mapsto \psi_E$ given by {\em Proposition \ref{th:ex}} the $L_\pm$ operators depend continuously on $E,$ hence their eigenvalues depend continuously on $E.$ Consequently $0$ is a simple eigenvalue of $L_-(\psi_E,E)$ at least on a subinterval containing zero: $E\in{\cal I}_{\epsilon}\subseteq {\cal I}$. Moreover, from $L_+-L_- = 2p \sigma |\psi_E|^{2p},$ we obtain that $L_+<L_-$ in the focusing case $\sigma <0.$ Via eigenvalue comparison principle, $L_+$ has its eigenvalues smaller than the
corresponding eigenvalues of $L_-,$ in particular $0$ is not an
eigenvalue of $L_+$ which has the number of strictly negative
eigenvalues increased by one compared to $-\Delta+V+E_0.$  If $-E_0$
is the lowest eigenvalue of $-\Delta+V$ then $0$ is the lowest
eigenvalue of $L_-,$ and $L_+$ has exactly one negative eigenvalue.
The situation is reversed in the defocusing case $\sigma >0.$ More
precisely $L_+>L_-,$ $L_+$ has the number of strictly positive
eigenvalues increased by one compared to $-\Delta+V+E_0,$ and if
$-E_0$ is the lowest eigenvalue of $-\Delta+V$ then the lowest
eigenvalue of $L_+$ is strictly positive. In both cases $L_\pm$ are
relatively compact perturbations of $-\Delta+E$ hence via Weyl's
Theorem their essential spectrum is the $[E,\infty)$ interval.
\end{remark}

Of particular interest are the ground states of \eqref{stationary}:
\begin{definition} \label{def:gs}
Solutions $(\psi,E)$ of \eqref{stationary} which can written as $\psi=e^{i\theta}\tilde\psi$ with $\tilde\psi>0$ and some $\theta\in[0,2\pi)$ will be called ground state.
\end{definition}
Linearization at ground states has the additional spectral property:
\begin{remark}\label{rmk:spgs}Standard properties of second order, self adjoint elliptic operators show that 0 must be the lowest e-value of $L_-(\tilde\psi,E)$ as it corresponds, see {\em Remark \ref{spL}}, to the positive eigenfunction $\tilde\psi.$ Moreover, it must be a simple e-value.
\end{remark}
In fact small amplitude ground states emerge from the lowest e-value of the linearization at zero  $-\Delta+V:$
\begin{corollary}\label{cor:sgs}
The $C^1$ manifold of zeroes of $F$ i.e., solutions of \eqref{stationary}, bifurcating from $(0,E_0)$ where $-E_0$ is the lowest e-value of $-\Delta+V$ is made of ground states.
\end{corollary}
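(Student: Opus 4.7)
The plan relies on Proposition \ref{th:ex}, which exhibits the bifurcating manifold as the rotation orbit $\{(e^{i\theta}\psi_E,E):\theta\in[0,2\pi),\ E\in{\cal I}\}$ of a real-valued $C^1$ curve $E\mapsto\psi_E$. The conclusion of the corollary then reduces to showing that, for $E$ in a subinterval of ${\cal I}$ still limiting to $E_0$, the function $\psi_E$ is of constant sign on $\R^n$; if so, writing $\psi_E=e^{i\theta_0}|\psi_E|$ with $\theta_0\in\{0,\pi\}$ and $|\psi_E|>0$ puts every point of the manifold in the form required by Definition \ref{def:gs}.

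To obtain constant sign I will identify $\psi_E$ with the ground state eigenfunction of the Schr\"odinger operator $L_-(\psi_E,E)=-\Delta+V+E+\sigma|\psi_E|^{2p}$. By Remark \ref{spL} one already has $L_-(\psi_E,E)\psi_E=0$, so $0$ is an eigenvalue with eigenfunction $\psi_E$; the key claim is that, after possibly shrinking ${\cal I}$ to an interval ${\cal I}_\epsilon$ about $E_0$, $0$ is the simple \emph{lowest} eigenvalue. In the focusing case $\sigma<0$ this is already contained in Remark \ref{rm:lpme0}. In the defocusing case $\sigma>0$ I argue by spectral continuity: at $(0,E_0)$ the operator $L_-(0,E_0)=-\Delta+V+E_0$ has $0$ as a simple lowest eigenvalue with eigenfunction $\psi_0>0$, so by norm-resolvent continuity along the branch $L_-(\psi_E,E)$ admits a unique eigenvalue $\lambda_0(E)$ near $0$ with $\lambda_0(E_0)=0$ and normalized eigenfunction $\phi_0(E)\to\psi_0$. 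Since $0$ remains an eigenvalue along the entire branch, $\lambda_0(E)\le 0$; if $\lambda_0(E)<0$ then $\lambda_0(E)$ and $0$ would be distinct eigenvalues and their orthogonal eigenfunctions $\phi_0(E)$ and $\psi_E/\|\psi_E\|_{L^2}$ would both converge to $\psi_0$ as $E\to E_0$ (using $\psi_E=a\psi_0+h(a)$ from the proof of Proposition \ref{th:ex}), contradicting $\langle\psi_0,\psi_0\rangle=1$. Hence $\lambda_0(E)=0$ on ${\cal I}_\epsilon$.

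With $0$ identified as the simple lowest eigenvalue of the Schr\"odinger-type operator $L_-(\psi_E,E)$, the standard Perron--Frobenius principle for operators of the form $-\Delta+W$ with $W$ a relatively compact perturbation of $-\Delta$ (positivity preservation of $e^{-tL_-}$ together with the strong maximum principle) forces the one-dimensional lowest eigenspace to be spanned by a strictly positive function $\tilde\psi$. Thus $\psi_E$ is a nonzero real scalar multiple of $\tilde\psi$ and has constant sign, completing the reduction. The delicate step is the defocusing case: here the positive perturbation $\sigma|\psi_E|^{2p}$ and the negative shift $E-E_0$ cancel to leading order, and the continuity/orthogonality argument above is what keeps $0$ pinned at the bottom of the spectrum without needing an explicit perturbative computation.
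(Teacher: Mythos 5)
Your argument is correct and takes essentially the same route as the paper: identify $\psi_E$ as the zero-eigenfunction of $L_-(\psi_E,E)$, use spectral continuity from $L_-(0,E_0)=-\Delta+V+E_0$ to pin $0$ as the simple lowest eigenvalue, conclude that $\psi_E$ cannot change sign, and then invoke part (ii) of Proposition \ref{th:ex} to rotate an arbitrary nearby solution onto $\pm\psi_E$. The only difference is that the paper does not shrink to ${\cal I}_\epsilon$ but keeps the conclusion on all of ${\cal I}$ by observing that if a second eigenvalue of $L_-$ crossed zero somewhere along the curve, the lowest eigenvalue would become degenerate at that point, contradicting the simplicity of the ground state of a second-order self-adjoint elliptic operator — a step you could append to your continuity argument (your orthogonality trick for the defocusing case is a valid, if slightly roundabout, substitute for the standard fact that a simple isolated eigenvalue perturbs to exactly one eigenvalue in a small disk, which must then coincide with the eigenvalue $0$ carried by $\psi_E$).
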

\begin{proof}
Standard properties of the second order, self adjoint, elliptic operator $-\Delta+V$ on $L^2$ implies that its lowest e-value is simple with corresponding real eigenfunction $\psi_0$ which does not change sign. The existence of zeroes of $F$ in a neighborhood of $(0,E_0)$ follows from Proposition \ref{th:ex} and we know that they form a unique, two dimensional, $C^1$ manifold obtained via rotations of a curve of real valued solutions. We first restrict our analysis to this curve given by part (i) in Proposition \ref{th:ex}. Note that for $L_-(0,E_0)=-\Delta+V+E_0$ zero is the lowest e-value. By Remark \ref{rm:lpme0} it will remain the lowest at least on a small neighborhood of $(0,E_0)$ in this curve.  If a second eigenvalue of $L_-$ crosses zero on a point on this curve then $L_-$ will have its lowest e-value at least of multiplicity two in contradiction with  $L_-$ being a second order, elliptic, self adjoint operator. Hence, zero remains the lowest eigenvalue of $L_-(\psi_E,E)$ at all points $(\psi_E,E)$ on this curve, and, consequently, its real valued eigenfunction $\psi_E$ {\em cannot change sign}. Therefore, at each point $(\psi_E,E)$ on this curve we have either $\psi_E>0$ or $e^{i\pi}\psi_E=-\psi_E>0$ Using now part (ii) of Proposition \ref{th:ex} we get that any solution in a neighborhood of $(0,E_0)$ can be rotated into a positive one hence it is a ground state.
\end{proof}

Even though the kernel of $D_\psi F(\psi_E,E)$ is nontrivial along manifolds of solutions, see Remark \ref{spL}, the next two theorems
show that, by moding out rotations, the curves of real valued solutions
can be uniquely continued until $L_+$ does not have a continuous
inverse. Moreover, all solutions are rotations of this curve until the
kernel of $L_-$ is at least two dimensional.

\subsection{Arbitrary branches of solutions and their maximal extension}

\begin{theorem}\label{th:cont}
If $(\psi_{E_1},E_1)$ is a nonzero, real valued, $H^2$ solution of
\eqref{stationary} with the properties that $L_+$ is invertible with
continuous inverse at this solution, then there exist $\epsilon,\
\delta>0$ such that:
 \begin{itemize}
 \item[(i)] for each $E\in (E_1-\delta,E_1+\delta),$ \eqref{stationary}
 has a unique real valued solution $\psi_E$ satisfying
 $\|\psi_E-\psi_{E_1}\|_{H^2}<\epsilon ;$
 \item[(ii)] if in addition $L_-$ at $(\psi_{E_1},E_1)$ has
kernel=span$\{\psi_{E_1}\}$ then any solution $(\psi,E)$ of
\eqref{stationary}
 satisfying $|E-E_1|<\delta$ and $\min_{0\le\theta<2\pi}\|\psi-e^{i\theta}\psi_{E_1}\|_{H^2}<\epsilon$ is of
 the form $(\psi,E)=(e^{i\theta}\psi_E,E)$ where $E$ and $\psi_E$
 are given in (i) and $0\le\theta<2\pi.$
 \end{itemize}
Moreover the function $E\mapsto\psi_E$ is $C^1$ from
$(E_1-\delta,E_1+\delta)$ to $H^2.$
\end{theorem}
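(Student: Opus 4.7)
The plan is to apply the implicit function theorem (IFT) in two passes: first on the real subspace to obtain part (i), and then via a slice reduction that factors out the rotational degeneracy to obtain part (ii). For part (i), I would restrict $F$ to the real Banach-space setting $H^2(\R^n,\R)\times\R\to L^2(\R^n,\R)$, which is well defined because real-valued $\psi$ yield real-valued $F(\psi,E)$. At $(\psi_{E_1},E_1)$ the Fr\'echet derivative in $\psi$ is exactly $L_+(\psi_{E_1},E_1)$, which by hypothesis is invertible with bounded inverse, so the IFT produces a unique real-valued $\psi_E$ for each $E$ near $E_1$, together with the asserted $C^1$ dependence.

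For part (ii) a direct IFT on the complex space fails because $D_\psi F(\psi_{E_1},E_1)=L_+\oplus iL_-$ has one-dimensional kernel $\operatorname{span}\{i\psi_{E_1}\}$ under the hypotheses on $L_\pm$. I would introduce the slice $S=\{\psi\in H^2 : \langle i\psi_{E_1},\psi\rangle=0\}$, a closed codimension-one subspace containing $\psi_{E_1}$ and invariant under complex conjugation (because $\psi_{E_1}$ is real). Four steps then complete the argument. Step 1: every $\psi$ near the orbit $\{e^{i\theta}\psi_{E_1}\}_{\theta\in[0,2\pi)}$ admits a unique $\theta$ with $e^{-i\theta}\psi\in S$, by IFT applied to $\theta\mapsto \langle i\psi_{E_1},e^{-i\theta}\psi\rangle$ whose $\theta$-derivative at $\theta=0$, $\psi=\psi_{E_1}$ is $-\|\psi_{E_1}\|_{L^2}^2\neq 0$. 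Step 2: let $P$ be the $L^2$-orthogonal projection onto $\{i\psi_{E_1}\}^\perp$; using self-adjointness and the Fredholm property of $L_\pm$, the operator $D_\psi(PF)(\psi_{E_1},E_1)$ restricted to $S$ is an isomorphism onto $\{i\psi_{E_1}\}^\perp$, so IFT yields a unique $C^1$ curve $E\mapsto\tilde\psi(E)\in S$ with $PF(\tilde\psi(E),E)=0$. Step 3: since $F$ and $P$ are equivariant under complex conjugation and $S$ is invariant, $\overline{\tilde\psi(E)}$ is a second solution of $PF=0$ on the slice, so uniqueness forces $\tilde\psi(E)$ real; then $F(\tilde\psi(E),E)$ is real and its component along $i\psi_{E_1}$ vanishes automatically, upgrading $PF=0$ to $F=0$, and by part (i) $\tilde\psi(E)=\psi_E$. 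Step 4: for a general solution $\psi$ near the orbit, rotate it into the slice via Step 1; rotational equivariance of $F$ together with the uniqueness from Steps 2--3 give $e^{-i\theta}\psi=\psi_E$, i.e.\ $\psi=e^{i\theta}\psi_E$, as required.

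The main technical obstacle is verifying in Step 2 that the range of $D_\psi F(\psi_{E_1},E_1)$ is exactly $\{i\psi_{E_1}\}^\perp$. This reduces to $L_-$ having closed range with kernel equal to $\operatorname{span}\{\psi_{E_1}\}$; since $L_-$ is self-adjoint and, by Remark \ref{spL}, a relatively compact symmetric perturbation of $-\Delta+E_1$, this holds whenever $0$ is isolated in $\sigma(L_-)$, which is automatic when $E_1>0$ because then $0\notin\sigma_{\mathrm{ess}}(L_-)=[E_1,\infty)$, and in the remaining cases should be read as implicit in the kernel hypothesis. Beyond this point the proof is just careful bookkeeping of the IFT together with the rotation and complex-conjugation symmetries of Remark \ref{symmetries}.
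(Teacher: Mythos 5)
Your proof is correct and follows essentially the same route as the paper: part (i) is the implicit function theorem applied to $F$ restricted to real-valued functions, where $D_\psi F=L_+$ is an isomorphism by hypothesis, and part (ii) is a gauge-fixing of the rotation followed by the implicit function theorem on the equation projected off ${\rm span}\{i\psi_{E_1}\}$, with the slice branch then identified with the real branch from (i). The only cosmetic differences are that the paper chooses the rotation angle by minimizing $\|\psi-e^{i\theta}\psi_{E_1}\|_{H^2}^2$ (which directly gives orthogonality of the remainder to $i\psi_{E_1}$) and identifies the slice solution with $\psi_E$ immediately by uniqueness of the IFT branch, rather than first proving it is real via the conjugation symmetry as you do.
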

\begin{proof} By the symmetries of $F,$ see remark \ref{symmetries}, we have
that $F(\psi,E)$ is real valued if $\psi$ is real valued. We can now
apply the implicit function theorem for
$$F(\psi,E)=0$$
at $(\psi_{E_1},E_1)$ where $F$ is restricted to real valued
functions: $F:H^2(\R^n,\mathbb R)\times\R\mapsto L^2(\R^n,\mathbb R).$ Note that under this restriction:
 $$D_\psi F(\psi_{E_1},E_1)=L_+,$$
see \eqref{deriv}, hence, by hypothesis, it is an isomorphism.
Implicit function theorem now implies (i).

For (ii) we remark that if $(\psi,E)$ is a solution of $F(\psi,E)=0$
then so are $(e^{-i\theta}\psi,E),\ 0\le\theta <2\pi.$ If we choose
$\theta_0$ such that it minimizes
$\|\psi-e^{i\theta}\psi_{E_1}\|_{H^2}^2,$ then
$$\tilde\psi=e^{-i\theta_0}\psi-\psi_{E_1}$$
is orthogonal to $i\psi_{E_1}$ in $H^2$ and satisfies:
$$F(\psi_{E_1}+\tilde\psi,E)=0$$ in particular $\tilde\psi$ solves:
$$P_\perp F(\psi_{E_1}+h,E)=0,\qquad h\in\{i\psi_{E_1}\}^{\perp_{H^2}}.$$
where $P_\perp$ is the projection onto the space orthogonal to $i\psi_{E_1}$ in $L^2.$ But,
via implicit function theorem, the latter has a unique branch of
solutions near $(\psi_{E_1},E_1),$ and, by part (i)
$(h=\psi_E-\psi_{E_1},E)$ is such a branch. Uniqueness of the branch
implies $\tilde\psi=\psi_E-\psi_{E_1}$ or, equivalently,
$\psi=e^{i\theta_0}\psi_E$.
\end{proof}

\begin{remark}\label{rm:lpme1} As in Remark \ref{rm:lpme0} the linearized operators $L_\pm $ along the curve $E\mapsto\psi_E$ given by Theorem \ref{th:cont},see \eqref{deriv}, depend continuously on $E$ and are relatively compact perturbations of $-\Delta+E.$ Hence their discrete spectrum depends continuously on $E$ and the essential spectrum is $[E,\infty).$ In particular, along this curve $0$ is not in the spectrum of $L_+$ and, in case (ii), $0$ remains a simple eigenvalue of $L_-.$
\end{remark}


We can actually continue the curve given by the previous theorem to
a maximal one:

\begin{theorem}\label{th:max} (i) If $\sigma <0,$
then the curve $(\psi_E,E)$ of real valued solutions of
\eqref{stationary} given in Theorem \ref{th:cont} can be uniquely
continued to a maximal interval $(E_-,E_+)$ such that either:
 \begin{itemize}
 \item[(a+)] $E_+=\infty;$
 \item[]or
 \item[(b+)] $E_+ < \infty$ and there exists a sequence $\{E_m\}_{m\in\mathbb N}\subset (E_1,
E_+)$ such that $\lim_{m\rightarrow\infty} E_m = E_+$ and
$L_+(\psi_{E_m},E_m)$ has an eigenvalue $\lambda_m$ satisfying
$\lim_{m\rightarrow\infty} \lambda_m = 0$;
 \item[]and
 \item[(a-)] $E_-=0;$
 \item[]or
 \item[(b-)] $E_- >0$ and there exists a sequence $\{E_m\}_{m\in\mathbb N}\subset (E_-,
E_1)$ such that $\lim_{m\rightarrow\infty} E_m = E_-$ and
$L_+(\psi_{E_m},E_m)$ has an eigenvalue $\lambda_m$ satisfying
$\lim_{m\rightarrow\infty} \lambda_m = 0$;

 \end{itemize}

(ii) If $\sigma >0,$ then the curve $(\psi_E,E)$ of real valued
solutions of \eqref{stationary} given in Theorem \ref{th:cont} can
be uniquely continued to a maximal interval $(E_-,E_+)$ such that
either:
 \begin{itemize}
 \item[(a-)] $E_-=0;$
 \item[]or
 \item[(b-)] $0<E_-,$ and there exists a sequence $\{E_m\}_{m\in\mathbb N}\subset (E_-,
E_1)$ such that $\lim_{m\rightarrow\infty} E_m = E_-$ and
$L_+(\psi_{E_m},E_m)$ has an eigenvalue $\lambda_m$ satisfying
$\lim_{m\rightarrow\infty} \lambda_m = 0$;
\item[]and
 \item[(b+)] $0<E_+,$ and there exists a sequence $\{E_m\}_{m\in\mathbb N}\subset (E_1,
E_+)$ such that $\lim_{m\rightarrow\infty} E_m = E_+$ and
$L_+(\psi_{E_m},E_m)$ has an eigenvalue $\lambda_n$ satisfying
$\lim_{m\rightarrow\infty} \lambda_m = 0$.
 \end{itemize}

(iii) In both cases $\sigma<0,$ or $\sigma>0,$ if $(\psi_E,E)$ is a
real valued solution on such a maximal curve with $\ker L_-={\rm
span}\{\psi_E\}$ then there exist $\epsilon,\ \delta >0$ such that
any solution $(\psi,E_*)$ of \eqref{stationary}
 satisfying $|E_*-E|<\delta$ and $\min_{0\leq\theta<2\pi}\|\psi-e^{i\theta}\psi_{E}\|_{H^2}<\epsilon$ is of
 the form $(\psi,E_*)=(e^{i\theta}\psi_{E_*},E_*)$ where $(\psi_{E_*},E_*)$ is on the same maximal curve as $(\psi_E,E)$ and $0\le\theta<2\pi.$
\end{theorem}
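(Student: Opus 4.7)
The plan is to build the maximal interval as a union of locally admissible $C^1$ continuations, and then, at each endpoint, to show that non-invertibility of $L_+$ is the only possible obstruction to further continuation.

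Let $\Escr$ denote the collection of pairs $(J,\{\psi_E\}_{E\in J})$ with $J\ni E_1$ an open interval, $E\mapsto\psi_E$ a $C^1$ curve of real-valued $H^2$-solutions of \eqref{stationary} matching $\psi_{E_1}$ at $E_1$ and along which $L_+(\psi_E,E)$ is continuously invertible. Theorem \ref{th:cont}(i) applied at $E_1$ gives $\Escr\neq\emptyset$; applied at any point of overlap it forces any two elements of $\Escr$ to coincide on a neighborhood of that point, and a standard open--closed argument upgrades this to agreement on the full intersection of their domains. The union therefore yields a unique maximal interval $(E_-,E_+)$ carrying a unique $C^1$ curve.

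The heart of the proof is the endpoint characterization. Take the upper endpoint first, assume $\sigma<0$ with $E_+<\infty$, and suppose no sequence $E_m\uparrow E_+$ drives an eigenvalue of $L_+$ to $0$. Then there exist $\eta>0$ and $E^*<E_+$ with $\operatorname{dist}(0,\operatorname{spec}L_+(\psi_E,E))\ge\eta$ on $[E^*,E_+)$. Differentiating $F(\psi_E,E)=0$ in $E$ and using $\partial_EF=\psi$ gives
\begin{equation*}
L_+(\psi_E,E)\,\partial_E\psi_E=-\psi_E,
\end{equation*}
so $\|\partial_E\psi_E\|_{L^2}\le\eta^{-1}\|\psi_E\|_{L^2}$ and Gronwall's inequality yields a uniform $L^2$-bound on $\psi_E$ up to $E_+$. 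Applying elliptic regularity for $-\Delta+E$ to this same ODE, and using the Sobolev/relative-compactness control of $|\psi|^{2p}$ noted in Remark \ref{spL}, bootstraps this to uniform $H^2$-bounds on both $\psi_E$ and $\partial_E\psi_E$. The bound on $\partial_E\psi_E$ forces $\{\psi_E\}$ to be Cauchy in $H^2$ as $E\uparrow E_+$, producing an $H^2$-limit $\psi_{E_+}$ with $F(\psi_{E_+},E_+)=0$; continuity of the discrete spectrum under relatively compact perturbations (Remark \ref{spL}) keeps $L_+(\psi_{E_+},E_+)$ invertible, and Theorem \ref{th:cont} extends the curve past $E_+$, contradicting maximality. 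The lower endpoint in case (i), and both endpoints in case (ii), follow by exactly the same scheme; the defocusing case (ii) additionally rules out $E_+=\infty$ by testing \eqref{stationary} against $\psi_E$, which gives
\begin{equation*}
E\,\|\psi_E\|_{L^2}^2\le -\langle V\psi_E,\psi_E\rangle\le\|V\|_{L^\infty}\|\psi_E\|_{L^2}^2
\end{equation*}
(the $\|\nabla\psi_E\|_{L^2}^2$ and $\sigma\|\psi_E\|_{L^{2p+2}}^{2p+2}$ terms being non-negative for $\sigma>0$), forcing $E\le\|V\|_{L^\infty}$ along the whole curve.

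Part (iii) is then immediate from Theorem \ref{th:cont}(ii): its two hypotheses hold on the maximal curve by construction ($L_+$ invertible) and by assumption ($\ker L_-=\operatorname{span}\{\psi_E\}$), and the unique real-valued branch it produces locally must coincide with the maximal curve by the uniqueness of continuation established in the first paragraph, so every nearby solution is a rotation $e^{i\theta}\psi_{E_*}$. The main obstacle in this plan is the compactness step in the middle paragraph: converting the uniform spectral gap for $L_+$ into uniform $H^2$-control of $\psi_E$ and of $\partial_E\psi_E$ requires careful bootstrapping through the nonlinearity $|\psi|^{2p}$, and preserving the invertibility of $L_+$ across the $H^2$-limit relies essentially on the relative compactness of the perturbation $V+(2p+1)\sigma|\psi|^{2p}$.
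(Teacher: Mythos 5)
Your overall architecture coincides with the paper's: define the maximal interval via the locally admissible continuations, negate the eigenvalue-degeneration alternative to get a uniform spectral gap $d>0$ for $L_+$, use $L_+\partial_E\psi_E=-\psi_E$ plus Gronwall for a uniform $L^2$ bound, produce a limit point that is still a solution with $L_+$ invertible, and contradict maximality via Theorem \ref{th:cont}; part (iii) is handled the same way in both.

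There is, however, a genuine gap in your compactness step for the focusing case $\sigma<0$. You claim that the uniform $L^2$ bound, elliptic regularity for $-\Delta+E$, and Remark \ref{spL} ``bootstrap'' to uniform $H^2$ bounds on $\psi_E$ and $\partial_E\psi_E$. This is circular: to put the nonlinear term $|\psi_E|^{2p}\psi_E$ in $L^2$ (or even $H^{-1}$) one needs $\psi_E$ bounded in $L^{2(2p+1)}$, and Remark \ref{spL} only gives $|\psi|^{2p}\in L^q$ \emph{assuming} an $H^1$ bound --- which is exactly what is missing. Moreover, the identity \eqref{eq:spstat} reads $\|\nabla\psi_E\|_{L^2}^2+\int V|\psi_E|^2+\sigma\|\psi_E\|_{L^{2p+2}}^{2p+2}+E\|\psi_E\|_{L^2}^2=0$, and for $\sigma<0$ the gradient term and the nonlinear term have opposite signs, so a uniform $L^2$ bound gives no control whatsoever on $\|\nabla\psi_E\|_{L^2}$ (one cannot rule out a priori that both blow up together along the branch). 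The paper closes precisely this gap by exploiting the Hamiltonian structure: the identity $\frac{d\mathcal{E}}{dE}=-E\frac{d}{dE}\|\psi_E\|_{L^2}^2$ is integrated (with one integration by parts) to bound the energy $\mathcal{E}(E)$ uniformly, and then $(p+1)\mathcal{E}(E)$ is combined with \eqref{eq:spstat} to \emph{cancel} the $L^{2p+2}$ term, yielding the uniform bound on $p\|\nabla\psi_E\|_{L^2}^2+p\int V|\psi_E|^2$ and hence the $H^1$ bound. Without this (or an equivalent) argument, your claim that $\{\psi_E\}$ is Cauchy in $H^2$ and that the limit solves \eqref{stationary} does not follow. (In the defocusing case $\sigma>0$ your direct route does work, since then all terms in \eqref{eq:spstat} except $\int V|\psi_E|^2$ are nonnegative; your observation that $E\le\|V\|_{L^\infty}$ along defocusing branches is a correct and slightly more elementary substitute for the comparison argument the paper defers to Proposition \ref{pr:endp}.)
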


\begin{proof} Consider first the case $\sigma <0.$ Define:
 \begin{eqnarray}
 E_+&=&\sup\{\tilde E : \quad \tilde E>E_1,\ E\mapsto\psi_E\ \mbox{is a}\ C^1\
 \mbox{extension on}\ [E_1,\tilde E)\ \mbox{ of the curve in Theorem \ref{th:cont}},\nonumber\\
   &&\mbox{for which}\ 0\ \mbox{is not in the spectrum of } L_+ \}\nonumber
 \end{eqnarray}
Theorem \ref{th:cont} and the Remark following it guarantees that
the set above is not empty. Assume neither (a+) nor (b+) hold for
$E_+>E_1$ defined above. Then for all $E\in [E_1,E_+),$ the spectrum of
$L_+(\psi_E,E)$ (which is real valued since $L_+$ is self-adjoint)
has no points in the interval $[-d,d]$ for some $0 < d < E_1$.
Indeed, as discussed in Remark \ref{rm:lpme1} the essential spectrum
of $L_+$ at $E$ is $[E,\infty),$ and if no $d > 0$ exists, there
must be a sequence of eigenvalues $\lambda_m$ for $L_+$ at $E_m\in
[E_1,E_+)$ such that $\lim_{m\rightarrow\infty}\lambda_m=0.$ But
$[E_1,E_+]$ is compact since (a+) does not hold, hence there exists
a subsequence $E_{m_k}$ of $E_m$ converging to $E_2\in [E_1,E_+]$.
However, $E_2\not=E_+$ because we assumed that (b+) does not hold,
so $E_2\in [E_1,E_+)$ and by continuous dependence of the
eigenvalues of $L_+$ on $E\in [E_1,E_+)$ we get that $0$ is an
eigenvalue of $L_+$ at $E_2<E_+$ which contradicts the choice of
$E_+.$

Consequently $L_+^{-1}:L^2(\R^n)\mapsto L^2(\R^n)$ is bounded with
uniform bound $K=1/d$ on $[E_1,E_+).$ Moreover, by differentiating
\eqref{stationary} with respect to $E$ we have:
\begin{equation}\label{eq:dpsie}
L_+ \partial_E\psi_E = -\psi_E \quad \Rightarrow \quad
\partial_E\psi_E= - L_+^{-1}\psi_E, \quad E \in (E_0,E_+),
\end{equation}
hence
\begin{equation}\label{estdpsie}
\| \partial_E \psi_E \|_{L^2} \le K \|\psi_E\|_{L^2}, \quad E \in
[E_1,E_+).
\end{equation}
and, by Cauchy-Schwarz inequality:
$$
\frac{d}{dE}\|\psi_E\|^2_{L^2}=2\langle \partial_E \psi_E
,\psi_E\rangle\leq 2K\|\psi_E\|^2_{L^2}, \quad E\in [E_1,E_+).
$$
The latter implies
\begin{equation}
\label{another-bound} \|\psi_E\|^2_{L^2}\leq
\|\psi_{E_1}\|^2_{L^2}e^{2K(E-E_1)}, \quad E\in [E_1,E_+),
\end{equation}
which combined with $E_+<\infty$ and bound \eqref{estdpsie} gives
that both $\partial_E \psi_E$ and $\psi_E$ have uniformly bounded
$L^2$ norms on $[E_1,E_+)$. By the Mean Value Theorem there exists
$\psi_{E_+}(x) \in L^2(\R)$ such that
\begin{equation}\label{eq:l2conv}
\lim_{E\nearrow E_+}\|\psi_E-\psi_{E_+}\|_{L^2}=0.
\end{equation}

We claim that $\psi_{E_+}(\R^n) \in H^1(\R^n)$ is a weak solution of the
stationary equation \eqref{stationary} with $E=E_+.$ Indeed,
consider the energy functional
\begin{equation}\label{def:energy}
{\cal E}(E)=\int_{\R^n}| \nabla\psi_E(x)|^2dx+\int_{\R^n} V(x)|\psi_E(x)|^2dx+\frac{\sigma}{p+1}\int_{\R^n}|\psi_E (x)|^{2p+2}dx.
\end{equation}
Note that because, $\psi_E$ is a weak solution of the stationary
equation \eqref{stationary} for any $E\in [E_1,E_+)$ we have
\begin{equation}\label{eq:denergy}
\frac{d{\cal E}}{dE}=-2E \langle\psi_E,\partial_E \psi_E
\rangle_{L^2}=-E\frac{d\|\psi_E\|_{L^2}^2}{dE}
\end{equation}
which integrated from $E_1$ to $E<E_+$ and using one integration by
parts gives:
\begin{equation}\label{eq:intenergy}
{\cal E}(E)-{\cal
E}(E_1)=E_1\|\psi_{E_1}\|_{L^2}^2-E\|\psi_E\|_{L^2}^2+\int_{E_1}^E\|\psi_E\|_{L^2}^2dE.
\end{equation}
Hence, from the uniform bounds for $E\in [E_1, E_+)$ and
$\|\psi_E\|_{L^2},\ E\in [E_1, E_+)$ see (\ref{another-bound}), the
energy ${\cal E}(E)$, is uniformly bounded on $[E_1,E_+)$. On the
other hand, from the weak formulation of solutions of
\eqref{stationary}, we get
\begin{equation}\label{eq:spstat}
\| \nabla\psi_E \|_{L^2}^2+ \int_{\R^n} V(x)|\psi_E(x)|^2dx +\sigma\|\psi_E\|_{L^{2p+2}}^{2p+2}+ E\|\psi_E\|_{L^2}^2=0
\end{equation}
Subtracting the latter from $(p+1){\cal E}(E)$ we get that there exists
an $M>0$ such that
\begin{equation}\label{eq:h1bounds}
\left|\ p\| \nabla\psi_E \|_{L^2}^2+p \int_{\R^n}V(x)|\psi_E(x)|^2dx\ \right|\le M,\qquad {\rm for\ all}\ E\in [E_1,E_+).
\end{equation}
From H\" older inequality we obtain:
$$
\left|\int_{\R^n}V(x)|\psi_E(x)|^2dx\right|\le\|V\|_{L^\infty}
\|\psi_E\|^2_{L^{2}}.
$$
Using the inequality in \eqref{eq:h1bounds} we deduce that $\|\nabla
\psi_E \|_{L^2}$ has to be uniformly bounded. Consequently, there
exists $M>0$ such that
\begin{equation}\label{eq:h1b}
\|\psi_E\|_{H^1}\le M,\qquad {\rm for\ all}\ E\in [E_1,E_+).
\end{equation}
Because of the embedding of $H^1(\R^n)$ into $L^{q}(\R^n),\ 2\leq
q\leq 2n/(n-2),$ and the Riesz-Thorin interpolation
$$
\| f \|_{L^q} \leq \| f \|^{1+n/q-n/2}_{L^2} \| f \|^{n/2-n/q}
_{L^{2n/(n-2)}}, \quad 2\leq q \leq 2n/(n-2),
$$
bound \eqref{eq:h1b} together with convergence \eqref{eq:l2conv}
imply that as $E\nearrow E_+$ we have:
$$
\left. \begin{array}{l} \psi_E \rightarrow \psi_{E_+},\quad {\rm in}\ L^2(\R^n)\\
\psi_E\rightharpoonup\psi_{E_+},\quad {\rm in}\ H^1(\R^n)
\end{array} \right\} \quad \Rightarrow \quad
\psi_E\rightarrow\psi_{E_+},\quad {\rm in}\ L^q(\R^n),\ 2\leq q<
2n/(n-2).
$$

Now, by passing to the limit in the weak formulation of the
stationary equation \eqref{stationary}, we conclude that
$\psi_{E_+}(x) \in H^1(\R^n)$ is a weak solution and hence also a strong $H^2$ solution, see Remark \ref{rmk:h1toh2}. Moreover, the
linearized operator $L_+$ depends continuously on $E$ on the
interval $[E_1,E_+]$. By the standard perturbation theory, the
discrete spectrum of $L_+$ depends continuously on $E$. Since $0$ is
not in the spectrum of $L_+$ for $E \in [E_1,E_+)$ and we assumed
that (b+) does not hold we deduce that $0$ is not an eigenvalue of
$L_+$ at $E_+.$ Moreover, since the essential spectrum of $L_+$ is
$[E,\infty)$, $0$ is not in the spectrum of $L_+$ at $E = E_+$.
Applying now Theorem \ref{th:cont} at $(\psi_{E_+},\ E_+)$ we can
continue the $C^1$ branch $(\psi_E,E)$ past $E=E_+$ which
contradicts the choice of $E_+.$

The dichotomy (a+) - (b+) is now proven. The above argument holds
for extensions to the left of $E_1$ with the only modification that
$E_-$ is defined as:
 \begin{eqnarray}
 E_-&=&\inf\{\tilde E : \quad \tilde E<E_1,\ E\mapsto\psi_E\ \mbox{is a}\ C^1\
 \mbox{extension on}\ (\tilde E,E_1]\ \mbox{ of the curve in Theorem \ref{th:cont}},\nonumber\\
   &&\mbox{for which}\ 0\ \mbox{is not in the spectrum of } L_+ \}\nonumber
 \end{eqnarray}
the uniform estimates are obtained on an interval of the form $(E_-,
E_1]$ and we study convergence as $E\searrow E_-.$ Obviously
$E_-\geq 0$ because the essential spectrum of $L_+$ at $(\psi_E,E)$
is $[E,\infty)$ and $0$ would be in the spectrum of $L_+$ for $E\leq 0.$

(ii) The proof for $\sigma >0$ is analogous to the one for $\sigma
<0.$ Part (iii) is a direct consequence of part (ii) in Theorem
\ref{th:cont}. The theorem is now completely proven. \end{proof}

Since the spectral assumption in part (iii) of Theorem \ref{th:max} is satisfied for ground states, we have the following corollary:
\begin{corollary}\label{cor:max} (i) If $\sigma <0,$
then the curve $(\psi_E,E)$ of solutions of \eqref{stationary} given
in Theorem \ref{th:max}, which bifurcates from the lowest eigenvalue
$-E_0$ of $-\Delta+V,$ can be uniquely continued to a maximal
interval $(E_0,E_+)$ such that either:
 \begin{itemize}
 \item[(a)] $E_+=\infty;$
 \item[]or
 \item[(b)] $E_+ < \infty$ and there exists a sequence $\{E_m\}_{m\in\N}\subset (E_0,E_+)$ such that $\lim_{m\rightarrow\infty} E_m = E_+$ and
$L_+(\psi_{E_m},E_m)$ has an eigenvalue $\lambda_m$ satisfying
$\lim_{m\rightarrow\infty} \lambda_m = 0$.
 \end{itemize}

(ii) If $\sigma >0,$ then the curve $(\psi_E,E)$ of solutions of
\eqref{stationary} given in Theorem \ref{th:max} which bifurcates
from the lowest eigenvalue $-E_0$ of $-\Delta +V,$ can be uniquely
continued to a maximal interval $(E_-,E_0)$ such that either:
 \begin{itemize}
 \item[(a)] $E_-=0;$
 \item[]or
 \item[(b)] $0<E_- < E_0$ and there exists a sequence $\{E_m\}_{m\in\mathbb N}\subset (E_-,E_0)$ such that $\lim_{m\rightarrow\infty} E_m = E_-$ and $L_+(\psi_{E_m},E_m)$ has an eigenvalue $\lambda_m$ satisfying
$\lim_{m\rightarrow\infty} \lambda_m = 0$.
 \end{itemize}

(iii) In both cases $\sigma<0,$ or $\sigma>0,$ if $(\psi_E,E)$ is a
real valued solution on such a maximal curve then there exists
$\epsilon,\ \delta >0$ such that any solution $(\psi,E_*)$ of
\eqref{stationary}
 satisfying $|E_*-E|<\delta$ and $\min_{0\leq\theta<2\pi}\|\psi-e^{i\theta}\psi_{E}\|_{H^2}<\epsilon$ is of
 the form $(\psi,E_*)=(e^{i\theta}\psi_{E_*},E_*)$ where $(\psi_{E_*},E_*)$ is on the same maximal curve as $(\psi_E,E)$ and $0\le\theta<2\pi.$
\end{corollary}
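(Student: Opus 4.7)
The plan is to apply Theorem \ref{th:max} to the specific ground-state branch bifurcating from $(0,E_0)$, with an extra argument pinning down the endpoint of the branch adjacent to $E_0$. I carry out three steps: (a) set up the maximal curve by combining Proposition \ref{th:ex} with Theorem \ref{th:max}; (b) show that the endpoint near $E_0$ is $E_0$ itself, i.e., case (b) of the dichotomy of Theorem \ref{th:max}; and (c) verify the spectral assumption $\ker L_-(\psi_E,E)=\mathrm{span}\{\psi_E\}$ of Theorem \ref{th:max}(iii) along the whole maximal curve.

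Step (a) is immediate: at any $E_1$ close to $E_0$ on the local curve from Proposition \ref{th:ex}, Remark \ref{rm:lpme0} ensures $0$ is not in the spectrum of $L_+(\psi_{E_1},E_1)$, so the hypotheses of Theorems \ref{th:cont} and \ref{th:max} hold and yield a maximal interval $(E_-,E_+)$ with the stated dichotomies. For (b) consider $\sigma<0$ (the case $\sigma>0$ is symmetric). Proposition \ref{th:ex} already extends the curve down to $E_0^+$ with $\|\psi_E\|_{H^2}\to 0$, hence $E_-\le E_0$. Because $|\psi_E|^{2p}\to 0$ in $L^q$ for some $q>\max\{1,n/2\}$ (Remark \ref{spL}), the operator $L_+(\psi_E,E)$ converges in norm-resolvent sense to $-\Delta+V+E_0$, whose lowest eigenvalue is $0$ and is simple. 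Standard perturbation theory for simple isolated eigenvalues then produces an eigenvalue $\lambda(E)$ of $L_+(\psi_E,E)$ with $\lambda(E)\to 0$ as $E\to E_0^+$, placing us in case (b-) with $E_-=E_0$; symmetrically one gets $E_+=E_0$ when $\sigma>0$.

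Step (c) is where I expect the only real work. Corollary \ref{cor:sgs} together with Remark \ref{rm:lpme0} gives the spectral assumption near $(0,E_0)$: there, $0$ is the simple lowest eigenvalue of $L_-(\psi_E,E)$. To propagate this along the whole maximal curve I would use the same kind of spectral-continuity argument as in Corollary \ref{cor:sgs}. By Remark \ref{spL}, $0$ is always an eigenvalue of $L_-(\psi_E,E)$ with eigenfunction $\psi_E$, and by Remark \ref{rm:lpme1} the eigenvalues of $L_-$ depend continuously on $E$. If $0$ ceased to be the simple lowest eigenvalue at some point on the curve, continuity would force a transition point where the lowest eigenvalue of $L_-$ acquired multiplicity at least two---either because a second eigenvalue crossed $0$ from above, or because the lowest eigenvalue started to dip below $0$ while $0$ remained in the spectrum through $\psi_E$. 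This contradicts the standard simplicity of the lowest eigenvalue of a second-order self-adjoint elliptic operator. Hence $\ker L_-(\psi_E,E)=\mathrm{span}\{\psi_E\}$ everywhere on the maximal curve, and Theorem \ref{th:max}(iii) directly yields part (iii). The main obstacle is precisely this global spectral-propagation step: Corollary \ref{cor:sgs} is purely local and gives no a priori control once the branch leaves a neighborhood of the bifurcation point, so one really needs the interplay between the persistence of $0$ in the spectrum (via equivariance) and the simplicity of the ground eigenvalue of the elliptic operator $L_-$.
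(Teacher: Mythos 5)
Your proposal is correct and follows essentially the same route as the paper: apply Theorem \ref{th:max} to the local branch from Proposition \ref{th:ex}, and for part (iii) propagate the simplicity of the zero (lowest) eigenvalue of $L_-$ along the whole maximal curve by combining its persistence (equivariance, Remark \ref{spL}), continuous dependence of the discrete spectrum on $E$, and the simplicity of the ground eigenvalue of a second-order self-adjoint elliptic operator, exactly as in the paper's contradiction at a first transition point. Your step (b), identifying the endpoint adjacent to $E_0$ via norm-resolvent convergence of $L_+$ to $-\Delta+V+E_0$, is a correct elaboration of a point the paper leaves implicit.
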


\begin{proof} Fix $E_1\in {\cal I}$ given by Proposition \ref{th:ex}. Parts (i) and (ii) follow directly from Theorem \ref{th:max} applied to the curve of solutions of \eqref{stationary} starting from $(\psi_{E_1},E_1).$ Part (iii) uses the fact that the lowest eigenvalue of $L_-$ must be simple according to the general theory of self adjoint, second order elliptic operators. Note that $0$ is always eigenvalue of $L_-$ along the maximal curve of solutions, and it is the lowest one on $(E_0,E_1],$ see Remark \ref{rm:lpme0}. If $0$ is not the lowest eigenvalue of $L_-$ at some $E_2,\ E_1<E_2<E_+$ then, from continuous dependence of the discrete spectrum of $L_-$ on $E,$ we deduce that there is $E_3,\ E_1<E_3<E_2,$ such that $0$ is the lowest eigenvalue of $L_-$ at $E_3$ and has multiplicity at least two, contradiction. So $0$ is a simple eigenvalue of $L_-$ on the entire maximal curve, and part (iii) follows from part (iii) in Theorem \ref{th:max}.
\end{proof}

\begin{remark}\label{rm:es1d}
Corollary \ref{cor:max} is also valid for the (excited state)
manifolds bifurcating from the second or higher negative eigenvalues
of $-\Delta+V$ provided we are in one space dimension $n=1.$ The
above proof needs no changes because Sturm-Liouville Theory implies
that all eigenvalues of $-\Delta+V$ and $L_\pm$ are simple. In
particular Theorem \ref{th:max} applies to all negative eigenvalues
of $-\Delta+V$ and the kernel of $L_-$ is always one dimensional.
\end{remark}

Note that along this maximal curves of solutions the number of negative eigenvalues of $L_+$ does not change. Using this information and the following proposition based on comparison principles for the linearized operators,
the estimates for the end points of the maximal curves of solutions
can be improved:

\begin{proposition}\label{pr:endp} (i) If $\sigma <0$ and $(\psi_E,E),\ E>0,$ is a nontrivial, real valued solution of \eqref{stationary}, then, at this solution, $L_+$  has $k\geq 1$ negative eigenvalues counting multiplicity and $E\geq E_{k-1}$, where $-E_{k-1}<0$ is the $k^{th}$ discrete eigenvalue of $-\Delta+V$ counting multiplicity. If the latter does not exist $-E_{k-1}$ is replaced with zero, the edge of the essential spectrum.

(ii) If $\sigma >0$ and $(\psi_E,E),\ E>0,$ is a nontrivial, real valued solution of \eqref{stationary} then the number $k\geq 0$ of negative eigenvalues of $L_+$  counting multiplicity is strictly less than the number of negative eigenvalues of $-\Delta+V$ counting multiplicity and $E\leq E_{k}$, where $-E_{k}<0$ is the $(k+1)^{th}$ discrete eigenvalue of $-\Delta+V.$
\end{proposition}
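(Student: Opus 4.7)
The plan is to combine Courant's min--max principle with the identity $L_+\psi_E=2p\sigma|\psi_E|^{2p}\psi_E$, obtained from $L_-\psi_E=0$ (Remark \ref{spL}) and the definitions of $L_\pm$, and to treat the two cases separately.

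For part (i) ($\sigma<0$), the test function $\psi_E$ gives $\langle L_+\psi_E,\psi_E\rangle=2p\sigma\|\psi_E\|_{L^{2p+2}}^{2p+2}<0$, so $k\geq 1$. For the bound $E\geq E_{k-1}$, let $m$ be the number of indices $j$ with $E_{j-1}>E$, with $e_1,\dots,e_m$ the corresponding orthonormal eigenfunctions of $-\Delta+V$. Since $(2p+1)\sigma|\psi_E|^{2p}\leq 0$, $L_+$ is negative definite on $\mathrm{span}\{e_1,\dots,e_m\}$. The key step is to extend this to an $(m+1)$-dimensional subspace $S'=\mathrm{span}\{\psi_E,e_1,\dots,e_m\}$ on which $L_+$ remains negative definite; min--max will then force $k\geq m+1$, equivalently $E\geq E_{k-1}$. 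Linear independence of $\psi_E$ from $\{e_1,\dots,e_m\}$ is established by contradiction: if $\psi_E=\sum_j c_je_j$, then \eqref{stationary} gives $\sum_j c_j(E-E_{j-1})e_j=-\sigma|\psi_E|^{2p}\psi_E$, and the $L^2$-inner product with $\psi_E$ yields $\sum_j c_j^2(E-E_{j-1})=-\sigma\|\psi_E\|_{L^{2p+2}}^{2p+2}$, with nonpositive left side and strictly positive right side --- a contradiction. Negative definiteness of $L_+$ on $S'$ is shown by a direct computation: writing $\phi=a\psi_E+y$ with $y=\sum_j b_je_j$, using $L_+\psi_E=2p\sigma|\psi_E|^{2p}\psi_E$ together with orthonormality of the $e_j$, and completing the square on the cross-terms produces
\[
\langle L_+\phi,\phi\rangle=\sum_j b_j^2(-E_{j-1}+E)+\sigma\int|\psi_E|^{2p}\bigl[2p\phi^2+y^2\bigr]dx,
\]
whose two summands are both nonpositive and cannot vanish simultaneously unless $\phi=0$.

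Part (ii) ($\sigma>0$) is handled through $L_-$ directly. Since $L_-\geq -\Delta+V+E$ and $L_-\psi_E=0$ places $0$ as a discrete eigenvalue below the essential spectrum $[E,\infty)$ of $L_-$ (using $E>0$), letting $0=\lambda_m(L_-)$ yields $E\leq E_{m-1}$ by min--max. The comparison $L_+\geq L_-$ gives $\lambda_j(L_-)\leq\lambda_j(L_+)$, so $L_-$ inherits at least $k$ negative eigenvalues from $L_+$, forcing $m\geq k+1$; the monotone decrease of $\{E_j\}$ then gives $E\leq E_k$. Since $E>0$ requires $E_k>0$, the operator $-\Delta+V$ must have at least $k+1$ discrete negative eigenvalues, establishing the strict inequality $k<$ \#(neg.~evs.~of $-\Delta+V$). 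I expect the main obstacle to be the computation in part (i) --- especially the inner-product argument establishing $\psi_E\notin\mathrm{span}\{e_1,\dots,e_m\}$ and the completion-of-squares identity --- both short once set up, but both relying critically on the specific form of the nonlinear term.
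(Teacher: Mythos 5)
Your proof is correct. Part (ii) is essentially the paper's argument: both rest on $L_+\geq L_-\geq -\Delta+V+E$ and on $0$ being a discrete eigenvalue of $L_-$ sitting at position at least $k+1$; the paper merely instantiates the min--max comparison with an explicit test function (a combination of the first $k+1$ eigenvectors of $L_-$ orthogonal to the first $k$ eigenvectors of $-\Delta+V$), while you quote the abstract eigenvalue comparison. In part (i) you take a genuinely different route. The paper again works through $L_-$: from $L_+\leq L_-$ it places $0$ at position at most $k$ in the spectrum of $L_-$ and then tests $L_-\leq -\Delta+V+E$ against a combination of the first $k$ eigenvectors of $-\Delta+V$ orthogonal to the first $k-1$ eigenvectors of $L_-$, squeezing $0\leq\langle L_-\phi,\phi\rangle\leq(E-E_{k-1})\|\phi\|_{L^2}^2$. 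You instead exhibit an explicit $(m+1)$-dimensional subspace $\mathrm{span}\{\psi_E,e_1,\dots,e_m\}$ on which $L_+$ itself is negative definite; I checked your identity $\langle L_+\phi,\phi\rangle=\sum_j b_j^2(E-E_{j-1})+\sigma\int|\psi_E|^{2p}(2p\phi^2+y^2)\,dx$ and the inner-product argument ruling out $\psi_E\in\mathrm{span}\{e_1,\dots,e_m\}$, and both are right, as is the bookkeeping $k\geq m+1$ giving $E_{k-1}\leq E_m\leq E$ (or $E>0$ when the $k$th discrete eigenvalue is absent). What your route buys: it bypasses the paper's terse claim that ``zero is at most the $k$th eigenvalue of $L_-$,'' which really requires the strict comparison $\lambda_j(L_+)<\lambda_j(L_-)$ (coming from $2p\sigma|\psi_E|^{2p}$ being negative on a set of positive measure); your argument needs only the sign of the nonlinear potential and the relation $L_+\psi_E=2p\sigma|\psi_E|^{2p}\psi_E$. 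The price is the completion-of-squares computation, but that is short and makes the role of $\psi_E$ as an explicit negative direction for $L_+$ transparent.
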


The proposition immediately implies:

\begin{corollary}\label{cor:endp} (i) If $\sigma <0$ and $(\psi_E,E),\ E\in (E_-,E_+),$ is a maximal curve of real valued solutions of \eqref{stationary}, then, along it, $L_+$  has $k\geq 1$ negative eigenvalues counting multiplicity and $E_-\geq E_{k-1}$, where $-E_{k-1}<0$ is the $k^{th}$ discrete eigenvalue of $-\Delta+V$ counting multiplicity. If the latter does not exist $-E_{k-1}$ is replaced with zero, the edge of the essential spectrum.

(ii) If $\sigma >0$ and $(\psi_E,E),\ E\in (E_-,E_+),$ is a maximal curve of real valued solutions of \eqref{stationary} then the number $k\geq 0$ of negative eigenvalues of $L_+$  counting multiplicity is less than the number of negative eigenvalues of $-\Delta+V$ counting multiplicity and $E_+\leq E_{k}$, where $-E_{k}<0$ is the $(k+1)^{th}$ discrete eigenvalue of $-\Delta+V.$
\end{corollary}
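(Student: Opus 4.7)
The strategy will be to promote the pointwise bound in Proposition \ref{pr:endp} to a bound on the endpoints of the maximal branch by first showing that the integer $k$ is constant along the curve, and then taking infima and suprema over the interval.

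First I would observe that, by the very definition of a maximal curve in Theorem \ref{th:max}, zero lies outside the spectrum of $L_+(\psi_E,E)$ for every $E\in(E_-,E_+)$. By Remark \ref{rm:lpme1}, $L_+$ remains a relatively compact perturbation of $-\Delta+E$ along the curve, so its essential spectrum is $[E,\infty)$ (with $E>0$) and its eigenvalues below $E$ depend continuously on $E$ by standard perturbation theory for self-adjoint operators. No eigenvalue can cross $0$ without contradicting the invertibility of $L_+$, and no strictly negative eigenvalue can be absorbed into the essential spectrum $[E,\infty)$ because $E>0$ keeps that set strictly to the right of $0$. Consequently the count $k$ of strictly negative eigenvalues of $L_+(\psi_E,E)$ is independent of $E\in(E_-,E_+)$.

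Having fixed this common integer $k$, I would pick any $E^*\in(E_-,E_+)$ and apply Proposition \ref{pr:endp} at $(\psi_{E^*},E^*)$. In the focusing case this yields $E^*\geq E_{k-1}$ with the same $k\geq 1$; in the defocusing case it yields $E^*\leq E_k$ together with $k$ strictly smaller than the number of negative eigenvalues of $-\Delta+V$. Since these inequalities hold for every $E^*\in(E_-,E_+)$, taking the infimum (respectively supremum) over the interval produces $E_-\geq E_{k-1}$ (respectively $E_+\leq E_k$), which is exactly the claim. The conventions about replacing $-E_{k-1}$ by the edge of the essential spectrum when that eigenvalue does not exist transfer verbatim from the proposition.

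The only delicate step will be making the constancy of $k$ precise along the entire maximal curve; once that is in place the corollary is just a pointwise consequence of Proposition \ref{pr:endp} followed by a sup/inf, so I expect no further obstacle.
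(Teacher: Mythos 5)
Your proposal is correct and follows essentially the same route as the paper: the paper notes just before Proposition \ref{pr:endp} that the number of negative eigenvalues of $L_+$ is constant along a maximal curve, then applies the proposition pointwise and takes the infimum/supremum over $E$. Your more detailed justification of the constancy of $k$ (continuity of the discrete spectrum, invertibility of $L_+$ on the maximal curve, and the essential spectrum $[E,\infty)$ staying away from zero) is exactly the intended argument.
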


\begin{proof} Indeed, for each  $(\psi_E,E),\ E\in (E_-,E_+),$ on the curve we have by Proposition \ref{pr:endp} and the remark above it that in case (i) $E_{k-1}\leq E,$ while in case (ii) $E_{k}\geq E.$ Consequently $E_{k-1}\leq E_-$ in case (i), while in case (ii) $E_k\geq E_+.$
\end{proof}

We now prove Proposition \ref{pr:endp}

\begin{proof}  In case (i) we first have
$$\langle L_+\psi_E,\psi_E\rangle=\langle L_-\psi_E,\psi_E\rangle+2p\sigma\|\psi_E\|_{L^{2p+2}}^{2p+2}=2p\sigma\|\psi_E\|_{L^{2p+2}}^{2p+2}<0$$
hence the lowest eigenvalue of $L_+:$
$$\lambda_0=\inf_{\|\phi\|_{L^2}=1}\langle L_+\phi,\phi\rangle <0.$$
Then we use use $L_+\leq L_-$ to deduce that zero is at most the $k^{th}$ eigenvalue of $L_-$ counting multiplicity. Let $\phi$ be the linear combination of the first $k$ eigenvectors of $-\Delta+V$ which is orthogonal on each of the first $k-1$ eigenvectors of $L_-.$ Then we have:
$$0\leq \langle L_-\phi,\phi\rangle\leq \langle (-\Delta+V+E)\phi,\phi\rangle\leq (-E_{k-1}+E)\|\phi\|_{L^2}^2$$
which implies $E_{k-1}\leq E$.

For part (ii) we use $L_+\geq L_-$ to deduce that zero is at least the
$(k+1)^{th}$ eigenvalue of $L_- \geq -\Delta+V+E.$ In particular
$-\Delta+V$ has at least $k+1$ negative eigenvalue since $E>0.$
Moreover, if $\phi$ is the linear combination of the first $k+1$
eigenvectors of $L_-$ which is orthogonal on each of the first $k$
eigenvectors of $-\Delta+V,$ then we have:
$$0\geq \langle L_-\phi,\phi\rangle\geq \langle (-\Delta+V+E)\phi,\phi\rangle\geq (-E_{k}+E)\|\phi\|_{L^2}^2$$
which implies $E_{k}\geq E$.
\end{proof}

\begin{remark}\label{rm:endgb} Since along ground state branches zero is the lowest eigenvalue of $L_-,$ the above argument shows that, in the focusing case $\sigma <0,$ their left endpoint $E_-$ always satisfies $E_-\geq E_0,$ where $E_0$ is the lowest eigenvalue of $-\Delta+V,$ while in the defocusing case $\sigma >0$ their right endpoint $E_+$ always satisfies $E_+\leq E_0.$
\end{remark}

\section{Bifurcations at the endpoints of maximal branches}\label{se:bif}

\ \ \ In this section we first employ differential inequalities involving the terms in the energy to show that if $(\psi_E,E)$ is a $C^1$ branch of solutions of \eqref{stationary} then $\|\psi_E\|_{H^2}$ is bounded as long as $E$ does not approach infinity or zero. In other words, such branches have no limit points at the $\{\|\cdot\|_{H^2}=\infty,E_*>0\}$ boundary of the Fredholm domain. This result already has important consequences for the maximal branches of bound states in the defocusing $\sigma>0$ case which we discuss in Theorem \ref{th:defocusing} and Remark \ref{rmk:defocusing}. For the focusing case, we employ concentration compactness and spectral properties of the linearized operator $L_+$ along the manifold of solutions to show, for the first time to our knowledge, that the ground state maximal branches are compact. In other words, as $E$ approaches one of the endpoints, say $E_*,$  $\psi_E$ has at least one limit point $\psi_{E_*}$ in $H^2$ and $(\psi_{E_*},E_*)$ is also a solution of \eqref{stationary}. This is essential in ``continuing" the branch past the bifurcation point $(\psi_{E_*},E_*).$ For example, if the nonlinearity is real analytic ($p$ is an integer) then the limit point is unique and the branch has a unique analytic continuation, see \cite[Corollary 7.5.3]{BT:gbt}. For non-analytic nonlinearities we can obtain similar results when we have certain information on the eigenvalue(s) crossing zero, see the examples in Section \ref{se:examples}.
Eventually, we will combine the results of this section with
the ones analyzing branches with one end point at infinity, see next section,
to obtain information on the global
bifurcation properties of the ground state branches.

\begin{theorem}\label{th:boundedness} Consider a $C^1$ curve $(\psi_E,E),\ E\in (E_-,E_+)$, of solutions of \eqref{stationary} (for example the ones given by Theorem \ref{th:max}). Assume that $0<E_-<E_+<\infty,$ then:
$\|\psi_E\|_{L^2},\ \|\psi_E\|_{H^1}$ and $\|\psi_E\|_{H^2}$ are uniformly bounded on $(E_-,E_+)$.
\end{theorem}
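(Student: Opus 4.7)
The strategy is to derive two integrated identities for $\psi_E$ and combine them with Gagliardo-Nirenberg into a first-order differential inequality that rules out blow-up of the relevant norms on an $E$-interval bounded away from $0$ and $\infty$. Set $N(E)=\|\psi_E\|_{L^2}^2$, $K(E)=\|\psi_E\|_{L^{2p+2}}^{2p+2}$, $a(E)=\|\nabla\psi_E\|_{L^2}^2$, and let $\mathcal E(E)$ be the energy from \eqref{def:energy}. Testing \eqref{stationary} against $\psi_E$ via \eqref{eq:weak} gives the weak-form identity
$$a + \int V|\psi_E|^2\,dx + \sigma K + E N = 0.$$
Since the curve is $C^1$, differentiating $F(\psi_E,E)=0$ yields $L_+\partial_E\psi_E=-\psi_E$. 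Combined with the fact that $\psi_E$ is a critical point of $\phi\mapsto\mathcal E(\phi)+E\|\phi\|_{L^2}^2$ this produces the envelope identity $\mathcal E'(E)=-E N'(E)$, exactly as in \eqref{eq:denergy}. Setting $\mathcal H(E):=\mathcal E(E)+E N(E)$ rewrites this as $\mathcal H'(E)=N(E)$, and eliminating $a+\int V|\psi_E|^2$ between the definition of $\mathcal E$ and the weak form gives the algebraic relation $\mathcal H(E)=-\frac{p\sigma}{p+1}K(E)$.

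Next I would invoke the Gagliardo-Nirenberg inequality, at our disposal because $p<2/(n-2)$,
$$ K \leq C\, N^{p+1-np/2}\, a^{np/2}, $$
together with the bound $a\leq |\sigma|K+\|V\|_\infty N$ following from the weak form and (H1). Substituting and solving for $K$ yields an estimate of the form $K(E)\leq C_1(1+N(E)^\gamma)$ for an explicit $\gamma=\gamma(p,n)$. Since $\mathcal H\asymp K$ and $\mathcal H'=N$, this translates into a first-order differential inequality for $\mathcal H$ whose solutions cannot blow up on any finite interval bounded away from $E=0$. The assumption $E_->0$ enters here through lower bounds on $E$ used to guarantee coercivity of the $EN$ term. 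Standard comparison then shows $\mathcal H$, and with it $N=\mathcal H'$ and $K\asymp\mathcal H$, stay uniformly bounded on $(E_-,E_+)$; the weak form in turn controls $a$, giving the $H^1$ bound.

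For the $H^2$ bound I would rewrite \eqref{stationary} as $-\Delta\psi_E=-(V+E)\psi_E-\sigma|\psi_E|^{2p}\psi_E$ and estimate the right-hand side in $L^2$ uniformly in $E\in(E_-,E_+)$ using Sobolev embeddings together with a standard elliptic bootstrap (iterating $W^{2,q}$ regularity a finite number of times when $p$ is close to the critical exponent $2/(n-2)$). This yields a uniform bound on $\|\Delta\psi_E\|_{L^2}$ and completes the $H^2$ estimate.

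The main obstacle is the closure of the nonlinear inequality between $K$ and $N$ in the second paragraph. In the $L^2$-subcritical regime $p<2/n$ one gets cleanly $\gamma=(p+1-np/2)/(1-np/2)$, but in the $L^2$-supercritical range $2/n\leq p<2/(n-2)$ the naive substitution of the bound on $a$ into Gagliardo-Nirenberg is circular; one must split into sub-cases (depending on whether $|\sigma|K$ or $\|V\|_\infty N$ dominates the bound on $a$) and use the lower bound $E\geq E_->0$ to absorb the borderline term and recover a usable estimate.
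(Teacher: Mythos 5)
Your opening identities are exactly the paper's: the weak-form relation \eqref{eq:wpsie}, the envelope identity $\mathcal E'=-E\mathcal N'$, and the resulting key equation $Q'(E)=\frac{p+1}{-\sigma p}\mathcal N(E)$ for $Q=\|\psi_E\|_{L^{2p+2}}^{2p+2}$ (your $\mathcal H'=N$ with $\mathcal H=-\frac{p\sigma}{p+1}K$ is the same statement). The $H^1$-from-$L^2$-and-energy step and the elliptic bootstrap for $H^2$ also match the paper. The gap is in the middle step, and it is not only the supercritical circularity you flag: the Gagliardo--Nirenberg route produces an inequality pointing the wrong way. GN plus the weak form gives at best $K\leq C(1+N^\gamma)$ with $\gamma>1$, i.e.\ $\mathcal H\leq C(1+(\mathcal H')^\gamma)$, which is a \emph{lower} bound $\mathcal H'\gtrsim(\mathcal H-C)^{1/\gamma}$ on the derivative of the increasing function $\mathcal H$. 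A sublinear lower bound on $\mathcal H'$ is perfectly consistent with $\mathcal H\to\infty$ as $E\to E_+^-$ (it only forces $\int N\,dE$ to diverge, which is exactly what blow-up of $\mathcal H$ means), so no comparison principle rules out blow-up. To close a Gronwall argument on $Q'=\frac{p+1}{-\sigma p}\mathcal N$ you need the reverse control, an upper bound $\mathcal N\leq C\,Q$, which GN cannot give.

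The paper obtains that reverse bound conditionally and handles the remaining case by a separate argument, and this is the idea your proposal is missing. One runs a dichotomy on $\liminf Q(E)/\mathcal N(E)$ as $E$ approaches an endpoint $E_*\in\{E_-,E_+\}$. If the $\liminf$ is positive, then $\mathcal N\leq Q/C$ near $E_*$, the differential equation becomes $|Q'|\leq \frac{p+1}{|\sigma| p C}Q$, and Gronwall on the finite interval bounds $Q$, hence $\mathcal N$. If instead $Q(E_m)/\mathcal N(E_m)\to0$ along a sequence with $\mathcal N(E_m)$ not tending to zero, one normalizes $\Psi_m=\psi_{E_m}/\|\psi_{E_m}\|_{L^2}$, notes $\|\Psi_m\|_{L^{2p+2}}\to0$, deduces $\int V|\Psi_m|^2\to0$, and divides the weak identity by $\mathcal N(E_m)$ to get $\|\nabla\Psi_m\|_{L^2}^2\to -E_*<0$, a contradiction (this is where $E_->0$ is genuinely used, not through coercivity of $E\mathcal N$). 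Monotonicity of $Q$ (increasing for $\sigma<0$, decreasing for $\sigma>0$) disposes of the residual sub-cases at the appropriate endpoint. This argument needs no Gagliardo--Nirenberg, no Pohozaev identity, and makes no distinction between $L^2$-subcritical and supercritical $p$; so the sub-case splitting you sketch for $p\geq 2/n$ is not a repair of your approach but should be replaced by the dichotomy above.
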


\begin{proof} Since $E\mapsto \psi_E$ is a $C^1$ map from an interval to the normed space $H^2,$ we get that $\|\psi_E\|_{H^2}$ is bounded on compact subintervals of $(E_-,E_+).$ It remains to get bounds as $E$ approaches one of the endpoints $E_-,\ E_+.$

Consider the energy \eqref{def:energy}:
$${\cal E}(E) = \int_{\R^n}|\nabla\psi_E(x)|^2\,\mathrm{d}x + \int_{\R^n}
  V(x)|\psi_E(x)|^2\,\mathrm{d}x + \frac{\sigma}{p+1}\int_{\R^n}|\psi_E(x)|^
  {2p+2}\,\mathrm{d}x,$$
and mass functional:
\begin{equation} \label{def:mass}
 {\cal N}(E)=\|\psi_E\|_{L^2}^2,
\end{equation}
along this curve. Then from \eqref{eq:denergy} we have
\begin{equation}\label{eq:diffen}\frac{\mathrm{d}{\cal E}}{\mathrm{d}E} = -E\frac{\mathrm{d}{\cal N}}
  {\mathrm{d}E}\end{equation}
and from \eqref{eq:spstat} we have
\begin{equation}\label{eq:wpsie}
\|\nabla\psi_E\|_{L^2}^2 + \int_{\R^n}V(x)|\psi_E(x)|^2 \mathrm{d}x +
  \sigma\|\psi_E\|_{L^{2p+2}}^{2p+2} + E\|\psi_E\|_{L^2}^2 = 0,\end{equation}
which can be rewritten as
$${\cal E}(E)+\frac{\sigma p}{p+1}\|\psi_E\|_{L^{2p+2}}^{2p+2}=-E{\cal N}(E).$$
Differentiating the above equation with respect to $E$ gives
\begin{equation}\label{eq:dnormp1}
 \frac{dQ}{dE}(E)=\frac{\mathrm{d}}{\mathrm{d}E}\|\psi_E\|_{L^{2p+2}}^{2p+2} = \frac{p+1}
  {-\sigma p}{\cal N}(E),
\end{equation}
where
$$Q(E)=\|\psi_E\|_{L^{2p+2}}^{2p+2}.$$

We first show that ${\cal N}(E)=\|\psi_E\|_{L^2}^2$ is uniformly bounded in $(E_-,E_+).$

For focussing $\sigma<0$ nonlinearity, it follows from the above differential equation that $Q(E)$ i.e., the $L^{2p+2}$ norm of the solutions, is increasing with $E$ and obviously bounded below by zero. Consequently it is uniformly bounded on intervals $(E_-,\tilde E],\ \tilde E< E_+.$ If we assume:
$$\limsup_{E\rightarrow E_-}{\cal N}(E)=\infty,$$
we have at least on a sequence $E_m\searrow E_-$ as $m\rightarrow\infty$ that
$$\lim_{m\rightarrow\infty}\frac{Q(E_m)}{{\cal N}(E_m)}=0.$$
Then for
$$\Psi_m=\frac{\psi_{E_m}}{\|\psi_{E_m}\|_{L^2}}$$
we have by plugging in \eqref{eq:wpsie} and dividing by $\|\psi_{E_m}\|_{L^2}^2={\cal N}(E_m):$
$$\lim_{m\rightarrow\infty}\left(\|\nabla\Psi_m\|_{L^2}^2+\int_{\R^n}V(x)|\Psi_m(x)|^2dx\right)=-E_-$$
Since $\|\Psi_m\|_{L^{2p+2}}\rightarrow 0,$ it can be shown that $\lim_{m\rightarrow\infty}\int_{\R^n}V(x)|\Psi_m(x)|^2dx=0$ Therefore:
$$\lim_{m\rightarrow\infty}\|\nabla\Psi_m\|_{L^2}^2=-E_-<0$$
contradiction. So ${\cal N}(E)=\|\psi_E\|_{L^2}^2$ is uniformly bounded in a neighborhood of $E_-.$ At the $E_+$ endpoint we first assume
$$\liminf_{E\rightarrow E_+}\frac{Q(E)}{{\cal N}(E)}>0\quad\Leftrightarrow\quad \exists E_- < \tilde E <E_+,\ C>0:\ Q(E)>C{\cal N}(E)\ \forall E\in [\tilde E,E_+).$$
Plugging the inequality in \eqref{eq:dnormp1} and integrating we get:
$$Q(E)\leq Q(\tilde E)e^{\frac{p+1}{-\sigma p C}(E-\tilde E)},\qquad\forall E\in [\tilde E,E_+),$$
which again shows $Q(E)$ is uniformly bounded in a neighborhood of $E_+$ since $[\tilde E,E_+)$ has finite length. As above this will imply ${\cal N}(E)=\|\psi_E\|_{L^2}^2$ is uniformly bounded in a neighborhood of $E_+.$ Now, if we assume
$$\liminf_{E\rightarrow E_+}\frac{Q(E)}{{\cal N}(E)}=0\quad\Leftrightarrow \exists E_m\nearrow E_+:\ \lim_{m\rightarrow\infty}\frac{Q(E_m)}{{\cal N}(E_m)}=0,$$
we have, in the case $\lim_{m\rightarrow\infty}{\cal N}(E_m)=0,$ that $\lim_{m\rightarrow\infty}Q(E_m)=0$ which contradicts the fact that $Q(E)$ is an increasing function of $E$. In the case $\limsup_{m\rightarrow\infty}{\cal N}(E_m)>0$ we get by possibly passing to a subsequence that for $\Psi_m=\frac{\psi_{E_m}}{\|\psi_{E_m}\|_{L^2}}$ we have $\|\Psi_m\|_{L^{2p+2}}\rightarrow 0,$ hence $\lim_{m\rightarrow\infty}\int_{\R^n}V(x)|\Psi_m(x)|^2dx=0$ and by plugging in \eqref{eq:wpsie} and dividing by $\|\psi_{E_m}\|_{L^2}^2={\cal N}(E_m)$ we get the contradiction
 $$\lim_{m\rightarrow\infty}\|\nabla\Psi_m\|_{L^2}^2=-E_+.$$
All in all, for $\sigma<0,$ ${\cal N}(E)=\|\psi_E\|_{L^2}^2$ is uniformly bounded in $(E_-,E_+).$

For defocussing $\sigma>0$ nonlinearity, it follows from the differential equation \eqref{eq:dnormp1} that $Q(E)$ i.e., the $L^{2p+2}$ norm of the solutions, is decreasing with $E$ and obviously bounded below by zero. Consequently it is uniformly bounded on intervals $[\tilde E, E_+),\ \tilde E> E_-.$ Uniform bounds for ${\cal N}(E)=\|\psi_E\|_{L^2}^2$ at this endpoint follow as above. At the $E_-$ endpoint we again split the analysis based on whether $\liminf_{E\rightarrow E_-}\frac{Q(E)}{{\cal N}(E)}$ is zero or not and with obvious modifications of the above argument we get uniform bounds on the $L^2$ norm.

Now, by integrating \eqref{eq:diffen} and doing an integration by parts on the right hand side we get that uniform bounds for ${\cal N}(E),\ E\in (E_-,E_+)$ and the finite length of $(E_-,E_+)$ imply uniform bounds for ${\cal E}(E),\ E\in (E_-,E_+).$ This together with \eqref{eq:wpsie} implies $H^1$ uniform bounds, see \eqref{eq:intenergy}-\eqref{eq:h1b}. Finally, standard regularity theory and $H^1$ uniform bounds implies $H^2$ uniform bounds, see Remark \ref{rmk:h1toh2b} in the Appendix.

The theorem is now completely proven.
 \end{proof}

We first apply the above result to solutions $(\psi_E,E)$ of \eqref{stationary} with $\sigma>0$ (the defocusing case). From Corollary \ref{cor:endp} we get  $0<E<E_0,$ and the above result shows that the maximal branches remain bounded as long as $E$ stays away from zero. Therefore, these branches can either be extended to $E_-=0$ or they undergo a bifurcation at their left end $0<E_-<E_0.$ The statement can be made much more precise for the ground state branch:

\begin{theorem} \label{th:defocusing} If $\sigma>0,$ then the ground states $(\psi_E,E)$ of \eqref{stationary} form a unique $C^1$ manifold, namely the one bifurcating from $(\psi=0,E=E_0),$ see Proposition \ref{th:ex}, where $-E_0$ is the lowest e-value of $-\Delta+V.$ Moreover, this manifold can be uniquely extended to the maximal interval $E\in(0,E_0).$
\end{theorem}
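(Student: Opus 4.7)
My plan is to reduce every part of the proof to a single spectral observation: in the defocusing case, $L_+$ is \emph{strictly positive}, hence invertible with continuous inverse, at every ground state, so Theorem \ref{th:cont} applies without restriction along any ground state branch. Once this is in hand, Proposition \ref{th:ex} and Corollary \ref{cor:sgs} provide a local $C^1$ branch of ground states emanating from $(0,E_0)$, and Corollary \ref{cor:max}(ii) supplies its maximal extension $(E_-,E_0)$.

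To justify the spectral observation, fix any ground state $\psi>0$. By Remark \ref{rmk:spgs}, $0$ is the simple lowest eigenvalue of $L_-$ with eigenfunction $\psi$. Since $L_+-L_- = 2p\sigma|\psi|^{2p}$ with $\sigma>0$, the positive Perron eigenfunction $\phi_0$ of $L_+$ associated with its lowest eigenvalue satisfies
\[
 \langle L_+\phi_0,\phi_0\rangle \;=\; \langle L_-\phi_0,\phi_0\rangle + 2p\sigma\int_{\R^n}|\psi|^{2p}\phi_0^2\,dx \;>\; 0,
\]
so the lowest eigenvalue of $L_+$ is strictly positive and $L_+$ is an isomorphism from $H^2$ onto $L^2$.

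To force $E_-=0$, I would argue by contradiction. If $E_->0$, Theorem \ref{th:boundedness} supplies uniform $H^2$ bounds on $(E_-,E_0-\delta)$, and the compactness argument from the proof of Theorem \ref{th:max} extracts an $H^2$ weak limit $\psi_{E_-}$ that solves \eqref{stationary} at $E_-$ (strongly, by Remark \ref{rmk:h1toh2}). Nontriviality follows from \eqref{eq:dnormp1}: for $\sigma>0$ the quantity $Q(E):=\|\psi_E\|_{L^{2p+2}}^{2p+2}$ is strictly decreasing in $E$, so $Q(E_-)\ge Q(\tilde E)>0$ for any $\tilde E\in(E_-,E_0)$, forcing $\psi_{E_-}\not\equiv 0$. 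As a nonnegative, nonzero solution of the elliptic equation, $\psi_{E_-}$ is strictly positive by the strong maximum principle and is therefore itself a ground state. The spectral observation now makes $L_+$ invertible at $(\psi_{E_-},E_-)$, and Theorem \ref{th:cont} continues the branch past $E_-$, contradicting maximality.

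For uniqueness, let $(\tilde\psi,\tilde E)$ be an arbitrary ground state. The spectral observation makes Theorem \ref{th:cont} applicable at $(\tilde\psi,\tilde E)$, yielding a maximal $C^1$ branch $(E_-^*,E_+^*)$ of positive solutions through it, with $E_+^*\le E_0$ by Corollary \ref{cor:endp}(ii). Running the previous step in both directions rules out any finite endpoint strictly inside $(0,E_0)$: at such a point the limit is once more a positive ground state at which $L_+$ is invertible, and Theorem \ref{th:cont} extends the branch. Hence $E_-^*=0$ and $E_+^*=E_0$; at $E_+^*=E_0$ the $L^2$ limit of $\psi_E$ must vanish, for otherwise Theorem \ref{th:cont} would produce a ground state for some $E>E_0$ contradicting Corollary \ref{cor:endp}(ii), and Proposition \ref{th:ex}(ii) then identifies the branch with the original bifurcating manifold in a neighborhood of $(0,E_0)$. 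The main obstacle is precisely this uniqueness step, where the interplay between the uniform invertibility of $L_+$, the monotonicity of $Q$, and the exclusion of alternative trivial-branch bifurcation points (via Propositions \ref{pr:nosol} and \ref{th:ex}, using that higher eigenvalues of $-\Delta+V$ have sign-changing eigenfunctions) must cooperate to prevent a disconnected component of ground states from existing.
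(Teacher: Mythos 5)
Your overall architecture matches the paper's: the strict positivity of $L_+$ at any positive ground state (via $L_+-L_-=2p\sigma|\psi|^{2p}>0$ and $L_-\geq 0$) is exactly the paper's opening move, and the continuation/maximal-extension machinery is used the same way. The genuine gap is in your treatment of the degenerate endpoints. You invoke ``the compactness argument from the proof of Theorem \ref{th:max}'' to produce a limit at $E_-$, but that argument rests on a \emph{uniform} bound for $L_+^{-1}$ along the branch, which is precisely what fails at an endpoint of type (b) — there an eigenvalue of $L_+$ tends to zero by definition of maximality. What you actually get from Theorem \ref{th:boundedness} is only a weak $H^1$ (or $H^2$) limit along a subsequence, and your nontriviality step then breaks: weak convergence gives only $\|\psi_{E_-}\|_{L^{2p+2}}\leq\liminf\|\psi_{E_n}\|_{L^{2p+2}}$, which is the wrong inequality, so the monotonicity $Q(E_-)\geq Q(\tilde E)>0$ does not follow and the limit could a priori vanish by mass escaping to spatial infinity. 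The missing ingredient is Corollary \ref{cor:defocuscomp}: in the defocusing case the uniform exponential decay of Proposition \ref{prop:exdecay} upgrades the bounded sequence to one that is relatively compact in $H^2$, and only then does your $Q$-monotonicity argument (or the paper's dichotomy) close.

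A second, smaller soft spot is the right endpoint in your uniqueness step. Since $Q$ is \emph{decreasing} for $\sigma>0$, its limit as $E\nearrow E_+^*$ is an infimum and need not be positive, so for a hypothetical interior endpoint $E_+^*\in(0,E_0)$ you cannot conclude the limit is a positive ground state; you must separately exclude a trivial limit there. Your suggested route through Propositions \ref{pr:nosol} and \ref{th:ex} and sign-changing eigenfunctions only covers simple eigenvalues of $-\Delta+V$. The paper's mechanism is cleaner and worth adopting: zero is the lowest eigenvalue of $L_-(\psi_E,E)$ along the entire ground-state branch, so by continuity of the spectrum under the (strong, compactness-supplied) convergence $\psi_E\to 0$, zero is the lowest eigenvalue of $L_-(0,E_+^*)=-\Delta+V+E_+^*$, which forces $E_+^*=E_0$ regardless of multiplicity. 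With Corollary \ref{cor:defocuscomp} inserted and the $L_-$ argument replacing the sign-change heuristic, your proof becomes essentially the paper's.
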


\begin{proof}
Consider a nontrivial ground state solution $(\psi_E,E),\ E>0$ of \eqref{stationary} and, if necessary, rotate it such that $\psi_E>0,$ see Definition \ref{def:gs}. Consequently, zero is the lowest e-value of $L_-(\psi_E,E)$ and it is simple with corresponding eigenvector $\psi_E.$ As for the spectrum of $L_+(\psi_E,E)$ we already know that the essential part is $[E,\infty),$ see Remark \ref{spL}, and, for its lowest e-value $\lambda_0$ with corresponding real valued, $L^2$ normalized eigenfunction $\psi_0$ we can apply the following comparison principle:
$$\lambda_0=\langle\psi_0,L_+(\psi_E,E)\psi_0\rangle=\langle\psi_0,2p\sigma|\psi_E|^{2p} \rangle+\langle\psi_0,L_-(\psi_E,E)\psi_0\rangle\geq 2p\sigma\int_{\R ^n}|\psi_E|^{2p}|\psi_0|^2dx>0,$$
where we used
$$ L_+(\psi_E,E)-L_-(\psi_E,E)=2p\sigma|\psi_E|^{2p}>0, $$
see \eqref{defL+}-\eqref{defL-}, and $\langle\psi_0,L_-(\psi_E,E)\psi_0\rangle\geq 0$ due to the $L^2$ spectrum of $L_-(\psi_E,E)$ being included in $[0,\infty).$
Consequently, zero is not in the spectrum of $L_+(\psi_E,E)$ so this is an invertible operator with continuous inverse. We can then apply Theorems \ref{th:cont}-\ref{th:max} to construct a maximal $C^1$ curve of ground states $E\mapsto\psi_E,\ E\in (E_-,E_+)$ passing through our initial ground state. Note that due to Theorem \ref{th:max} and Corollary \ref{cor:endp} we have
\begin{equation}\label{Epmineq} 0\leq E_-<E_+\leq E_0,\end{equation}
where $-E_0$ is the lowest e-value of $-\Delta+V,$ and that zero remains the lowest e-value of $L_-(\psi_E,E),\ E\in(E_-,E_+).$

We claim that $E_-=0.$  Indeed, if we assume contrary we get, by Theorem \ref{th:max} part (b-), that on any sequence $(\psi_{E_n},E_n)$ on the above curve such that $\lim_{n\rightarrow\infty}E_n=E_-$ we have \begin{equation}\label{L+sing}
\lim_{n\rightarrow\infty}\lambda_n=0\mbox{ where }\lambda_n\mbox{ is the lowest e-value of }L_+(\psi_{E_n},E_n).
 \end{equation}
 By Theorem \ref{th:boundedness} the sequence $(\psi_{E_n})_{n\in\N}$ is bounded in $H^2,$ hence, using the compactness result in the Appendix Corollary \ref{cor:defocuscomp}, there is a subsequence re-denoted by $(\psi_{E_n})_{n\in N}$ and $\psi_{E_-}\in H^2$ such that
$$\lim_{n\rightarrow\infty}\|\psi_{E_n}-\psi_{E_-}\|_{H^2}=0,\qquad F(\psi_{E_-},E_-)=0.$$
Moreover, by continuity, zero will be the lowest e-value of $L_-(\psi_{E_-},E_-)$ with corresponding eigenfunction $\psi_{E_-}$ provided the latter is not the zero function. In this case $\psi_{E_-}$ does not change sign and, by repeating the argument at the beginning of this proof, the lowest e-value of $L_+(\psi_{E_-},E_-)$ must be strictly positive contradicting its continuity with respect to $E,$ see \eqref{L+sing}. Therefore, $\psi_{E_-}=0$ and $L_-(\psi_{E_-},E_-)=-\Delta+V+E_-$ must have zero as its lowest e-value i.e., $E_-=E_0,$ which contradicts \eqref{Epmineq}.

It remains to show that $E_+=E_0$ and
$$\lim_{E\rightarrow E_+}\|\psi_E\|_{H^2}=0,$$ hence the maximal curve through our initial, arbitrary, ground state actually bifurcates from $(0,E_0).$ The fact that $\lim_{n\rightarrow \infty}\|\psi_{E_n}\|_{H^2}=0$ on any sequence $(\psi_{E_n},E_n)$ on the above curve with $\lim_{n\rightarrow\infty}E_n=E_+$ follows exactly as in the previous paragraph by simply changing any minus index to a plus. Since the sequence is arbitrary we get
$$\lim_{E\rightarrow E_+}\|\psi_E\|_{H^2}=0$$
But now, instead of a contradiction we have:
 $L_-(0,E_+)=-\Delta+V+E_+$ must have zero as its lowest e-value i.e., $E_+=E_0.$ This proves that the curve bifurcates from $(0,E_0).$

The theorem is now completely proven.
\end{proof}

\begin{remark}\label{rmk:defocusing} Theorem \ref{th:defocusing} completely describes all ground states of \eqref{stationary} in the defocusing case $\sigma>0.$ It generalizes the result in \cite{jls:gsd} for this particular nonlinearity. We are currently extending Theorem \ref{th:boundedness} to general nonlinearities, see \cite{KS:gbs}, in which case Theorem \ref{th:defocusing} will be valid for general, repelling type nonlinearities. For a discussion on the excited states of this problem, see Section \ref{sse:exdef}.
\end{remark}

In what follows we focus on the attractive nonlinearity $\sigma<0.$ As in the proof of the previous theorem the existence of limit points at the end of the maximal branches plays an important role in our analysis. This compactness type result is not at all trivial, and we prove it for the first time to our knowledge in the following theorem. As opposed to the defocusing case, uniform decay as $|x|\rightarrow\infty$ for solutions $\psi_E(x)$ as $E$ approaches the endpoints of the maximal branches is no longer guaranteed. We use concentration compactness and further assumptions on the behavior of the potential as $|x|\rightarrow\infty$ to prevent splitting of solutions into profiles that drift to infinity.

We will need the following lemma through out this paper. Recall the map $F(U,E):H^2(\R^n)\times \R \mapsto L^2(\R^n)$ defined as
$$ F(U,E) \m=\m (-\Delta+V+E)U+\sigma|U|^{2p}U$$
and restricted to real valued $U$. For any $U,v\in H^2(\R^n)$, we have
$$ F(U+v, E) \m=\m F(U,E)+D_UF(U,E)v+N(U,v)\m,$$
where $D_UF(U,E)$ is the Fr\'echet derivative of $F$ with respect to $U$ at $(U,E)$ and the nonlinear function $N$ has the following properties.

\begin{lemma}\label{lm:nuv}
Consider the nonlinear operator $N(U,v):H^2(\R^n)\times H^2(\R^n)\mapsto L^2(\R^n)$ defined as
\begin{equation}\label{bound_N}
  N(U,v) = \sigma|U+v|^{2p}(U+v) - \sigma|U|^{2p}U - \sigma(2p+1)|U|^{2p}v,
\end{equation}
where $\sigma\in\R$ and $p\in(0,2n/(n-2))$. Then there exists $C>0$ such that
\begin{align}
 \|N(U,v_1)-N(U,v_2)\|_{L^2} &\leq C \|v_1-v_2\|^{2p+1}_{H^2}\quad \mbox{if}\quad p\leq1/2, \label{psmall}\\
 \|N(U,v_1)-N(U,v_2)\|_{L^2} &\leq C\|U\|^{2p-1}_{H^2}\|v_1-v_2\|^2_{H^2} + C \|v_1-v_2\|^{2p+1}_{H^2} \quad \mbox{if}\quad p>1/2. \label{plarge}
\end{align}

Furthermore, for any bounded sequences $\{U_k\}_{k\in\N},\  \{v_k\}_{k\in\N}\subset H^2(\R^n)$ with $\lim_{k\to\infty}\|v_k\|_{L^q}=0$  for some $2< q<2n/(n-2)$ ($2< q<\infty\textrm{ if } n=2$, $2< q \leq\infty\textrm{ if } n=1$), we have
\begin{equation}  \label{Nl2}
 \lim_{k\to\infty}\|N(U_k,v_k)\|_{L^2} \m=\m0 \m.
\end{equation}
\end{lemma}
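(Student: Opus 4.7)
The natural approach is to recognize $N(U,v)$ as the second-order Taylor remainder of $f(w):=\sigma|w|^{2p}w$, so that $N(U,v)=f(U+v)-f(U)-f'(U)v$ with (real) derivative $f'(w)=\sigma(2p+1)|w|^{2p}$. Differentiating in $v$ and applying the mean value theorem gives the representation
$$N(U,v_1)-N(U,v_2)=\sigma(2p+1)(v_1-v_2)\int_0^1\Bigl[\,|U+v_2+s(v_1-v_2)|^{2p}-|U|^{2p}\,\Bigr]\,ds,$$
so the task reduces to estimating the bracket pointwise via the regularity of $t\mapsto|t|^{2p}$ on $[0,\infty)$ and then taking an $L^2$ norm with H\"older and Sobolev embeddings.

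The dichotomy \eqref{psmall}--\eqref{plarge} is forced by the regularity of this power. When $p\le 1/2$ the map $t\mapsto t^{2p}$ is H\"older of exponent $2p$, which yields $\bigl||a|^{2p}-|b|^{2p}\bigr|\le|a-b|^{2p}$; substituting and applying Minkowski reduces the $L^2$ norm of $N(U,v_1)-N(U,v_2)$ to $\||v_1-v_2|^{2p+1}\|_{L^2}=\|v_1-v_2\|_{L^{4p+2}}^{2p+1}$, and the Sobolev embedding $H^2(\R^n)\hookrightarrow L^{4p+2}(\R^n)$, valid because of the subcritical condition $2p<4/(n-2)$, then delivers \eqref{psmall}. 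When $p>1/2$ the power is only locally Lipschitz, so I would instead use $\bigl||a|^{2p}-|b|^{2p}\bigr|\le C(|a|^{2p-1}+|b|^{2p-1})|a-b|$ together with $(x+y)^{2p-1}\le C(x^{2p-1}+y^{2p-1})$. The bracket then splits into a piece bounded by $C|U|^{2p-1}|v_2+s(v_1-v_2)|$ and a piece bounded by $C|v_2+s(v_1-v_2)|^{2p}$; H\"older in $L^2$ with the decomposition $\tfrac{1}{2}=\tfrac{2p-1}{4p+2}+\tfrac{2}{4p+2}$ combined with the same Sobolev embedding turns the former into $C\|U\|_{H^2}^{2p-1}\|v_1-v_2\|_{H^2}^{2}$ and the latter into $C\|v_1-v_2\|_{H^2}^{2p+1}$, giving \eqref{plarge}.

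The most delicate part is the limit \eqref{Nl2}, and this is what dictates the choice of exponents above. Specializing the previous estimates to $v_2=0$ (or applying Taylor with integral remainder to $f$ directly) gives the pointwise bounds $|N(U,v)|\le C|v|^{2p+1}$ for $p\le 1/2$ and $|N(U,v)|\le C|U|^{2p-1}|v|^2+C|v|^{2p+1}$ for $p>1/2$, so that
$$\|N(U_k,v_k)\|_{L^2}\le C\|v_k\|_{L^{4p+2}}^{2p+1}+C\|U_k\|_{L^{4p+2}}^{2p-1}\|v_k\|_{L^{2(2p+1)}}^{2},$$
with the second summand appearing only when $p>1/2$. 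Since $\{v_k\}$ is bounded in $H^2$ it is bounded in every $L^s$ up to the endpoint exponent of the $H^2$-Sobolev embedding, and Lebesgue interpolation between this bound and the hypothesis $\|v_k\|_{L^q}\to 0$ forces $\|v_k\|_{L^r}\to 0$ for every $r$ strictly between $q$ and that endpoint; the subcritical restriction $2p<4/(n-2)$ places both $4p+2$ and $2(2p+1)$ inside this interpolation range, so both summands go to zero. The main obstacle, and the only point where the growth restriction on the nonlinearity is essential, is verifying that every H\"older exponent appearing in the argument stays inside the admissible Sobolev range.
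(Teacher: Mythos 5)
Your proposal follows the paper's proof essentially step for step: the same mean--value/fundamental-theorem representation of $N(U,v_1)-N(U,v_2)$ through the derivative $\sigma(2p+1)\bigl[|U+\cdot|^{2p}-|U|^{2p}\bigr]$, the same dichotomy (subadditivity of $t\mapsto t^{2p}$ for $p\le 1/2$ versus a local Lipschitz bound for $p>1/2$), the same H\"older split $\tfrac12=\tfrac{2p-1}{4p+2}+\tfrac{2}{4p+2}$ followed by the Sobolev embedding $H^2\hookrightarrow L^{4p+2}$, and the same interpolation argument for \eqref{Nl2}. As a reconstruction of the intended argument it is on target, and your derivation of \eqref{Nl2} (which specializes to $v_2=0$) is sound.

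There is, however, one step that does not hold up, and it is the same step the paper's own proof glosses over. In your integral representation the bracket is $|U+v_2+s(v_1-v_2)|^{2p}-|U|^{2p}$, so what the subadditivity and Lipschitz bounds actually produce is $|v_2+s(v_1-v_2)|$, not $|v_1-v_2|$; these coincide only when $v_2=0$. (The paper makes the identical silent substitution by asserting $|\tilde v|\le |v_1-v_2|$ for the mean-value intermediate point, whereas $\tilde v$ lies between $v_1$ and $v_2$ and is only controlled by $\max(|v_1|,|v_2|)$.) As a consequence, the estimates \eqref{psmall}--\eqref{plarge} cannot be literally true as stated: with $U=0$, $v_1=(1+\e)\phi$, $v_2=\phi$ for a fixed bump $\phi$, the left-hand side is of order $\e$ while the right-hand side is of order $\e^{2p+1}=o(\e)$. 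What the argument genuinely yields is the weaker bound in which $\|v_1-v_2\|_{H^2}^{2p+1}$ is replaced by $\max(\|v_1\|_{H^2},\|v_2\|_{H^2})^{2p}\,\|v_1-v_2\|_{H^2}$ (and analogously in \eqref{plarge}); this is still Lipschitz in $v_1-v_2$ on bounded balls, which is all that the downstream applications (the contraction arguments via \eqref{est_nl}, and \eqref{Nl2} itself) require. You should either restrict the Lipschitz estimates to $v_2=0$ or record the dependence on $\max(\|v_1\|,\|v_2\|)$ explicitly rather than reproducing the stated form.
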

\begin{proof}
For each $x\in\R^n$, using the mean value theorem, we get
$$ N(U(x),v_1(x))-N(U(x),v_2(x)) = \sigma(2p+1)|U+\tilde v|^{2p}(v_1(x)-v_2(x)) -\sigma(2p+1) |U|^{2p}(v_1(x)-v_2(x))$$
with $|\tilde v(x)|\leq|v_1(x)-v_2(x)|$.

Suppose that $p\leq1/2$. Then we have
$$ |U(x)+\tilde v(x)| \leq (|\tilde v(x)|^{2p}+|U(x)|^{2p})^{1/2p} $$
and therefore for some $C>0$ and all $x\in\R^n$,
$$ |N(U(x),v_1(x))-N(U(x),v_2(x))| \leq C |v_1(x)-v_2(x)|^{2p+1}. $$
Therefore
\begin{equation} \label{interpsmall}
 \|N(U,v_1)-N(U,v_2)\|_{L^2} \leq C \|v_1-v_2\|_{L^{4p+2}}^{2p+1}\m,
\end{equation}
which, using Sobolev embedding and the constraint $p\in(0,2n/(n-2))$, gives \eqref{psmall}. Now let $(U_k)_{k\in\N}$ and $(v_k)_{k\in\N}$ be as in the lemma. Then $\lim_{k\to\infty}\|v_k\|_{L^q}=0$ for some $2< q<2n/(n-4)$ ($2<q<\infty$ if $n<5$) and $\sup_{k\in\N} \|v_k\|_{L^{q'}} <\infty$ for every $2< q'<2n/(n-4)$ ($2<q'<\infty$ if $n<5$). Since $p\in(0,2n/(n-2))$ implies that $2<4p+2<2n/(n-4)$ ($2<4p+2<\infty$ if $n<5$), we can apply the Riesz-Thorin theorem with the appropriate $q'$ to interpolate at the intermediate space $L^{4p+2}$ to get $ \|v_k\|_{L^{4p+2}} \leq \|v_k\|_{L^q}^{\theta} \|v_k\|_{q'}^{1-\theta}$ for some $\theta\in(0,1)$. Hence $\lim_{k\to\infty} \|v_k\|_{L^{4p+2}}=0$. It now follows from \eqref{interpsmall} (with $U=U_k$, $v_1=v_k$ and $v_2=0$) that \eqref{Nl2} holds.

Suppose that $p>1/2$. In this case, using the Taylor's formula with second order remainder for \eqref{bound_N}, we have
$$ N(U(x),v_1(x))-N(U(x),v_2(x)) = \sigma p(2p+1)|U+\tilde v|^{2p-1}(v_1(x)-v_2(x))^2$$
with $|\tilde v(x)|\leq|v_1(x)-v_2(x)|$. In the region $|U(x)|\geq |v_1(x)-v_2(x)|$ we have
$$ |N(U(x),v_1(x))-N(U(x),v_2(x))| \leq 2^{2p-1} |\sigma|p(2p+1)|U(x)|^{2p-1} |v_1(x)-v_2(x)|^2$$
and in the region $|U(x)|<|v_1(x)-v_2(x)|$ we have
$$ |N(U(x),v_1(x))-N(U(x),v_2(x))| \leq 2^{2p-1}|\sigma|p(2p+1) |v_1(x)-v_2(x)|^{2p+1}. $$
From this we get that for all $x\in\R^n$
$$ |N(U(x),v_1(x))-N(U(x),v_2(x))| \leq C \left[|U(x)|^{2p-1} |v_1(x)-v_2(x)|^2 +|v_1(x)-v_2(x)|^{2p+1}\right] $$
and therefore
\begin{equation} \label{rcity7}
 \|N(U,v_1)-N(U,v_2)\|_{L^2} \leq C \left(\int_{\R^n} |U(x)|^{4p-2} |v_1(x) -v_2(x)|^4\dd x\right)^\frac{1}{2} +C \|v_1-v_2\|_{L^{4p+2}}^{2p+1}.
\end{equation}
Since $p\in(0,2n/(n-2))$, $p>1/2$ implies that $n\leq5$. In this case, there exist $\alpha,\alpha'>1$ with $1/\alpha+1/\alpha'=1$ such that $2\leq (4p-2)\alpha <2n/(n-4)$ ($2\leq (4p-2)\alpha <\infty$ if $n<5$), $2< 4\alpha'< 2n/(n-4)$ ($2<4\alpha'<\infty$ if $n<5$). Indeed if $n=1,2,3,4$ let $\alpha=\max\{2,1/(2p-1)\}$ and if $n=5$ let $\max\{5/3,1/(2p-1)\}<\alpha <5/(2p-1)$. (Note that if $n=5$, $p<2/3$.) We then get using H\"older's inequality that
\begin{equation} \label{rcity8}
 \int_{\R^n} |U(x)|^{4p-2} |v_1(x)-v_2(x)|^4\dd x \leq \|U\|^{4p-2}_{L^{(4p-2)\alpha}} \|v_1-v_2\|^4_{L^{4\alpha'}}
\end{equation}
which, along with \eqref{rcity7}, implies using Sobolev embedding that \eqref{plarge} holds i.e.,
$$  \|N(U,v_1)-N(U,v_2)\|_{L^2} \leq C \|U\|_{H^2}^{2p-1}\|v_1-v_2\|^2_{H^2} + C \|v_1-v_2\|_{H^2}^{2p+1}. $$
Let $(U_k)_{k\in\N}$ and $(v_k)_{k\in\N}$ be as in the lemma. Then like in the case of $p\leq1/2$, we can apply the Riesz-Thorin theorem to interpolate at the intermediate spaces $L^{4p+2}$ and $L^{4\alpha'}$ to conclude that $\lim_{k\to\infty} \|v_k\|_{L^{4p+2}}=0$ and $\lim_{k\to\infty} \|v_k\|_{L^{4\alpha'}}=0$. It now follows from \eqref{rcity7} and \eqref{rcity8} (with $U=U_k$, $v_1=v_k$ and $v_2=0$) and our choice of $\alpha$ that \eqref{Nl2} holds.
\end{proof}

For $p\geq1/2$, it follows from the above lemma that for each $U,v_1,v_2 \in H^2(\R^n)$ satisfying $\|U\|_{H^2}\leq L$, $\|v_1\|_{H^2}\leq L$ and $\|v_2\|_{H^2}\leq L$ there exists $C(L)>0$, independent of $U, v_1$ and $v_2$, such that
\begin{equation}\label{est_nl}
  \|N(U,v_1)-N(U,v_2)\|_{L^2}\leq C(L) \|v_1-v_2\|^2_{H^2}.
\end{equation}

Now we present our compactness result.

\begin{theorem}\label{th:comp} Let $\sigma < 0,\ p\geq 1/2$. Assume that the potential $V\in W^{1,\infty}(\R^n)$ satisfies Hypothesis (H2). Furthermore, let the radial derivative $\partial_r V(x)$ of $V$ and a set of its tangential derivatives $\{\partial_{s_i}V(x),i=1,2,\ldots n-1\}$, where $s_1,s_2, \ldots s_{n-1}$ are a set of mutually orthogonal directions which are also orthogonal to the radial direction, satisfy the following hypotheses: There exists a set $\cal{M}$ of measure zero such that
\begin{equation} \label{hypo_smallE1}
 \lim_{x\in \R^n\setminus{\cal M},\ |x|\to\infty} \frac{V^2(x)}{|\partial_r V(x)|} =0
\end{equation}
and there exist constants $\Cscr,\beta,\rho>0$ such that for any $x\in\R^n\setminus\Mscr$ with $|x|>\rho$
\begin{align}
  \Cscr|\partial_r V(x)| &> \frac{1}{|x|^\beta}, \label{hypo_smallE2}\\
  \Cscr|\partial_r V(x)| &> \Big(\sum\limits_{i=1}^{n-1}| \partial_{s_i}V(x)|^2 \Big)^\frac{1}{2}. \label{hypo_smallE3}
\end{align}

Under the above assumptions, consider a set $\Gscr$ of real-valued, positive ground states of \eqref{stationary}, see Definition \ref{def:gs}, bounded in $H^2(\R^n) \times(0,\infty)$ and with $E$ bounded away from zero i.e., there exist $E_-, E_+, M>0,$ such that
$$ \Gscr \subseteq \{(\psi,E)\in H^2\times(0,\infty)\m\big| \|\psi\|_{H^2}<M, E\in[E_-,E_+],  F(\psi,E)=0, \psi>0\}.$$
Then the set $\Gscr$ is relatively compact in $H^2(\R^n)\times(0,\infty)$. Therefore for any sequence $(\psi_m, E_m)_{ m=1}^\infty$ in $\Gscr$, there exists a subsequence $(\psi_{m_k}, E_{m_k} )_{k=1}^\infty$, an $E_*\in[E_-,E_+]$ and a function $\psi_{E_*}\in H^2(\R^n)$ such that
$$ \lim_{k\to\infty} E_{m_k}=E_*, \qquad\lim_{k\rightarrow \infty}\|\psi_{E_{m_k}}-\psi_{E_*}\|_{H^2}=0.$$
Furthermore $(\psi_{E_*},E_*)$ solves \eqref{stationary}.
\end{theorem}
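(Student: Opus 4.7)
The plan is to combine weak compactness in $H^2$ with a concentration–compactness argument, using the structural hypotheses on $V$ to rule out escape of mass to infinity. Since $[E_-,E_+]$ is compact and $\{\psi_m\}$ is bounded in $H^2$, extract a subsequence (still indexed by $m$) with $E_m\to E_*\in[E_-,E_+]$ and $\psi_m\rightharpoonup\psi_*$ weakly in $H^2(\R^n)$. Rellich–Kondrachov then yields $\psi_m\to\psi_*$ strongly in $L^q_{\rm loc}(\R^n)$ for $2\leq q<2n/(n-2)$. Testing the weak formulation \eqref{eq:weak} against compactly supported test functions and using Lemma \ref{lm:nuv} to pass to the limit in the nonlinear term on bounded sets gives $F(\psi_*,E_*)=0$, and Remark \ref{rmk:h1toh2} upgrades $\psi_*$ to $H^2$. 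Writing $v_m:=\psi_m-\psi_*$ and subtracting the two equations gives
\begin{equation*}
 L_+(\psi_*,E_*)v_m \m=\m -(E_m-E_*)\psi_m \m-\m N(\psi_*,v_m),
\end{equation*}
so once strong $L^2$ convergence is established, the nonlinear bounds \eqref{psmall}--\eqref{plarge} together with standard elliptic regularity promote it to strong $H^2$ convergence; it therefore suffices to prove $\|\psi_m-\psi_*\|_{L^2}\to 0$.

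Apply Lions' concentration–compactness lemma to the sequence $\rho_m:=|\psi_m|^2$, uniformly bounded in $L^1(\R^n)$. One of three alternatives holds along a subsequence: tightness, vanishing, or dichotomy. Tightness, together with the weak $L^2$ convergence, gives strong $L^2$ convergence to $\psi_*$ at once. In the vanishing case, $\psi_m\to 0$ in every $L^q$ with $2<q<2n/(n-2)$, hence in $L^{2p+2}$; if $\|\psi_m\|_{L^2}$ were bounded below, then setting $\Psi_m:=\psi_m/\|\psi_m\|_{L^2}$, dividing \eqref{eq:wpsie} by $\|\psi_m\|_{L^2}^2$, and using that multiplication by $V$ is compact from $H^1$ into $L^2$ (a consequence of $V\in L^\infty$ and (H2)) would force $\|\nabla\Psi_m\|_{L^2}^2\to -E_*<0$, which is impossible. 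Hence $\|\psi_m\|_{L^2}\to 0$, and then \eqref{eq:wpsie} gives $\|\nabla\psi_m\|_{L^2}\to 0$, so $\psi_m\to\psi_*=0$ in $H^2$.

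The main obstacle is ruling out dichotomy. In that case one obtains along a subsequence a profile decomposition
\begin{equation*}
 \psi_m \m=\m \psi_* \m+\m \phi(\cdot-y_m) \m+\m r_m,\qquad |y_m|\to\infty,\quad \|r_m\|_{L^2}\to 0,
\end{equation*}
with $\phi>0$ a nontrivial solution of the free limit equation \eqref{uinf2}, since the shifted potential $V(\cdot+y_m)$ converges to $0$ locally by (H2). To derive a contradiction, multiply the stationary equation by $\partial_r\psi_m$ and integrate by parts in polar coordinates: all kinetic, spectral-parameter and nonlinear terms become $|x|^{-1}$-weighted moments of bounded quantities, whereas the potential contributes a Pohozaev-type bulk term $-\tfrac12\int \partial_r V\,\psi_m^2\,\mathrm{d}x$ together with a multiple of $\int V|x|^{-1}\psi_m^2\,\mathrm{d}x$. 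Inserting the profile decomposition, the escaping profile contributes $-\tfrac12(\partial_r V)(y_m)\|\phi\|_{L^2}^2$ plus error terms: the angular error from replacing $(\partial_r V)(x)$ by $(\partial_r V)(y_m)$ over the support of $\phi(\cdot-y_m)$ is controlled by the tangential-domination hypothesis \eqref{hypo_smallE3}; the cross contribution from $\psi_*$ is bounded using (H2); and the $V|x|^{-1}$ correction at $y_m$ is controlled by \eqref{hypo_smallE1} combined with the lower bound \eqref{hypo_smallE2}. The coercivity \eqref{hypo_smallE2} then prevents the dominant escaping-profile term from being absorbed by the remaining contributions, contradicting the identity. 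Hence only tightness can occur, completing the proof; $(\psi_{E_*},E_*):=(\psi_*,E_*)$ solves \eqref{stationary} by construction. The technical heart of the argument is the Pohozaev bookkeeping of this last step, where the three structural conditions on the derivatives of $V$ work in concert to block any drift of positive mass to infinity.
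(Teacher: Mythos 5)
Your setup (weak $H^2$ limit, passing to the limit in the weak formulation, reduction to strong $L^2$ convergence, and the exclusion of vanishing) matches the paper, and the tightness and vanishing cases are handled correctly. The proof, however, lives entirely in the dichotomy case, and there your argument has two genuine gaps.

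First, dichotomy does not produce a single escaping bubble. Iterating the concentration--compactness alternative yields a decomposition $\psi_m=\psi_*^0+\sum_{j=1}^d T_{y_m^j}\psi_*^j+\hat v_m$ with possibly several profiles whose mutual separations $|y_m^i-y_m^j|$ tend to infinity, and the finite-dimensional obstruction then contains profile--profile interaction terms of size $e^{-c|z_m^i-z_m^j|}$ that can dominate the potential terms $\|VT_{z_m^j}U_m\|_{L^2}$ (the hypotheses \eqref{hypo_smallE2} only give a polynomial lower bound on $\partial_r V$, so exponential interaction terms are not automatically negligible, nor automatically dominant). The paper must split into two cases according to which of these two families of terms dominates, and in the interaction-dominated case it uses the radial symmetry and monotonicity of $U_{E_*}$, baricenters of connected components, and the sign of $\langle T_{z^q}\partial_{x}U, T_{z^q}U^{2p}T_{z^l}U\rangle$ to get a contradiction. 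Your proposal never confronts these terms. (Note also that the single-profile compactness case with $|y_m|\to\infty$ already requires an argument; you fold it into ``dichotomy,'' which is fine, but the multi-bubble case cannot be avoided.)

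Second, the radial Pohozaev multiplier does not close the argument even for one bubble. Multiplying by $\partial_r\psi_m=\frac{x}{|x|}\cdot\nabla\psi_m$ and integrating by parts produces, besides $-\frac12\int\partial_rV\,\psi_m^2$, a family of $|x|^{-1}$-weighted kinetic, mass and $L^{2p+2}$ terms which, evaluated on the drifting profile, are generically of size $|y_m|^{-1}$ and do not cancel for $n\geq2$. Hypothesis \eqref{hypo_smallE2} only guarantees $|\partial_rV(y_m)|\gtrsim|y_m|^{-\beta}$ with $\beta>0$ arbitrary, so for $\beta\geq1$ the ``dominant'' term $\partial_rV(y_m)\|\phi\|_{L^2}^2$ is swamped by the $O(|y_m|^{-1})$ geometric errors and no contradiction follows. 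The same problem afflicts the remainder: knowing only $\|r_m\|_{L^2}\to0$ gives cross terms that are $o(1)$ at an unknown rate, not $o(|\partial_rV(y_m)|)$. What makes the leading term actually dominant in the paper is a Lyapunov--Schmidt reduction: one identifies the profile as the unique positive radial solution $U_{E_*}$ (Kwong), absorbs the kernel of the linearization into translations (Lemma \ref{lm_dec_finite_c}), inverts the projected linearized operator uniformly in $m$ via the separated-potentials spectral theory (Proposition \ref{prop:MS}), and thereby obtains the quantitative bound $\|v_m\|_{H^2}\leq C\|VT_{z_m}U_m\|_{L^2}$. Only then is the remainder in the projected equation of size $O(\|VT_{z_m}U_m\|_{L^2}^2)$, which hypothesis \eqref{hypo_smallE1} ($V^2=o(|\partial_rV|)$) renders negligible against $\int U_m^2(\cdot-z_m)\partial_{x_m}V$. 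Without this quantitative remainder estimate, the bookkeeping in your last step cannot be made to work.
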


Before presenting a proof of Theorem \ref{th:comp}, we will state and prove a corollary to this theorem.

\begin{corollary} \label{cor:comp}
Under the hypothesis on the potential $V$ in Theorem \ref{th:comp}, consider a maximal $C^1$ curve $(\psi_E,E),\ E\in(E_-,E_+)$ of real-valued, positive ground states of \eqref{stationary}, see Definition \ref{def:gs}, with $0<E_-<E_+<\infty$. Then for any sequence $(E_m)_{m=1}^\infty$ in $(E_-,E_+)$ with $\lim_{m\to\infty} E_m=E_*$, there exists a subsequence $(E_{m_k})_{k=1}^\infty$ and a function $\psi_{E_*}\in H^2(\R^n)$ such that  $(\psi_{E_*},E_*)$ solves \eqref{stationary} and
$$\lim_{k\rightarrow \infty}\|\psi_{E_{m_k}}-\psi_{E_*}\|_{H^2}=0.$$
\end{corollary}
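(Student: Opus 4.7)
The plan is to deduce this corollary as a direct consequence of two results already in hand: Theorem \ref{th:boundedness}, which gives uniform $H^2$ bounds on any $C^1$ solution curve whose parameter interval $(E_-,E_+)$ satisfies $0<E_-<E_+<\infty$, and Theorem \ref{th:comp}, which gives relative compactness in $H^2\times(0,\infty)$ for any set of positive ground states that is uniformly $H^2$-bounded and has $E$ bounded away from $0$ and $\infty$.

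Concretely, I would first invoke Theorem \ref{th:boundedness} applied to the given maximal curve $(\psi_E,E)$, $E\in(E_-,E_+)$. Since by hypothesis $0<E_-<E_+<\infty$, this yields a constant $M>0$ such that $\|\psi_E\|_{H^2}\le M$ for every $E\in(E_-,E_+)$. Next, given an arbitrary sequence $(E_m)_{m\in\N}\subset(E_-,E_+)$ with $E_m\to E_*$, I would define the set
\[
\Gscr \;=\; \{(\psi_{E_m},E_m)\,:\,m\in\N\}.
\]
The membership in $\Gscr$ is by construction contained in the set
\[
\{(\psi,E)\in H^2\times(0,\infty)\,:\,\|\psi\|_{H^2}<M+1,\ E\in[E_-,E_+],\ F(\psi,E)=0,\ \psi>0\},
\]
where positivity uses that the whole curve consists of positive ground states, $F(\psi_{E_m},E_m)=0$ because each point lies on the solution curve, the $E_m$ are squeezed into $[E_-,E_+]\subset(0,\infty)$ by assumption, and the $H^2$ bound comes from the previous step. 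Hence $\Gscr$ satisfies precisely the hypotheses required to apply Theorem \ref{th:comp}.

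Applying Theorem \ref{th:comp} to $\Gscr$ then yields a subsequence $(\psi_{E_{m_k}},E_{m_k})$, an $E'\in[E_-,E_+]$, and a function $\psi_{E'}\in H^2(\R^n)$ such that $E_{m_k}\to E'$ and $\psi_{E_{m_k}}\to\psi_{E'}$ in $H^2$, with $(\psi_{E'},E')$ solving \eqref{stationary}. But by hypothesis the full sequence $E_m$ already converges to $E_*$, so uniqueness of limits forces $E'=E_*$; relabeling $\psi_{E_*}:=\psi_{E'}$ completes the conclusion. Since both underlying theorems do all the heavy lifting, no part of this deduction is obstructed; the only subtlety is the bookkeeping step of checking that the hypotheses of Theorem \ref{th:comp} are met by the sequence $(\psi_{E_m},E_m)$, and this is ensured by Theorem \ref{th:boundedness} together with the standing assumption that the curve is made up of real-valued positive ground states.
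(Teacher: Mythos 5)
Your proposal is correct and follows essentially the same route as the paper: invoke Theorem \ref{th:boundedness} to obtain the uniform $H^2$ bound $M$, apply Theorem \ref{th:comp} to the set $\Gscr=\{(\psi_{E_m},E_m)\}$, and note that the limit of the $E$-values must be $E_*$. The only (harmless) cosmetic difference is your use of the strict bound $M+1$ in place of the paper's supremum $M$.
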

\begin{proof}
From Theorem \ref{th:boundedness} we have $M=\sup_{E\in(E_-,E_+)} \|\psi_{E} \|_{H^2}<\infty$. Hence for any sequence $(E_m)_{m=1}^\infty$ in $(E_-,E_+)$ with $\lim_{m\to\infty} E_m=E_*$, applying  Theorem \ref{th:comp} to the set
$$ \Gscr \m=\m \{(\psi_{E_m},E_m)\m\big| m\in\N\}\subset H^2(\R^n,\C)\times(0,\infty),$$
it follows that there exists a subsequence $(E_{m_k})_{k=1}^\infty$ and a function $\psi_{E_*}\in H^2(\R^n)$ such that  $(\psi_{E_*},E_*)$ solves \eqref{stationary} and $\lim_{k\rightarrow \infty}\|\psi_{E_{m_k}}- \psi_{E_*}\|_{H^2}=0.$
\end{proof}

We now present the proof of Theorem \ref{th:comp}.
\begin{proof}
Consider a sequence $(\psi_{E_m}, E_m)_{m=1}^\infty$ in $\Gscr$. Since $(\psi_{E_m})_{m=1}^\infty$ is bounded in $H^2(\R^n)$ and $(E_m)_{m=1}^\infty$ is bounded in $\R$, we can construct a subsequence, denoted again by $(\psi_m, E_m)_{ m=1}^\infty$, such that $(\psi_{E_m})_{m=1}^\infty$ is weakly convergent in $H^1(\R^n)$ to a $\psi_*\in H^1(\R^n)$ and $\lim_{m\to\infty}E_m=E_*$.
Proposition \ref{lm:lqtoh2} in the Appendix completes the proof of the theorem provided we show that at least on a subsequence of $(\psi_{E_m})_{m\in\N}$ we have
\begin{equation}
  \psi_{E_m} \stackrel{L^q}{\rightarrow} \tilde\psi\ \ \ \mbox{for some} \
  \ \ 2\leq q<\frac{2n}{n-2}\ (2\leq q<\infty\ \ \mbox{if}\ \ n=2,\ 2 \leq q\leq\infty  \ \ \mbox{if}\ \ n=1). \ \ \ \label{lqconvb}
\end{equation}


If $\liminf_{m\to\infty}\|\psi_{E_m}\|_{L^2}=0$, then there is a subsequence of $(\psi_{E_m})_{m\in\N}$ converging to 0 in $L^2$ and hence \eqref{lqconvb} is satisfied with $q=2$. If $\liminf_{m\to\infty}\|\psi_{E_m}\|_{L^2}=a>0$, then consider a subsequence of $(E_m)_{m\in\N}$ such that $\lim_{m\to\infty}\|\psi_{E_m}\|_{L^2}=a$ and
\begin{equation}\label{def:Psi}
  \Psi_m = \frac{\psi_{E_m}}{\|\psi_{E_m}\|_{L^2(\R^n)}}\ ,
\end{equation}
which is a sequence normalized in $L^2$ and bounded in $H^1$.

Next we will apply concentration compactness theory, see for example \cite[Section 1.7]{Caz:nls} and our Appendix \ref{se:app}, to $(\Psi_m)_{m\in\N}$. Consider the concentration function $\rho(\phi,r)$ defined on $L^2(\R^n)\times [0,\infty)$ as
$$ \rho(\phi,r)=\sup_{y\in\R^n}\int_{|x-y|<r} |\phi(x)|^2 \mathrm{d}x$$
and let
$$\mu=\lim_{r\rightarrow\infty}\liminf_{m\rightarrow\infty}\rho(\Psi_m,r).$$
Out of three possible cases in the concentration compactness theory we will show that vanishing a splitting cannot occur and that compactness implies \eqref{lqconvb} for a subsequence of $\Psi_m$, hence for $\psi_{E_m}$.

\noindent
{\bf Vanishing ($\mu=0$):} In this case there is a subsequence $\Psi_
{m_k}$ of $\Psi_m$ convergent to zero in $L^q(\R^n)$ for each $2<q<2n/(n-2)$ ($2<q\leq\infty$ if $n=1$). Since
$$ \psi_{E_{m_k}}=\|\psi_{E_{m_k}}\|_{L^2}\Psi_{m_k}$$
and $\|\psi_{E_{m_k}}\|_{L^2}$ is bounded, see Theorem \ref{th:boundedness},
we get that
$$ \psi_{E_{m_k}} \stackrel{L^q}{\rightarrow} 0. $$
Hence, by Lemma \ref{lm:lqtoh2}, we deduce that $\psi_{E_{m_k}}$ converges to 0 in $H^1(\R^n)$ in contradiction with
$$ \lim_{k\to\infty}\|\psi_{E_{m_k}} \|_{L^2}=a>0.$$
Hence vanishing cannot occur.

\noindent
{\bf Compactness ($\mu=1$):} Since $ \psi_{E_m}=\|\psi_{E_m}\|_{L^2}\Psi_m$ and $\lim_{m\to\infty}\|\psi_{E_m}\|_{L^2}=a>0$, it follows from concentration compactness theory that if $\mu=1$, then there exists a sequence $(y_m)_{m\in \N}\subset \R^n$ and a $\hat\psi_*\in H^1(\R^n)$ such that on a subsequence of $E_m$ (again denoted as $E_m$) we can decompose $\psi_{E_m}$ as
\begin{equation}\label{compact_soln}
  \psi_{E_m}=\hat\psi_m(\cdot-y_m) + \hat v_m,
\end{equation}
such that $\hat\psi_m, \hat v_m \in H^1(\R^n)$ and as $E_m \to E_*$ we have
\begin{itemize}
  \item[(i)] $\hat\psi_m\stackrel{H^1}{\rightharpoonup} \hat\psi_*$\ \ \
  and\ \ \ $\hat\psi_m\stackrel{L^q}{\rightarrow} \hat\psi_*$\ \ \ for \
  \ \ $2\leq q<2n/(n-2)$\ ($2\leq q<\infty\textrm{ if } n=2$,\ \  $2\leq q \leq\infty\textrm{ if } n=1$),
  \item[(ii)] $\hat v_m\stackrel{H^1}{\rightharpoonup} 0$\ \ \ and\ \ \
  $\hat v_m\stackrel{L^q}{\rightarrow}0$\ \ \ for\ \ \ $2\leq q<2n/(n-2)$
  \ ($2\leq q<\infty\textrm{ if } n=2$,\ \  $2\leq q \leq\infty\textrm{ if } n=1$).
\end{itemize}
We will show that $(y_m)_{m\in\N}$ has a subsequence converging to some $y_*\in\R^n$. Therefore we can conclude that $\psi_{E_m}$ converges weakly in $H^1$ to a $\psi_*$ and satisfies \eqref{lqconvb} (at least on a subsequence) with $\psi_*=\hat\psi_*(\cdot-y_*)$. Now Proposition \ref{lm:lqtoh2} finishes the proof of the theorem.

By contradiction assume that
\begin{equation}\label{unb_y}
\lim_{m\to\infty}|y_m| = \infty.
\end{equation}
Using the translation operator $T_y u(x)=u(x-y)$, the equation \eqref{stationary} satisfied by the pair $(\psi_{E_m},E_m)$ can be translated to
$$ \left(-\Delta + T_{-y_m}V + E_m\right) T_{-y_m}\psi_{E_m}+ \sigma T_{-y_m}|\psi_{E_m}|^{2p}\psi_{E_m}=0.$$
Taking the $L^2$ scalar product of the above equation with $\phi \in C_0^
{\infty}(\R^n)$ we get
$$ \left<T_{-y_m}\nabla\psi_{E_m}, \nabla\phi\right> + \left<(T_{-y_m}V + E_m)T_{-y_m}\psi_{E_m}, \phi\right> + \sigma\left<T_{-y_m}|\psi_{E_m}|^{2p} \psi_{E_m}, \phi\right>=0.$$
Substituting for $\psi_{E_m}$ from \eqref{compact_soln} into the last equation
and taking the limit as $E_m \to E_*$, we get using \eqref{unb_y} that
$$ \big<\nabla\hat\psi_*, \nabla\phi\big> + \big< E_*\hat\psi_*, \phi
   \big> + \sigma\big<|\hat\psi_*|^{2p}\hat\psi_*,\phi\big>=0.$$
Therefore $\hat\psi_*$ is a weak solution (and hence a strong solution) to the equation
\begin{equation} \label{uEinf}
 (-\Delta+E)\psi + \sigma|\psi|^{2p}\psi = 0
\end{equation}
at $E=E_*$. Since $\psi_{E_m}>0$ for all $m$, it follows using \eqref{compact_soln} that for any nonnegative $\phi\in C_0^{\infty}(\R^n)$
$$ 0 \leq \liminf_{m\to\infty} \langle \psi_{E_m}, T_{y_m} \phi \rangle \m=\m \langle \hat\psi_*,\phi\rangle + \liminf_{m\to\infty}\langle \hat v_m, T_{y_m}\phi\rangle \m=\m \langle \hat \psi_*,\phi\rangle\m,$$
which implies that $\hat \psi_*$ is a nonnegative function. Since every non-negative solution of \eqref{uEinf} for any $E>0$ is a translation of the unique non-negative radially symmetric solution $U_E$ to \eqref{uEinf} \cite{kwong}, and so $U_E$ satisfies
\begin{equation}\label{schr_no_pot}
  (-\Delta+E)U_E + \sigma|U_E|^{2p} U_E = 0, \qquad U_E>0,
\end{equation}
it follows that $\hat\psi_*=T_{y_*}U_{E_*}$ for some $y_*\in\R^n$. By redefining $\hat v_m$ to be $\hat v_m-T_{y_m+y_*} U_{E_m} +T_{y_m}\hat\psi_m$ and then redefining $y_m$ to be $y_m+y_*$ we obtain the following different decomposition for $\psi_{E_m}$ in place of \eqref{compact_soln}:
\begin{equation}\label{new_compact_soln}
  \psi_{E_m} = T_{y_m}U_m + \hat v_m, \qquad U_m=U_{E_m}.
\end{equation}
Clearly, the redefined $y_m$ satisfies \eqref{unb_y} and the limits in (ii) continue to hold for the redefined $\hat v_m$.

We need the following properties of $U_E$ in \eqref{schr_no_pot} in the sequel. From Proposition \ref{prop:exdecay} in the Appendix we get that for every $\gamma\in(0,E)$, there exists a $C(\gamma,E)>0$ such that
\begin{equation} \label{UEdecayest}
 |U_E(x)|\leq C(\gamma,E)e^{-\sqrt{E-\gamma}|x|}, \qquad
 |\nabla U_E(x)|\leq C(\gamma,E)e^{-\sqrt{E-\gamma}|x|} \qquad \forall x\in\R^n.
\end{equation}
Furthermore,
\begin{equation} \label{diwali}
  U_E(x) \m=\m \left(E\right)^{\frac{1}{2p}} U_1(\sqrt{E}\m x) \FORALL x\in\R^n\m, \FORALL E>0\m,
\end{equation}
where $U_1$ is the unique positive radially symmetric solution to \eqref{schr_no_pot} with $E=1$.

Since $(\psi_{E_m},E_m)$ solves \eqref{stationary}, by substituting
\eqref{new_compact_soln} we obtain
\begin{equation}\label{comp_stat_ym}
  (-\Delta+V+E_m)(T_{y_m}U_m+\hat v_m) + \sigma\left|T_{y_m}U_m+\hat v_m\right|
  ^{2p}(T_{y_m}U_m+\hat v_m)=0.
\end{equation}
Define the linear operator $L_+(U,E):H^2(\R^n)\times\R\mapsto L^2(\R^n)$ as follows:
\begin{equation}\label{def:L+}
  L_+(U,E)[v] = (-\Delta +V+E)v + \sigma(2p+1)|U|^{2p}v.
\end{equation}
Recall the nonlinear operator $N(U,v):H^2(\R^n)\times H^2(\R^n)\mapsto L^2(\R^n)$ from \eqref{bound_N}:
$$ N(U,v) = \sigma|U+v|^{2p}(U+v) - \sigma|U|^{2p}U - \sigma(2p+1)|U|^{2p}v. $$
We can rewrite \eqref{comp_stat_ym} as
$$  L_+(T_{y_m}U_m,E_m)[\hat v_m]+V T_{y_m} U_m+N(T_{y_m} U_m,\hat v_m)=0.$$
From \eqref{comp_stat_ym} we get that
\begin{equation} \label{orlov0}
 \hat v_m=-(-\Delta+E_*)^{-1}\left[V T_{y_m}U_m+(E_m-E_*+V+\sigma(2p+1) |T_{y_m}U_m|^{2p})\hat v_m+N(T_{y_m}U_m,\hat v_m)\right].
\end{equation}
We will now show that
\begin{equation} \label{noconfmotion}
 \lim_{m\to\infty}\|\hat v_m\|_{H^2} \m=\m0.
\end{equation}
Recall the limits $\hat v_m\stackrel{L^q} {\rightarrow}0$ for $2\leq q<2n/(n-2)$ ($2\leq q<\infty\textrm{ if } n=2$, $2\leq q \leq\infty\textrm{ if } n=1$) as $m\to\infty$ from (ii). Note that from \eqref{UEdecayest}, \eqref{diwali} and $\lim_{m\to\infty} E_m=E_*$ we get that $U_m\in H^2(\R^n)\cap L^\infty(\R^n)$ and $\|U_m\|_{H^2}$ and $\|U_m\|_{L^\infty}$ can be bounded by a constant independent of $m$. This, along with the limit $\lim_{m\to\infty}\|\hat v_m\|_{L^2}=0$ in (ii), the limit $\lim_{m\to\infty} E_m=E_*$ and $V\in L^\infty(\R^n)$, implies that
\begin{equation} \label{orlov1}
 \lim_{m\to\infty} \|(E_m-E_*+ V + \sigma(2p+1) |T_{y_m} U_m|^{2p})\hat v_m\|_{L^2} \m=\m0.
\end{equation}
Also, using $\lim_{|x|\to\infty}V(x)=0$, \eqref{unb_y}, \eqref{UEdecayest}, \eqref{diwali} and $\lim_{m\to\infty} E_m=E_*$ we get that
\begin{equation} \label{orlov2}
 \lim_{m\to\infty} \|V T_{y_m} U_m\|_{L^2} \leq \lim_{m\to\infty} \|V T_{y_m}(U_m - U_{E_*})\|_{L^2} + \lim_{m\to\infty} \|V T_{y_m} U_{E_*}\|_{L^2} =0.
\end{equation}
From Lemma \ref{lm:nuv} we get that
\begin{equation} \label{orlov3}
 \lim_{m\to\infty} \|N(T_{y_m}U_m,\hat v_m)\|_{L^2} \m=\m0,
\end{equation}
The limit in \eqref{noconfmotion} follows from \eqref{orlov0} and the limits in \eqref{orlov1}, \eqref{orlov2} and \eqref{orlov3}.

Note that the decomposition
$$\psi_{E_m}=U_{E_m}(\cdot -y_m)+\hat v_m$$
together with $\hat v_m\stackrel{H^2}{\rightarrow}0$ and the assumptions $|y_m|\rightarrow\infty,\ E_m\rightarrow E_*$ imply that our solution bifurcates from $U_{E_*}$ at $|x|=\infty,$ see Figure \ref{fig:drift}.
\begin{figure}[h]
\begin{center}
\includegraphics[scale=0.2]{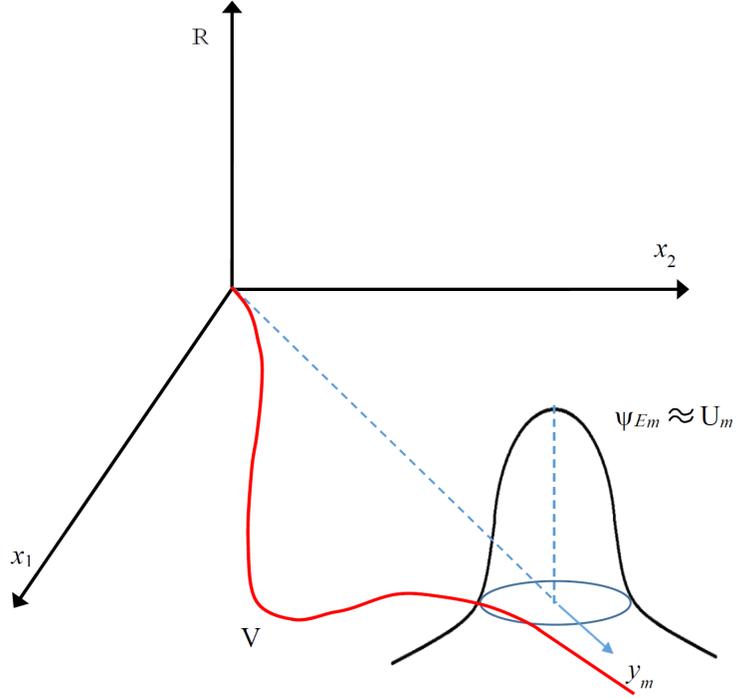}
\end{center}
\caption{A schematic representation showing that a ``drifting" $\psi_{E_m}$ experiences only the effect of the tails of the potential as it bifurcates from $U_{E_*}$ shifted to infinity.}
\label{fig:drift}
\end{figure}
While this bifurcation point is inaccessible in the Banach spaces we are working with, we will use a related Lyapunov-Schmidt like decomposition. In fact $L_+(T_{y_m}U_m,E_m)$ is the linearization at the profiles $T_{y_m}U_m=U_{E_m}(\cdot-y_m).$ Each can be viewed as a Schr\"odinger operator with potential given by a sum of two potentials $V$ and $(2p+1)\sigma|U_m(\cdot-y_m)|^{2p}.$ Since $\lim_{m\rightarrow\infty}|y_m|=\infty$ the distance between the two potentials eventually becomes very large. From the spectral theory for operators with potentials separated by a large distance \cite{MS:slsp}, as $|y_m| \to \infty$ every eigenvalue of $L_+(T_{y_m}U_m,E_m)$ tends to either an eigenvalue of $(-\Delta+V+E_m)$ or an eigenvalue of $-\Delta+E_m+(2p+1)\sigma |U_m|^{2p}$. The eigenvalues of the former operator are bounded away from zero by $E_m-E_0>0$ and those of the latter operator are also bounded away from zero if we remove the projection onto its kernel.

Therefore we now move to a decomposition with respect to the invariant subspaces of $-\Delta+E_m+\sigma(2p+1)T_{y_m}|U_m|^{2p}$. Recall that its kernel is
${\rm span}\left\{T_{y_m}\partial_{x_k}U_m| k=1,2,  \ldots n\right\}$, which are infinitesimal generators of translations of $U_m$ in $\R^n$. Hence the projection onto this subspace can be absorbed into translations of the profile $U_m$. This is established in the following lemma.

\begin{lemma}\label{lm_dec_finite_c}

There exists a constant $\e>0$ and a function $\Sscr$ defined on the set of all $y\in\R^n$, all $E\in(E_0,E_*)$ and all $\psi\in H^1(\R^n)$ which satisfy the condition
$$ \|\psi- T_y U_E\|_{L^2} \m<\m\e\m, $$
such that using $s=\Sscr(\psi,E,y)\in\R^n$, $\psi$ can be decomposed as follows:
\begin{equation}\label{decom}
  \psi \m=\m T_{y+s}U_E+v\m,
\end{equation}
where $v$ satisfies
$$ \langle v, T_{y+s} \partial_{x_k} U_E\rangle \m=0\m \FORALL
   k\in\{1,2,\ldots n\}\m. $$
In addition,
\begin{equation} \label{nsttest_sp}
 \lim_{\|\psi-T_yU_E\|_{L^2}\to0}\Sscr(\psi,E,y) \m=\m 0\m,
\end{equation}
uniformly in $y$ and $E$.
\end{lemma}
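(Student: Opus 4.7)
The plan is a standard modulation argument via the implicit function theorem. First, translation invariance reduces the problem to $y=0$: if I set $\tilde\psi=T_{-y}\psi$, then the orthogonality conditions on $v=\psi-T_{y+s}U_E$ are equivalent to the same conditions on $T_{-y}v=\tilde\psi-T_sU_E$, so it suffices to construct $\Sscr(\cdot,E,0)$ on a neighborhood of $U_E$ in $L^2$ and then set $\Sscr(\psi,E,y):=\Sscr(T_{-y}\psi,E,0)$.

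Consider therefore the map $G:L^2(\R^n)\times(E_0,E_*)\times\R^n\to\R^n$ defined by
$$G_k(\tilde\psi,E,s)=\langle\tilde\psi-T_sU_E,\,T_s\partial_{x_k}U_E\rangle,\qquad k=1,\ldots,n.$$
Clearly $G(U_E,E,0)=0$, and $G$ is $C^1$ in $(\tilde\psi,s)$ jointly, with continuous dependence on $E$ via the scaling formula \eqref{diwali} and the decay estimates \eqref{UEdecayest}. Using $\partial_{s_j}(T_sU_E)=-T_s\partial_{x_j}U_E$ and noting that the term proportional to $\tilde\psi-T_sU_E$ vanishes at the base point, the Jacobian in $s$ is
$$\left.\partial_{s_j}G_k\right|_{(\tilde\psi,s)=(U_E,0)}=\langle\partial_{x_j}U_E,\,\partial_{x_k}U_E\rangle.$$
Radial symmetry of $U_E$ gives $\partial_{x_j}U_E(x)=(x_j/|x|)\,U_E'(|x|)$, so this Gram matrix equals $\tfrac{1}{n}\|\nabla U_E\|_{L^2}^2\,I_n$, a strictly positive multiple of the identity. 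In particular the derivative in $s$ is an isomorphism with norm controlled by $\|\nabla U_E\|_{L^2}^{-2}$, so the implicit function theorem furnishes, for each fixed $E$, some $\e(E)>0$ and a $C^1$ function $\Sscr(\cdot,E,0)$ defined on the $L^2$-ball of radius $\e(E)$ about $U_E$, solving $G=0$ with $\Sscr(U_E,E,0)=0$.

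The main obstacle is to make the neighborhood size $\e$ and the modulus of continuity of $\Sscr$ uniform in $y$ and $E$. Uniformity in $y$ is immediate from the translation reduction above. For uniformity in $E$, I would run the implicit function theorem quantitatively, as a contraction argument for the map $s\mapsto s-M(E)^{-1}G(\tilde\psi,E,s)$, where $M(E)$ is the Jacobian above; the contraction constant and error size can be controlled in terms of $\|\nabla U_E\|_{L^2}^{-2}$ together with $\|\partial_{x_jx_k}U_E\|_{L^2}$ and $\|\nabla U_E\|_{L^\infty}$. The explicit scaling $U_E(x)=E^{1/(2p)}U_1(\sqrt{E}\,x)$ from \eqref{diwali} makes each of these quantities a continuous positive function of $E$ on the compact interval $[E_0,E_*]\subset(0,\infty)$, hence bounded uniformly above and below on $(E_0,E_*)$. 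This yields an $\e>0$ and a constant $C>0$, both independent of $E$ and $y$, such that whenever $\|\psi-T_yU_E\|_{L^2}<\e$ the solution $s=\Sscr(\psi,E,y)$ exists and satisfies $|\Sscr(\psi,E,y)|\leq C\,\|\psi-T_yU_E\|_{L^2}$, which is precisely the uniform limit \eqref{nsttest_sp}.
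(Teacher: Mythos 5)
Your proposal is correct and follows essentially the same route as the paper: set up the finite-dimensional orthogonality map, verify its $s$-Jacobian at the base point is the (positive definite) Gram matrix of $\{\partial_{x_k}U_E\}$, and run the implicit function theorem quantitatively so that the neighborhood size and the modulus of continuity of $\Sscr$ are controlled by bounds on $\|\nabla U_E\|_{L^2}^{-2}$ and $\|U_E\|_{H^2}$ that are uniform over $E\in(E_0,E_*)$ via the scaling \eqref{diwali}. The only (harmless) difference is that you dispose of the uniformity in $y$ at the outset by the translation reduction $\Sscr(\psi,E,y)=\Sscr(T_{-y}\psi,E,0)$, whereas the paper keeps $y$ in the map and simply observes that all its estimates are $y$-independent.
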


\begin{proof}
Define the map $\FFF:L^2(\R^n)\times\R\times\R^n\times\R^n\mapsto \R^n$ as follows:
$$ \FFF(\psi,E,y,s) \m=\m [\m F_1(\psi,E,y,s), F_2(\psi,E,y,s),\ldots F_n(\psi,E,y,s) \m]^\top\m,$$
where
$$ F_k(\psi,E,y,s) \m=\m \bigg\langle\frac{T_{y+s}\partial_{x_k}U_E}
  {\|\partial_{x_k}U_E\|^2_{L^2}}, \psi- T_{y+s} U_E\bigg \rangle_{L^2} \FORALL k\in\{1,2,\ldots n\}\m. $$
We will establish the proposition by solving
\begin{equation}\label{inf_eqdec_sp}
  \FFF(\psi,E,y,s) \m=\m 0
\end{equation}
for $s\in\R^n$, given $\psi$, $E$ and $y$. Note that an $s$ that solves \eqref{inf_eqdec_sp} for a given $\psi$, $y$ and $E$ implies the decomposition in \eqref{decom}. The function $\FFF$ is $C^1$ on $L^2(\R^n)\times\R\times\R^n\times\R^n$. Its $Fr\acute echet$ derivative with respect to $s$, denoted as $D_s\FFF(\psi,E,y,s)$, is a $\R^n\times\R^n$ matrix whose $(k,i)$ entry is
$$ \frac{\partial F_k}{\partial s_i} \m=\m \bigg\langle\frac{T_{y+s}\partial_{x_k} U_E}{\|\partial_{x_k}U_E\|^2_{L^2}}, T_{y+s}\partial_{x_i}U_E\bigg\rangle -\bigg\langle \frac{T_{y+s}\partial^2_{x_k x_i}U_E}{\|\partial_{x_k}U_E\|^2_{L^2}}, \psi-T_{y+s}U_E \bigg\rangle. $$
Here $s_i$ is the $i^{th}$ entry of the vector $s$. Using the above expression and the radial symmetry of $U_E$ it follows that if $\psi$, $E$ and $y$ satisfy the conditions in the lemma for some $\e$, then
$$ \left|\frac{\partial F_k}{\partial s_i}\right| \m\leq\m \delta_{ik}+ \big(\e+ \|T_{y+s}U_E-T_y U_E\|_{L^2}\big) \frac{\|U_E\|_{H^2}}{\|\partial_{x_k}U_E\|^2_{L^2}}. $$
Choose a $\delta>0$. Fix $\e>0$ and $\gamma\in(0,\delta)$ such that the following estimates hold: for any $E\in(E_0,E_*)$, $y\in\R^n$, $\psi$ that satisfies the condition in the lemma and  $s$ that satisfies $|s|<\gamma$,
\begin{equation}
 |\FFF(\psi,E,y,0)|\m<\m\frac{\gamma}{4}\m, \qquad \|[D_s\FFF(T_y U_E,
 E,y,0)]^{-1}\|<2\m, \label{inf_ineq1_sp}
\end{equation}
\begin{equation}
 \|D_s\FFF(T_y U_E,E,y,0)-D_s\FFF(\psi,E, y, s)\| \m<\m \frac{1}{4}\m. \label{inf_ineq2_sp}
\end{equation}
The existence of the constants $\e$ and $\gamma$ follows from the definition of $\FFF$ and the estimates for the elements of $D_s\FFF$. Given $\psi$, $E$ and $y$ that satisfy the conditions in the lemma, we now apply the implicit function theorem to \eqref{inf_eqdec_sp} at its solution $(T_yU_E,E,y,0)$ and conclude, using the estimates in \eqref{inf_ineq1_sp} and \eqref{inf_ineq2_sp}, that there exists a unique $s$ satisfying $|s|<\gamma$ such that $\FFF(\psi,E,y,s)=0$. We define $\Sscr(\psi,E,y)=s$. The estimate in \eqref{nsttest_sp} follows from the fact that we can choose $\delta>0$ to be arbitrarily small, in which case the corresponding $\e$ must be sufficiently small.
\end{proof}

\begin{remark}
In the proof of Lemma \ref{lm_dec_finite_c}, for some fixed $E\in(E_0,E_*)$ and $y$, we could have applied the implicit function theorem at $(T_yU_E,E,y,0)$ to conclude the existence of $\e(E,y)>0$ and $\gamma(E,y)>0$ such that if $\psi$ satisfies the condition in the statement of the lemma with $\e=\e_y$, then there is a unique $s$ with $|s|<\gamma(E,y)$ for which $\FFF(\psi,E,y,s)=0$. But to establish the existence of $\e$ and $\gamma$ independent of $E\in(E_0,E_*)$ and $y$, we have appealed to the estimates in \eqref{inf_ineq1_sp} and \eqref{inf_ineq2_sp} which are uniform in $E$ and $y$. These estimates can be used (in some form) in the proof of the implicit function theorem to construct closed balls for applying the contraction mapping principle.
\end{remark}

Applying Lemma \ref{lm_dec_finite_c} to \eqref{new_compact_soln} we obtain the following decomposition for $\psi_{E_m}$:
$$  \psi_{E_m} = T_{y_m+s_m}U_m + v_m = T_{z_m} U_m+v_m=U_m(\cdot-z_m)+v_m, $$
where $z_m=y_m+s_m$ and $v_m$ satisfies
$$ \langle v_m, T_{y_m+s_m} \partial_{x_k} U_m\rangle \m=0\m \FORALL
   k\in\{1,2,\ldots n\}\m. $$
Moreover as $E_m \to E_*$ we have $|s_m|\to 0$ and $|z_m|\to \infty$. Furthermore $v_m\stackrel {H^2} {\to} 0$ as $m\to\infty$ because $v_m=\hat v_m+T_{y_m}U_m-T_{y_m+s_m}U_m$ and both $\hat v_m\stackrel {H^2} {\to} 0$ (see \eqref{noconfmotion}) and, due to $s_m\to0$,  $T_{y_m}U_m-T_{y_m+s_m}U_m \stackrel {H^2} {\to} 0$.

Equation \eqref{stationary} can now be rewritten as
\begin{equation}\label{comp_stat}
  (-\Delta+V+E_m)(T_{z_m}U_m+ v_m) + \sigma\left|T_{z_m}U_m+ v_m\right|
  ^{2p}(T_{z_m}U_m+ v_m)=0
\end{equation}
or, equivalently:
\begin{equation}\label{eq:bif}
  L_+(T_{z_m}U_m,E_m)[v_m]+V T_{z_m} U_m+N(T_{z_m} U_m,v_m)=0,
\end{equation}
for which the Lyapunov-Schmidt procedure can be applied as follows.
Define the operator $P^\perp_m$ on $L^2(\R^n)$ as
\begin{equation} \label{ragaspeech1}
 P^\perp_m \phi = \phi - \sum_{k=1}^n \langle \phi,T_{z_m}\partial_{x_k} U_m\rangle \frac{T_{z_m}\partial_{x_k} U_m}{\|T_{z_m}\partial_{x_k} U_m\|^2_{L^2}} \qquad \forall \phi \in L^2(\R^n).
\end{equation}
Then \eqref{eq:bif} is equivalent to the set of equations
\begin{equation}\label{perp_eq}
  P^\perp_m L_+(T_{z_m}U_m,E_m)[v_m] + P^\perp_m V T_{z_m}U_m + P^\perp_m N(T_{z_m}U_m,v_m)=0,
\end{equation}
which is an infinite dimensional equation, and
\begin{equation}\label{parallel_eq}
  \langle \partial_{x_k} T_{z_m}U_m,V T_{z_m} U_m + V v_m + N(T_{z_m}U_m,v_m) \rangle =0, \quad k=1 \ldots n,
\end{equation}
which is a set of finite dimensional equations.

We can write $L_+(T_{z_m}U_m,E_m)=\widetilde L_m+\widetilde W_m$, where
$$ \widetilde L_m=-\Delta+ V+ E_* +\sigma(2p+1)\big|T_{z_m} U_{E_*}\big|^{2p}, \qquad  \widetilde W_m= E_m-E_*+\sigma(2p+1)\Big(\big|T_{z_m} U_m\big|^{2p}-\big|T_{z_m} U_{E_*}\big|^{2p}\Big).$$
Since $\lim_{m\to\infty}E_m=E_*$, we have $\|\widetilde W_m\|_{H^2\mapsto L^2}\to0$ as $m\to\infty$. Clearly, $\widetilde L_m$ is a linear operator with two potentials $V_1=V$ and $V_2=\sigma(2p+1)|U_{E_*}|^{2p}$ separated by a large distance. Let $\widetilde P_m$ be the projection operator in $L^2(\R^n)$ onto the set $\{\textrm{ker}\m(-\Delta+V_1+E_*)\} \cup \{\textrm{ker}\m(-\Delta+V_2+E_*)\}$. Note that $\textrm{ker}\m(-\Delta+V_1+E_*)$ is an empty set since the eigenvalues of $(-\Delta+V_1+E_*)$ are bounded away from zero by $E_*-E_0>0$ and $\textrm{ker}\m(-\Delta+V_2+E_*)=\{\partial_{x_k} U_{E_*}\m\big|\m k=1,2,\ldots n\}$. Hence
\begin{equation} \label{ragaspeech2}
 \widetilde P^\perp_m \phi = \phi - \sum_{k=1}^n \langle \phi,T_{z_m}\partial_{x_k} U_{E_*}\rangle \frac{T_{z_m}\partial_{x_k} U_{E_*}}{\|T_{z_m}\partial_{x_k} U_{E_*}\|^2_{L^2}} \qquad \forall \phi \in L^2(\R^n).
\end{equation}
Let $R_m=1/E_m$. We can apply Proposition \ref{prop:MS} in the Appendix to $\widetilde L_m$,  with $R=R_m$, $L_{R_m}=\widetilde L_m$, $E(R_m)=E_*$, $s_1(R_m)=0$, $s_2(R_m)=z_m$ and $V_1$ and $V_2$ as defined above, and then apply the spectral perturbation theory to $\widetilde L_m+\widetilde W_m$, by regarding $\widetilde W_m$ as a perturbation, to conclude that the operator $\widetilde P_{m}^{\perp}L_+(T_{z_m}U_m,E_m)\widetilde P_{m}^{\perp}:H^2(\R^n)\mapsto L^2(\R^n)$ has a bounded inverse and the norm of the inverse operator can be bounded uniformly in $m$, for large $m$. The same conclusion holds for the operator $ P_{m}^{\perp} L_+(T_{z_m}U_m,E_m) P_{m}^{\perp}$, since \eqref{ragaspeech1}, \eqref{ragaspeech2} and  $\lim_{m\to\infty}E_m=E_*$ imply that $\|P_m^\perp-\widetilde P_m^\perp\|_{L^2\mapsto L^2}\to0$ and $\|P_m^\perp-\widetilde P_m^\perp\|_{H^2\mapsto H^2}\to0$ as $m\to\infty$. Hence for all $m$ large and some $K>0$,
\begin{equation} \label{shankvisit}
 \|[P_m^{\perp}L_+(T_{z_m}U_m, E_m) P_m^{\perp}]^{-1}\|_{L^2\mapsto H^2} \leq K.
\end{equation}

We estimate $v_m$ from \eqref{perp_eq} which can be rewritten as
\begin{equation}\label{perp_eq_cont}
  v_m = [P^\perp_m L_+(T_{z_m}U_m, E_m) P^\perp_m]^{-1}P^\perp_m (VT_{z_m}U_m +
        N(T_{z_m}U_m,v_m)),
\end{equation}
where we have used $P^\perp_m v_m = v_m$. From the estimate on the nonlinear term \eqref{est_nl} and the convergence of $v_m$ to zero in $H^2(\R^n)$, it follows that there exist $L,\,\delta>0$ and $M\in\N$ such that for all $m>M$, \eqref{shankvisit} holds and
$$ \|U_m\|_{H^2}\leq L, \quad \|v_m\|_{H^2}\leq \delta \quad {\rm and }  \quad KC(L)2\delta\leq \frac {1}{2},$$
where $C(L)$ is defined as in \eqref{est_nl}. Then applying the contraction principle to \eqref{perp_eq_cont} it follows that for all $m>M$ there exists an unique solution $v_m$ to \eqref{perp_eq_cont} with the bound
\begin{equation}\label{bound_vk}
  \| v_m \|_{H^2}\leq 2K \| VU_{m}(\cdot-z_m) \|_{L^2}.
\end{equation}

Our assumption \eqref{unb_y}:
$$ \lim_{m\to\infty}|y_m|=\infty \equiv \lim_{m\to\infty}|z_m| = \infty$$
will now be contradicted when we show that no such sequence $z_m$ can solve the remaining, finite-dimensional part \eqref{parallel_eq} of \eqref{eq:bif}.

Let $x_m$ denote the radial direction at $z_m$. For each $m\in\N$ consider
the following finite dimensional equation obtained as a linear combination of
the equations in \eqref{parallel_eq}:
\begin{equation} \label{parallel_contr}
  \langle T_{z_m} \partial_{x_m} U_m, VT_{z_m}U_m + Vv_m + N(T_{z_m}U_m,
  v_m)\rangle=0.
\end{equation}
We claim that, under our hypothesis on the behavior of the potential $V(x)$ as $|x|\to\infty$, the dominant term in \eqref{parallel_contr} as $z_m\to\infty$ is  $\langle T_{z_m} \partial_{x_m} U_m, VT_{z_m}U_m \rangle=0$ and by dividing the right hand side of \eqref{parallel_contr} with it and taking the limit as $m\to\infty$, we get the contradiction that $1=0$.

\begin{figure}[t]
\begin{center}
\includegraphics[scale=0.15]{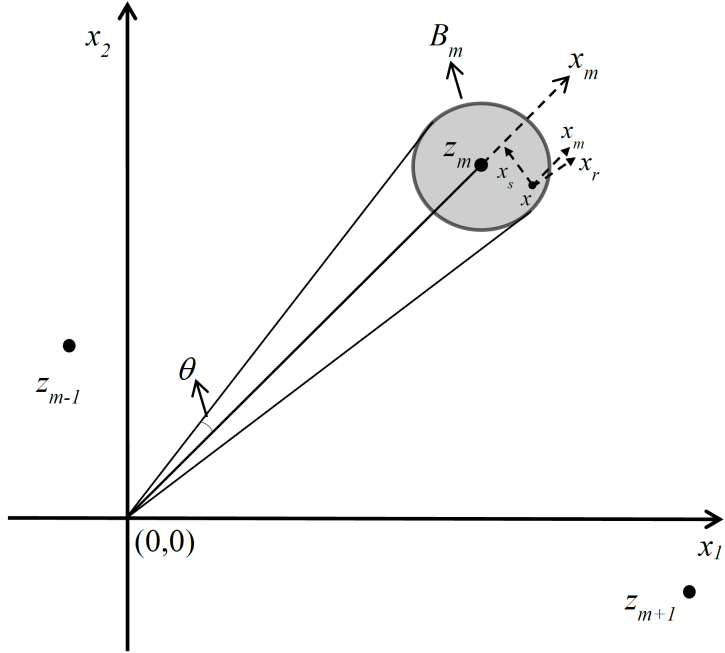}
\end{center}
\caption{This picture depicts the ideas used while computing the integrals
  in the compactness case when the space dimension $n=2$. The radially symmetric function $U_m$ is centered at $z_m$ and $x_m$ denotes the radial direction at $z_m$. As $m\to\infty$, $|z_m|\to\infty$. The $z_m$'s can be in any quadrant (see for example $z_{m-1}$ and $z_{m+1}$). When $m$ is large, since $U_m$ decays exponentially (uniformly in $m$), most of the value of the integrals involving $U_m$ can be obtained by integrating over the ball $B_m$ of radius $z_m\sin\theta$ centered at $z_m$. The ball $B_m$ lies within a cone of
  half-angle $\theta$. At each $x\in B_m$, the radial direction $x_r$ and the tangential direction $x_s$ are at an angle of $\pi/2-\theta_1$ and $\theta_1$, respectively, to the direction $x_m$ with $\theta_1<\theta$. Hence for each  $x\in B_m$ and sufficiently small $\theta$, under the hypothesis in \eqref{hypo_smallE3}, $\partial_{x_m}V(x)$ behaves like $\partial_r V(x)$ on which we have imposed the hypothesis in \eqref{hypo_smallE1} and \eqref{hypo_smallE2}.}
\label{fig:comp}
\end{figure}

Fix $0<\theta<\pi/2\ $ such that $2\Cscr\tan\theta<1$ where $\Cscr$ is the constant in \eqref{hypo_smallE3}. Let $B_m$ be closed ball of radius $|z_m|\sin\theta$ in $\R^n$ centered at $z_m$, i.e. $B_m$ is the closure of $B(z_m,|z_m|\sin\theta),$ see Figure \ref{fig:comp}. Clearly
\begin{align*}
  -2\left\langle \partial_{x_m}U_{m}(x-z_m), V(x)U_{m}(x-z_m)\right\rangle
  &= \left\langle {U^2_{m}}(x-z_m), \partial_{x_m}V(x) \right\rangle\\
  &= \int\limits_{B_m \cup \R^n\setminus B_m} U^2_{m}(x-z_m) \partial_{x_m} V(x) \,  \mathrm{d}x.
\end{align*}
Since $E_m\to E_*$ as $m\to\infty$, there exist $C,\epsilon>0$ and $M\in\N$ such that for all $m>M$ and each $x\in\R^n$ we have $\left|U_m(x)\right|<Ce^{-\sqrt{E_*-\epsilon}|x|}$ and
$\left|\partial_{x_m}U_m\right|<Ce^{-\sqrt{E_*-\epsilon}|x|}$, see \eqref{UEdecayest}. Using this we get for $m>M$
\begin{align}
  \left|\int\limits_{\R^n\setminus B_m} U^2_{m}(x-z_m) \partial_{x_m}V(x) \,\mathrm{d}x
  \right| &\leq C\|\nabla V\|_{L_{\infty}}\int\limits_{\R^n\setminus B_m} e^{-2\sqrt{E_*-\epsilon}|x-z_m|} \,\mathrm{d}x \nonumber\\
  &\leq C\|\nabla V\|_{L_{\infty}}\int\limits_{r>|z_m|\sin\theta} e^{-2\sqrt{E_*-
  \epsilon}\, r} r^{n-1}\,\mathrm{d}r\nonumber\\
  &\leq Ce^{-2\sqrt{E_*-2\epsilon} |z_m| \sin \theta} \label{eq:finv'estimate}.
\end{align}
For each $x\in B_m$, let $\{\theta_i(x)\big| i=1,2,\ldots n\}$ with  $|\theta_i(x)|<\theta$ be the set of angles using which the derivative of the potential along the direction $x_m$ can be written as a linear combination of the derivatives along the radial and tangential directions at $x$, i.e.
$$\partial_{x_m} V(x) = \partial_{r}V(x) \cos\theta_n(x)+ \sum\limits_{i=1}^{n-1}  \partial_{s_i} V(x)\sin\theta_i(x),$$
where $\partial_r V$ and $\partial_{s_i} V$ are the radial and tangential derivatives introduced in the theorem. The function $\partial_{x_m} V(x)$ has the same sign a.e. in $B_m$ for large $m$ since for each $x\in B_m$, using \eqref{hypo_smallE2} and \eqref{hypo_smallE3}, we have
\begin{align}
  &\left|\partial_{x_m} V(x)\right| = \Big|\partial_{r} V(x)\cos\theta_n(x)+
  \sum\limits_{i=1}^{n-1}\partial_{s_i} V(x)\sin\theta_i (x)\Big| \nonumber\\
  \implies&\ \left|\partial_{x_m}V(x)\right|\geq\cos \theta\left|\partial_r
  V(x)\right| - \Big(\sum\limits_{i=1}^{n-1}\left|\partial_{s_i}V(x)\right|
  ^2\Big)^{\frac{1}{2}}\Big(\sum\limits_{i=1}^{n-1}\sin^2\theta_i(x) \Big)^{\frac{1}{2}}   \nonumber\\
  \implies&\ \left|\partial_{x_m} V(x)\right|\geq\cos\theta\left|\partial_r
  V(x)\right|-\Cscr\sin\theta\left|\partial_{r}V(x)\right|\label{est_dvx}\\
  \implies&\ \left|\partial_{x_m} V(x)\right| > \frac{C}{|z_m|^\beta}\ .
  \nonumber
\end{align}
Therefore it follows that for $m$ large
$$\left|\int\limits_{B_m} U^2_m(x-z_m) \partial_{x_m}V(x)\,\mathrm{d}x\right|
  \geq\frac{C}{|z_m|^\beta}\int\limits_{B_m} U^2_m(x-z_m) \,\mathrm{d}x \geq
  \frac {C} {|z_m|^\beta}$$
which along with \eqref{eq:finv'estimate} gives
\begin{equation}\label{lower_bound_poly}
  \lim_{m\to\infty} \frac{\left|\int\limits_{\R^n\setminus B_m}U^2_m(x-z_m)\partial_{x_m}
  V(x)\,\mathrm{d}x\right|}{\left|\int\limits_{B_m}U^2_m(x-z_m)\partial_{x_m}
  V(x)\,\mathrm{d}x\right|} = 0.
\end{equation}
For the remaining terms in \eqref{parallel_contr} we have the following estimates using \eqref{est_nl} and \eqref{bound_vk}:
\begin{align*}
  &\hspace{-10mm}2\left|\left\langle T_{z_m} \partial_{x_m}U_m, V v_m + N(T_{z_m}U_m,v_m) \right\rangle\right|\\
  &\leq\ C\left\|V T_{z_m}\partial_{x_m} U_m \right\|_{L^2}\left\|v_m
  \right\|_{L^2} + C\left\|\partial_{x_m} U_m\right\|_{L^2}\left\|N(T_{z_m} U_m,v_m)\right\|_{L^2} \\
  &\leq\ C\left(\left\|V T_{z_m}\partial_{x_m} U_m \right\|_{L^2}\left\|
  VT_{z_m}U_m\right\|_{L^2}+\left\|V T_{z_m} U_m \right\|^2_{L^2}\right).
\end{align*}
Now as in \eqref{eq:finv'estimate}, using the decay estimates in \eqref{UEdecayest} for $|U_m(x)|$ and $|\partial_{x_i}U_m(x)|$, we obtain
\begin{align}
  \left|\int\limits_{\R^n\setminus B_m} U^2_m(x-z_m)V^2(x)\,\mathrm{d}x \right| &\leq C
  e^{-2\sqrt{E_*-2\epsilon}|z_m|\sin\theta} \label{eq:finvsqr_est},\\
  \left|\int\limits_{\R^n\setminus B_m} (\partial_{x_m} U_m(x-z_m))^2 V^2(x)\,\mathrm{d}x
  \right| &\leq Ce^{-2\sqrt{E_*-2\epsilon}|z_m|\sin\theta}
  \label{eq:finvsqr_der_est}.
\end{align}
Since $\partial_{x_m} V(x)$ does not change sign in $B_m$, using \eqref{hypo_smallE1} along with \eqref{est_dvx} gives us the following
estimates:
\begin{equation}\label{est1}
  \lim_{m \to \infty}\frac{\left|\int\limits_{B_m} U^2_m(x-z_m) V^2(x)\,
  \mathrm{d}x\right|}{\left|\int\limits_{B_m} U^2_m(x-z_m) \partial_{x_m}V(x)
  \,\mathrm{d}x\right|} = \lim_{m \to \infty} \frac{\left|\int\limits_{B_m}
  U^2_m(x-z_m) \partial_{x_m}V(x) \frac{V^2(x)}{\partial_{x_m} V(x)}\,
  \mathrm{d}x\right|}{\left|\int\limits_{B_m} U^2_m(x-z_m)\partial_{x_m}V(x)
  \,\mathrm{d}x\right|} =0
\end{equation}
and
\begin{align}
  &\lim_{m\to\infty}\frac{\left|\int\limits_{B_m}(\partial_{x_m}U_m(x-z_m))^2
  V^2(x)\,\mathrm{d}x\right|}{\left|\int\limits_{B_m}U^2_m(x-z_m)\partial_
  {x_m}V(x)\,\mathrm{d}x\right|} \nonumber\\
  =&\ \lim_{m \to \infty}\frac{\left|\int\limits_{B_m}U^2_m(x-z_m)\partial_
  {x_m}V(x)\frac{(\partial_{x_m}U_m(x-z_m))^2}{U^2_m(x-z_m)}\frac{V^2(x)}
  {\partial_{x_m}V(x)}\,\mathrm{d}x\right|}{\left|\int\limits_{B_m} U^2_m
  (x-z_m) \partial_{x_m} V(x)\,\mathrm{d}x\right|}= 0. \label{est2}
\end{align}
In deriving the last limit we have used the estimate
$$\sup_{x\in\R^n}\left|\frac{\partial_{x_m}U_m(x)}{U_m(x)}\right|<C$$
for some $C>0$ and each $m \in \N$. This follows directly from the relation
$$ U_E(x) \m=\m \left(E\right)^{\frac{1}{2p}} u_\infty(\sqrt{E}\m x) \FORALL x\in\R^n\m, \FORALL E>0\m, $$
where $U_E$ is the unique positive radially symmetric solution to \eqref{schr_no_pot} and $u_\infty=U_1$, and the estimate
$$ \sup_{x\in\R^n}\left|\frac{\partial_{x_m} u_\infty(x)} {u_\infty(x)} \right|<C \m.$$
for some $C>0$. While this estimate appears to be known, see \cite{NY:mps} who state that this estimate is established in \cite{kwong}, we could not locate its proof in the literature. Hence in the next lemma we present a short proof. We will denote the derivatives of functions on $\R$ using {\em primes}.

\begin{lemma}\label{lm:uE_ratio}
Define the function $v:[0,\infty)\to\R$ as follows: $v(|x|)=u_\infty(x)$ for each $x\in\R^n$. Then there exists $C>0$ such that for each $r\in[0,\infty)$
$$\left|\frac{v'(r)}{v(r)}\right|<C.$$
\end{lemma}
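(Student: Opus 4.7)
The plan is to reduce the lemma to an ODE estimate for a Riccati-type quantity. Since $u_\infty=U_1$ is the unique positive radial solution of $(-\Delta+1)U+\sigma|U|^{2p}U=0$ with $\sigma<0$, the profile $v$ satisfies
$$ v''(r)+\frac{n-1}{r}v'(r)=v(r)-|\sigma|v(r)^{2p+1},\qquad v(0)>0,\ v'(0)=0, $$
with $v(r)>0$ for all $r\geq0$ and $v(r)\to0$ exponentially as $r\to\infty$ (see \eqref{UEdecayest} and \eqref{diwali}). The classical Gidas--Ni--Nirenberg result (or the Kwong uniqueness argument already invoked for \eqref{schr_no_pot}) gives $v'(r)<0$ for $r>0$. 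I will set $y(r):=-v'(r)/v(r)$, which is smooth on all of $[0,\infty)$ since $v$ is smooth and strictly positive there, with $y(0)=0$ and $y(r)\geq0$.

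Substituting $v'=-yv$ and $v''=(y^2-y')v$ into the ODE gives the Riccati equation
$$ y'(r)=y(r)^2-\frac{n-1}{r}y(r)-1+|\sigma|v(r)^{2p}. $$
On the compact interval $[0,1]$ the smoothness of $y$ immediately yields a bound, so the heart of the proof is an a priori bound for $y$ on $[1,\infty)$. I will argue by contradiction: if $y$ is unbounded on $[1,\infty)$, pick any $r_1\geq 1$ with $y_1:=y(r_1)\geq M:=\max(2,4(n-1))$. On $[r_1,\infty)$, as long as $y(r)\geq y_1\geq 4(n-1)$ and $r\geq 1$, we have $\frac{n-1}{r}y(r)\leq (n-1)y(r)\leq y(r)^2/4$; dropping the nonnegative term $|\sigma|v^{2p}$ then yields
$$ y'(r)\geq y(r)^2-\tfrac14 y(r)^2-1\geq \tfrac12 y(r)^2, $$
where the last inequality uses $y(r)\geq 2$. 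In particular $y'(r_1)>0$, so $y$ stays above $y_1$ on a maximal interval $[r_1,r^*)$ of existence, where the differential inequality $y'\geq y^2/2$ remains valid. Integrating gives $1/y(r)\leq 1/y_1-(r-r_1)/2$, forcing $y(r)\to\infty$ before $r=r_1+2/y_1<\infty$; this contradicts the smoothness of $y$ on all of $[0,\infty)$.

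Therefore $y(r)\leq M$ for every $r\geq 1$, and combining with the continuous bound on $[0,1]$ produces the desired constant $C$. The only delicate point is that the linear term $-\frac{n-1}{r}y$ has the ``wrong'' sign for a naive Riccati comparison, since it can partially cancel the driving quadratic $y^2$. This is handled by working on $r\geq 1$ (where $(n-1)/r\leq n-1$ is uniformly bounded) and by restricting to values of $y$ large enough that the quadratic dominates; the residual term $|\sigma|v^{2p}$, being nonnegative and pointwise bounded because $v\in L^\infty$, only strengthens the blow-up argument rather than obstructing it.
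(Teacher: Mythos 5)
Your proof is correct, and it closes the argument by a different mechanism than the paper. Both proofs pass to the same Riccati equation $y'=y^2-\frac{n-1}{r}y-1+|\sigma|v^{2p}$ for $y=-v'/v$ and both argue by contradiction from a large value of $y$, but the contradictions are obtained differently. The paper only extracts \emph{linear} growth from the quadratic term: it shows that once $f$ exceeds the threshold in \eqref{fbounds} one has $f'>f$, hence $f(r)\geq f(L)e^{r-L}$, and this is played off against the a priori sub-exponential bound $f(r)\leq \frac{C_1(\gamma)}{C_2(\gamma)}e^{(\sqrt{1+\gamma}-\sqrt{1-\gamma})r}$, which requires \emph{both} the upper and the lower exponential decay estimates for $v$ in \eqref{est_gen}. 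You instead use the full quadratic strength of the Riccati term: for $r\geq1$ and $y\geq\max\{2,4(n-1)\}$ you get $y'\geq y^2/2$, which forces blow-up of $y$ before $r_1+2/y_1$, contradicting the fact that $y=-v'/v$ is finite on all of $[0,\infty)$ because $v$ is smooth and strictly positive. This buys you two things: you never need the lower bound $v(r)>C_2(\gamma)e^{-\sqrt{1+\gamma}r}$ (only positivity, smoothness, and eventual monotonicity of $v$, the last so that $y\geq0$ and unboundedness of $|y|$ means unboundedness from above), and you obtain the explicit bound $y\leq\max\{2,4(n-1)\}$ on $[1,\infty)$. The one external input you invoke that the paper avoids is strict radial monotonicity $v'<0$ on $(0,\infty)$ via Gidas--Ni--Nirenberg/Kwong; this is standard for $u_\infty$, and in any case your argument only needs $v'\leq0$ for large $r$, which the paper derives elementarily, together with continuity of $y$ on the remaining compact interval.
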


\begin{proof}
The radially symmetric function $u_\infty$ satisfies \eqref{schr_no_pot} with $E=1$. It follows from the properties of solutions to \eqref{schr_no_pot} that $v$ is a $C^2$ function on the interval $(0,\infty)$ which satisfies
\begin{equation}\label{uE}
  -v''-\frac{n-1}{r} v' +  v + \sigma|v|^{2p}v=0
\end{equation}
and there exist constants $\gamma,C_1(\gamma),C_2(\gamma)>0$ such that $\sqrt{1+\gamma}-\sqrt{1-\gamma}<1$ and for each $r\in[0,\infty)$
\begin{equation}\label{est_gen}
  v(r)< C_1(\gamma) e^{-\sqrt{1-\gamma}\,r}, \quad v(r)> C_2(\gamma) e^{-\sqrt{1+\gamma}\ r}\quad\textrm{and}\quad |v'(r)|
  <C_1(\gamma) e^{-\sqrt{1-\gamma}\ r}.
\end{equation}
From these estimates we can infer that there exists a $\widetilde L>0$ such that $v'(\widetilde L)\leq 0$ and $v(r)+\sigma|v|^{2p}v(r)>0$ for each $r>\widetilde L$. It now follows from \eqref{uE} that $v$ cannot have a local maxima on the interval $(\widetilde L,\infty)$ which in turn implies that, on the same interval, $v'(r)\leq 0$.

Since $v$ and $v'$ are continuous functions on $[0,\infty)$ and $v>0$, we only need to show that $|v'(r)/v(r)|$ is uniformly bounded for $r$ large. Define the function $f(r)\geq 0$ on $(\widetilde L,\infty)$ as
$$ f(r)=-\frac{v'(r)}{v(r)}.$$
Then it is easy to verify using \eqref{uE} that
\begin{equation}  \label{case 1E}
  f'(r) \m=\m -\frac{n-1}{r}f(r)-1-\sigma|v|^{2p}(r)+f^2(r).
\end{equation}
Assume that
\begin{equation}\label{fbounds}
 f(L) \m>\m 1+\frac{n-1}{L}\qquad{\rm and}\qquad -\frac{n-1}{L} f(L)-1+f^2(L) \m>\m f(L)
\end{equation}
for some $L>\widetilde L$ (if no such $L$ exists, we are done). Then clearly $f'(L)>f(L)$. We claim that $f'(r)>f(r)$ for each $r\in(L,\infty)$. Suppose this is false. Then there exists a $\hat L\in(L,\infty)$ such that $f'(r)>f(r)\geq0$ for $r\in(L,\hat L)$ and $f'(\hat L)=f(\hat L)$. From \eqref{fbounds} we have
$$ \Big(1+\frac{n-1}{L}\Big)f(L) \m<\m -1+f^2(L).$$
Since $\hat L> L$ and $f(\hat L) >f(L)$, the above inequality implies that
$$ \Big(1+\frac{n-1}{\hat L}\Big)f(\hat L) \m<\m -1+f^2(\hat L)$$
which, using \eqref{case 1E} with $r=\hat L$, gives the contradiction $f'(\hat L)>f(\hat L)$. Hence
$$ f'(r) \m>\m f(r) \FORALL r\in(L,\infty) \m.$$
Therefore comparing \eqref{case 1E} with the differential equation
$$ y'(r) \m=\m y(r),\qquad y(L)\m=\m f(L),$$
we get that
\begin{equation}\label{est1_fr}
  f(r) \m\geq\m y(r) \m=\m f(L) e^{r-L} \FORALL r\in(L,\infty).
\end{equation}
From the estimates in \eqref{est_gen} we obtain
\begin{equation}\label{est2_fr}
  f(r) \m=\m -\frac{v'(r)}{v(r)} \m\leq\m \frac{C_1(\gamma)} {C_2(\gamma)} e^{(\sqrt{1+\gamma}-\sqrt{1-\gamma})r} \FORALL r\in(0,\infty).
\end{equation}
The estimates in \eqref{est1_fr} and \eqref{est2_fr} give the following
contradiction
$$1 \m=\m \lim_{r\to\infty}\frac{f(r)}{f(r)} \m\leq\m \lim_{r\to\infty} \frac{C_1(\gamma) e^{(\sqrt{1+\gamma}-\sqrt{1-\gamma})r}}{C_2(\gamma) f(L)e^{r-L}} \m=\m0\m.$$
Therefore no $L>\widetilde L$ exists such that \eqref{fbounds} holds. Hence  $f$ is a bounded function on $(0,\infty)$. This completes the proof of the lemma.
\end{proof}

Using the estimates in \eqref{eq:finv'estimate}, \eqref{lower_bound_poly},
\eqref{eq:finvsqr_est}, \eqref{eq:finvsqr_der_est}, \eqref{est1} and
\eqref{est2} we see that the term
$$\int\limits_{\R^n} U^2_m(x-z_m) \partial_{x_m} V(x)\,\mathrm{d}x$$
cannot be canceled by the remaining terms in \eqref{parallel_contr} when $m$
is sufficiently large. Therefore it follows that there are no solutions to
\eqref{comp_stat} such that the sequence $z_m$, and consequently $y_m$, has
an unbounded subsequence. Hence $y_m$ must be bounded. This completes the proof of the theorem in the compactness case (see discussion above \eqref{unb_y}).

\noindent
{\bf Splitting ($0<\mu<1$):} Since $ \psi_{E_m}=\|\psi_{E_k}\|_{L^2}\Psi_m$ and $\lim_{m\to\infty}\|\psi_{E_m}\|_{L^2}=a>0$, it follows from concentration compactness theory, see Proposition \ref{prop:comp}, that if $0<\mu<1$, then there exists a subsequence of $(E_m)_{m\in N}$, again denoted as $(E_m)_{m\in \N}$, on which we can decompose $\psi_{E_m}$ as follows:
\begin{equation}\label{split_soln}
  \psi_{E_m} = \sum_{j=1}^d T_{y^j_m} \psi_m^j + \psi_m^0  + \hat v_m\m.
\end{equation}
Here $\psi_m^j,\hat v_m\in H^1(\R^n)$ for $j\in\Jscr=\{0,1,2,\ldots d\}$ and there exist non-zero functions $\psi_*^j$ such that as $m \to \infty$
\begin{itemize}
\item[(i)] $\psi_m^j\stackrel{H^1}{\rightharpoonup} \psi^j_*$\ \ and\ \ $\psi_m^j\stackrel{L^q}{\rightarrow}\psi^j_*$\ \ for\ \ $2\leq q<2n/(n-2)$\ ($2\leq q<\infty$ if $n=2$,\ $2 \leq q\leq\infty$ if $n=1$),
\item[(ii)] $|y^j_m|\to\infty$\ \ \ and\ \ \ $|y^j_m-y^k_m|\to
  \infty$\ \ \ for\ \ \ $j, k\in \{1,2,\ldots d\}$ \ \  and\ \ $j \neq k$,
\item[(iii)] $\hat v_m\stackrel{H^1}{\rightharpoonup}0$\ \ \ and\ \ \
  $\hat v_m\stackrel{L^q}{\rightarrow}0$\ \ \ for\ \ \ $2<q<2n/(n-2)$
  \ ($2<q<\infty$ if $n=2$,\ $2<q\leq\infty$ if $n=1$).
\end{itemize}
We will show that splitting cannot occur. We will obtain the contradiction that if the $y_m^j$s satisfy the limits in (ii) above, then $(\psi_{E_m},E_m)$ cannot be a solution of \eqref{stationary}.

Concentration compactness theory does not imply that $\psi_*^0\neq 0$, but we assume it only to simplify our presentation. All the arguments in this section remain valid if $\psi_*^0=0$. Let $\psi=\psi_{E_m}$ and $E=E_m$ in \eqref{stationary} and take the $L^2$ scalar product of the resulting equation with $\phi\in C_0^{\infty}(\R^n)$ to obtain
$$ \left<\nabla\psi_{E_m},\nabla\phi\right> + \left<(V+E_m)\psi_{E_m},\phi
   \right> + \sigma\left<|\psi_{E_m}|^{2p}\psi_{E_m},\phi\right>=0.$$
Substituting for $\psi_{E_m}$ from \eqref{split_soln} and taking the limit
along $E_m \to E_*$ we get,
$$ \left<\nabla\psi_*^0,\nabla\phi\right> + \left<(V+E_*)\psi_*^0,
  \phi\right> + \sigma\left<|\psi_*^0|^{2p}\psi_*^0,\phi\right>=0.$$
Therefore $\psi_*^0$ is a weak solution (and hence a strong solution) of
\eqref{stationary} at $E=E_*$ and hence
$$  (-\Delta+V+E_*)\psi_{E_*}^0 + \sigma|\psi_{E_*}^0|^{2p}\psi_{E_*}^0=0. $$
Similarly for any $j\in\Jscr$, $j\neq0$, taking the $L^2$ scalar product of the equation
$$ (-\Delta+T_{-y^j_m}V +E_m)T_{-y^j_m}\psi_{E_m} + \sigma T_{-y^j_m} |\psi_{E_m}|^{2p}\psi_{E_m} = 0$$
(which is nothing but a translated version of \eqref{stationary} with $E=E_m$ and $\psi=\psi_{E_m}$) with $\phi\in C_0^{\infty}(\R^n)$ and taking the limit as $m\to\infty$ we can conclude that $\psi_*^j$ is a weak solution (and hence a strong solution) to
\begin{equation} \label{fallhorz}
 (-\Delta+E)\psi + \sigma|\psi|^{2p}\psi = 0
\end{equation}
at $E=E_*$. It follows from Proposition \ref{prop:exdecay} in the Appendix that there exists $C>0$ and $\gamma\in(0,E_*)$ such that
\begin{equation} \label{fireworks}
 |\psi_*^j(x)| \m\leq\m C e^{-\sqrt{E_*-\gamma}|x|} \FORALL x\in\R^n, \FORALL j\in\Jscr\m.
\end{equation}
Since $(\psi_{E_m},E_m)$ is a ground state solution for \eqref{stationary}, we have $\psi_{E_m}>0$ for all $m$. It now follows using \eqref{split_soln} and \eqref{fireworks} and the limits in (i), (ii) and (iii) above that for any nonnegative $\phi\in C_0^{\infty} (\R^n)$ and any $j\in\Jscr$, using the notation $y_m^0=0$, we have
$$ 0 \leq \liminf_{m\to\infty} \langle \psi_{E_m}, T_{y^j_m} \phi \rangle \m=\m \langle \psi_*^j,\phi\rangle + \liminf_{m\to\infty}\langle \hat v_m, T_{y_m}\phi\rangle \m=\m \langle \psi_*^j,\phi\rangle\m,$$
which implies that $\psi_*^j$ is a nonnegative function. Since any non-negative solution to \eqref{fallhorz} must be a translation of the radially symmetric nonnegative function $U_E$ introduced in \eqref{schr_no_pot}, it follows that for each $j\in\Jscr$, $j\neq0$, the function $\psi_*^j$ is a translation of $U_{E_*}$ and so $\psi_*^j=T_{y_*^j}U_{E_*}$. By redefining $\hat v_m$ to be $\hat v_m+\psi_m^0-\psi_*^0+\sum_{j=1}^d T_{y_m^j}\psi_m^j-\sum_{j=1}^d T_{y_m^j+y_*^j}U_m$ and then redefining ${y_m^j+y_*^j}$ to be $y_m^j$, we obtain the following decomposition instead of \eqref{split_soln} for $\psi_{E_m}$:
\begin{equation}\label{new_split_soln}
  \psi_{E_m} = \sum_{j=1}^d T_{y^j_m} U_m + \psi_*^0   + \hat v_m.
\end{equation}
Here we have denoted $U_{E_m}$ by $U_m$. Using $\lim_{m\to\infty}E_m=E_*$, \eqref{UEdecayest}, \eqref{diwali} and the fact that $\psi_m^j$s, $y_m^j$s and $\hat v_m$ satisfy the limits in (i), (ii) and (iii) above, it is easy to verify that the redefined $y_m^j$s and $\hat v_m$ also satisfy the limits in (ii) and (iii).

Since $(\psi_{E_m},E_m)$ solves \eqref{stationary}, by substituting
\eqref{new_split_soln} we obtain
\begin{align}
  &(-\Delta+V+E_m)\Big(\sum_{j=1}^d T_{y^j_m} U_m + \psi_*^0 + \hat v_m\Big) \nonumber\\
  & \hspace{10mm}+ \sigma\left|\sum_{j=1}^d T_{y^j_m} U_m + \psi_*^0   + \hat v_m\right|
  ^{2p}\sum_{j=1}^d T_{y^j_m} U_m + \psi_*^0   + \hat v_m)=0. \label{passIIT}
\end{align}
Recall the nonlinear operator $N(U,v)$ from \eqref{bound_N}:
$$ N(U,v) = \sigma|U+v|^{2p}(U+v) - \sigma|U|^{2p}U - \sigma(2p+1)|U|^{2p}v.$$
Using this, \eqref{passIIT} can be rewritten as
$$ \hat v_m=(-\Delta+E_m)^{-1}\Bigg[-V\sum_{j=1}^d T_{y^j_m} U_m- \Big(E_m-E_*+V+\sigma(2p+1) \Big|\sum_{j=1}^d T_{y^j_m}U_m +\psi_*^0\Big|^{2p} \Big)\hat v_m\Bigg. $$
$$-N\Big(\sum_{j=1}^d T_{y^j_m}U_m+\psi_*^0,\hat v_m\Big) -\sigma\Bigg. \Big|\sum_{j=1}^d T_{y^j_m} U_m+\psi_*^0\Big|^{2p} \Big(\sum_{j=1}^d T_{y^j_m}U_m+\psi_*^0\Big)+\sigma\sum_{j=1}^d T_{y^j_m}|U_m|^{2p} U_m+ \sigma|\psi_*^0|^{2p} \psi_*^0  \Bigg]. $$
We will now show that the term in the bracket in the above expression (to which $-(-\Delta+E_*)^{-1}$ is applied) converges to 0 in $L^2(\R^n)$ as $m\to\infty$. This will imply that
\begin{equation} \label{noconffail}
 \lim_{m\to\infty}\|\hat v_m\|_{H^2} \m=\m0.
\end{equation}
Recall the limits $|y^j_m|\to\infty$ and $|y^j_m-y^k_m|\to\infty$ for $j, k\in \{1,2,\ldots d\}$ and $j \neq k$ from (ii) and $\hat v_m\stackrel{L^q} {\rightarrow}0$ for $2< q<2n/(n-2)$ ($2< q<\infty\textrm{ if } n=2$, $2<q \leq\infty\textrm{ if } n=1$) as $m\to\infty$ from (iii). Note that from \eqref{UEdecayest}, \eqref{diwali}, $\lim_{m\to\infty} E_m=E_*$ and \eqref{fireworks} we get that $U_m, \psi_*^0\in L^q(\R^n)$ and $\|U_m\|_{L^q}$ can be bounded by a constant independent of $m$ for each $q\in[1,\infty]$. This, along with the limit for $\hat v_m$ in (iii), the limit $\lim_{m\to\infty} E_m=E_*$ and the fact that $\sup_{m\in\N}\|\hat v_m\|_{L^2}=M_v<\infty$, implies that
\begin{equation} \label{yuri1}
 \lim_{m\to\infty} \Big\|\Big(E_m-E_*+\sigma(2p+1) \Big|\sum_{j=1}^d T_{y^j_m}U_m +\psi_*^0\Big|^{2p} \Big)\hat v_m \Big\|_{L^2} \m=\m0.
\end{equation}
Also, using $\lim_{|x|\to\infty}V(x)=0$, the limit for $y_m^j$ from (ii), \eqref{UEdecayest}, \eqref{diwali} and $\lim_{m\to\infty} E_m=E_*$ we get that
\begin{equation} \label{yuri2}
 \lim_{m\to\infty} \Big\|V \sum_{j=1}^d T_{y^j_m} U_m\Big\|_{L^2} \leq \lim_{m\to\infty} \Big\|V \sum_{j=1}^d T_{y^j_m}(U_m - U_{E_*})\Big\|_{L^2} + \lim_{m\to\infty} \Big\|V \sum_{j=1}^d T^j_{y_m} U_{E_*} \Big\|_{L^2} =0.
\end{equation}
From the exponential decay of $U_m$ (uniform in $m$) and $\psi_*^0$ (see \eqref{UEdecayest}, \eqref{diwali} and \eqref{fireworks}) it follows that
\begin{equation} \label{yuri3}
 \lim_{m\to\infty} \Bigg\|\Big|\sum_{j=1}^d T_{y^j_m} U_m+\psi_*^0\Big|^{2p} \Big(\sum_{j=1}^d T_{y^j_m}U_m+\psi_*^0\Big)-\sum_{j=1}^d T_{y^j_m}|U_m|^{2p} U_m-|\psi_*^0|^{2p}\psi_*^0 \Bigg\|_{L^2} =0.
\end{equation}
Note that from \eqref{UEdecayest}, \eqref{diwali}, $\lim_{m\to\infty} E_m=E_*$ and \eqref{fireworks} we get that $U_m, \psi_*^0\in H^2(\R^n)$ and $\|U_m\|_{H^2}$ is bounded uniformly in $m$. Since $\|\psi_{E_m}\|_{H^2}$ is bounded in uniformly in $m$, it follows that $\|\hat v_m\|_{H^2}$ is bounded uniformly in $m$. It now follows from Lemma \ref{lm:nuv} and (iii) that
\begin{equation} \label{yuri4}
 \lim_{m\to\infty} \Big\|N\Big(\sum_{j=1}^d T_{y^j_m} U_m+\psi_*^0,\hat v_m\Big)\Big\|_{L^2} \m=\m0,
\end{equation}
To show that $\|V \hat v_m\|_{L^2}$ converges to 0, we use the following argument. Fix $\epsilon>0$ and choose $L_\e>0$ sufficiently large such that $\|V\|_{L^\infty (\{|x|\geq L_\e\})}<\epsilon/(2M).$ Here $M=\sup_{m\in\N}\|\hat v_m\|_{L^2}$. Using $V\in L^\infty(\R^n)$ and the limits for $\hat v_m$ in (iii) it follows via H\"older's inequality that $\|\hat v_m V\|_{L^2(\{|x|\leq L_\e\})}<\epsilon/2$ for all $m>m_\e$ and some $m_\e$. Hence for all $m>m_\e$, we have $\|V \hat v_m\|_{L^2}<\e$. Since $\e>0$ is arbitrary we get that
\begin{equation} \label{yuri5}
 \lim_{m\to\infty} \|V \hat v_m\|_{L^2}=0.
\end{equation}
It now follows from \eqref{yuri1}-\eqref{yuri5} that \eqref{noconffail} holds.

Like in the compactness case, the decomposition
$$\psi_{E_m}=\sum_{j=1}^d T_{y^j_m} U_m + \psi_*^0   + \hat v_m,$$
together with $\hat v_m\stackrel{H^2}{\rightarrow}0$ and the assumptions $|y_m^j|\rightarrow\infty,|y_m^j-y_m^k|\rightarrow\infty,\ E_m\rightarrow E_*$ imply that our solution bifurcates from a sum of $\psi_*^0$ and $d$ copies of $U_{E_*}$ located at $|x|=\infty$. While this bifurcation point is inaccessible in the Banach spaces we are working with, we will use a related Lyapunov-Schmidt like decomposition. Recall the linear operator $L_+(U,E)$ from \eqref{def:L+}:
$$ L_+(U,E)[v] = (-\Delta +V+E)v + \sigma(2p+1)|U|^{2p}v. $$
Note that $L_+(\sum_{j=1}^d T_{y^j_m}U_m+\psi_*^0,E_m)$, the linearization of
\eqref{passIIT} at the profiles $\sum_{j=1}^d T_{y^j_m}U_m+\psi_*^0$, can be  viewed as a Schr\"odinger operator with $d+1$ potentials,  $V+\sigma(2p+1)|\psi_*^0|^{2p}$ and $\sigma(2p+1)|T_{y_m^j}U_m|^{2p}$ for $j\in\{1,2,\ldots d\}$. Since $|y_m^j|\rightarrow\infty$ and $|y_m^j-y_m^k| \rightarrow\infty$ as $m\to\infty$, the distance between the potentials eventually becomes very large. From the spectral theory for operators with potentials separated by a large distance \cite{MS:slsp}, as $m \to \infty$ the projection operator associated with the kernel of $L_+(\sum_{j=1}^d T_{y^j_m}U_m+\psi_*^0, E_m)$ converges to the projection operator associated with the set containing the kernel of $(-\Delta+V+E_*+(2p+1)\sigma |\psi^0_{E_*}|^{2p})$ and the kernel of  $(-\Delta+E_m+\sigma |T_{y^j_m}U_m|^{2p})$ for all $j\in\{1,2,\ldots d\}$. Therefore we now move to a decomposition with respect to the kernels of the operators
$(-\Delta+V+E_*+(2p+1)\sigma |\psi^0_{E_*}|^{2p})$ and $(-\Delta+E_m+\sigma |T_{y^j_m}U_m|^{2p})$. Since the kernel of the latter operator is
$ {\rm span}\{T_{y^j_m}\partial_{x_k} U_m \m\big|\m i=1,2,\ldots n\}$, which are infinitesimal generators of translations of $U_m$ in $\R^n$, the projection onto this kernel can be absorbed into translations of the profile $U_m$. This is established in a general setting in Proposition \ref{lm:newdec_gen} in the Appendix. Let
$${\rm ker} \left(-\Delta+V+E_*+(2p+1)\sigma|\psi^0_*|^{2p}\right) = {\rm
  span}\{\phi^i \m\big|\m i=1,2,\ldots n_0\}.$$
Applying Proposition \ref{lm:newdec_gen} (also see Remark \ref{svn_ram}) with $u_i$, $\psi$, $\phi_i$ in the proposition being $U_m$ (independent of $i$), $\psi_*^0$ and $\phi^i$, respectively, it follows from the limits $\hat v_m\stackrel{H^2}{\rightarrow}0$, $|y_m^j|\rightarrow\infty$ and $|y_m^j-y_m^k|\rightarrow\infty$ as $m\to\infty$ that \eqref{new_split_soln} can be rewritten (for large $m$)
as
\begin{equation}\label{split_soln_new}
  \psi_{E_m} = \sum_{j=1}^d T_{y_m^j+s_m^j} U_m + \psi^0_* +
  \sum_{i=1}^{n_0}a^i_m \phi^i + v_m\m,
\end{equation}
where $v_m$ satisfies the following orthogonality relations:
$$ \big\langle v_m, T_{y_m^j+s_m^j} \partial_{x_k} U_m
   \big\rangle \m=\m 0, \FORALL j\in\{1,2,\ldots d\}, \FORALL k\in\{1,2,\ldots n\}\m,$$
$$ \left\langle v_m, \phi^i \right\rangle \m=\m 0 \FORALL i\in\{1,2,\ldots n_0\}\m.$$
Moreover, as $m\to \infty$ we have $|s_m^j|\to 0$ for $j\in\{1,2,\ldots d\}$ and $|a_m^i|\to 0$ for $i\in\{1,2,\ldots n_0\}$. Furthermore, since
$$ v_m=\hat v_m+\sum_{j=1}^d T_{y_m^j} U_m-\sum_{j=1}^d T_{y_m^j+s_m^j}U_m- \sum_{i=1}^{n_0}a^i_m \phi^i $$ and
\eqref{noconffail} holds, $a_m^i\to0$ and $\sum_{j=1}^d T_{y_m^j} U_m-\sum_{j=1}^d T_{y_m^j+s_m^j}U_m \stackrel {H^2} {\to} 0$ (because $s_m^j\to0$ and $E_m\to E_*$), we have
$$ \lim_{m\to\infty} \|v_m\|_{H^2} =0. $$
We assume, with no loss of generality, that the decomposition of $\psi_{E_m}$ in \eqref{split_soln_new} holds for all $m$.

Define
$$ z_m^j \m=\m y_m^j+s_m^j \FORALL j\in\{1,2,\ldots d\}\m, \qquad \widetilde U_m \m=\m \sum_{j=1}^d T_{z_m^j} U_m+\psi^0_*\m, \qquad \phi^a_m \m=\m \sum_{i=1}^{n_0} a^i_m \phi^i \m. $$
Using \eqref{split_soln_new}, we can rewrite \eqref{stationary} as
\begin{equation}\label{splteq}
  (-\Delta +V+E_m)(\widetilde U_m+\phi^a_m+v_m) + \sigma|\widetilde U_m
  +\phi_m^a+v_m|^{2p}(\widetilde U_m+\phi_m^a+v_m)=0,
\end{equation}
which can equivalently be written using the operators $L_+(U,E)$ and $N(U,v)$, defined in \eqref{def:L+} and \eqref{bound_N}, as follows:
\begin{align}
  &\ L_+(\widetilde U_m+\phi^a_m, E_m)[v_m] + N(\widetilde U_m+\phi^a_m, v_m) + V\sum_{j=1}^d T_{z_m^j} U_m + \sigma|\widetilde U_m+\phi^a_m|^{2p}(\widetilde U_m +\phi^a_m) \nonumber\\
  & -\sigma \sum_{j=1}^d T_{z_m^j}|U_m|^{2p}U_m - \sigma|\psi_*^0|^{2p}
  \psi_*^0  + (E_m-E_*)(\psi^0_*+\phi^a_m) - \sigma(2p+1) |\psi^0_*|^{2p} \phi^a_m \m=\m 0. \label{eq:stationary_split}
\end{align}
We will apply the Lyapunov-Schmidt procedure to the above equation. Note that using the exponential decays of $U_m$ and $\psi_*^0$ and the fact that $E_m\to E_*$ we can write the above equation as
\begin{align}
  &\ L_+(\widetilde U_m+\phi^a_m,E_m)[v_m] + N(\widetilde U_m+\phi^a_m,v_m)
  + V\sum_{j=1}^d T_{z_m^j}U_m + O(|a_m|) \nonumber \\
  & + O(E_m-E_*) + \sum_{j=1}^d O\left(e^{-\sqrt{E_*-\gamma}|z_m^j|}\right) +
  \sum_{i\neq j,\ i,j=1}^d O\left(e^{-\sqrt{E_*-\gamma}|z_m^i-z_m^j|}\right)=0,
  \label{eq:spord}
\end{align}
for a constant $\gamma>0$. Define the operator $P^\perp_m$ on $L^2(\R^n)$ to be the orthogonal projection onto its subspace
$$ \left\{\phi^i, T_{z_m^j}\partial_{x_k}U_m \m\big|\m i=1,2,\ldots n_0\m,\ k=1,2, \ldots n\m,\ j=1,2,\ldots d\right\}^{\perp}.$$
For simplicity of notation, let us write \eqref{eq:stationary_split} as
$$F(v_m, z_m, a_m)=0\m,$$
where $z_m$ is the set $\{z_m^1,z_m^2,\ldots z_m^d\}$ and $a_m$ is the set $\{a_m^1,a_m^2,\ldots a_m^{n_0}\}$. This equation is equivalent to the following set of equations:
\begin{equation}\label{eq:infcom}
  P^\perp_m F(v_m, z_m, a_m)=0,
\end{equation}
which is an infinite dimensional equation to be solved for $v_m$ as a
function of $z_m$ and $a_m$, and
\begin{align}
   \langle F(v_m,z_m,a_m),\phi^i\rangle &= 0, \FORALL i\in\{1,2,\ldots n_0\},
   \nonumber\\
   \langle F(v_m,z_m,a_m), T_{z_m^j}\partial_{x_k} U_m \rangle
   &= 0, \FORALL j\in\{1,2,\ldots d\}, \FORALL k\in\{1,2,\ldots n\}, \label{eq:fincom}
\end{align}
which is a set of finite dimensional equations to be solved for $z_m$ and
$a_m$ using the solution $v_m$ of \eqref{eq:infcom}.

We can write $L_+(\widetilde U_m+\phi^a_m, E_m)=\widetilde L_m+\widetilde W_m$, where
$$ \widetilde L_m = -\Delta+ V+ E_*+\sigma(2p+1)\Big(\sum_{j=1}^d \big|T_{z_m^j} U_{E_*}\big|^{2p} +|\psi_*^0|^{2p}\Big),$$
$$ \widetilde W_m = E_m-E_*+\sigma(2p+1)\Big(\Big|\sum_{j=1}^d T_{z_m^j} U_m+\psi_*^0+\phi^a_m\Big|^{2p}-\sum_{j=1}^d\big|T_{z_m^j} U_{E_*}\big|^{2p} -|\psi_*^0|^{2p}\Big).$$
Since $E_m\to E_*$, $a_m^i\to0$, $|y_m^j|\rightarrow\infty$ and $|y_m^j-y_m^k| \rightarrow\infty$ as $m\to\infty$, we get $\|\widetilde W_m\|_{H^2\mapsto L^2}\to0$ as $m\to\infty$. Clearly, $\widetilde L_m$ is a Schr\"odinger operator with $d+1$ potentials,
$$ V_j=\sigma(2p+1)|T_{y_m^j}U_{E_*}|^{2p} \FORALL j\in\{1,2,\ldots d\}, \qquad V_{d+1}=V+\sigma(2p+1)|\psi_*^0|^{2p},$$
which are separated by a large distance. Let $\widetilde P_m^\perp$ be the projection operator in $L^2(\R^n)$ onto the set
$$ \left\{\phi^i, T_{z_m^j}\partial_{x_k}U_{E_*} \m\big|\m i=1,2,\ldots n_0\m,\ k=1,2, \ldots n\m,\ j=1,2,\ldots d\right\}^{\perp}.$$
This is the set containing the kernel of the operators $(-\Delta+V+\sigma(2p+1) |\psi_*^0|^{2p}+E_*)$ and $(-\Delta+E_*+\sigma(2p+1) |T_{y_m^j}U_{E_*}|^{2p})$. Let $R_m=1/E_m$. We can apply Proposition \ref{prop:MS} in the Appendix to $\widetilde L_m$,  with $R=R_m$, $L_{R_m}=\widetilde L_m$, $E(R_m)=E_*$, $s_j(R_m)=z_m^j$ for $j\in\{1,2,\ldots d\}$, $s_{d+1}(R_m)=0$ and the potential $V_1, V_2, \ldots V_{d+1}$ as defined above, and then apply the spectral perturbation theory to $\widetilde L_m+\widetilde W_m$, by regarding $\widetilde W_m$ as a perturbation, to conclude that the operator $\widetilde P_{m}^{\perp} L_+(\widetilde U_m+\phi^a_m, E_m) \widetilde P_{m}^{\perp}:H^2(\R^n)\mapsto L^2(\R^n)$ has a bounded inverse and the norm of the inverse operator can be bounded uniformly in $m$, for large $m$. The same conclusion holds for the operator $ P_{m}^{\perp} L_+(\widetilde U_m+\phi^a_m, E_m) P_{m}^{\perp}$ since the definitions of $P_m^\perp$ and $\widetilde P_m^\perp$ and the limit  $\lim_{m\to\infty}E_m=E_*$ imply that $\|P_m^\perp-\widetilde P_m^\perp\|_{L^2\mapsto L^2}\to0$ and $\|P_m^\perp-\widetilde P_m^\perp\|_{H^2\mapsto H^2}\to0$ as $m\to\infty$. Hence for all $m$ large and some $K>0$,
$$ \|[P_m^{\perp}L_+(\widetilde U_m+\phi^a_m, E_m) P_m^{\perp}]^{-1}\|_{L^2\mapsto H^2} \leq K. $$
Hence for large $m$, using \eqref{eq:spord} and the relation $P^\perp_m v_m=v_m$, we can rewrite the infinite dimensional equation
\eqref{eq:infcom} as
\begin{align}
  v_m &= -[\m P_m^\perp L_+(\widetilde U_m+\phi^a_m,E_m) P_m^\perp\m]^{-1} \Big(N(\widetilde U_m+\phi^a_m, v_m) + V\sum_{j=1}^d T_{z_m^j} U_m\Big.\nonumber\\
  &\qquad\qquad\Big. + (E_m-E_*)\psi_*^0 + O(|a_m|) + \sum_{\stackrel{j,k
  =1}{j\neq k}}^d O\left(e^{-\sqrt{E_*-\gamma}|z_m^j-z_m^k|}\right)\Big).
  \label{vm_com}
\end{align}
Like in the case of compactness, existence of an unique solution $v_m$ to this equation can established by applying contraction mapping principle using \eqref{est_nl} and the smallness of $\|v_m\|_{H^2}$.

While analyzing the finite dimensional equations \eqref{eq:fincom} it would
be useful to have estimates for the decay of $\|v_m\|_{H^2}$, explicitly in terms of the $z_m^j$s, similar to \eqref{bound_vk} in the case of compactness.  But due to the presence of the non-drifting terms $\psi_*^0$ and $\phi_m^a$ in \eqref{split_soln_new}, that remain localized near the origin in $\R^n$, we will not be able to get such estimates. Indeed, by adopting the approach in the compactness case, we can derive from \eqref{vm_com} that for some $C>0$
$$ \|v_m\|_{H^2} \m\leq\m C \sum_{j=1}^d\| VT_{z_m^j}U_m\|_{L^2}+ C(E_m-E_*)\|\psi_*^0\|_{L^2} + O(|a_m|). $$
But without knowing the rate of convergence of the terms $(E_m-E_*)\|\psi_*^0\|_{L^2} + O(|a_m|)$ to 0, this estimate is not useful. Hence we will express $v_m$ as
$$ v_m \m=\m P_m^\perp v_m^0+v_m^1 \m.$$
Here $v_m^0$ will be associated with the non-drifting terms in \eqref{split_soln_new} while $v_m^1$ will be associated with the drifting terms. For $v_m^0$ we will derive certain exponential in space decay estimates (see \eqref{tumbboost}) so that its scalar product with the drifting terms $T_{z_m^j}\partial_{x_k}U_m$ in \eqref{eq:fincom}, which is a quantity of interest, decays at a rate which depends explicitly on the $z_m^j$s.
For $\|v_m^1\|_{H^2}$ we will derive an decay estimate explicitly in terms of the $z_m^j$s, like \eqref{bound_vk} in the compactness case (see \eqref{newnotconf}). Using these estimates we will analyze the equations
\eqref{eq:fincom} to obtain a contradiction implying that splitting cannot occur.

We first derive the estimate \eqref{tumbboost}. Let  $P_0$ be the projection operator in $L^2(\R^n)$ onto its subspace
$$ S \m=\m {\rm ker} \left(-\Delta+V+E_*+(2p+1)\sigma|\psi^0_*|^{2p}\right) \m=\m \{\phi^1,\phi^2,\ldots \phi^{n_0}\} \m.$$
Consider the equation
\begin{equation}\label{eq:v0}
  P_0^\perp \left[(-\Delta+V+E_m)(\psi^0_* + \phi^a_m+v_m^0)+\sigma
  |\psi^0_*+\phi^a_m+v_m^0|^{2p}(\psi^0_*+\phi^a_m+v_m^0)\right]=0,
\end{equation}
We wish to find a $v_m^0\in H^2(\R^n)$ that solves this equation. Clearly the operator
$$  F_L(\cdot)=P_0^\perp\Big[(-\Delta +V+E_*)(\cdot) + \sigma
    (2p+1)|\psi_*^0|^{2p}(\cdot)\Big], $$
which maps $S^\perp\cap H^2(\R^n)$ to $S^\perp \cap L^2(\R^n)$, has a bounded inverse.
Hence we can write \eqref{eq:v0} as
\begin{align*}
  v_m^0 =& F_L^{-1}P_0^\perp \Big[(E_*-E_m)(\psi^0_* + \phi^a_m+v_m^0) +\sigma(2p+1)(|\psi_0^*|^{2p}-|\psi_0^*+\phi^a_m|^{2p})v_m^0 \\ &\hspace{25mm}+\sigma(2p+1)|\psi_*^0|^{2p}\phi^a_m +\sigma(|\psi_*^0|^{2p}\psi_*^0-|\psi_*^0+\phi_m^a|^{2p}(\psi_*^0+\phi_m^a)) \Big].
\end{align*}
By using the contraction mapping principle it follows from the above equation that for $m$ sufficiently large (so that $|E_m-E_*|$ and $|a_m|$ are sufficiently small) there exists a unique $v_m^0\in S^\perp\cap H^2(\R^n)$ that solves this equation with $\|v_m^0\|_{H^2}=O(|E_m-E_*|) +O(|a_m|)$. For $n\leq3$, this implies directly that $v_m^0 \stackrel{L^\infty}{\to}0$ as $m\to\infty$. For $n\geq4$, the same conclusion can be arrived at by first using regularity to show that $v_m^0$ converges to 0 in $W^{2,q}$ for all $q\geq 2$, see the Appendix \ref{se:app}. We will now show that $v_m^0$ decays exponentially.

From \eqref{eq:v0} we get that there exist $\beta_m^i\in\R$, for each $i\in\{1,2,\ldots n_0\}$, such that
\begin{equation} \label{raman_model}
 \Delta(\psi_*^0+\phi^a_m+v_m^0) = (V + E_m + \sigma|\psi_*^0+\phi^a_m+ v_m^0|^{2p})(\psi_*^0+\phi^a_m+v_m^0) - \sum^{n_0}_{i=1} \beta_m^i\phi^i.
\end{equation}
Note that for each $i\in\{1,2,\ldots n_0\}$ we have
\begin{align*}
  \beta_m^i\langle\phi^i,\phi^i\rangle &= \left<\phi^i,(-\Delta+V+E_m)(\psi^0_* +\phi^a_m+v_m^0) + \sigma|\psi^0_*+\phi^a_m+v_m^0|^{2p}(\psi^0_*
  +\phi^a_m+v_m^0)\right>\\
  &=\left<\phi^i,(L_*+E_m-E_*)(\phi^a_m+v_m^0)+(E_m-E_*)\psi^0_*+N(\psi^0_*, \phi^a_m+v_m^0)\right>\\ &=\left<\phi^i,(L_*+E_m-E_*)(\phi^a_m+v_m^0)+(E_m-E_*)\psi^0_*\right> +
  O(\|\phi^a_m+v_m^0\|_{H^2}^2),
\end{align*}
where we have used \eqref{est_nl} to get the last equality and $L_*[v]=(-\Delta +V+E_*)v+(2p+1)\sigma |\psi_{E_*}^0|^{2p}v$. It now follows using $L_*\phi^i=0$ and the estimate for $\|v_m^0\|_{H^2}$ discussed earlier that
$$ |\beta_m^i| = O(|E_m-E_*|) + O(|a_m|) .$$
Define the function $W_m=\psi_*^0+\phi^a_m+v_m^0$. From the estimates on $\psi_*^0$, $\phi^a_m$ and $v_m^0$ we get that $W_m\in H^2(\R^n)$ and, again using the regularity result in the Appendix \ref{se:app}, $W_m\in L^\infty(\R^n)$. Consider the set
$$ K_+=\Big\{x\in\R^n \m\big| W_m(x)>\max_{i=1,2,\ldots n_0}
   |\phi^i(x)|\Big\}.$$
On $K_+$ we have
$$ \Delta W_m(x) \geq \Big(E_m+\sigma |W_m(x)|^{2p}-|V(x)|-\sum_{i=1}^{n_0}
   |\beta_m^i|\Big)W_m(x).$$
Fix $0<\gamma<E_*$. Let $M\in\N$ and $L>0$ be sufficiently large so that for all $m>M$
$$ \sum_{i=1}^{n_0}|\beta_m^i|\leq\frac{\gamma}{4},\quad |E_m-E_*|\leq\frac
   {\gamma}{4},\quad \|V\|_{L^\infty}\leq\frac{\gamma}{4},\quad |\sigma| |W_m(x)|^{2p}\leq  \frac{\gamma}{4} \FORALL x\in \R^n\setminus B(0,L).$$
Here $B(0,L)$ is the closed ball of radius $L$ centered at the origin in $\R^n$ and  the last inequality follows from the inequalities $\psi_*^0(x)\leq Ce^{-\sqrt{E_*-\gamma}|x|}$ and $\phi^a_m(x)\leq Ce^{-\sqrt{E_*-\gamma}|x|}$  for some $C>0$ and the limit $v_m^0\stackrel{L^\infty}{\to}0$ as $m\to\infty$. Therefore
\begin{equation}\label{w_ineq}
  \Delta W_m(x) \geq (E_*-\gamma)W_m(x) \FORALL x\in K_+\cap\R^n\setminus B(0,L).
\end{equation}
Consider the function $\Phi(x)=C_0 e^{-\sqrt{E_*-\gamma}|x|}$ with $C_0>0$ being a constant (to be chosen later). This function satisfies
\begin{equation}\label{p_ineq}
  \Delta\Phi(x)\leq(E_*-\gamma)\Phi(x) \FORALL x\in\R^n\setminus0.
\end{equation}
It follows from \eqref{w_ineq} and \eqref{p_ineq} that
$$\Delta(W_m-\Phi)(x) \geq (E_*-\gamma)(W_m-\Phi)(x) \FORALL x\in K_+\cap \R^n\setminus B(0,L).$$
Hence there exists no positive maxima for the function $W_m-\Phi$ on the set
$K_+\cap B(0,L)$. Choose $C_0$ sufficiently large so that
$$ \Phi(x)\geq W_m(x) \FORALL x\in \partial B(0,L), \qquad \Phi(x)\geq\max_{i=1,2,\ldots n_0} |\phi^i(x)| \FORALL x\in\R^n.$$
Here $\partial B(0,L)=\{x\in\R^n\m\big|\m |x|=L\}$. Also, on the boundary of the set $K_+$, we have
$$ W_m(x)=\max_{i=1,\ldots,n_0}|\phi^i(x)|\leq\Phi(x).$$
Moreover, since $v_m^0\in W^{2,q}$ with $q\geq2$, we have $v_m^0(x)\to 0$ as
$|x|\to\infty$ which implies that $W_m(x)\to 0$ as $|x|\to \infty$. It now follows from the above discussion that
$$W_m(x)\leq\Phi(x) \FORALL x\in K_+\cap \R^n\setminus B(0,L).$$
A similar argument applied to the function $-W_m$ on the set
$$ K_- \m=\m \Big\{ x\in\R^n \m\big|\m -W_m(x) > \max_{i=1,2\ldots n_0} |\phi^i(x)|\Big\}$$
gives
$$ -W_m(x)\leq\Phi(x) \FORALL x\in K_-\cap \R^n\setminus B(0,L).$$
Therefore we can conclude that for each $x\in\R^n\setminus B(0,L)$ and some $C>0$
\begin{align}
  & |\psi_*^0(x)+\phi^a_m(x)+v_m^0(x)| \m\leq\m \max\{\phi^1(x),\phi^2(x), \ldots\phi^{n_0}(x),\Phi(x)\} \m\leq\m C e^{-\sqrt{E_*-\gamma}|x|} \nonumber\\
  \implies&\ |v_m^0(x)| \leq C e^{-\sqrt{E_*-\gamma}|x|}. \label{tumbboost}
\end{align}
For $x\in B(0,L)$, we have $|v_m^0(x)|\leq\|v_m^0\|_{L^\infty}$ which tends to zero as $m\to\infty$.

We will next derive estimates for the $H^2$ norm of $v_m^1=v_m-P_m^\perp v_m^0$. This function is the part of $v_m$ associated with the drifting terms in \eqref{split_soln_new}. Since $v_m^0\in S^{\perp}$ we have
$$ P_m^\perp v_m^0 = v_m^0 - \sum_{k=1}^n\sum_{j=1}^d \big\langle T_{z_m^j} \partial_{x_k}U_m, v_m^0 \big\rangle T_{z_m^j} \partial_{x_k}U_m = v_m^0-P_z v_m^0.$$
Here $P_z$ is the projection operator in $L^2(\R^n)$ onto its subspace
$$\textrm{ span} \{T_{z_m^j} \partial_{x_k}U_m \m\big|\m k=1,2,\ldots n,\ j=1,2,\ldots d\}.$$
To simplify the notation, we do not explicitly indicate the dependence of $P_z$ on $m$. Define
$$ \overline U_m \m=\m \sum_{j=1}^d T_{z_m^j} U_m \m.$$
Then \eqref{splteq} can be rewritten as
$$ (-\Delta+V+E_m)(\overline U_m+W_m+v_m^1-P_z v_m^0) + \sigma|\overline U_m
  +W_m+v_m^1-P_z v_m^0|^{2p}(\overline U_m+W_m+v_m^1-P_z v_m^0)=0,$$
where $W_m=\psi_*^0+\phi^a_m+v_m^0$. Using \eqref{raman_model} it follows that the above equation is equivalent to
\begin{align*}
  0 =&\ (-\Delta+V+E_m)v_m^1 + V\overline U_m - \sigma\sum_{j=1}^d T_{z_m^j}|U_m|^{2p}U_m - \sigma|W_m|^{2p}W_m + \sum_{i=1}^{n_0}\beta_m^i\phi^i \\
  & - V\sum_{k=1}^n\sum_{j=1}^d\big\langle
  T_{z_m^j}\partial_{x_k}U_m,v_m^0\big\rangle T_{z_m^j}\partial_{x_k}U_m+ \sigma(2p+1)\sum_{k=1}^n\sum_{j=1}^d\big\langle T_{z_m^j}\partial_{x_k}U_m,
  v_m^0 \big\rangle T_{z_m^j}|U_m|^{2p}\partial_{x_k}U_m\\[2mm]
  &+ \sigma|\overline U_m+W_m+v_m^1-P_z v_m^0|^{2p}(\overline U_m+W_m+v_m^1-
  P_z v_m^0),
\end{align*}
which can be rewritten using the linear operator $L_+(\overline U_m+W_m-P_z
v_m^0,E_m)$ and the nonlinear operator $N(\overline U_m+W_m-P_z v_m^0,v_m^1)$ as follows:
\begin{align}
  0 =&\ L_+(\overline U_m+W_m-P_z v_m^0,E_m)v_m^1 + N(\overline U_m+W_m-P_z
  v_m^0,v_m^1) + V\overline U_m \nonumber\\
  & + \sigma|\overline U_m+W_m-P_z v_m^0|^{2p}(\overline U_m+W_m-P_z v_m^0) - \sigma\sum_{j=1}^d T_{z_m^j}|U_m|^{2p}U_m - \sigma|W_m|^{2p}W_m + \sum_{i=1}^{n_0}\beta_m^i\phi^i \nonumber\\
  & - V\sum_{k=1}^n\sum_{j=1}^d\left<T_{z_m^j}\partial_{x_k}U_m,v_m^0
  \right>T_{z_m^j}\partial_{x_k}U_m + \sigma(2p+1)\sum_{k=1}^n\sum_{j=1}^d\Big\langle T_{z_m^j}\partial_{x_k} U_m,v_m^0\Big\rangle T_{z_m^j}|U_m|^{2p}\partial_{x_k}U_m. \label{cappan}
\end{align}
The argument based on Proposition \ref{prop:MS} and spectral perturbation theory (see discussion above \eqref{vm_com}) used to show that $P_{m}^{\perp}L_+(\widetilde U_m+\phi^a_m, E_m) P_{m}^{\perp}$ is invertible, with a uniform in $m$ bound for its inverse, can be used to show that the same is true for $P_m^\perp L_+(\overline U_m+W_m-P_z v_m^0,E_m) P_m^\perp$. Indeed, let $L_+(\overline U_m+W_m-P_z v_m^0,E_m)=\widetilde L_m+\widetilde W_m$, where
$$ \widetilde L_m = -\Delta+ V+ E_*+\sigma(2p+1)\Big(\sum_{j=1}^d \big|T_{z_m^j} U_{E_*}\big|^{2p} +|\psi_*^0|^{2p}\Big),$$
$$ \widetilde W_m = E_m-E_*+\sigma(2p+1)\Big(\Big|\overline U_m+W_m-P_z v_m^0\Big|^{2p}-\sum_{j=1}^d\big|T_{z_m^j} U_{E_*}\big|^{2p} -|\psi_*^0|^{2p}\Big).$$
Since $E_m\to E_*$, $a_m^i\to0$, $|y_m^j|\rightarrow\infty$ and $|y_m^j-y_m^k| \rightarrow\infty$ as $m\to\infty$, we get $\|\widetilde W_m\|_{H^2\mapsto L^2}\to0$ as $m\to\infty$.  Applying the argument above \eqref{vm_com} to $\widetilde L_m$ and $\widetilde W_m$ defined here, we get that $L_m=P_m^\perp L_+(\overline U_m+W_m-P_z v_m^0,E_m) P_m^\perp$ is invertible for large $m$ and the norm of its inverse operator can be uniformly bounded. Therefore applying $P_m^\perp$ to \eqref{cappan} and rewriting it as  $v_m^1= L_m^{-1}$(remaining terms), and using $P_m^\perp\sum_{i=1}^{n_0}\beta_
m^i\phi^i=0$ and deriving estimates for terms containing product of exponentially decaying functions separated by large distances, we can deduce via contraction principle that
\begin{equation} \label{newnotconf}
  \|v_m^1\|_{H^2} \leq C\|V\overline U_m\|_{L^2} + \sum_{j=1}^d O\Big(e^
   {-\sqrt{E_*-\gamma}|z_m^j|}\Big) + \sum_{i\neq j,\ i,j=1}^d O\Big( e^{-\sqrt{E_*-\gamma}|z_m^i-z_m^j|}\Big).
\end{equation}
In summary, we have
$$ v_m=P_m^{\perp}v_m^0+v_m^1=v_m^0-P_z v_m^0+v_m^1$$
with $v_m^0(x)\leq Ce^{-\sqrt{E_*-\gamma}|x|}$ for all $x\in\R^n$ which, using the definition of $P_z$, gives
$$ \|P_z v_m^0\|_{H^2}+\|v_m^1\|_{H^2} \leq C\|V\overline U_m\|_{L^2}+\sum_
   {i\neq j,\ i,j=1}^{d} Ce^{-\sqrt{E_*-\gamma}|z_m^i-z_m^j|}\ .$$

We will now study the set of finite dimensional equations in \eqref{eq:fincom} obtained by projecting \eqref{eq:stationary_split} on the functions $T_{z_m^j}\partial_{x_k}U_m$. Explicitly, the set of equations are
\begin{align}
  0=\ &\Big\langle T_{z_m^j}\partial_{x_k}U_m, V\sum_{i=1}^d T_{z_m^i}U_m
  + N(\widetilde U_m+\phi^a_m,v_m)\Big\rangle+ (E_m-E_*)\Big\langle T_{z_m^j}\partial_{x_k}U_m, \psi_{E_*}^0+\phi^a_m\Big\rangle\nonumber\\
  & +\sigma\Big\langle T_{z_m^j} \partial_{x_k}U_m, |\widetilde U_m+\phi^a_m|^{2p}(\widetilde U_m+\phi^a_m) - \sum_{i=1}^d T_{z_m^i}|U_m|^{2p}U_m -|\psi_*^0|^{2p} \psi_*^0 - (2p+1)|\psi_*^0|^{2p}\phi^a_m \Big\rangle\nonumber\\
  & +\Big\langle \Big(V+\sigma(2p+1)\big[|\widetilde U_m+\phi^a_m|^{2p}-T_{z_m^j}|U_m|^{2p}\big]\Big)T_{z_m^j}\partial_{x_k}U_m, v_m   \Big\rangle \label{findim_splt}
\end{align}
for $j=1,2,\ldots d$ and $k=1,2,\ldots n$. We will show that the set of
equations in \eqref{findim_splt} has no solution. Our analysis will focus on
two types of terms in \eqref{findim_splt} - terms involving interaction
between $T_{z_m^j}U_m$ and $T_{z_m^k}U_m$ for $j\neq k$ (such terms
arise from the second line of \eqref{findim_splt}) and terms involving
interaction between the various $T_{z_m^i}U_m$s and the potential $V$.
We will consider two cases, one of which must occur. In the first case we assume that the former terms, which decay like $e^{-\sqrt{E_*-\gamma} |z^j_m-z^k_m|}$, dominate the latter terms. Then, by combining the equations in \eqref{findim_splt} appropriately we construct a new equation in which the former terms dominate all the other terms. This will imply that there is no solution to \eqref{findim_splt}. In the second case, we will combine the equations in \eqref{findim_splt} appropriately to obtain a new equation in which the former terms (at least the important ones) are eliminated. Under the assumption that the terms involving the potential dominate terms decaying like $e^{-2\sqrt{E_*-\gamma}|z^j_m-z^k_m|}$, we can apply the argument from the proof of the compactness case to conclude that \eqref{findim_splt} has no solution. We will now discuss the two cases rigorously.

Let $M_m=\min\{|z_m^j-z_m^k|\m\big|\m j,k=1,2,\ldots d,\ j\neq k\}$. We assume without loss of generality, by considering subsequences if necessary, that each of the bounded sequences $(M_m/|z_m^j-z_m^k|)_{m\in\N}$ is converging
and that $\|VT_{z^1_m}U_m\|_{L^2}$ is the maximum of the set $\{\|V
T_{z^j_m}U_m\|_{L^2} \m\big|\m j=1,2,\ldots d\}$ for each $m$ and also that each of the sequences $(|z_m^1-z_m^k|/|z_m^1|)_{m\in\N}$ converges either to a finite limit or to $\infty$. Partition $\{z_m^1,z_m^2,\ldots z_m^d\}$ into sets $Z_m^1$ and $Z_m^2$ such that for each $z_m^k \in Z_m^1$ and no $z_m^k \in Z_m^2$ we have
$$\lim_{m\to \infty} \frac{|z_m^k-z_m^1|}{|z_m^1|}=0.$$
Define
$$ \widehat U_m = \sum_{z_m^k\in Z_m^1}T_{z_m^k}U_m\quad\textrm{and}\quad
   \widehat W_m = \sum_{z_m^k\in Z_m^2}T_{z_m^k}U_m.$$
In Case (i) we suppose that
\begin{equation}\label{case1_condition_p_n}
  \sqrt{E_*} < \limsup_{m \to \infty} \frac{-\ln\|V T_{z_m^j} U_m\|_{L^2}}
  {M_m} \FORALL j\in\{1,2,\ldots d\}
\end{equation}
holds and in Case (ii) we suppose that
\begin{equation}\label{case2_condition_p_n}
  2\sqrt{E_*} > \liminf_{m\to \infty} \frac{-\ln \Big|\Big\langle\partial_{x_m}V, \widehat U_m^2\Big\rangle\Big|}{M_m},
\end{equation}
where $x_m$ is the direction along the vector $z_m^1$, holds. One of the above conditions must hold. Indeed if \eqref{case1_condition_p_n} does not hold for any $j\in\{1,2,\ldots d\}$ then, since we have assumed that $\|VT_{z^1_m}U_m \|_{L^2}$ is the maximum of the set $\{\|VT_{z^j_m}U_m\|_{L^2} \m\big|\m j=1,2,\ldots d\}$ for each $m$, it follows that
\begin{equation}\label{contr}
  \limsup_{m\to \infty} \frac{-\ln\|V T_{z_m^1} U_m\|_{L^2}}{M_m} \leq
  \sqrt{E_*}.
\end{equation}
Then
\begin{align*}
  &\liminf_{m\to \infty} \frac{-\ln\Big|\Big\langle \partial_{x_m}  V,
  \widehat U_m^2 \Big\rangle\Big|} {M_m} \\
  =\ & \liminf_{m\to \infty} \frac{-\ln\|V T_{z_m^1}U_m\|_{L^2}^2}
  {M_m} \frac{-\ln\Big|\Big\langle \partial_{x_m}  V, \widehat U_m^2 \Big\rangle\Big|}{-\ln\|V T_{z_m^1}U_m\|_{L^2}^2}\\
  <\ &2 \sqrt{E_*},
\end{align*}
where we have used \eqref{contr} and the fact that the second ratio in
the second line above is less than 1  because for large $m$
$$ 1 >\left|\left\langle\partial_{x_m} V, V_{R_m}\widehat U_m^2 \right\rangle\right| > \|VT_{z_m^1}U_m\|_{L^2}^2. $$
Hence if \eqref{case1_condition_p_n} does not hold, then \eqref{case2_condition_p_n} holds, i.e. either Case (i) or Case (ii) must occur.

We say that the sequences $(z_m^j)_{m\in\N}$ and $(z_m^k)_{m\in N}$ are  connected if and only if
$$\lim_{m\to \infty} \frac {M_m} {|z_m^j-z_m^k|} \m=\m 1.$$

\noindent
{\bf Case (i):} Assume that \eqref{case1_condition_p_n} holds.  We say that the sequences $(z_m^{a_1})_{m\in\N}, (z_m^{a_2})_{m\in\N}, \ldots (z_m^{a_r})_{m\in\N}$, where each $a_i\in\{1,2,\ldots d\}$, form a connected component if two conditions are satisfied: (a) For any $i,j\in \{a_1,a_2, \ldots a_r\}$ with $i\neq j$, there exists a chain $(b_1, b_2, \ldots b_t)$, where each $b_k\in\{a_1,a_2,\ldots a_r\}$, $b_1=i$ and $b_t=j$, such that the sequences $(z_m^{b_k})_{m\in\N}$ and $(z_m^{b_{k+1}})_{m\in\N}$ are connected for each $k\in\{1,2,\ldots r-1\}$ and (b) For any $i \notin \{a_1,a_2,\ldots a_r\}$, there exists no $j\in\{a_1,a_2,\ldots a_r\}$ such that $(z_m^i)_{m\in\N}$ is connected to $(z_m^j)_{m\in\N}$.

Consider a connected component with at least two sequences, say $(z_m^1)_{m\in\N}, (z_m^2)_{m\in\N}, \ldots (z_m^r)_{m\in\N},$ see Figure \ref{fig:spt(i)}. For each $m$, the baricenter of these $z_m^j$s is
$$ \bar z_m \m=\m \frac{z_m^1+z_m^2 \ldots +z_m^r}{r} \m.$$
By going to a subsequence if necessary, suppose that $q\in\{1,2,\ldots r\}$, is such that $|z_m^q-\bar z_m|\geq |z_m^j-\bar z_m|$ for all $j\in\{1,2,\ldots r\}$ and all $m$. For each $z_m^l$ connected to $z_m^q$ it follows that
$$ \cos(\alpha_m^l) \geq \frac{|z_m^l-z_m^q|}{2|\bar z_m-z_m^q|},$$
where $\alpha_m^l$ is the angle between the vectors $z_m^l-z_m^q$ and $\bar
z_m-z_m^q$. Also, we have
\begin{align*}
  \lim_{m\to\infty}\frac{|\bar z_m-z_m^q|}{M_m} &= \lim_{m\to\infty} \frac{|z_m^1-z_m^q+\ldots+z_m^{q-1}-z_m^q+z_m^{q+1}-z_m^q+\ldots+z_m^r
  -z_m^q|}{rM_m}\\
  &\leq\lim_{m\to\infty}\sum_{i=1}^{r-1}\frac{|z_m^i-z_m^q|}{rM_m}
  \leq\frac{(r-1)^2}{r} \leq\frac{(d-1)^2}{d}.
\end{align*}
which implies that for each $z_m^l$ connected to $z_m^q$
$$ \lim_{m\to \infty} \frac{|z_m^l-z_m^q|}{2|\bar z_m-z_m^q|} \geq \liminf_{m   \to \infty}\frac{r |z_m^l-z_m^q|}{2(r-1) \sum_{i=1}^{r-1}|z_m^i-z_m^{i+1}|}
   \geq \frac{r}{2 (r-1)^2} \geq \frac{d}{2 (d-1)^2}.$$
By letting $j=q$ in \eqref{findim_splt}, we get that for some $\epsilon>0$
\begin{align}\label{eq:fin_dim_eq_case1}
  0 =&\ \Big\langle T_{z_m^q}\partial_{x_k}U_m, \sigma(2p+1) T_{z_m^q}U_m^{2p} \mathop{\sum_{l:z_m^l,z_m^q\textrm{ is}}}_{\textrm{connected}} T_{z_m^l} U_m\Big\rangle\nonumber\\
  &+ C\sum_{j=1}^d\|VT_{z_m^j}U_m\|_{L^2} + \sum_{i\neq j,\ i,j=1}^d O
  \Big(e^{-(1+\epsilon)\sqrt{E_*-\gamma}|z_m^i-z_m^j|}\Big).
\end{align}
Here we have used \eqref{hypo_smallE2}, which implies that $|V(x)|>C |x|^{-\beta+1}$ for large $|x|$, to dominate some exponentially decaying terms (for instance we have used that $\|VT_{z_m^j}U_m\|_{L^2}$ dominates terms with decay $e^{-\sqrt{E_*-\gamma}|z_m^j|}$). By choosing $x_k \parallel \bar z_m-z_m^q$ in \eqref{eq:fin_dim_eq_case1} and by denoting the direction parallel to $(z_m^l-z_m^q)$ by $x_{lq}$, we get
\begin{align}
  &\Big|\big\langle T_{z_m^q}\partial_{x_k}U_m, \sigma (2p+1)T_{z_m^q} U_m^{2p}\mathop{\sum_{l:z_m^l,z_m^q\textrm{ is}}}_{\textrm{connected}} T_{z_m^l} U_m\big\rangle\Big|\label{eq:fin_dim_merm_case1}\\
  =&\ \Big|\sigma(2p+1)\mathop{\sum_{l:z_m^l,z_m^q\textrm{ is}}}_{\textrm{connected}} \cos\alpha_m^l\big\langle T_{z_m^q} \partial_{x_{lq}}U_m, T_{z_m^q}U_m^{2p}\, T_{z_m^l}U_m\big\rangle \Big|\nonumber\\
  \geq&\ \frac{C(\gamma)d}{2(d-1)^2}\exp^{-\sqrt{E_*+\gamma}M_m}, \nonumber
\end{align}
for any $\gamma>0$. While deriving the above expression, we have used the
following fact: if $x_i \perp (z_m^l-z_m^q$) and $z_m^l$ and $z_m^q$ are
connected, then
$$ \big\langle T_{z_m^q}\partial_{x_i}U_m ,\sigma(2p+1)T_{z_m^q}U_m^{2p}
   T_{z_m^l}U_m\big\rangle=0.$$
It now follows from \eqref{case1_condition_p_n} that as $m \to \infty$, for a
sufficiently small $\gamma>0$ and each $j\in\{1,2,\ldots d\}$, we have
(at least on a subsequence) that
\begin{align*}
  &M_m \left[\sqrt{E_*+\gamma} + \frac{\ln\|VT_{z_m^j}U_m\|_{L^2}}{M_m}
  \right] \to -\infty\\
  \iff&\ \sqrt{E_*+\gamma} M_m + \ln\|VT_{z_m^j}U_m\|_{L^2} \to -\infty,\\
  \iff&\ e^{\sqrt{E_*+\gamma}M_m} \|VT_{z_m^j}U_m\|_{L^2} \to 0.
\end{align*}
The last expression above implies that the term \eqref{eq:fin_dim_merm_case1}
cannot be canceled by the remaining terms in \eqref{eq:fin_dim_eq_case1}.
Therefore \eqref{eq:fin_dim_eq_case1} and consequently the set of equations
\eqref{findim_splt} has no solution in this case.

\begin{figure}[t]
\begin{center}
 \includegraphics[scale=0.3]{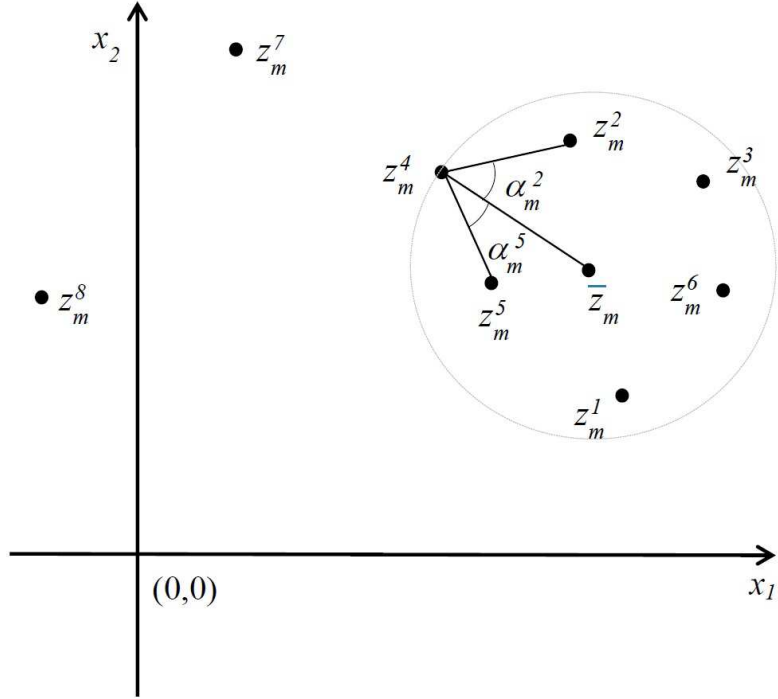}
\end{center}
\caption{An example of grouping of the $U_m$'s (each centered at a $z_m^j$)
  used while discussing splitting case (i). We assume
  that $n=2$ and $d=8$. The set $\{z_m^j:j=1,\ldots,6\}$ is a connected
  component whose baricenter is $\bar z_m$. $\bar z_m$ is also the center of
  the circle shown in the picture. Clearly $z_m^4$ is located at a maximal
  distance from $\bar z_m$ (i.e., $q=4$). We consider a finite dimensional
  equation \eqref{eq:fin_dim_eq_case1} with projection on to the function
  $\partial_{x_k}U_m(\cdot-z_m^4)$, where $x_k$ is parallel to $\bar z_m-z_m
  ^4$. We show that the terms containing $U_m$'s centered at $z_m^2$ and
  $z_m^5$, which are connected to $z_m^4$, cannot be canceled in
  \eqref{eq:fin_dim_eq_case1} by the remaining terms. This gives us the
  desired contradiction. The lower bound for $\cos\alpha_m^l$ used in Case
  (i) can be easily verified from the picture.}
\label{fig:spt(i)}
\end{figure}

\noindent
{\bf Case (ii):} Assume that \eqref{case2_condition_p_n} holds. By adding the
equations in \eqref{findim_splt} corresponding to each of the $z_m^i \in
Z_m^1,$ see Figure \ref{fig:Spt(ii)}, we obtain
\begin{align}\label{eq:fin_dim_eq_case2}
  0 =&\ \Big\langle \partial_{x_m}\widehat U_m, V(\widehat U_m + \widehat W_m) + \sigma (\widehat U_m + \widehat W_m)^{2p+1} - \sigma\widehat U_m^{2p+1} -
  \sigma \sum_{i=1}^d T_{z_m^i} U_m^{2p+1} \Big\rangle\nonumber\\
  & + C \sum_{j=1}^d\|VT_{z_m^j}U_m\|_{L^2}^2 + \sum_{i\neq j,\ i,j
  =1}^d O\Big(e^{-2\sqrt{E_*-\gamma}|z_m^i-z_m^j|}\Big).
\end{align}
\begin{figure}[h]
\begin{center}
 \includegraphics[scale=0.3]{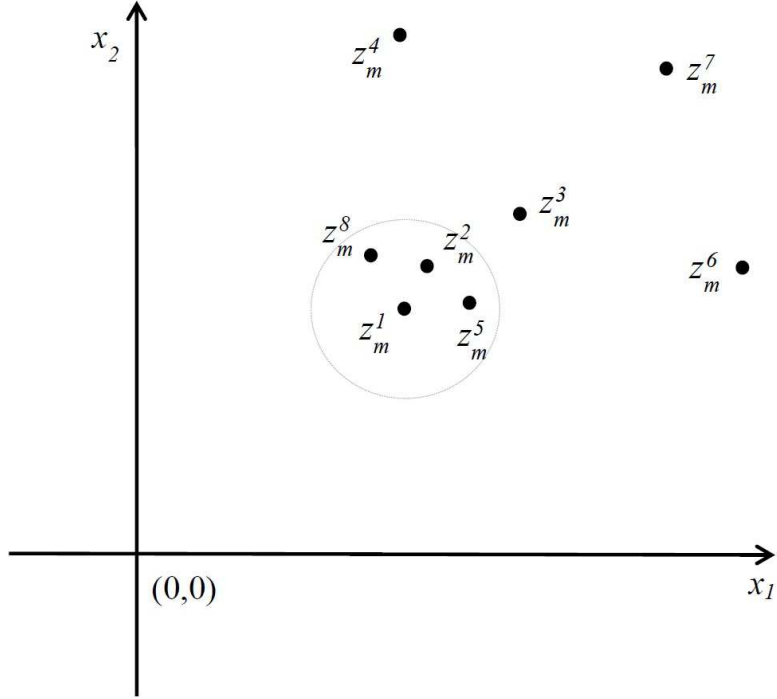}
\end{center}
\caption{An example of grouping of the $U_m$'s (each centered at a $z_m^j$)
  used while discussing splitting case (ii). We assume
  that $n=2$ and $d=8$. The norm $\|VT_{z_m^1}U_m\|_{L^2}$ is the maximum
  of the set $\{\|V T_{z_m^j}U_m\|_{L^2}\m\big|\m j=1,2,\ldots 8\}$ and $z_m^2$, $z_m^5$ and $z_m^8$ are near $z_m^1$. By adding the $U_m$'s centered at  $z_m^1$, $z_m^2$, $z_m^5$ and $z_m^8$ we get a function $\widehat U_m$.   By treating $\widehat U_m$ like $U_m$ in the case of compactness we get the desired contradiction.}
\label{fig:Spt(ii)}
\end{figure}
Since the functions $U_m$ are radially symmetric, one can check that for any
$i,j\in\{1,2,\ldots d\}$ and any direction $x_k$
$$ \big\langle T_{z_m^i}\partial_{x_k}U_m, T_{z_m^j}U_m^{2p+1}\big\rangle +
   \big\langle T_{z_m^j}\partial_{x_k}U_m, T_{z_m^i}U_m^{2p+1}\big\rangle=0.$$
In particular the above equality implies that
\begin{equation}\label{par_est}
  \bigg\langle \partial_{x_m}\widehat U_m, \sum_{i\in Z_m^1} T_{z_m^i} U_m^{2p+1} \bigg\rangle=0.
\end{equation}
Note that for some $\delta>0$
$$ \lim_{m\to\infty} \frac{|z_m^i-z_m^1|}{|z_m^1|}>\delta\quad\textrm{for
   each}\quad z_m^i\in Z_m^2.$$
Therefore
$$ \lim_{m\to\infty} \frac{|z_m^i-z_m^k|}{|z_m^1|}>\delta\quad\textrm{for
   each}\quad z_m^k\in Z_m^1,\ z_m^i\in Z_m^2.$$
From the above discussion we get the estimate
\begin{equation}\label{cross_est}
  \bigg\langle \partial_{x_m}\widehat U_m, V\widehat W_m + \sigma(\widehat U_m+\widehat W_m)^{2p+1} -\sigma\widehat U_m^{2p+1} - \sigma\sum_{l\in Z_m^2} T_{z_m^l}U_m^{2p+1} \bigg\rangle=O(e^{-\sqrt{E_*-\gamma}\delta |z_m^1|}).
\end{equation}
The hypothesis in \eqref{hypo_smallE2} implies that for some $C>0$
$$ \|V T_{z_m^1} U_m\|_{L^2}^2 \m>\m Ce^{-\sqrt{E_*-\gamma}\delta|z_m^1|} . $$ Using the above estimate, \eqref{par_est} and \eqref{cross_est}, we can rewrite \eqref{eq:fin_dim_eq_case2} as
\begin{equation}\label{case2_eqn_p_n}
  0=\Big\langle \partial_{x_m} \widehat U_m, V\widehat U_m \Big\rangle + C \sum_{j=1}^d \|V T_{z_m^j}U_m\|_{L^2}^2 + \sum_{i\neq j,\ i,j=1}^d O\Big(e^  {-2\sqrt{E_*-\gamma}|z_m^i-z_m^j|}\Big).
\end{equation}
We will next show that the first term in the above equation dominates the other two terms. Comparing the first term with the third term, it follows  from \eqref{case2_condition_p_n} that for a sufficiently small $\gamma>0$, as $m\to\infty$ (considering a subsequence if necessary)
\begin{align*}
  & M_m\left(\frac {-\ln\left|\left\langle\partial_{x_m}V, \widehat U_m^2
  \right\rangle\right|} {M_m}-2\sqrt{E_*- \gamma}\right) \to -\infty \\
  \iff&\ -\ln\left|\left\langle\partial_{x_m}V, \widehat U_m^2\right\rangle
  \right| - 2\sqrt{E_*-\gamma}M_m \to -\infty \\
  \iff&\ \frac{e^{-2\sqrt{E_*-\gamma}M_m}}{\left|\left\langle\partial_
  {x_m}V,\widehat U_m^2\right\rangle\right|} \to 0.
\end{align*}
We next compare the first term with the second term of \eqref{case2_eqn_p_n}. Using the hypothesis \eqref{hypo_smallE1} we get, as in the compactness case, that
$$ \lim_{m\to\infty}\frac{\left|\left\langle VT_{z_m^1}U_m, VT_{z_m^1}U_m
   \right\rangle\right|^2}{\left|\left\langle \widehat U^2_m,
   \partial_{x_m} V \right\rangle\right|} \leq
   \lim_{m\to\infty}\frac{\left|\left\langle\widehat U_m V, \widehat U_m V
   \right\rangle\right|^2}{\left|\left\langle\widehat U^2_m, \partial_
   {x_m} V\right\rangle\right|} = 0.$$
Hence the term $\langle\partial_{x_m}\widehat U_m ,V\widehat U_m\rangle$ in \eqref{case2_eqn_p_n} cannot be canceled by the remaining two terms. Thus we get that \eqref{eq:fin_dim_eq_case2}, and consequently the set of equations \eqref{findim_splt}, has no solution in this case.

Since either Case (i) or Case (ii) must occur, we get the contradiction that the set of equations \eqref{findim_splt} has no solution if splitting occurs. This completes the proof of this theorem.

\end{proof}

We currently do not have a similar result for arbitrary solutions of the time-independent Schr\"odinger equation \eqref{stationary} with focussing nonlinearity. The reason is that the delicate estimates in the nonlinear system of finite equations with finitely many unknowns obtained after a Lyapunov-Schmidt decomposition when attempting to show that the profiles cannot approach the limit $|x|=\infty$. Currently the estimates are using the positivity of the profiles, but we hope to remove this restriction in the future.

\section{Behavior of bound states at large $E$}\label{se:large}

In this section we analyze the limit points of zeroes of $F$ i.e., bound states of \eqref{GP}, at the $E=\infty$ boundary. Note that Corollary \ref{cor:endp} prevents branches of bound states from reaching this limit in the defocusing case $\sigma>0.$ However, in the focusing case $\sigma<0,$ we expect that all branches of bound states will end up at $E=\infty$ after maybe undergoing a finite number of bifurcations. Indeed, Theorem \ref{th:max} guarantees it for maximal branches along which e-values of $L_+$ do not approach zero. For any maximal ground state branches on finite interval $E\in(E_-,E_+),\ E_+<\infty$ we already know from Theorem \ref{th:comp} that it has (at least) a limit point $(\psi_{E_+},E_+)$ which is a zero of $F.$ In the analytic case we get that the limit point is unique and branch can be analytically continued past the bifurcation point $(\psi_{E_+},E_+),$ see \cite{BT:gbt}. By repeating this argument the branch extended analytically past the possible bifurcation points either forms a loop or ends up at the boundary of the Fredholm domain $H^2(\R^n,\C)\times [0,\infty).$ But Remark \ref{rm:endgb}, respectively Theorem \ref{th:boundedness}, exclude the $E=0,$ respectively the finite $E$ infinite $H^2$ norm portion of the boundary. Therefore, with the exception of the loops, all ground state branches will reach $E=\infty.$ We think that the restrictions to ground state branches and analytical maps will be both removed in the near future, hence all bound state branches will have an extension to $E=\infty$ boundary. Moreover, as we shall see in the examples of Section \ref{se:examples}, by analyzing the bound state branches near $E=\infty$ we will be able to discover all the non-loop branches in the interior of the Fredholm domain i.e., solutions $(\psi_E,E)$ of \eqref{stationary} with $E>0,$ together with their stability and bifurcation points.

The first result shows that the $L^2$ norm ${\cal N}(E),$ the energy ${\cal E}(E)$ and consequently the $H^1$ and $H^2$ norms of any $E\mapsto\psi_E,$  $C^1$ curve of solutions of \eqref{stationary} have a very precise behavior as $E\rightarrow\infty,$ see Theorem \ref{correct_scaling}. This leads to a unique choice for a renormalization (change of variables) which renders the curve bounded in $H^2$ as $E\rightarrow\infty.$ We then employ concentration compactness to show that such a curve, modulo the renormalization, must actually converge to a superposition of finitely many, nontrivial solutions of the limiting equation:
\begin{equation}\label{uinf}
  -\Delta u+u+\sigma|u|^{2p}u = 0.
\end{equation}
This is the well known and studied, constant coefficient, nonlinear Schr\"odinger equation, see for example \cite{Caz:nls}. Its set of solutions is invariant under the Euclidian group of transformations in $\R^n$ and is formed by smooth, exponentially decaying functions, see Appendix. In general we can show that the limiting point of our curve of bound states must be a finite superposition of solutions of \eqref{uinf}. However, since the limiting equation has a unique (up to translations in $\R^n$) positive solution $u_\infty$, see \cite{kwong}, we can be much more precise with the ground state branches, namely we can show that, up to rotations in the complex plane, they must converge in $H^2$ to a superposition of $u_\infty$ each centered at a critical point of the potential $V,$ see Theorem \ref{th:brelarge}.

Reciprocally, we show that solutions of our problem \eqref{stationary} in a sufficiently small $H^2$ neighborhood of any superposition of $\pm u_\infty$ each centered at a distinct critical point of the potential, form a unique, 2 dimensional $C^1$ manifold obtained by rotating a curve of real valued solutions, see Theorem \ref{th:bfelarge}. The orbital stability of these solutions is studied in the same theorem.

\begin{remark} \label{rmk:limitp}
Note that the two main theorems of this section determines almost all ground state branches at large $E.$ Indeed Theorem \ref{th:brelarge} shows that, modulo a rotation and renormalization, any such branch must converge in $H^2$ to a finite superposition of positive, $u_\infty,$ solutions of \eqref{uinf} each centered at a critical point of $V.$ Theorem \ref{th:bfelarge} confirms that near such a superposition, with at most one $u_\infty$ at each critical point of $V,$ there is a unique set of ground states and, moreover, the set can be organized as a $C^1$ manifold. We are missing the case when the superposition has multiple profiles $u_\infty$ approaching {\em the same} critical point. This situation occurs in related problems, see \cite{NY:mps}, and is allowed by Theorem \ref{th:brelarge} but not analyzed by Theorem \ref{th:bfelarge}. We will analyze it in a future paper, our current partial results show that multiple profiles at a minima of $V$ cannot occur but saddle points and maxima of $V$ allow for any number of profiles to approach it along the directions given by the e-vectors of the Hessian of $V$ at the saddle or maximal point corresponding to negative e-values.
\end{remark}

In the next theorem we use the following notation. For any $z\in\R^n$, let $\Tscr_z$ be the shift operator defined as follows: For any function $u$ defined on $\R^n$, $\Tscr_z u(x)=u(x-z)$ for all $x\in\R^n$. We define $\Iscr=\{1,2,\ldots n\}$.

\begin{theorem}\label{correct_scaling} Suppose that $\sigma<0$ and the potential $V$ is such that
\begin{equation}\label{potelarge}
  V\in L^{\infty}(\R^n), \qquad \lim_{|x|\rightarrow\infty}V(x)=0 \qquad
  \textrm{and} \qquad x.\nabla V\in L^\infty(\R^n).
\end{equation}
Consider a $C^1$ branch of real-valued solutions $(\psi_E,E)$ of \eqref{stationary} on the interval $[E_1,\infty)$, for some $E_1>E_0$. Then the following assertions hold:
\begin{enumerate}
 \item[(i)] There exists a $0<b<\infty$ such that
  \begin{align}
 &\lim_{E\rightarrow\infty}\frac{\|\psi_E\|_{L^{2p+2}}^{2p+2}}{E^{1-
 \frac{n}{2}+\frac{1}{p}}} = b, \label{2pnormelarge}\\
 &\lim_{E\rightarrow\infty}\frac{\|\psi_E\|_{L^2}^2}{E^{\frac{1}{p}-
 \frac{n}{2}}} = \frac{-\sigma}{2}\left(\frac{2p+2-np}{p+1}\right)b,
 \label{2normelarge}\\
 &\lim_{E\rightarrow\infty}\frac{\|\nabla\psi_E\|_{L^2}^2}{E^{1-\frac{n}
 {2}+\frac{1}{p}}} = \frac{-\sigma}{2}\left(\frac{np}{p+1}\right)b.
 \label{gradnormelarge}
  \end{align}
\ \ \ Moreover, the function $u_E$ defined as
\begin{equation}\label{uelarge}
  u_E(x)=E^{-\frac{1}{2p}}\psi_E(E^{-\frac{1}{2}}x) \qquad \forall x\in\R^n
\end{equation}
is such that
\begin{equation} \label{ue_limit}
 \lim_{E\rightarrow\infty}\|u_E\|_{H^1} = -\sigma b > 0
\end{equation}
and $u_E$ solves the equation
\begin{equation} \label{ue_equation}
 -\Delta u+E^{-1} V_{E^{-\frac{1}{2}}} u+u+\sigma|u|^{2p}u = 0.
\end{equation}
Here $V_{E^{-\frac{1}{2}}}(x)=V(E^{-\frac{1}{2}}x)$ for all $x\in\R^n$.

\item[(ii)] For any sequence $(E_m)_{m=1}^\infty$ in $[E_1,\infty)$ with $\lim_{m\to\infty} E_m =\infty$, there exists a subsequence $(E_{m_k})_{k=1}^\infty$, a positive integer $M$ and a subsequence $(x^i_k)_{k=1}^\infty$ in $\R^n$ for each $i\in\{1,2,\ldots M\}$ such that
\begin{equation}\label{ueh2_weak}
 \lim_{k\to\infty} \Big\|u_{E_{m_k}} -\sum_{i=1}^M u_i(\cdot-x_k^i) \Big\|_{H^2} \m=\m 0
\end{equation}
and for every $1\leq i,j\leq M$ with $i\not=j$
\begin{equation} \label{need_to_show_weak}
 \lim_{k\rightarrow\infty}|x_k^i-x_k^j|=\infty \quad\textrm{and}\quad \textrm{either}\quad\lim_{k\rightarrow\infty}\frac{x_k^i}{\sqrt{E_{m_k}}}=x^i\quad
 \textrm{or}\quad \lim_{k\rightarrow\infty}\frac{x_k^i}{\sqrt{E_{m_k}}}=\infty.
\end{equation}
Here $u_{E_{m_k}}$ is the scaled version of $\psi_{E_{m_k}}$ obtained by letting $E=E_{m_k}$ in \eqref{uelarge} and each $u_i$ is a solution of \eqref{uinf}.
\end{enumerate}

\end{theorem}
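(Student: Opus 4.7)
The plan for part (i) is to combine three ingredients: the weak formulation \eqref{eq:wpsie}, a Pohozaev identity obtained by pairing \eqref{stationary} with $x\cdot\nabla\psi_E$ and integrating by parts, and the ODE \eqref{eq:dnormp1} inherited from the $C^1$ dependence. The Pohozaev identity reads
\begin{equation*}
 \tfrac{n-2}{2}\|\nabla\psi_E\|^2_{L^2} + \tfrac{n}{2}\langle V\psi_E,\psi_E\rangle + \tfrac{1}{2}\langle (x\cdot\nabla V)\psi_E,\psi_E\rangle + \tfrac{nE}{2}\|\psi_E\|^2_{L^2} + \tfrac{n\sigma}{2p+2}\|\psi_E\|^{2p+2}_{L^{2p+2}} = 0,
\end{equation*}
and combining with \eqref{eq:wpsie} to eliminate the $V$ and $E$ terms yields the scale-covariant identity
\begin{equation*}
 \|\nabla\psi_E\|^2_{L^2} + \tfrac{n\sigma p}{2(p+1)}\|\psi_E\|^{2p+2}_{L^{2p+2}} = \tfrac{1}{2}\langle (x\cdot\nabla V)\psi_E,\psi_E\rangle.
\end{equation*}
A direct computation shows the rescaled function $u_E$ from \eqref{uelarge} solves \eqref{ue_equation}. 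Rewriting the above identity and \eqref{eq:wpsie} in terms of $u_E$ and invoking $V,\,x\cdot\nabla V\in L^\infty$, one gets
\begin{equation*}
 \|\nabla u_E\|^2_{L^2} = -\tfrac{n\sigma p}{2(p+1)}\|u_E\|^{2p+2}_{L^{2p+2}} + O(E^{-1}\|u_E\|^2_{L^2})
\end{equation*}
and
\begin{equation*}
 \|\nabla u_E\|^2_{L^2} + \|u_E\|^2_{L^2} + \sigma\|u_E\|^{2p+2}_{L^{2p+2}} = O(E^{-1}\|u_E\|^2_{L^2}),
\end{equation*}
whose difference produces the approximate Pohozaev relation $\|u_E\|^2_{L^2}(1+O(E^{-1})) = \tfrac{-\sigma(2p+2-np)}{2(p+1)}\|u_E\|^{2p+2}_{L^{2p+2}}$. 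Every implicit constant depends only on $\|V\|_{L^\infty}$ and $\|x\cdot\nabla V\|_{L^\infty}$, not on $u_E$.

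Writing $\alpha=1+1/p-n/2$ and $q_E=\|u_E\|^{2p+2}_{L^{2p+2}}$, the scalings $\|\psi_E\|^{2p+2}_{L^{2p+2}}=E^{\alpha}q_E$ and $\|\psi_E\|^2_{L^2}=E^{\alpha-1}\|u_E\|^2_{L^2}$ turn \eqref{eq:dnormp1} into the ODE $\alpha q_E + Eq_E'(E) = \tfrac{p+1}{-\sigma p}\|u_E\|^2_{L^2}$. Inserting the approximate Pohozaev relation into the right-hand side and using the algebraic identity $\tfrac{p+1}{-\sigma p}\cdot\tfrac{-\sigma(2p+2-np)}{2(p+1)}=\alpha$, the leading $\alpha q_E$ terms \emph{cancel exactly}, leaving $|q_E'(E)|\le Cq_E(E)/E^2$, i.e.\ $|(\ln q_E)'(E)|\le C/E^2$. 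Integration shows $\ln q_E$ is Cauchy as $E\to\infty$, so $b=\lim q_E$ exists and satisfies $q_{E_1}e^{-C/E_1}\le b\le q_{E_1}e^{C/E_1}$; nontriviality of the branch forces $q_{E_1}>0$, hence $b\in(0,\infty)$. The limits \eqref{2pnormelarge}, \eqref{2normelarge}, \eqref{gradnormelarge} then follow from the definitions and the two approximate identities, and \eqref{ue_limit} is their sum.

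For part (ii), my plan is a standard concentration-compactness profile decomposition applied to $(u_{E_m})$, which is bounded in $H^1$ by part (i) and satisfies $q_{E_m}\to b>0$. Vanishing is ruled out since $\|u_{E_m}\|_{L^{2p+2}}$ stays bounded away from zero. Passing to subsequences, I extract a first profile $u_1$ as the weak $H^1$-limit of $u_{E_{m_k}}(\cdot+x_k^1)$ along suitable shifts. The shifted equation reads $-\Delta u + E_{m_k}^{-1}V((\cdot+x_k^1)/\sqrt{E_{m_k}})u + u + \sigma|u|^{2p}u=0$, and since $E_{m_k}^{-1}V\to 0$ uniformly, the weak limit $u_1$ solves \eqref{uinf} \emph{regardless} of where $x_k^1/\sqrt{E_{m_k}}$ goes. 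A further diagonal subsequence makes $x_k^1/\sqrt{E_{m_k}}$ either converge in $\R^n$ or diverge, giving the dichotomy in \eqref{need_to_show_weak}. Iterating on the residual $u_{E_{m_k}}-T_{x_k^1}u_1$ extracts further profiles; each nontrivial solution of \eqref{uinf} carries $L^{2p+2}$ mass at least a universal $c_*>0$ (a Sobolev lower bound applied to \eqref{uinf} exactly as in part (i)), and the Brezis--Lieb lemma shows these masses add, so the procedure terminates in $M\le b/c_*$ steps, with the separation $|x_k^i-x_k^j|\to\infty$ automatic from the construction.

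The delicate step, which I expect to be the main technical obstacle, is upgrading the convergence of the residual $r_k=u_{E_{m_k}}-\sum_{i=1}^M u_i(\cdot-x_k^i)$ from $L^{2p+2}\to 0$ to $H^2\to 0$, i.e.\ establishing \eqref{ueh2_weak}. For this I subtract \eqref{ue_equation} for $u_{E_{m_k}}$ from the sum of the limiting equations \eqref{uinf} for the shifted profiles; the separation $|x_k^i-x_k^j|\to\infty$ forces all cross interactions between distinct profiles to vanish in $L^2$, the factor $E_{m_k}^{-1}$ suppresses the potential term, and Lemma \ref{lm:nuv} controls the nonlinear piece, so the residual satisfies $(-\Delta+1)r_k=h_k$ with $\|h_k\|_{L^2}\to 0$; inverting $(-\Delta+1)$ then yields $\|r_k\|_{H^2}\to 0$. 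Compared to Theorem \ref{th:comp}, the analysis is much lighter here because $V$ enters the scaled equation multiplied by $1/E$, so no finite-dimensional matching step and no potential non-degeneracy hypotheses such as \eqref{hypo_smallE1}--\eqref{hypo_smallE3} are required.
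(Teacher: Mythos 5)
Your proposal is correct and follows essentially the same route as the paper: part (i) rests on the same three ingredients (the Pohozaev identity, the weak formulation \eqref{eq:wpsie}, and the differential equation $\frac{dQ}{dE}=\frac{p+1}{-\sigma p}\Nscr(E)$ for $Q(E)=\|\psi_E\|_{L^{2p+2}}^{2p+2}$), with your ``exact cancellation'' of the leading terms being just a repackaging of the paper's two-sided differential inequality \eqref{ineq:dnorm} and the ensuing squeeze, while part (ii) is the same concentration-compactness argument in which each profile solves \eqref{uinf} because the rescaled potential is $O(1/E)$, and the $H^2$ upgrade of the residual is obtained exactly as in the paper by inverting $(-\Delta+1)$ and controlling the right-hand side via Lemma \ref{lm:nuv}, exponential decay, and profile separation. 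The only cosmetic differences are that you rule out vanishing through the persistence of the $L^{2p+2}$ norm rather than the paper's resolvent fixed-point argument, and you organize the profiles by iterative extraction with Brezis--Lieb rather than by invoking the trichotomy of Proposition \ref{prop:comp} directly.
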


\begin{remark}
The above theorem applies to any $C^1$ branch of solutions of \eqref{stationary} and not just to $C^1$ branches of ground states. Also in this theorem $p$ can be smaller than $1/2$ i.e., $0< p<2/(n-2)$.
\end{remark}

\begin{proof} Consider a $C^1$ branch of real-valued solutions $(\psi_E,E)$ of
\eqref{stationary} on the interval $[E_1,\infty)$ for some $E_1>E_0$.
From \eqref{def:energy} recall the energy functional
$${\cal E}(E) = \int_{\R^n}|\nabla\psi_E(x)|^2\,\mathrm{d}x + \int_{\R^n}
  V(x)|\psi_E(x)|^2\,\mathrm{d}x + \frac{\sigma}{p+1}\int_{\R^n}|\psi_E(x)|^
  {2p+2}\,\mathrm{d}x.$$
Recall $\Nscr(E)=\|\psi_E\|_{L^2}^2,$ see \eqref{def:mass}. Then from \eqref{eq:denergy} we have
$$\frac{\mathrm{d}{\cal E}}{\mathrm{d}E} = -E\frac{\mathrm{d}\Nscr}
  {\mathrm{d}E}$$
and from \eqref{eq:spstat} we have
$${\cal E}(E)+\frac{\sigma p}{p+1}\|\psi_E\|_{L^{2p+2}}^{2p+2}=-E\Nscr(E).$$
Differentiating the above equation with respect to $E$ gives
\begin{equation}\label{eq:dnormp}
  \frac{\mathrm{d}}{\mathrm{d}E}\|\psi_E\|_{L^{2p+2}}^{2p+2} = \frac{p+1}
  {-\sigma p}\Nscr(E).
\end{equation}
Since $\psi_E$ is a weak solution of \eqref{stationary}, using
$x.\nabla\psi_E$ as a test function and integrating by parts we get the Pohozaev type identity
\begin{equation}\label{eq:poho}
  (n-2)\|\nabla\psi_E\|_{L^2}^2 + \int_{\R^n}(nV(x)+x.\nabla V(x)) |\psi_E(x)|^2\mathrm{d}x + \frac{n\sigma}{p+1}\|\psi_E\|_{L^{2p+2}}^{2p+2} =-En\|\psi_E\|_{L^2}^2.
\end{equation}
Adding \eqref{eq:poho} with \eqref{eq:spstat} multiplied by $-(n-2)$ gives
$$ \int_{\R^n}(2V(x)+x.\nabla V(x))|\psi_E(x)|^2\mathrm{d}x + \sigma  \left(\frac{n}{p+1}+2-n\right)\|\psi_E\|_{L^{2p+2}}^{2p+2}=-2E  \|\psi_E\|_{L^2}^2,$$
which can be written equivalently as
$$ 2E\Nscr(E) + \int_{\R^n}(2V(x)+x.\nabla V(x))|\psi_E(x)|^2\mathrm{d}x = -\sigma \left(2-\frac{np}{p+1}\right)\|\psi_E\|_{L^{2p+2}}^{2p+2}.$$
Since we have $2p<4/(n-2)$ for $n>2$, it follows that $2-np/(p+1)>0$ for any $n$. Bounding the integral term in the identity above via H\"older inequality we get
\begin{equation}\label{ineq:nnormp}
  2(E-C)\Nscr(E) \leq -\sigma\frac{2p+2-np}{p+1}\|\psi_E\|_{L^{2p+2}}^{2p+2} \leq 2(E+C)\Nscr(E),
\end{equation}
where $2C=2\|V\|_{L^\infty}+\|x.\nabla V\|_{L^\infty}$. Denoting $Q(E)= \|\psi_E\|_{L^{2p+2}}^{2p+2}$ and using \eqref{eq:dnormp} in \eqref{ineq:nnormp} it follows that
\begin{equation} \label{ineq:dnorm}
 \left(\frac{1}{p}+1-\frac{n}{2}\right)\frac{Q(E)}{E+C} \leq \frac{dQ(E)}{dE}
   \leq \left(\frac{1}{p}+1-\frac{n}{2}\right)\frac{Q(E)}{E-C}.
\end{equation}
Fix $E_2>\max\{C,E_1\}$ and $E_3>E_2$ and integrate the above expression on $[E_2,E_3]$ to get
$$  \frac{Q(E_2)}{(E_2+C)^{\left(\frac{1}{p}+1-\frac{n}{2}\right)}}(E_3+C)^
    {\left(\frac{1}{p}+1-\frac{n}{2}\right)} \leq Q(E_3) \leq \frac{Q(E_2)}
    {(E_2-C)^{\left(\frac{1}{p}+1-\frac{n}{2}\right)}}(E_3-C)^{\left(\frac{1}
    {p}+1-\frac{n}{2}\right)}$$
and therefore
$$  \frac{Q(E_2)}{(E_2+C)^{\left(\frac{1}{p}+1-\frac{n}{2}\right)}} \leq
    \frac{Q(E_3)}{E_3^{\left(\frac{1}{p}+1-\frac{n}{2}\right)}} \leq \frac
    {Q(E_2)}{(E_2-C)^{\left(\frac{1}{p}+1-\frac{n}{2}\right)}}.$$
This implies \eqref{2pnormelarge} since
$$  \lim_{E_2\rightarrow\infty}\frac{\frac{Q(E_2)}{(E_2-C)^{\left(\frac{1}{p}
    +1-\frac{n}{2}\right)}}}{\frac{Q(E_2)}{(E_2+C)^{\left(\frac{1}{p}+1-\frac
    {n}{2}\right)}}}=1.$$
Now \eqref{2normelarge} follows from dividing \eqref{ineq:nnormp} by $E^{1-n/2+1/p}$ and passing to the limit as $E\rightarrow\infty$, while \eqref{gradnormelarge} follows from dividing \eqref{eq:spstat} by $E^{1-n/2+1/p}$ and passing to the limit as $E\rightarrow\infty$.

Using the fact that $\psi_E$ solves \eqref{stationary}, it is easy to verify that $u_E$ solves \eqref{ue_equation}. Moreover \eqref{uelarge} combined with \eqref{2pnormelarge}-\eqref{gradnormelarge} gives
\begin{align}
  &\|u_E\|_{L^{2p+2}}^{2p+2} = E^{-1-\frac{1}{p}+\frac{n}{2}} \|\psi_E\|_{L^{2p+2}}^{2p+2}  \stackrel{E\to\infty}{\longrightarrow}b,\nonumber\\
  &\|u_E\|_{L^2}^2 = E^{-\frac{1}{p}+\frac{n}{2}} \|\psi_E\|_{L^2}^2\stackrel{E\to\infty} {\longrightarrow} \frac{-\sigma}{2}\frac{2p+2-np}{p+1}b, \label{repeated_ref}\\
  &\|\nabla u_E\|_{L^2}^2 = E^{-1-\frac{1}{p}+\frac{n}{2}}\|\nabla\psi_E\|_{L^2}^2
  \stackrel{E\to\infty}{\longrightarrow}\frac{-\sigma}{2}\frac{np}{p+1}b. \nonumber
\end{align}
Adding the last two equations we get \eqref{ue_limit}. This completes the proof of part (i) of the theorem.

To prove part (ii) of the theorem, consider a sequence $(E_m)_{m=1}^\infty$ in $[E_1,\infty)$ with $\lim_{m\to\infty}E_m=\infty$.
Define
$$ u_{E_m}(x)=E_m^{-\frac{1}{2p}}\psi_{E_m}(E_m^{-\frac{1}{2}}x) \FORALL x\in\R^n. $$
From part (i) we know that the sequence $(u_{E_m})_{m=1}^\infty$ is bounded in $H^1$ and the sequence $(\|u_{E_m}\|_{L^2})_{m=1}^\infty$ converges to a nonzero constant, see \eqref{repeated_ref}. Define
$$  \Psi_m = \frac{u_{E_m}}{\|u_{E_m}\|_{L^2}}\m.$$
Like in Section \ref{se:bif} (see discussion below \eqref{def:Psi}), we will apply concentration compactness theory to $(\Psi_m)_{m\in\N}$ by defining the concentration function $\rho$ and the variable $\mu$ for the sequence $(\Psi_m)_{m\in\N}$. Out of three possible cases in the concentration compactness theory we will show that vanishing cannot occur and compactness and splitting imply \eqref{ueh2_weak} and \eqref{need_to_show_weak} for a subsequence of $u_{E_m}$.

\noindent
{\bf Vanishing ($\mu=0$):} In this case there is a subsequence of $(\Psi_m)_{m\in\N}$, again denoted by $(\Psi_m)_{m\in\N}$, such that along this subsequence
\begin{equation} \label{swimming}
 \lim_{m\to\infty} \|\Psi_m\|_{L^q}=0 \quad \textrm{for all} \quad
 2<q<\frac{2n}{n-2} \qquad (2<q<\infty \textrm{ if } n=2,\ 2<q\leq\infty \textrm{ if } n=1).
\end{equation}
Since $u_{E_m}=\|u_{E_m}\|_{L^2}\Psi_m$ and part (i) gives
\begin{equation} \label{heavy_rain}
 \lim_{E_m\to\infty} \|u_{E_m}\|_{L^2} =a>0,
\end{equation}
it follows from \eqref{swimming} that there is a subsequence of $(u_{E_m})_{m\in\N}$, again denoted by $(u_{E_m})_{m\in\N}$, such that along this subsequence
\begin{equation}\label{conc_van}
 \lim_{m\to\infty} \|u_{E_m}\|_{L^q}=0 \quad \textrm{for all} \quad
 2<q<\frac{2n}{n-2} \qquad (2<q<\infty \textrm{ if } n=2,\ 2<q\leq\infty \textrm{ if } n=1).
\end{equation}
Note that $u_{E_m}$ solves \eqref{ue_equation} (with $E=E_m$ of course). Therefore
\begin{equation}\label{eq:uefp}
  u_{E_m}=(-\Delta+1)^{-1}\left[-R_m^2 V_{R_m} u_{E_m}-\sigma
  |u_{E_m}|^{2p}u_{E_m}\right],
\end{equation}
where $R_m=(E_m)^{-1/2}$ and $V_{R_m}(x)=V(R_m x)$ for all $x\in\R^n$. Clearly $\lim_{m\to\infty}R_m=0$. Since $L^2\hookrightarrow H^{-1}$ and $\|u_{E_m}\|_{L^2}$ is uniformly bounded, we get that for some $C>0$
\begin{equation} \label{gulmohar1}
 \lim_{m\to\infty}\|R_m^2 V_{R_m} u_{E_m}\|_{H^{-1}} \leq C\lim_{m\to\infty}\|R_m^2 V_{R_m} u_{E_m}\|_{L^2} \leq C \lim_{m\to\infty} R_m^2\|V\|_{L^\infty} \|u_{E_m}\|_{L^2}=0.
\end{equation}
Fix $r$ such that if $n>2$, then $2<r<2n/(n-2)$ and $2n/(n+2)<r/(2p+1)<2$ and if $n\leq 2$, then $2<r<\infty$ and $1<r/(2p+1)<2$. For $n>2$, the fact that such an $r$ exists follows easily from the observation that $p<2/(n-2)$ implies $(2p+1)2n/(n+2)<2n/(n-2)$. Then it follows that $L^{\frac{r}{2p+1}} \hookrightarrow H^{-1}$ and so for some $C>0$
\begin{equation} \label{gulmohar2}
\lim_{m\to\infty}\|\,|u_{E_m}|^{2p}u_{E_m}\|_{H^{-1}} \leq C\lim_{m\to\infty} \||u_{E_m}|^{2p} u_{E_m}\|_{L^{\frac{r}{2p+1}}} = C\lim_{m\to\infty} \|u_{E_m}\|_{L^r}^{2p+1}=0.
\end{equation}
Here we have used \eqref{conc_van} to obtain the last equality. It now follows easily from \eqref{gulmohar1} and \eqref{gulmohar2} and the fact that the operator $(-\Delta+1)^{-1}:H^{-1}\mapsto H^1$ is unitary that
$$  \lim_{m\to\infty} \|u_{E_m}\|_{H^1} =0$$
which contradicts \eqref{heavy_rain}. Therefore vanishing cannot happen.

\noindent
{\bf Compactness ($\mu=1$):} Since $ u_{E_m}=\|u_{E_m}\|_{L^2}\Psi_m$ and \eqref{heavy_rain} holds, it follows from concentration compactness theory that if $\mu=1$, then there exists a subsequence of $(E_m)_{m\in\N}$, also denoted as $(E_m)_{m\in\N}$, a corresponding sequence $(y_m)_{m\in\N}$ in $\R^n$ and a function $\hat u_*\in H^1(\R^n)$ such that for each $m\in\N$, $u_{E_m}$ can be decomposed as follows:
\begin{equation}\label{comp-conv}
  u_{E_m} = \Tscr_{y_m}\hat u_m + \hat v_m,
\end{equation}
where $\hat u_m, \hat v_m \in H^1(\R^n)$ and as $m\to\infty$
\begin{itemize}
  \item[(i)] $\hat u_m\stackrel{H^1}{\rightharpoonup} \hat u_*$\ \ \
  and\ \ \ $\hat u_m\stackrel{L^q}{\rightarrow} \hat u_*$\ \ \ for \
  \ \ $2\leq q<2n/(n-2)$\ \ ($2\leq q<\infty\textrm{ if } n=2$,\ $2\leq q\leq\infty\textrm{ if } n=1$),
  \item[(ii)] $\hat v_m\stackrel{H^1}{\rightharpoonup} 0$\ \ \ and\ \ \ $\hat v_m\stackrel{L^q}{\rightarrow}0$\ \ \ for\ \ \ $2\leq q<2n/(n-2)$ \  \ ($2\leq q<\infty\textrm{ if } n=2$,\ $2\leq q \leq\infty\textrm{ if } n=1$).
\end{itemize}
 The pair $(u_{E_m},R_m)$ satisfies \eqref{ue_equation} and therefore, by translating all the functions in this equation by $-y_m$, we get
$$  (-\Delta + R_m^2\Tscr_{-y_m} V_{R_m}+1)\Tscr_{-y_m}u_{E_m}+\sigma  \Tscr_{-y_m}|u_{E_m}|^{2p}u_{E_m}=0.$$
Here $R_m=E_m^{-1/2}$ and $V_{R_m}(x)=V(R_m x)$ for all $x\in\R^n$. Taking the $L^2$ scalar product of the above equation with any $\phi\in C_0^{\infty}(\R^n)$ gives
$$ \left\langle\nabla \Tscr_{-y_m}u_{E_m},\nabla\phi\right\rangle + R_m^2 \left\langle \Tscr_{-y_m}V_{R_m}\Tscr_{-y_m}u_{E_m},\phi(x)\right\rangle + \left\langle \Tscr_{-y_m} u_{E_m},\phi\right\rangle + \sigma\left\langle \Tscr_{-y_m} |u_{E_m}|^{2p}u_{E_m},\phi\right\rangle =0. $$
Substituting for $u_{E_m}$ from \eqref{comp-conv} into the last equation and taking the limit as $m\to\infty$ we obtain, using (i) and (ii) above, that
$$\left\langle\nabla\hat u_*,\nabla\phi\right\rangle + \left\langle\hat u_*,
    \phi\right\rangle + \sigma\left\langle|\hat u_*|^{2p}\hat u_*,\phi
    \right\rangle=0.$$
Thus $\hat u_*$ is a weak solution (and therefore a strong, $H^2$ solution) of \eqref{uinf}. Redefining $\hat v_m$ to be $\hat v_m+\Tscr_{y_m}\hat u_m -\Tscr_{y_m}\hat u_*$ we obtain the following different decomposition for $u_{E_m}$ in place of \eqref{comp-conv}:
\begin{equation}\label{comp-conv1_weak}
  u_{E_m} = \Tscr_{y_m}\hat u_* + \hat v_m.
\end{equation}
The redefined functions $\hat v_m$ in \eqref{comp-conv1_weak} also have the convergence properties mentioned in (ii) above. Since $(u_{E_m},R_m)$ satisfies \eqref{ue_equation}, substituting the above decomposition for $u_{E_m}$ into \eqref{ue_equation} and using the fact that $\hat u_*$ satisfies \eqref{uinf}, we obtain
\begin{equation} \label{vmhat_exp}
 \hat v_m=-(-\Delta+1)^{-1}\left[R_m^2 V_{R_m} T_{y_m} \hat u_* + (R_m^2 V_{R_m}+\sigma(2p+1) |T_{y_m} \hat u_*|^{2p})\hat v_m+N(T_{y_m} \hat u_*,\hat v_m)\right].
\end{equation}
Here $N(U,v):H^2(\R^n)\times H^2(\R^n)\mapsto L^2(\R^n)$ is the nonlinear function from \eqref{bound_N},
$$ N(U,v)=\sigma|U+v|^{2p}(U+v)-\sigma|U|^{2p}U-\sigma(2p+1)|U|^{2p}v.$$
From the limit $\hat v_m\stackrel{L^2} {\rightarrow}0$ as $m\to\infty$ from (ii), $\lim_{m\to\infty}R_m=0$ and $u^*\in L^\infty$ (see Proposition \ref{prop:h1toh2}), it follows that
$$ \lim_{m\to\infty} \|R_m^2 V_{R_m} T_{y_m} \hat u_* + (R_m^2 V_{R_m}+\sigma(2p+1) |T_{y_m} \hat u_*|^{2p})\hat v_m\|_{L^2} \m=\m0$$
and from Lemma \ref{lm:nuv} and the limits for $\hat v_m$ in (ii) it follows that
\begin{equation} \label{mus_aud}
 \lim_{m\to\infty}  \|N(T_{y_m} \hat u_*,\hat v_m)\|_{L^2} \m=\m 0.
\end{equation}
Hence from \eqref{vmhat_exp} we get that
\begin{equation} \label{almost_done_weak}
 \lim_{m\to\infty}\|\hat v_m\|_{H^2} = 0.
\end{equation}

Suppose that $(R_m y_m)_{m\in\N}$ has a converging subsequence, denoted again by $(R_m y_m)_{m\in\N}$, converging to $x^1\in\R^n$. Then it follows from \eqref{comp-conv1_weak} and \eqref{almost_done_weak} that \eqref{ueh2_weak} and the second limit in \eqref{need_to_show_weak} hold with $M=1$, $u_1=\hat u_*$ and $x_k^1= y_k$. On the other hand, if $(R_m y_m)_{m\in\N}$ has no bounded subsequence, then it follows from \eqref{comp-conv1_weak} and \eqref{almost_done_weak} that \eqref{ueh2_weak} and the third limit in \eqref{need_to_show_weak} hold with $M=1$, $u_1=\hat u_*$ and $x_k^1= y_k$. In either case, the first limit in \eqref{need_to_show_weak} holds vacuously. This completes the proof of part (ii) in the compactness case.

\noindent
{\bf Splitting ($0<\mu<1$):} Since $ u_{E_m}=\|u_{E_m}\|_{L^2}\Psi_m$ and \eqref{heavy_rain} holds, it follows from concentration compactness theory that if $0<\mu<1$, then there exists a subsequence of $(E_m)_{m\in\N}$, again denoted as $(E_m)_{m\in\N}$, on which we can decompose $u_{E_m}$ as follows:
\begin{equation}\label{eq:split_soln}
  u_{E_m} = \sum_{j=1}^{d} \Tscr_{y_m^j}u_m^j+u_m^0+ \hat v_m,
\end{equation}
where $u_m^j, v_m\in H^1(\R^n)$ for all $j\in\Jscr=\{0,1,\ldots d\}$ and there exist non-zero functions $u_*^j$ such that as $m\to\infty$
\begin{itemize}
  \item[(i)] $u_m^j \stackrel{H^1}{\rightharpoonup} u^j_*$\ \ and \ \ $u_m^j \stackrel{L^q}{\rightarrow} u^j_*$\ \  for \ \ $2\leq q< 2n/(n-2)$\ \ ($2\leq q<\infty$ if $n=2$,\ $2\leq q\leq\infty$ if $n=1$),
  \item[(ii)] $|y^j_m|\to\infty$\ \ and \ \ $|y^j_m-y^k_m|\to\infty$\ \
  for\ \ $j, k\in \{1,2,\ldots d\}$ and $j\neq k$,
  \item[(iii)] $\hat v_m \stackrel{H^1}{\rightharpoonup} 0$\ \ and\ \ $\hat
  v_m \stackrel {L^q} {\rightarrow} 0$\ \ for\ \ $2< q< 2n/(n-2)$ ($2< q<\infty$ if $n=2$,\ $2 < q\leq\infty$ if $n=1$).
\end{itemize}
Concentration compactness theory does not imply that $u_*^0\neq 0$, but we assume it only to simplify our presentation. All the arguments in this proof remain valid if $u_*^0 = 0$. Letting $u=u_{E_m}$ and $E=E_m$ in \eqref{ue_equation} and taking the $L^2$ scalar product of the resulting equation with $\phi\in C_0^{\infty}(\R^n)$ we  get
$$  \big\langle \nabla u_{E_m},\nabla\phi \big\rangle + \big\langle
    (R_m^2V_{R_m}+1)u_{E_m},\phi \big\rangle + \sigma\big\langle
    |u_{E_m}|^{2p}u_{E_m},\phi \big\rangle = 0.$$
Here $R_m=E_m^{-1/2}$ and $V_{R_m}(x)=V(R_m x)$ for all $x\in\R^n$. Substituting for $u_{E_m}$ from \eqref{eq:split_soln} into the above equation and taking the limit as $m\to\infty$ gives
$$ \big\langle \nabla u_*^0, \nabla\phi\big\rangle + \big\langle u_*^0,
   \phi\big\rangle + \sigma\big\langle |u_*^0|^{2p} u_*^0,\phi \big\rangle = 0. $$
Therefore $u_*^0$ is a weak solution (and hence a strong, $H^2$ solution) of \eqref{uinf}. Similarly for any $j\in\{1,2,\ldots d\}$, taking the $L^2$ scalar product of the equation
$$  (-\Delta+1+R_m^2 \Tscr_{-y^j_m}V_{R_m})\Tscr_{-y^j_m}u_{E_m}+\sigma
    \Tscr_{-y^j_m}|u_{E_m}|^{2p}u_{E_m}=0$$
(which is nothing but a translated version of \eqref{ue_equation} with $E=E_m$ and $u=u_{E_m}$) with $\phi\in C_0^{\infty}(\R^n)$ and taking the limit as $m\to\infty$ we can conclude that $u_*^j$ is a weak solution (and hence a strong, $H^2$ solution) to \eqref{uinf}. Hence for each $j\in\Jscr$ the function $u_*^j$ is a solution of \eqref{uinf}. Redefining $\hat v_m$ to be $\hat v_m+u_m^0+\sum_{j=1}^d T_{y_m^j} u_m^j-u_*^0-\sum_{j=1}^d T_{y_m^j} u_*^j$ we obtain the following different decomposition for $u_{E_m}$:
\begin{equation}\label{new_split_soln_inf_weak}
  u_{E_m} \m=\m \sum_{j=1}^{d} \Tscr_{y^j_m}u_*^j+u_*^0+
  \hat v_m \m.
\end{equation}
The redefined $\hat v_m$ satisfy the limits in (iii) above and from Proposition \ref{prop:exdecay} in the Appendix we get that there exist $C>0$ and $\gamma \in(0,1)$ such that
\begin{equation} \label{Mmime}
 |u^j_*(x)| \m<\m Ce^{-\sqrt{1-\gamma}|x|} \FORALL x\in\R^n, \FORALL j\in\Jscr.
\end{equation}
Since $(u_{E_m},R_m)$ satisfies \eqref{ue_equation}, substituting the decomposition in \eqref{new_split_soln_inf_weak} for $u_{E_m}$ into \eqref{ue_equation} and using the fact that each $u^j_*$ satisfies \eqref{uinf}, we get
\begin{align}
  \hat v_m=-(-\Delta+1)^{-1}&\left[R_m^2 V_{R_m} \sum_{j=0}^d T_{y_m^j}u_*^j+ \Big(R_m^2 V_{R_m}+\sigma(2p+1) \Big|\sum_{j=0}^d T_{y_m^j}u_*^j\Big|^{2p}\Big)\hat v_m\right.\nonumber\\
  &\left.-\sigma\sum_{j=0}^d T_{y_m^j}|u_*^j|^{2p}u_*^j+\sigma\Big|\sum_{j=0}^d T_{y_m^j}u_*^j\Big|^{2p}\sum_{j=0}^d T_{y_m^j}u_*^j+N\Big(\sum_{j=0}^d T_{y_m^j}u_*^j,\hat v_m\Big)\right]. \label{10autom}
\end{align}
Here $y_m^0=0$ and $N(U,v):H^2(\R^n) \times H^2(\R^n)\mapsto L^2(\R^n)$ is the nonlinear function defined in \eqref{bound_N}. We will now show that the term in the bracket in the above expression (to which $-(-\Delta+1)^{-1}$ is applied) converges to 0 in $L^2(\R^n)$ as $m\to\infty$. Indeed, since $u_*^j\in L^2(\R^n)$ for all $j\in\Jscr$, $V\in L^\infty(\R^n)$, $\sup_{m\in\N}\|v_m\|_{L^2}<\infty$ and $\lim_{m\to\infty} R_m=0$, we have
\begin{equation} \label{8TAC}
 \lim_{m\to\infty}\Big\|R_m^2 V_{R_m} \sum_{j=0}^d T_{y_m^j}u_*^j+ R_m^2 V_{R_m} \hat v_m\Big\|_{L^2} = 0.
\end{equation}
From \eqref{Mmime} and the limits for $y_m^j$ in (ii) we get that
\begin{equation} \label{hostel14}
 \lim_{m\to\infty} \Bigg\|\Big|\sum_{j=0}^d T_{y_m^j}u_*^j\Big|^{2p}\sum_{j=0}^d T_{y_m^j}u_*^j - \sum_{j=0}^d T_{y_m^j}|u_*^j|^{2p} u_*^j \Bigg\|_{L^2}=0.
\end{equation}
From \eqref{Mmime} we have $u_*^j\in L^q(\R^n)$ for any $q\in[1,\infty]$. This, along with the limits for $\hat v_m$ in (iii), implies using H\"older's inequality that
\begin{equation} \label{Irabday}
 \lim_{m\to\infty} \Bigg\| \Big|\sum_{j=0}^d T_{y_m^j}u_*^j\Big|^{2p}\hat v_m \Bigg\|_{L^2} \m=\m 0\m.
\end{equation}
Since $\|u_{E_m}\|_{H^2}$ is bounded in uniformly in $m$ and each $u_*^j\in H^2(\R^n)$, it follows that $\|\hat v_m\|_{H^2}$ is bounded uniformly in $m$.
Hence from Lemma \ref{lm:nuv} and (iii) it follows that
\begin{equation} \label{randcall}
 \lim_{m\to\infty} \Big\|N\Big(\sum_{j=0}^d T_{y_m^j}u_*^j,\hat v_m\Big)\Big\|_{L^2} \m=\m 0.
\end{equation}
From \eqref{10autom}-\eqref{randcall} we get that
\begin{equation} \label{almost_done_split1}
 \lim_{m\to\infty}\|\hat v_m\|_{H^2} = 0.
\end{equation}
For each $j\in\Jscr$, if $(R_m y_m^j)_{m\in\N}$ has a converging subsequence, then denote it (again) as $(R_m y_m^j)_{m\in\N}$ and its limit point as $x^j$. It now follows from \eqref{new_split_soln_inf_weak}, \eqref{almost_done_split1} and the limits for $y_m^j$ in (ii) that \eqref{ueh2_weak} and \eqref{need_to_show_weak} hold with $M=d+1$, $u_i=u_*^{i-1}$ and $x_k^i= y_k^{i-1}$ for $i\in\{1,2,\ldots M\}$. This completes the proof of part (ii) in the splitting case.

Theorem \ref{correct_scaling} is now completely proven.
\end{proof}

Recall that $\Iscr=\{1,2,\ldots n\}$. For any $z\in\R^n$ and any function $u$ defined on $\R^n$, $\Tscr_z u(x)=u(x-z)$ for all $x\in\R^n$. In the next theorem, under additional hypothesis on $p$ and the potential $V$, we strengthen the results in part (ii) of Theorem \ref{correct_scaling} as follows: If $(\psi_E,E)$ is a branch of ground states on $[E_1,\infty)$, then each $u_i$ in \eqref{ueh2_weak} is the unique radially symmetric positive solution $u_\infty$ of \eqref{uinf}, the third limit in \eqref{need_to_show_weak} does not occur and each $x^i$ in \eqref{need_to_show_weak} is a critical point of the potential $V$.

\begin{theorem}\label{th:brelarge} Let $\sigma<0$ and $p\geq 1/2$. Assume that the potential $V$ satisfies the conditions in \eqref{potelarge} and $\nabla V\in W^{1,\infty}(\R^n)$. Furthermore, let the radial derivative $\partial_r V(x)$ of $V$ and a set of its tangential derivatives $\{\partial_{s_i}V(x),i=1,2,\ldots n-1\}$, where  $s_1,s_2, \ldots s_{n-1}$ are a set of mutually orthogonal directions which are also orthogonal to the radial direction, satisfy the following hypotheses: There exists a set $\cal{M}$ of measure zero such that
\begin{equation} \label{Hypo_largeE1}
 \limsup_{x\in \R^n\setminus{\cal M},\ |x|\to\infty} \frac{V^2(x)}{|\partial_r V(x)|} <\infty
\end{equation}
and there exist constants $\Cscr,\beta,\rho>0$ such that for each $x\in\R^n\setminus{\cal M}$ with $|x|>\rho$
\begin{align}
  \Cscr|\partial_r V(x)| &> e^{-\beta|x|}, \label{Hypo_largeE2}\\
  \Cscr|\partial_r V(x)| &> \Big(\sum\limits_{i=1}^{n-1}| \partial_{s_i}V(x)|^2 \Big)^\frac{1}{2}. \label{Hypo_largeE3}
\end{align}

Under the above assumptions, consider a $C^1$ curve $(\psi_E,E)$ of real-valued positive ground states of \eqref{stationary}, see Definition \ref{def:gs}, on the interval $[E_1,\infty)$ for some $E_1>0$. Then for any sequence $(E_m)_{m=1}^\infty$ in $[E_1,\infty)$ with $\lim_{m\to\infty}E_m =\infty$, there exists a subsequence $(E_{m_k})_{k=1}^\infty$, a positive integer $M$ and a subsequence $(x^i_k)_{k=1}^\infty$ in $\R^n$ for each $i\in\{1,2,\ldots M\}$ such that
\begin{equation}\label{ueh2}
    \lim_{k\rightarrow\infty}\Big\|u_{E_{m_k}}-\sum_{i=1}^M u_\infty (\cdot-x_k^i)\Big\|_{H^2}=0
\end{equation}
and for every $1\leq i,j\leq M$ with $i\not=j$
\begin{equation} \label{need_to_show}
 \lim_{k\rightarrow\infty}|x_k^i-x_k^j|=\infty \quad\textrm{and}\quad\lim_
 {k\rightarrow\infty}\frac{x_k^i}{\sqrt{E_{m_k}}}=x^i.
\end{equation}
Here $u_{E_{m_k}}$ is the scaled version of $\psi_{E_{m_k}}$ obtained by letting $E=E_{m_k}$ in \eqref{uelarge}, $u_\infty$ is the unique radially symmetric positive solution of \eqref{uinf}. Furthermore, if the potential $V$ is continuously differentiable in a neighborhood of $x^i$, then $x^i$ must be a critical point of $V$.
\end{theorem}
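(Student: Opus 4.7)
The plan is to promote the conclusions of Theorem \ref{correct_scaling}(ii) to the stronger form claimed, using three ingredients: (a) positivity of the ground states together with Kwong's uniqueness of the radial positive solution of \eqref{uinf}; (b) a Lyapunov-Schmidt decomposition of the rescaled equation \eqref{ue_equation} at the ansatz $\sum_i \Tscr_{x_k^i} u_\infty$; and (c) the decay/monotonicity hypotheses \eqref{Hypo_largeE1}--\eqref{Hypo_largeE3} on $V$, used in a manner analogous to Theorem \ref{th:comp}. First I would apply Theorem \ref{correct_scaling}(ii) directly to $(E_m)$ to extract a subsequence $(E_{m_k})$, an integer $M$, profiles $u_i$ solving \eqref{uinf}, and translations $(x_k^i)$ satisfying \eqref{ueh2_weak} and \eqref{need_to_show_weak}. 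Since $\psi_{E_{m_k}}>0$ implies $u_{E_{m_k}}>0$, the same test-function argument used in the splitting case of Theorem \ref{th:comp} (pairing $u_{E_{m_k}}$ against nonnegative $\phi\in C_0^\infty$ translated by $x_k^i$ and passing to the limit) shows that each $u_i\geq 0$, hence by Kwong each $u_i$ is a translate of $u_\infty$; absorbing these fixed translates into the $x_k^i$ delivers \eqref{ueh2}.

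The substance of the argument is to rule out the divergent alternative $x_k^i/\sqrt{E_{m_k}}\to\infty$ in \eqref{need_to_show_weak} and to identify each finite limit $x^i$ as a critical point of $V$. Writing $u_{E_{m_k}} = \sum_{i=1}^M \Tscr_{x_k^i} u_\infty + v_k$ with $\|v_k\|_{H^2}\to 0$, I would apply Proposition \ref{lm:newdec_gen} to absorb the translational kernel of each profile into a modulation $x_k^i\mapsto x_k^i + s_k^i$ with $|s_k^i|\to 0$, producing an orthogonality relation $\langle v_k,\Tscr_{x_k^i}\partial_{x_\ell}u_\infty\rangle=0$. The linearization of \eqref{ue_equation} at $\sum_i \Tscr_{x_k^i}u_\infty$ is $-\Delta+1+\sigma(2p+1)|\sum_i \Tscr_{x_k^i}u_\infty|^{2p}$ plus the small term $E_{m_k}^{-1}V_{E_{m_k}^{-1/2}}$; since $|x_k^i-x_k^j|\to\infty$, Proposition \ref{prop:MS} shows that, projected off the kernel, this operator has a uniformly bounded inverse on $S_k^\perp\cap H^2\to S_k^\perp\cap L^2$, where $S_k=\mathrm{span}\{\Tscr_{x_k^i}\partial_{x_\ell}u_\infty\}$. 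The infinite-dimensional part of Lyapunov-Schmidt then yields the estimate
\begin{equation*}
 \|v_k\|_{H^2} \le C\Big\|E_{m_k}^{-1} V_{E_{m_k}^{-1/2}} \sum_i \Tscr_{x_k^i}u_\infty\Big\|_{L^2} + \sum_{i\neq j}O\big(e^{-\sqrt{1-\gamma}|x_k^i-x_k^j|}\big).
\end{equation*}

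The finite-dimensional equations $\langle \Tscr_{x_k^i}\partial_{x_\ell}u_\infty,\,F(u_{E_{m_k}},E_{m_k})\rangle=0$ reduce, after using that $u_\infty$ solves \eqref{uinf} and performing the change of variable $y = x - x_k^i$, to
\begin{equation*}
 E_{m_k}^{-1}\int V\big(E_{m_k}^{-1/2}y + E_{m_k}^{-1/2}x_k^i\big)\,u_\infty(y)\,\partial_{x_\ell}u_\infty(y)\,dy + (\text{interaction and remainder terms}).
\end{equation*}
In the bounded case $E_{m_k}^{-1/2}x_k^i\to x^i$, I would Taylor expand $V$ around $x^i$ using $\nabla V\in W^{1,\infty}$: the zeroth-order piece vanishes by the identity $\int u_\infty\partial_{x_\ell}u_\infty=0$, and the first-order piece produces $-\tfrac{1}{2}E_{m_k}^{-3/2}[\nabla V(x^i)]_\ell\|u_\infty\|_{L^2}^2$. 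The interaction terms decay like $e^{-|x_k^i-x_k^j|}$ and the $v_k$ contributions are absorbed into an $O(E_{m_k}^{-2})$ remainder, so for $V\in C^1$ near $x^i$ the equation forces $\nabla V(x^i)=0$. In the divergent case $E_{m_k}^{-1/2}|x_k^i|\to\infty$, I would mimic Case (ii) of the splitting analysis in Theorem \ref{th:comp}: hypothesis \eqref{Hypo_largeE3} guarantees $\partial_r V$ has definite sign on a cone around $x_k^i$, hypothesis \eqref{Hypo_largeE2} provides a lower bound $|\partial_r V(E^{-1/2}x_k^i)|\gtrsim e^{-\beta E^{-1/2}|x_k^i|}$ that dominates the exponentially small profile interactions $e^{-|x_k^i-x_k^j|}$, and hypothesis \eqref{Hypo_largeE1} ensures $V^2/|\partial_r V|$ remains bounded, so that $\|V\Tscr_{x_k^i}u_\infty\|_{L^2}^2$ (which controls $\|v_k\|_{H^2}$) is dominated by $\langle \Tscr_{x_k^i}\partial_{x_m}u_\infty, V\Tscr_{x_k^i}u_\infty\rangle$. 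Then the leading term in the finite-dimensional equation cannot be cancelled, yielding a contradiction.

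The principal obstacle will be the careful bookkeeping of the hierarchy of small parameters: the exponential decay $e^{-|x_k^i-x_k^j|}$ of inter-profile interactions, the scale-$E_{m_k}^{-1/2}$ corrections from Taylor expanding $V_{E_{m_k}^{-1/2}}$, and the $L^2$-size of $V(E_{m_k}^{-1/2}\cdot)u_\infty(\cdot-x_k^i)$ in the drifting regime. In particular the shift from the polynomial lower bound \eqref{hypo_smallE2} to the exponential one \eqref{Hypo_largeE2} is essential: in the rescaled variable the profile lives on a length scale $O(1)$ while $V$ is stretched by a factor $\sqrt{E_{m_k}}$, so only an exponential lower bound on $|\partial_r V|$ can compete with the exponential tails of $u_\infty$. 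A secondary technical point is the uniform-in-$k$ invertibility of the projected linearization, which requires combining Proposition \ref{prop:MS} (for the multiple well-separated wells $\sigma(2p+1)|\Tscr_{x_k^i}u_\infty|^{2p}$) with the observation that $\|E_{m_k}^{-1}V_{E_{m_k}^{-1/2}}\|_{H^2\to L^2}\to 0$, so that the vanishing external potential may be treated as a regular perturbation.
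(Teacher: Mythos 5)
Your architecture matches the paper's: Theorem \ref{correct_scaling}(ii) plus positivity and Kwong's uniqueness to reduce to translates of $u_\infty$, modulation via Proposition \ref{lm:newdec_gen}, uniform invertibility of the projected linearization via Proposition \ref{prop:MS} with $E^{-1}V_{E^{-1/2}}$ as a regular perturbation, and a finite-dimensional Pohozaev-type equation obtained by pairing with $\Tscr_{x_k^i}\partial_{x_m}u_\infty$ in the radial direction. The identification of finite limits $x^i$ as critical points by Taylor expansion and dominated convergence is also the paper's argument. However, there are two genuine gaps in the exclusion of drifting.

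First, your estimate $\|v_k\|_{H^2}\le C\|E^{-1}V_{E^{-1/2}}\sum_i\Tscr_{x_k^i}u_\infty\|_{L^2}+\sum_{i\neq j}O(e^{-\sqrt{1-\gamma}|x_k^i-x_k^j|})$ is true but useless when drifting and non-drifting profiles coexist: for a profile with $x_k^i/\sqrt{E}\to x^i$ finite, $\|E^{-1}V_{E^{-1/2}}\Tscr_{x_k^i}u_\infty\|_{L^2}\sim E^{-1}|V(x^i)|$ does not vanish faster than $R^2=E^{-1}$, so the bound only gives $\|v_k\|_{H^2}=O(R^2)$. The leading term in the finite-dimensional equation for a drifting profile is only bounded below by $CR^3e^{-2\beta R|x_k^i|}$, which can be smaller than any power of $R$, so the remainder terms involving $v_k$ (of size $O(R^2)\cdot\|R^2V_R\Tscr_{x_k^i}\partial u_\infty\|_{L^2}$ and $O(\|v_k\|^2)$) are not dominated by it. This is precisely why the paper splits $v_m=P_m^\perp v_m^0+v_m^1$, proves a \emph{pointwise} exponential decay $|v_m^0(x)|\le C\sum_j e^{-\sqrt{1-\gamma}|x-z_m^j|}$ centered at the non-drifting profiles via a comparison/maximum-principle argument (see \eqref{vm0est}), and bounds $\|v_m^1\|_{H^2}$ by $\|V_R\tilde u_m^1\|_{L^2}$ involving only the drifting profiles (see \eqref{eq:vr1_estimate}). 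Without this refinement the contradiction does not close.

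Second, your claim that \eqref{Hypo_largeE2} makes the potential term dominate the inter-profile interactions $e^{-|x_k^i-x_k^j|}$ is false in general: the separations $|x_k^i-x_k^j|\to\infty$ may grow arbitrarily slowly while $R|x_k^i|\to\infty$ arbitrarily fast, so $e^{-M_m}$ can vastly exceed $R^3e^{-2\beta R|x_k^i|}$. The paper handles this with a dichotomy (conditions \eqref{eq:case1_condition_new_n} versus \eqref{eq:case2_condition_new_n}): in the interaction-dominated regime it selects the extremal profile of a connected component and a direction toward its barycenter so that all leading interactions have the same sign and cannot cancel; in the potential-dominated regime it first sums the equations over a cluster to cancel the pairwise interactions by the antisymmetry $\langle\Tscr_{z^i}\partial_{x_k}u_\infty^{2p+1},\Tscr_{z^j}u_\infty\rangle+\langle\Tscr_{z^j}\partial_{x_k}u_\infty^{2p+1},\Tscr_{z^i}u_\infty\rangle=0$ and only then runs the argument you describe. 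A corresponding dichotomy is also needed in the critical-point step, since interactions between several profiles converging to the same $x^0$ could a priori dominate the $R^3\nabla V(x^0)$ term.
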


\begin{remark}
The hypotheses in \eqref{Hypo_largeE1}-\eqref{Hypo_largeE3} in the above theorem impose restrictions on the behavior of the potential near $x=\infty$ alone. In particular, near infinity the potential should not have any critical points and must either strictly decrease or strictly increase to zero in the radial direction. The radial derivative of the potential multiplied by a sufficiently large constant must, in magnitude, dominate the tangential ones. These assumptions hold for physical potentials which model a field that is homogeneous and isotropic at infinity. The conclusions of the above theorem implies that for large $E$ the ground states on a $C^1$ branch of real-valued positive ground states are essentially a sum of finitely many humps. Each of these humps is a re-scaling (via \eqref{uelarge}) of the radially symmetric profile $u_\infty$, and each of them concentrates at a critical point of the potential $V$. In particular, these humps cannot drift towards infinity. The question of whether such $C^1$ branches of ground states actually exist is addressed in Theorem \ref{th:bfelarge}.
\end{remark}

\begin{proof}
Consider a $C^1$ branch of real-valued positive ground states $(\psi_E,E)$ of
\eqref{stationary} on the interval $[E_1,\infty)$ for some $E_1>0$. Let $(E_m)_{m=1}^\infty$ be a sequence with $\lim_{m\to\infty}E_m=\infty$.
Define
$$ u_{E_m}(x)=E_m^{-\frac{1}{2p}}\psi_{E_m}(E_m^{-\frac{1}{2}}x) \qquad \forall x\in\R^n. $$
From part (i) of Theorem \ref{correct_scaling} we know that the sequence $(u_{E_m})_{m=1}^\infty$ is bounded in $H^1$ and the sequence $(\|u_{E_m}\|_{L^2})_{m=1}^\infty$ converges to a nonzero constant, see \eqref{repeated_ref}. Define
$$  \Psi_m = \frac{u_{E_m}}{\|u_{E_m}\|_{L^2}}\m.$$
Like in the proof of part (ii) of Theorem \ref{correct_scaling}, we will apply concentration compactness theory to $(\Psi_m)_{m\in\N}$. We have already established in that proof that vanishing ($\mu=0$) cannot occur. We will show that if either compactness $(\mu=1)$ or splitting $(0<\mu<1)$ occurs, then \eqref{ueh2} and \eqref{need_to_show} hold for a subsequence of $u_{E_m}$.

\noindent
{\bf Compactness ($\mu=1$):} In the proof of part (ii) of Theorem \ref{correct_scaling} we have established that along a subsequence of $(E_m)_{m\in\N}$, again denoted as $(E_m)_{m\in\N}$, $u_{E_m}$ can be written as follows:
\begin{equation}\label{comp-conv1}
  u_{E_m} = \Tscr_{y_m} \hat u_* + \hat v_m.
\end{equation}
Here $\hat u_*$ is a strong, $H^2$ solution of \eqref{uinf} and
\begin{equation} \label{almost_done}
 \lim_{m\to\infty}\|\hat v_m\|_{H^2} = 0.
\end{equation}
Since $(\psi_E,E)$ is a ground state branch, we have $u_{E_m}>0$ for all $m$. It now follows using \eqref{comp-conv1} and \eqref{almost_done} that for any nonnegative $\phi\in C_0^{\infty}(\R^n)$
$$ 0 \leq \liminf_{m\to\infty} \langle u_{E_m}, T_{y_m} \phi \rangle \m=\m \langle \hat u_*,\phi\rangle + \liminf_{m\to\infty}\langle \hat v_m, T_{y_m}\phi\rangle \m=\m \langle \hat u_*,\phi\rangle\m,$$
which implies that $\hat u_*$ is a nonnegative function. Since every nonnegative \cite{kwong} solution of \eqref{uinf} must be a translation of the function $u_\infty$, we have $\hat u_*= \Tscr_{y_*}u_\infty$ for some $y_*\in\R^n$. Redefining $y_m$ to be $y_m+y_*$ we get
\begin{equation}\label{comp-conv2}
  u_{E_m} = \Tscr_{y_m}u_\infty + \hat v_m.
\end{equation}
To complete the proof of this theorem in the compactness case, we will show that the sequence $(R_m y_m)_{m\in\N}$ remains bounded and one of its subsequences, again denoted as $(R_m y_m)_{m\in\N}$, converges to some $x^1\in\R^n$. It will then follow from \eqref{comp-conv2} and \eqref{almost_done} that \eqref{ueh2} and \eqref{need_to_show} hold with $M=1$ and $x_k^1=y_k$. Finally we will show that if the potential $V$ is continuously differentiable in a neighborhood of $x^1$, then $x^1$ must be a critical point of $V$.

To show that $(R_m y_m)_{m\in\N}$ is a bounded sequence, we will assume the contrary, i.e. there exists a subsequence of $(y_m)_{m\in\N}$, also denoted by $(y_m)_{m\in\N}$, such that
\begin{equation} \label{contrad_comp}
  \lim_{m\to\infty}R_m|y_m|=\infty.
\end{equation}

Define the linear operator $L_+(u,R):H^2(\R^n)\times\R\mapsto L^2(\R^n)$ as follows:
\begin{equation} \label{anpan_call}
 L_+(u,R)[v]=(-\Delta+R^2 V_R+1)v +\sigma(2p+1) |u|^{2p}v.
\end{equation}
Note that the definition of $L_+$ here is different from the definition in Section \ref{se:bif} due to the scaling \eqref{uelarge} used to obtain $u_{E_m}$ from $\psi_{E_m}$. Since for the decomposition of $u_{E_m}$ in \eqref{comp-conv2} we have $\lim_{m\to\infty}y_m=\infty$ by assumption \eqref{contrad_comp} and \eqref{almost_done} holds, for reasons similar to those expressed in the case of bifurcation at finite $E$ (see discussion in the paragraph below \eqref{orlov3}), we wish to decompose $u_{E_m}$ with respect to the kernel of $L_+(T_{y_m}u_\infty,R_m)$. Since the kernel of $-\Delta +1+ \sigma(2p+1) T_{y_m} |u_\infty|^{2p}$ approaches the kernel of $L_+(T_{y_m}u_\infty,R_m)$ for large $m$, we will work with the former. The kernel of $-\Delta +1+ \sigma(2p+1) T_{y_m} |u_\infty|^{2p}$ is ${\rm span} \left\{T_{y_m} \partial_{x_k} u_\infty| k=1,2,  \ldots n\right\}$, which is the set of infinitesimal generators of translations of $T_{y_m} u_\infty$ in $\R^n$. Hence the projection onto the kernel can be absorbed into translations of the profile $u_\infty$. To be precise, we apply Lemma \ref{lm_dec_finite_c} to \eqref{comp-conv2} to obtain the following decomposition for sufficiently large $m$:
\begin{equation}\label{compact_soln_newE}
  u_{E_m} = \Tscr_{y_m+s_m}u_{\infty}+v_m = \Tscr_{z_m}u_{\infty}+v_m,
\end{equation}
where $z_m=y_m+s_m$ and $v_m$ satisfies
$$  \langle v_m,\partial_{x_k} T_{y_m+s_m}u_{\infty}\rangle=0\quad\textrm{for
    all} \quad k=1,2,\ldots n.$$
Moreover, as $m\to\infty$ we have $R_m\to 0$, $|s_m|\to 0$ and $R_m|z_m|\to\infty$. Furthermore, since $v_m=\hat v_m+T_{y_m}u_\infty- T_{y_m+s_m}u_\infty$ and \eqref{almost_done} holds and $T_{y_m}u_\infty-T_{y_m+s_m}u_\infty \stackrel {H^2} {\to} 0$ (because $s_m\to0$), we have
$$ \lim_{m\to\infty} \|v_m\|_{H^2} =0.  $$
We assume, with no loss of generality, that the decomposition of $u_{E_m}$ in \eqref{compact_soln_newE} holds for all $m$.

Since $u_{E_m}$ is a solution for \eqref{ue_equation}, using \eqref{compact_soln_newE} we obtain
\begin{equation} \label{eq:ue_comp}
  (-\Delta+R_m^2V_{R_m}+1)(\Tscr_{z_m}u_{\infty}+v_m)+\sigma|\Tscr_{z_m} u_{\infty}+v_m|^{2p}(\Tscr_{z_m}u_{\infty}+v_m)= 0.
\end{equation}
We will analyze the above equation using a Lyapunov-Schmidt decomposition. Our approach here will be identical to the one used in Section \ref{se:bif} and is summarized next. First we obtain a set of equations - an infinite dimensional equation and a set of finite dimensional equations - which is equivalent to \eqref{eq:ue_comp}. We show the existence of a unique solution $v_m$, as a function of $z_m$, to the infinite dimensional equation via contraction mapping. Using this solution we study the set of finite dimensional equations to obtain the contradiction that $u_{E_m}=\Tscr_{y_m} u_\infty + \hat v_m$ does not solve \eqref{ue_equation} if \eqref{contrad_comp} holds. Hence \eqref{contrad_comp} cannot hold.

Recall the linear operator $L_+(u,R)$ from \eqref{anpan_call} and the
nonlinear function $N(U,v):H^2(\R^n)\times H^2(\R^n)\mapsto L^2(\R^n)$ from \eqref{bound_N}:
$$ N(U,v)=\sigma|U+v|^{2p}(U+v)-\sigma|U|^{2p}U-\sigma(2p+1)|U|^{2p}v.$$
Using $L_+$ and $N$, we can rewrite \eqref{eq:ue_comp} as
\begin{equation}\label{eq:bif_Elarge}
  L_+(\Tscr_{z_m}u_\infty,R_m)[v_m] + R_m^2 V_{R_m} \Tscr_{z_m}u_\infty+ N(\Tscr_{z_m}u_\infty, v_m)=0.
\end{equation}
Consider the following projection operator $P^\perp_m$ which projects onto the orthogonal complement of the subspace
$$\textrm{ker}(L_+(\Tscr_{z_m}u_\infty,R_m)-R_m^2 V_{R_m})=\textrm{span}\left\{\Tscr_{z_m} \partial_{x_k} u_\infty\big| k\in\Iscr \right\}$$
in  $L^2(\R^n)$:
$$P^\perp_m \phi = \phi - \sum_{k=1}^n \langle \phi, \Tscr_{z_m} \partial_{x_k} u_\infty\rangle \frac{\Tscr_{z_m}\partial_{x_k} u_\infty} {\|\Tscr_{z_m} \partial_{x_k} u_\infty\|^2_{L^2}} \qquad \forall \phi \in L^2(\R^n).$$
Let $P^\perp_m L^2(\R^n)$ denote the range of $P^\perp_m$. Clearly \eqref{eq:bif_Elarge} is equivalent to the set of equations
\begin{equation}\label{perp_eq_Elarge}
  P^\perp_m L_+(\Tscr_{z_m}u_\infty,R_m)[v_m] + P^\perp_m R_m^2 V_{R_m}\Tscr_{z_m}u_\infty + P^\perp_m N(\Tscr_{z_m}u_\infty,v_m)=0,
\end{equation}
which is an infinite dimensional equation and
\begin{equation} \label{parallel_eq_Elarge}
  \langle\Tscr_{z_m}\partial_{x_k}u_\infty,R_m^2 V_{R_m}\Tscr_{z_m} u_\infty +R_m^2 V_{R_m} v_m+N(\Tscr_{z_m}u_\infty,v_m)\rangle=\ 0\qquad \forall
  k\in\Iscr,
\end{equation}
which is a set of finite dimensional equations. Clearly the operator
$$ P^\perp_m(-\Delta +1+\sigma|\Tscr_{z_m}u_\infty|^{2p})P^\perp_m: H^2(\R^n)\cap P^\perp_m L^2(\R^n)\mapsto P^\perp_m L^2(\R^n),$$
has a bounded inverse and the norm of the inverse operator is independent of $m$. The operators $P^\perp_m L_+(\Tscr_{z_m}u_\infty, R_m)P^\perp_m$ and $P^\perp_m (-\Delta+1+\sigma|\Tscr_{z_m}u_\infty|^{2p})P^\perp_m$ differ by $P^\perp_m R_m^2V(R_m x) P^\perp_m$, and $\|P^\perp_m R_m^2V(R_m x) P^\perp_m\|_{H^2\mapsto L^2}\to0$ as $m\to\infty$. Hence for sufficiently large $m$, $P^\perp_m
L_+(\Tscr_{z_m}u_\infty,R_m)P^\perp_m$ is boundedly invertible with a bound independent of $m$, i.e.
$$\|[P_m^{\perp}L_+(\Tscr_{z_m}u_\infty,R_m) P_m^{\perp}]^{-1}\|_{L^2\mapsto H^2} \leq K.$$
Since $P^\perp_m v_m=v_m$, \eqref{perp_eq_Elarge} can be written as
\begin{equation}\label{perp_eq_cont_Elarge}
  v_m=[P^\perp_m L_+(\Tscr_{z_m}u_\infty,R_m)P^\perp_m]^{-1}P^\perp_m(R_m^2 V_{R_m}\Tscr_{z_m} u_\infty+N(\Tscr_{z_m}u_\infty,v_m)).
\end{equation}
Fix $\delta>0$ and $M\in\N$ such for all $m>M$
\begin{equation} \label{sunday_lunch}
 \|v_m\|_{H^2}\leq\delta \quad\textrm{and}\quad KC(\|u_\infty\|_{H^2})
 2\delta\leq\frac{1}{2}.
\end{equation}
Here $C(\|u_\infty\|_{H^2})$ is the constant in \eqref{est_nl} with $L=\|u_\infty\|_{H^2}$. It now follows via the contraction mapping principle that there exists a unique solution $v_m$ to \eqref{perp_eq_cont_Elarge} for all $m>M$ such that
\begin{equation}\label{eq:estimate_drifting_vR}
  \|v_m\|_{H^2}\leq 2KR_m^2\|V_{R_m}T_{z_m}u_\infty\|_{L^2}.
\end{equation}
We will assume, with no loss of generality, that a $v_m$ satisfying \eqref{eq:estimate_drifting_vR} exists for all $m$.

Using the solution $v_m$ of \eqref{perp_eq_Elarge}, regarded as a function of $z_m$, we will now consider the set of finite dimensional equations \eqref{parallel_eq_Elarge} to be equations in the unknown $z_m$ and show that under our assumption \eqref{contrad_comp}, these equations have no solution.

Let $x_m$ denote the radial direction at $z_m$. For each $m\in\N$, consider the following finite dimensional equation obtained as a linear combination of the equations in \eqref{parallel_eq_Elarge}:
\begin{equation}\label{eq:drifting}
  \langle \Tscr_{z_m}\partial_{x_m}u_{\infty}, R_m^2 V_{R_m}\Tscr_{z_m} u_{\infty}\rangle + \langle \Tscr_{z_m}\partial_{x_m}u_{\infty}, R_m^2 V_{R_m}v_m+N(\Tscr_{z_m}u_{\infty},v_m)\rangle = 0.
\end{equation}
We will show that, under our hypothesis on the behavior of the potential $V(x)$ as $|x|\to\infty$, the dominant term in \eqref{eq:drifting} as $R_m z_m\to\infty$ is  $ \langle \Tscr_{z_m}\partial_{x_m}u_{\infty},R_m^2 V_{R_m}\Tscr_{z_m} u_{\infty}\rangle$ and by dividing the left hand side of \eqref{eq:drifting} with it and taking the limit as $m\to\infty$, we get the contradiction that $1=0$.

Fix $\theta\in(0,\pi/2)$ such that $2\Cscr\tan\theta<1$, where $\Cscr$ is the constant in the hypothesis \eqref{Hypo_largeE3}. Let $B_m$ be the closed ball in $\R^n$ with center $z_m$ and radius $|z_m|\sin\theta$. Then for the first term in \eqref{eq:drifting}
$$ -2\langle\Tscr_{z_m}\partial_{x_m}u_\infty,R_m^2 V_{R_m} \Tscr_{z_m} u_\infty\rangle = R_m^3\int\limits_{B_m\cup\, \R^n\setminus B_m}\!\!\!\! u^2_\infty(x-z_m) \partial_{x_m} V(R_m x) \mathrm{d}x.$$
Since there exist constants  $C>0$ and $\gamma\in(0,1/2)$ such that $|u_\infty(x)| <Ce^{-\sqrt{1-\gamma}|x|}$ and $|\partial_{x_k}u_\infty(x)| <Ce^{-\sqrt{1-\gamma}|x|}$ for all $x\in\R^n$, it follows that for sufficiently large $m$
\begin{align}
  \Bigg|R_m^3\int\limits_{\R^n\setminus B_m} u^2_\infty(x-z_m)\partial_{x_m}V(R_m x)
  \mathrm{d}x\Bigg| &\leq CR_m^3\|\nabla V\|_{L_{\infty}}\int\limits
  _{\R^n\setminus B_m} e^{-2\sqrt{1-\gamma}|x-z_m|}\mathrm{d}x \nonumber\\
  &\leq CR_m^3 e^{-2\sqrt{1-2\gamma}|z_m|\sin\theta} \label{eq:v'exp_decay}.
\end{align}
For each $x\in B_m$, let $\{\theta_i(x)\big| i\in\Iscr\}$ with  $|\theta_i(x)|<\theta$ be the set of angles using which the derivative of the potential along the direction $x_m$ can be written as a linear combination of the derivatives along the radial and tangential directions at $x$, i.e.
$$ \partial_{x_m}V(R_m x) = \partial_{r}V(R_m x)\cos\theta_n(x) +
   \sum\limits_{i=1}^{n-1}\partial_{s_i}V(R_m x)\sin\theta_i (x),$$
where $\partial_r V$ and $\partial_{s_i} V$ are the radial and tangential
derivatives introduced in the theorem. The function $\partial_{x_m} V(R_m x)$ has the same sign a.e. in $B_m$ for large $m$. Indeed, since $\lim_{m\to\infty}R_m z_m=\infty$, for almost every $x\in B_m$ we can use the hypothesis in \eqref{Hypo_largeE2} and \eqref{Hypo_largeE3} to get
\begin{align}
  \left|\partial_{x_m}V(R_m x)\right| &= \Big|\partial_r V(R_m x)  \cos\theta_n(x) +\sum\limits_{i=1}^{n-1}\partial_{s_i}V(R_m x)\sin\theta_i
  (x)\Big| \nonumber\\
  &\geq \cos\theta\left|\partial_r V(R_m x)\right|-\Big(\sum\limits_{i=1}^{n-1} \left|\partial_{s_i}V(R_m x)\right|^2\Big)^\frac{1}{2} \Big(\sum\limits_{i=1}^{n-1} \sin^2\theta_i(x)\Big)^\frac{1}{2} \nonumber\\
  &\geq\cos\theta\left|\partial_r V(R_m x)\right|-\Cscr\sin\theta\left|\partial_r V(R_m x) \right|\label{est_dvxinf}\\
  \implies\ \left|\partial_{x_m} V(R_m x)\right| &> C e^{-\beta R_m|x|}.
  \nonumber
\end{align}
Using the above estimate it follows that for $m$ large
\begin{align}
  \Big|R_m^3\int\limits_{B_m} u^2_\infty(x-z_m)\partial_{x_m}V(R_m x)
  \mathrm{d}x\Big| &\geq CR_m^3 \int\limits_{B_m} u^2_\infty(x-z_m)
  e^{-\beta R_m|x|} \mathrm{d}x \\
  &\geq C R_m^3 e^{-2\beta R_m|z_m|}, \label{useinsplit}
\end{align}
which along with \eqref{eq:v'exp_decay} gives
\begin{equation}\label{eq:v'estimate}
  \lim_{m\to\infty}\frac{\Big|\int\limits_{\R^n\setminus B_m}u^2_\infty(x-z_m)
  \partial_{x_m}V(R_m x)\mathrm{d}x\Big|}{\Big|\int\limits_{B_m}
  u^2_\infty(x-z_m)\partial_{x_m}V(R_m x)\mathrm{d}x\Big|} = 0.
\end{equation}
For the remaining terms in \eqref{eq:drifting} we have the following estimates
using \eqref{est_nl} and \eqref{eq:estimate_drifting_vR}:
\begin{align}
  &\ |\langle\Tscr_{z_m}\partial_{x_m}u_\infty,R_m^2 V_{R_m}v_m+ N(\Tscr_{z_m}u_\infty,v_m)\rangle| \nonumber\\
  \leq&\ R_m^2\|\partial_{x_m} V_{R_m}\Tscr_{z_m} u_\infty\|_{L^2}\|v_m\|_{L^2} + R_m^2\|V_{R_m}\Tscr_{z_m} u_\infty\|_{L^2}\|v_m\|_{H^1} +
  \|\Tscr_{z_m}\partial_{x_m}u_\infty\|_{L^2}\|N(\Tscr_{z_m}u_\infty,v_m)\|_{L^2}\nonumber\\
  \leq&\ C R_m^4\big(\|\partial_{x_m} V_{R_m} \Tscr_{z_m}u_\infty\|_{L^2}^2 + \|V_{R_m} \Tscr_{z_m}u_\infty \|_{L^2}^2\big). \label{bimamvas}
\end{align}
Now as in \eqref{eq:v'exp_decay}, using the decay estimates for $|u_\infty|$ and $|\partial_{x_i} u_\infty|$, we get
\begin{equation}\label{eq:vsqr_exp1}
  \Big|\int\limits_{\R^n\setminus B_m} u^2_\infty(x-z_m)V^2(R_m x) \mathrm{d}x\Big|
  < Ce^{-2\sqrt{1-2\gamma}|z_m|\sin\theta},
\end{equation}
\begin{equation}\label{eq:vsqr_exp2}
  \Big|\int\limits_{\R^n\setminus B_m} u^2_\infty(x-z_m) (R_m \partial_{x_m} V(R_m x))^2 \mathrm{d}x\Big| < Ce^{-2\sqrt{1-2\gamma}|z_m|\sin\theta}.
\end{equation}
Since $\partial_{x_m}V(R_m x)$ does not change sign in $B_m$ (excluding a set of measure 0), using \eqref{Hypo_largeE1} along with \eqref{est_dvxinf} gives us the following estimates:
\begin{align}
  &\lim_{m\to\infty}\frac{R_m\Big|\int\limits_{B_m}u^2_\infty(x-z_m)V^2(R_m x)
  \mathrm{d}x\Big|}{\Big|\int\limits_{B_m} u^2_\infty(x-z_m) \partial_{x_m}
  V(R_m x)\mathrm{d}x\Big|}\nonumber\\
  =&\lim_{m\to\infty}\frac{\Big|\int\limits_{B_m} u^2_\infty(x-z_m)\partial
  _{x_m}V(R_m x) [ R_m \frac{V^2(R_m x)}{\partial_{x_m}V(R_m x)}]\mathrm{d}x\Big|}
  {\Big|\int\limits_{B_m} u^2_\infty(x-z_m)\partial_{x_m}V(R_m x)\mathrm{d}x
  \Big|}=0\label{eq:vsqr_v'1}
\end{align}
and
\begin{align}
  &\lim_{m\to\infty}\frac{R_m^3\Big|\int\limits_{B_m} u_\infty^2(x-z_m) (\partial_{x_m} V(R_m x))^2 \mathrm{d}x\Big|}{\Big|\int\limits_{B_m} u^2_
  \infty(x-z_m)\partial_{x_m}V(R_m x)\mathrm{d}x\Big|}\nonumber\\
  =&\lim_{m\to\infty}\frac{\Big|\int\limits_{B_m} u^2_\infty(x-z_m)\partial
  _{x_m} V(R_m x) [R_m^3 \partial_{x_m} V(R_m x)] \mathrm{d}x\Big|} {\Big|\int\limits_{B_m} u^2_\infty(x-z_m)\partial_{x_m} V(R_m x) \mathrm{d}x\Big|} = 0.
  \label{eq:vsqr_v'2}
\end{align}
In deriving the last limit we have used the assumption $\nabla V\in L^\infty(\R^n)$. Using the estimates in \eqref{eq:v'exp_decay}, \eqref{eq:v'estimate}, \eqref{eq:vsqr_exp1}, \eqref{eq:vsqr_exp2}, \eqref{eq:vsqr_v'1} and \eqref{eq:vsqr_v'2} we see that the first term in
\eqref{eq:drifting}
$$\langle\Tscr_{z_m}\partial_{x_m}u_\infty,R_m^2 V_{R_m}\Tscr_{z_m} u_\infty
  \rangle$$
cannot be canceled by the remaining terms in \eqref{eq:drifting} when $m$
is large. Therefore it follows that there are no solutions to
\eqref{eq:ue_comp} such that the sequence $(R_m z_m)_{m\in\N}$, and consequently $(R_m y_m)_{m\in\N}$, is unbounded. Hence \eqref{contrad_comp} cannot hold and so $(R_m y_m)_{m\in\N}$ must be a bounded sequence.

Consider a subsequence of $(R_m y_m)_{m\in\N}$, again denoted as $(R_m y_m)_{m\in\N}$, converging to $x^1\in\R^n$. From \eqref{compact_soln_newE} and \eqref{almost_done} it follows that \eqref{ueh2} and \eqref{need_to_show} holds with $M=1$ and $x_k^1=y_k$. Suppose that $V$ is continuously differentiable at $x^1$. We will now complete the proof of the theorem in the compactness case by showing $x^1$ is a critical point of the potential. Indeed, for each $k\in\Iscr$, \eqref{parallel_eq_Elarge} can be written using \eqref{est_nl} and \eqref{eq:estimate_drifting_vR} (via calculations similar to those used to derive \eqref{bimamvas}) as
$$ \int_{\R^n} \partial_{x_k} V(R_m x+R_m z_m) u^2_\infty(x)   \dd x = O\big(R_m \left\|V_{R_m} T_{z_m} u_\infty\right\|_{L^2}^2 + R_m \left\|V_{R_m} T_{z_m} \partial_{x_1}u_\infty\right\|_{L^2}^2\big). $$
For every $x\in\R^n$, we have $\partial_{x_k} V(R_m x+R_m z_m) u^2_\infty(x)\to \partial_{x_k} V(x^1) u^2_\infty(x)$ as $m\to\infty$ and $|\partial_{x_1} V(R_m x+R_m z_m) u^2_\infty(x)|\leq \|\nabla V\|_{L^\infty}u^2_\infty(x)$ for all $m$. Since $\|\nabla V\|_{L^\infty}u^2_\infty\in L^1(\R^n)$, it follows using the Lebesgue dominated convergence theorem that as $m\to\infty$ the left hand side of the above equation converges to $\partial_{x_k} V(x^1)\|u_\infty\|_{L^2}^2$.
Clearly the right hand side of the above equation converges to 0. Hence we have
$$ \partial_{x_k} V(x^1)\|u_\infty\|_{L^2}^2 \m=\m0 \FORALL k\in\Iscr \m,$$
i.e., $x^1$ is a critical point of the potential.

\noindent
{\bf Splitting ($0<\mu<1$):} In the proof of part (ii) of Theorem \ref{correct_scaling} we have established in the splitting case that along a subsequence of $(E_m)_{m\in\N}$, again denoted as $(E_m)_{m\in\N}$, $u_{E_m}$ can be written as follows:
\begin{equation}\label{PQWL_split}
  u_{E_m} \m=\m \sum_{j=1}^{d} \Tscr_{y^j_m}u_*^j+u_*^0+ \hat v_m \m.
\end{equation}
Here $u_*^j$ is a strong solution of \eqref{uinf} for each $j\in \Jscr= \{0,1,\ldots d\}$,
\begin{equation} \label{almost_done_split}
 \lim_{m\to\infty}\|\hat v_m\|_{H^2} = 0,
\end{equation}
and as $m\to\infty$
\begin{equation} \label{GNWL_split}
  |y^j_m|\to\infty\quad \textrm{and} \quad |y^j_m-y^k_m|\to\infty\ \FORALL j, k\in \{1,2,\ldots d\},\ j\neq k.
\end{equation}
Concentration compactness theory does not imply that $u_*^0\neq 0$, but we assume it only to simplify our presentation. All the arguments in this section remain valid if $u_*^0 = 0$. Since $(\psi_E,E)$ is a ground state branch, we have $u_{E_m}>0$ for all $m$. It now follows using \eqref{PQWL_split} and \eqref{almost_done_split} that for any nonnegative $\phi\in C_0^{\infty} (\R^n)$ and any $j\in\Jscr$, using the notation $y_m^0=0$, we have
$$ 0 \leq \liminf_{m\to\infty} \langle u_{E_m}, T_{y^j_m} \phi \rangle \m=\m \langle u_*^j,\phi\rangle + \liminf_{m\to\infty}\langle \hat v_m, T_{y_m}\phi\rangle \m=\m \langle u_*^j,\phi\rangle\m,$$
which implies that $\hat u_*^j$ is a nonnegative function. Since every nonnegative solution of \eqref{uinf} must be a translation of the function $u_\infty$, see \cite{kwong}, we have $u_*^j= \Tscr_{y_*^j}u_\infty$ for some $y_*^j\in\R^n$. Redefining $y_m^j$ to be $y_m^j+y_*^j$ for each $j\in\Jscr$ we get the following different decomposition for $u_{E_m}$:
\begin{equation}\label{new_split_soln_inf}
  u_{E_m} = \sum_{j=1}^{d} \Tscr_{y^j_m}u_\infty+\Tscr_{y_m^0}u_\infty+
  \hat v_m.
\end{equation}
Here $y_m^0\in\R^n$ is a constant and the $y^j_m$s satisfy \eqref{GNWL_split}.

To complete the proof of this theorem in the splitting case, we will show that the sequence $(R_m y_m^j)_{m\in\N}$ remains bounded for all $j\in\Jscr$ and one of its subsequences, again denoted as $(R_m y_m^j)_{m\in\N}$, converges to some $x^j\in\R^n$. It will then follow from \eqref{new_split_soln_inf} and \eqref{almost_done_split}, that \eqref{ueh2} and \eqref{need_to_show} hold with $M=d+1$ and $x_k^1=y_k$. Finally we will show that if the potential $V$ is continuously differentiable in a neighborhood of $x^j$, then $x^j$ must be a critical point of $V$.

To show that $(R_m y_m^j)_{m\in\N}$ is a bounded sequence for each $j\in\Jscr$, we will assume the contrary i.e., we suppose that there exists a $0<d_0<d$ such that $(R_m y_m^j)_{m\in\N}$ is a bounded sequence for each $j\in\Jscr_0= \{0,1,\ldots d_0\}$ and for each $j\in\Jscr\setminus\Jscr_0$ the sequence $(y_m^j)_{m\in\N}$ has a subsequence, also denoted by $(y_m^j)_{m\in\N}$, such that
\begin{equation} \label{contrad_split}
  \lim_{m\to\infty}R_m|y_m^j|=\infty \FORALL j\in\Jscr\setminus\Jscr_0.
\end{equation}

Recall the linear operator $L_+(u,R):H^2(\R^n)\times\R\mapsto L^2(\R^n)$ from \eqref{anpan_call}:
$$ L_+(u,R)[v]=(-\Delta+R^2 V_R+1)v +\sigma(2p+1) |u|^{2p}v. $$
Since for the decomposition of $u_{E_m}$ in \eqref{new_split_soln_inf} we have $\lim_{m\to\infty}y_m^j=\infty$ and $\lim_{m\to\infty}|y_m^j-y_m^k|=\infty$ for $j,k\in\{1,2,\ldots d\}$ and \eqref{almost_done_split} holds, for reasons similar to those expressed in the case of bifurcation at finite $E$ (see discussion in the paragraph below \eqref{yuri5}), we wish to decompose $u_{E_m}$ with respect to the kernel of $L_+(\sum_{j=0}^\infty T_{y_m}u_\infty,R_m)$. From the theory of Schr\"odinger operators with potentials separated by a large distance, the kernel of $L_+(\sum_{j=0}^\infty T_{y_m}u_\infty,R_m)$ approaches the union of the kernels of $-\Delta +1+ \sigma(2p+1) T_{y_m^j} |u_\infty|^{2p}$, $j\in\Jscr$, for large $m$. We will therefore work with the latter. The kernel of $-\Delta +1+ \sigma(2p+1) T_{y_m^j} |u_\infty|^{2p}$ is ${\rm span} \left\{T_{y_m^j} \partial_{x_k} u_\infty \m|\m k=1,2, \ldots n\right\}$, which is the set of infinitesimal generators of translations of $T_{y_m^j} u_\infty$ in $\R^n$. Hence the projection onto this kernel can be absorbed into translations of the profile $u_\infty$. To be precise, we apply Proposition \ref{lm:newdec_gen} (also see Remark \ref{decay_not needed}) with $\psi=0$, $\phi_i=0$ for all $i$, $u_j= u_\infty$ and $y_j=y_m^j$ for $j\in\{0,1,\ldots d\}$ to $u_{E_m}$ in \eqref{new_split_soln_inf} to conclude, using the limits $\hat v_m\stackrel{H^2}{\rightarrow}0$ and $|y_m^j-y_m^k| \rightarrow \infty$ as $m\to\infty$, that for sufficiently large $m$:
\begin{equation} \label{eq:split_soln_new}
  u_{E_m}=\sum_{j=0}^d \Tscr_{y_m^j+s_m^j} u_{\infty} + v_m=\sum_{j=0}^d \Tscr_{z_m^j} u_{\infty} + v_m,
\end{equation}
where $z_m^j=y_m^j+s_m^j$ and  $v_m$ satisfies
$$  \big\langle v_m, \Tscr_{z_m^j}\partial_{x_k} u_{\infty}\big\rangle= 0\qquad \forall k\in\Iscr,\quad \forall j\in\Jscr.$$
Moreover, as $m\to\infty$ we have $R_m\to 0$, $|s_m^j|\to 0$ for $j\in\Jscr$, $R_m|z_m^j|$ remains bounded for $j\in\Jscr_0$ and $R_m|z_m^j|\to\infty$ for $j\in\Jscr\setminus\Jscr_0$. Furthermore, since $v_m=\hat v_m+\sum_{j=0}^d T_{y_m^j}u_\infty-\sum_{j=0}^d T_{y_m^j+s_m^j}u_\infty$ and \eqref{almost_done_split} holds and $\sum_{j=0}^dT_{y_m^j}u_\infty-\sum_{j=0}^d T_{y_m^j+s_m^j}u_\infty \stackrel {H^2} {\to} 0$ (because $s_m^j\to0$), we have
$$ \lim_{m\to\infty} \|v_m\|_{H^2} =0. $$
We assume, with no loss of generality, that the decomposition of $u_{E_m}$ in \eqref{eq:split_soln_new} holds for all $m$.

The arguments presented next to establish that the assumption in \eqref{contrad_split} cannot hold are a complex extension of those used to show that $(R_m z_m)_{m\in\N}$ is bounded in the compactness case. Define
\begin{equation} \label{defn_functions}
 \tilde u_m=\sum_{j=0}^d \Tscr_{z_m^j}u_{\infty}, \qquad \tilde u_m^0=\sum_{j=0}^{d_0} \Tscr_{z_m^j}u_{\infty}, \qquad \tilde u_m^1=\sum_{j=d_0+1}^d \Tscr_{z_m^j}u_{\infty}.
\end{equation}
Since $u_{E_m}$ solves \eqref{ue_equation}, we substitute its decomposition in \eqref{eq:split_soln_new} into \eqref{ue_equation} to get
\begin{equation}\label{splteq_inf}
  (-\Delta +1+R_m^2 V_{R_m})(\tilde u_m+v_m) + \sigma\left|\tilde
   u_m+v_m \right|^{2p}(\tilde u_m+v_m)(x) = 0.
\end{equation}
As in the compactness case, we will analyze the above equation using the Lyapunov-Schmidt decomposition. We first derive an equivalent family of equations: an infinite dimensional equation and a set of finite dimensional equations. We then solve the infinite dimensional equation for $v_m$ as a function of the $z_m^j$s and derive some useful estimates for its decay. Using this solution and its decay estimates we study the set of finite dimensional equations to derive the contradiction that if \eqref{contrad_split} holds, then \eqref{eq:split_soln_new} does not solve \eqref{ue_equation}. Therefore $\{R_m z_m^j\}_{m\in\N}$ must be a bounded sequence for all $j\in\Jscr$.

Recall the linear operator $L_+(u,R):H^2(\R^n)\times\R\mapsto L^2(\R^n)$ from \eqref{anpan_call} and the nonlinear operator  $N(U,v):H^2(\R^n) \times H^2(\R^n)\mapsto L^2(\R^n)$ from \eqref{bound_N},
$$ N(U,v)=\sigma|U+v|^{2p}(U+v)-\sigma|U|^{2p}U-\sigma(2p+1)|U|^{2p}v,$$
which satisfies the bound in \eqref{est_nl}. Using these operators we can rewrite
\eqref{splteq_inf} as
\begin{equation}
  L_+(\tilde u_m,R_m)[v_m]+N(\tilde u_m,v_m)+R_m^2 V_{R_m} \sum _{j=0}^d
  \Tscr_{z_m^j}u_{\infty}+\sigma\left|\tilde u_m \right|^{2p}\tilde u_m-\sigma\sum_{j=0}^d
  \Tscr_{z_m^j}|u_{\infty}|^{2p}u_{\infty}=0.\label{eq:stationary_split_Elarge}
\end{equation}
Define the operator
$P^{\perp}_{m}$ on $L^2(\R^n)$ to be the orthogonal projection onto the
subspace
$$  \big\{\Tscr_{z_m^j}\partial_{x_k}u_\infty \big| k\in\Iscr, j\in\Jscr \big\}^{\perp}.$$
Note that the definitions of $P_m^\perp$ here is different from the compactness case. For simplicity of notation, let us rewrite \eqref{eq:stationary_split_Elarge} as
$$  F(v_m, z_m)=0,$$
where $z_m$ is the set $\{z_m^0,z_m^1,\ldots z_m^d\}$. This equation is equivalent to the following set of equations:
\begin{equation}\label{eq:infcom_inf}
  P_m^\perp F(v_m,z_m)=0,
\end{equation}
which is an infinite dimensional equation to be solved for $v_m$ as a
function of $z_m$, and
\begin{equation}\label{eq:fincom_inf}
  \langle F(v_m, z_m), \partial_{x_k}u_\infty(x-z_m^j)\rangle=0,
  \quad j\in\Jscr, \quad k\in\Iscr,
\end{equation}
which is a set of finite dimensional equations to be solved for $z_m$ using the solution $v_m$ of \eqref{eq:infcom_inf}.

We will first solve the infinite dimensional equation \eqref{eq:infcom_inf}. When $m$ is sufficiently large, $R_m$ is sufficiently small and $|z_m^j|$ and $|z_m^j-z_m^k|$ are sufficiently large for all $j,k\in\Jscr$. We can write $L_+(\tilde u_m,R_m)=\widetilde L_m+\widetilde W_m$, where
$$ \widetilde L_m=-\Delta+1 +\sigma(2p+1)\sum_{j=0}^d \big|\Tscr_{z_m^j} u_{\infty}\big|^{2p}, \qquad  \widetilde W_m= R_m^2 V_{R_m} +\sigma(2p+1) |\tilde u_m|^{2p}-\sigma(2p+1)\sum_{j=0}^d \big|\Tscr_{z_m^j} u_{\infty}\big|^{2p}.$$
Clearly $\|\widetilde W_m\|_{H^2\mapsto L^2}\to0$ as $m\to\infty$. Applying Proposition \ref{prop:MS} in the Appendix to $\widetilde L_m$, with $R=R_m$, $L_{R_m}=\widetilde L_m$, $E(R_m)=1$, $s_k(R_m)=z_m^k$ and $V_k=\sigma(2p+1)|u_\infty|^{2p}$, and then applying the spectral perturbation theory to $\widetilde L_m+\widetilde W_m$ by regarding $\widetilde W_m$ as a perturbation, it follows that the operator $P_{m}^{\perp}L_+(\tilde u_m,R_m)P_{m}^{\perp}:H^2(\R^n)\mapsto L^2(\R^n)$ has a bounded inverse and the norm of the inverse operator can be bounded uniformly in $m$, for large $m$. Hence for sufficiently large $m$, using \eqref{eq:stationary_split_Elarge} and $P^{\perp}_{m}v_m=v_m$, we can rewrite the infinite dimensional equation \eqref{eq:infcom_inf} as
\begin{equation}   \label{vm_com_inf}
  v_m = -(P_m^\perp L_+(\tilde u_m,R_m)P_m^\perp)^{-1}\Big(N(\tilde u_m,v_m)+R_m^2
  V_{R_m}\tilde u_m +\sigma \left|\tilde u_m \right|^{2p}\tilde u_m-\sigma
  \sum_{k=0}^d\Tscr_{z_m^k}|u_{\infty}|^{2p}u_{\infty}\Big).
\end{equation}
Like in the case of compactness, using \eqref{est_nl}, the smallness of $\|v_m\|_{H^2}$ and the estimates
\begin{equation}  \label{vsmurthy}
  |u_\infty(x)| \m<\m Ce^{-\sqrt{1-\gamma}|x|} \FORALL x\in\R^n,
\end{equation}
for some $C>0$ and $\gamma\in(0,1)$, the existence of a unique solution $v_m$ to \eqref{vm_com_inf} can be established by applying the contraction mapping principle.

While analyzing the finite dimensional equations \eqref{eq:fincom_inf} it
will be useful to have estimates for the decay of $\|v_m\|_{H^2}$ explicitly in terms of $z^j_m$s. Indeed, like in the compactness case (see \eqref{eq:estimate_drifting_vR}), we can easily derive that for some $C>0$
\begin{equation} \label{prabhat}
  \|v_m\|_{H^2}\leq CR_m^2\sum_{j=0}^d\|V_{R_m}T_{z_m^j}u_\infty\|_{L^2}.
\end{equation}
However this estimate is not useful since $\|V_{R_m}T_{z_m^j} u_\infty\|_{L^2}$ need not decay to 0 as $z_m^j\to\infty$ for $j\in\Jscr_0$. This is a consequence of $(R_mz_m^j)_{m\in\N}$ being bounded for $j\in\Jscr_0$. Hence we will express $v_m$ as
$$ v_m=P_m^\perp v_m^0 + v_m^1. $$
Here $v_m^0$ will be associated with  $z^j_m$'s for $j\in\Jscr_0$ and $v_m^1$ will be associated with  $z^j_m$'s for $j\in\Jscr\setminus\Jscr_0$. For $v_m^0$ we will derive certain exponential in space decay estimates (see \eqref{vm0est}) and for the $v_m^1$ we will derive an estimate like \eqref{prabhat} (see \eqref{eq:vr1_estimate}). Using these estimates we will analyze the equations \eqref{eq:fincom_inf} to show that our assumption \eqref{contrad_split} leads to a contradiction.

We first derive the estimate \eqref{vm0est}. Let $P_0^\perp$ be the projection in $L^2(\R^n)$ onto its subspace
\begin{equation} \label{twosigns}
  S_m=\left\{\Tscr_{z_m^j}\partial_{x_k} u_{\infty}\big|\,j\in\Jscr_0,\,
  k\in\Iscr\right\}^\perp.
\end{equation}
Recall $\tilde u_m^0$ from \eqref{defn_functions}. Consider the equation
\begin{equation}\label{eq:v0_Elarge}
  P_0^\perp \Big[(-\Delta +1 + R_m^2 V_{R_m})\big(\tilde u_m^0+v_m^0 \big) +\sigma |\tilde u_m^0+v_m^0|^{2p}\big(\tilde u_m^0 +v_m^0\big)\Big]=0.
\end{equation}
We wish to find a $v_m^0\in H^2(\R^n)$ that solves this equation. For sufficiently large $m$, the linear operator $F_L:S_m\cap H^2(\R^n)\mapsto S_m\cap L^2(\R^n)$ defined as
$$  F_L(v)=-P_0^\perp\Big[(-\Delta +1+R_m^2V_{R_m})v + \sigma
    (2p+1)|\tilde u_m^0|^{2p}v\Big]$$
is invertible with the norm of its inverse operator bounded uniformly in $m$. This can be established using Proposition \ref{prop:MS} in the Appendix and the spectral perturbation theory (see, for instance, the discussion above \eqref{vm_com_inf}). Hence we can write \eqref{eq:v0_Elarge} as
\begin{align}
  v_m^0 =&\ F_L^{-1}P_0^\perp \Big(R_m^2 V_{R_m}\tilde u_m^0 + \sigma
  |\tilde u_m^0|^{2p}\tilde u_m^0 - \sigma\sum_{j=0}^{d_0} \Tscr_{z_m^j} |u_\infty|^{2p}u_\infty + N(\tilde u_m^0,v_m^0)\Big) \label{FLeqnvm0}\\
  \equiv&\ F_N(v_m^0). \nonumber
\end{align}
For $m$ sufficiently large and $\delta$ sufficiently small, the map $F_N$ is a contraction on the space $S_m\cap H^2(\R^n)\cap B_\delta$, where $B_\delta$ is a closed ball of radius $\delta$ around the origin in $H^2(\R^n)$. Hence for each $m$ large, a $v_m^0\in S_m\cap H^2(\R^n)$ solving \eqref{eq:v0_Elarge} exists via the contraction mapping theorem and
\begin{equation} \label{Mramsmy}
 \|v_m^0\|_{H^2}=O(R_m^2)+\sum_{k\neq j,\ j,k=0}^{d_0}O(\exp^{-\sqrt{1-
 \gamma}|z_m^j-z_m^k|}).
\end{equation}
Our assumption $p\geq1/2$, along with the constraint $0<p<2/(n-2)$, implies that $n\leq5$. We will now show that $\lim_{m\to\infty}\|v_m^0\|_{W^{2,4}}=0$, which will imply that $v_m^0$ is a continuous function and
\begin{equation} \label{reg_vm0}
 \lim_{m\to\infty}\|v_m^0\|_{L^\infty} = 0.
\end{equation}
From \eqref{FLeqnvm0} we have
$$ v_m^0=-P_0^\perp(-\Delta+1)^{-1}P_0^\perp\Big[R_m^2 V_{R_m} \tilde u_m^0 + (R_m^2 V_{R_m}+\sigma(2p+1) |\tilde u_m^0|^{2p})v_m^0$$
$$\hspace{40mm} +\sigma  |\tilde u_m^0|^{2p}\tilde u_m^0 - \sigma\sum_{j=0}^{d_0} \Tscr_{z_m^j} |u_\infty|^{2p}u_\infty + N(\tilde u_m^0,v_m^0)\Big]. $$
Since $(-\Delta+1)^{-1}: L^4\mapsto W^{2,4}$ is a bounded operator, it follows from the above expression that to establish \eqref{reg_vm0}, it suffices to show that $\lim_{m\to\infty}\|R_m^2 V_{R_m} \tilde u_m^0 + (R_m^2 V_{R_m} + \sigma(2p+1) |\tilde u_m^0|^{2p})v_m^0\|_{L^4}=0$, $\lim_{m\to\infty}\||\tilde u_m^0|^{2p}\tilde u_m^0 - \sigma\sum_{j=0}^{d_0} \Tscr_{z_m^j} |u_\infty|^{2p} u_\infty\|_{L^4}=0$ and $\lim_{m\to\infty}\|N(\tilde u_m^0,v_m^0)\|_{L^4}=0$. The first limit follows easily using $\lim_{m\to\infty}R_m=0$, \eqref{Mramsmy} which implies $\lim_{m\to\infty}\|v_m^0\|_{L^4}=0$ and the fact $u_\infty\in L^2(\R^n)\cap L^\infty(\R^n)$ which implies that $\|\tilde u_m^0\|_{L^4}<C$ for all $m$. The second limit follows easily from the estimate in \eqref{vsmurthy} and the fact that $|z_m^j-z_m^k|\to\infty$ as $m\to\infty$. We only need to prove the third limit. As shown in Lemma \ref{lm:nuv}, there exists $C>0$ such that
$$ |N(U(x),v(x))| \leq C \left[|U(x)|^{2p-1} |v(x)|^2+|v(x)|^{2p+1}\right] \FORALL x\in\R^n .$$
If $U\in L^\infty(\R)^n$, then
$$ \|N(U,v)\|_{L^4} \m\leq\m C \|U\|_{L^\infty}^{2p-1} \|v\|_{L^8}^2 + C \|v\|_{L^{8p+4}}^{2p+1} \m. $$
Since $H^2(\R^n)\hookrightarrow L^8(\R^n)$ and $H^2(\R^n)\hookrightarrow L^{8p+4}(\R^n)$ for $n\leq 5$ and $0<p<2/(n-2)$, it follows that
$$ \|N(U,v)\|_{L^4} \m\leq\m C \|U\|_{L^\infty}^{2p-1} \|v\|_{H^2}^2 + C \|v\|_{H^2}^{2p+1} \m. $$
From this estimate, \eqref{Mramsmy} and $\tilde u_m^0\in L^2(\R^n)\cap L^\infty(\R^n)$, we get $\lim_{m\to\infty}\|N(\tilde u_m^0, v_m^0)\|_{L^4}=0$ as desired.

From \eqref{FLeqnvm0} we get that there exist $\beta_i^j\in\R$, for each
$i\in\Iscr$ and $j\in\Jscr_0$, such that
$$  (-\Delta + 1 + R_m^2 V_{R_m}+\sigma(2p+1) |\tilde u_m^0|^{2p})v_m^0 + R_m^2 V_{R_m} \tilde u_m^0$$
$$\hspace{40mm} +\sigma  |\tilde u_m^0|^{2p}\tilde u_m^0 - \sigma\sum_{j=0}^{d_0} \Tscr_{z_m^j} |u_\infty|^{2p}u_\infty + N(\tilde u_m^0,v_m^0)=\sum_{j=0}^{d_0} \sum_{i=1}^n \beta_i^j \Tscr_{z_m^j}\partial_{x_i}u_{\infty}. $$
Each of the $\beta_i^j$s depend on $m$. By taking the $L^2$ scalar product
of the above equation with the functions $\Tscr_{z_m^j}\partial_{x_i} u_{\infty}$, and using the estimate for $\|v_m^0\|_{H^2}$ in \eqref{Mramsmy} and the estimate for $N$ in \eqref{est_nl}, we can show that
$$  \beta_i^j = O(R_m^2) + \sum_{k\neq r,\ r, k=0}^{d_0}O(\exp^{-\sqrt{1-
    \gamma} |z_m^r-z_m^k|}). $$
Define $W_m=\tilde u_m^0+v_m^0$ and consider the set
$$  K_+ = \Big\{x\in \R^n \big| W_m(x) > \max_{i\in\Iscr,\ j\in\Jscr_0} |{\partial_{x_i} u_{\infty}(x-z_m^j)}|\Big\}.$$
On $K_+$ we have
$$  \Delta W_m(x)\geq\bigg(1+\sigma |W_m(x)|^{2p}-R_m^2|V_{R_m}(x)|-\sum_{j=0}^
    {d_0} \sum_{i=1}^n |\beta_i^j|\bigg)W_m(x).$$
Fix $0<\gamma<1$. Fix $M\in\N$ such that for all $m>M$
$$ \sum_{j=0}^{d_0}\sum_{i=1}^n |\beta_i^j|< \frac{\gamma}{4},\qquad R_m^2
   \|V\|_{L^{\infty}}<  \frac{\gamma}{4}, \qquad \|v_m^0\|_{L^\infty}<\frac{\gamma}{8}. $$
Define
$$ \Bscr = \bigcap\limits_{j=0}^{d_0} \R^n\setminus B(z_m^j,L),$$
where $B(z_m^j,L)$ denotes an open ball in $\R^n$ with center $z_m^j$ and radius $L$ and $L>0$ is chosen sufficiently large such that $|W_m(x)|^{2p}\leq \gamma/4$ for all $x\in \Bscr$ and $m>M$. This is possible due to the exponential decay of $u_{\infty}$. Therefore
\begin{equation} \label{w_ineq_inf}
  \Delta W_m(x)\geq(1-\gamma)W_m(x) \qquad \forall x\in \Bscr\cap K_+.
\end{equation}
Consider the functions $\Phi_j(x) = C_0 e^{-\sqrt{1-\gamma}|x-z_m^j|}$ for $j\in\Jscr_0$ with $C_0>0$ being a constant (to be chosen later). These functions satisfy
\begin{equation} \label{p_ineq_inf}
  \Delta \Phi_j(x) \leq (1-\gamma)\Phi_j(x) \qquad \forall x\in\R^n\setminus0.
\end{equation}
Define $\Phi=\sum_{j=0}^{d_0}\Phi_j$. It follows from \eqref{w_ineq_inf} and \eqref{p_ineq_inf} that
$$  \Delta(W_m-\Phi)(x)\geq(1-\gamma)(W_m-\Phi)(x) \qquad \forall x\in
    K_+\cap \Bscr.$$
Hence there exists no positive maxima for the function $W_m-\Phi$ in the set
$K_+\cap \Bscr$. Choose $C_0$ sufficiently large so that
$$ \Phi(x)\geq W_m(x) \quad\forall x\in\partial\Bscr,\qquad \Phi(x)\geq\max_{j\in\Jscr_0,\ i\in\Iscr} |\partial_{x_i} u_{\infty}(x-z_m^j)| \qquad \forall x\in\R^n,$$
where $\partial\Bscr$ denotes the boundary of $\Bscr$. Also, on the boundary
$\partial K_+$ of $K_+$ we have
$$  W_m(x)=\max_{j\in\Jscr_0,\ i\in\Iscr} |\partial_{x_i} u_{\infty}(x-z_m^j)|.$$
Moreover, since $v_m^0\in W^{2,4}$ and $n\leq5$, we have $v_m^0(x)\to 0$ as
$|x|\to\infty$ which implies that $W_m(x)\to 0$ as $|x|\to \infty$. It follows
from the above discussion that
$$  \Phi(x) \geq W_m(x) \qquad \forall x\in K_+\cap \Bscr. $$
A similar argument applied to the function $-W_m$ on the set
$$  K_- = \Big\{x\in \R^n \big|\ -W_m(x)>\max_{j\in\Jscr_0,\ i\in\Iscr}|{\partial_{x_i} u_{\infty}(x-z_m^j)}| \Big\}$$
gives
$$  \Phi(x) \geq -W_m(x) \quad \forall x\in K_-\cap \Bscr. $$
Therefore, we can conclude that for each $x\in\Bscr$ and some $C>0$
\begin{align}
 &|\tilde u_m^0(x)+v_m^0(x)|\leq \max_{j\in\Jscr_0,\ i\in\Iscr} \{| \partial_{x_i} u_{\infty}(x-z_m^j)|,\Phi(x)\}\nonumber\\
\implies &|v_m^0(x)|\leq C \sum_{j=0}^{d_0} e^{-\sqrt{1-\gamma}|x-z_m^j|}.
\label{vm0est}
\end{align}
For $x\in\R^n\setminus\Bscr$, we have $|v_m^0(x)|\leq\|v_m^0\|_{L^\infty}$ which tends to 0 as $m\to\infty$.

Next we derive estimates for the $H^2$ norm of $v_m^1=v_m-P_m^\perp v_m^0$. This function is the part of $v_m$ associated with the $z_m^j$s for which the sequence $(R_m z_m^j)_{m\in\N}$ is unbounded. Since $v_m^0 \in S_m$ ($S_m$ is defined in \eqref{twosigns}) we have
$$  P_m^\perp v_m^0 = v_m^0-\sum_{k=1}^{n}\sum_{j=d_0+1}^{d}\left\langle
    \Tscr_{z_m^j}\partial_{x_k} u_{\infty},v_m^0 \right\rangle \Tscr_{z_m^j}\partial_{x_k}u_{\infty} = v_m^0 - P_z v_m^0.$$
Here $P_z$ is the projection in $L^2$ onto its subspace spanned by
$$  \big\{\Tscr_{z_m^j}\partial_{x_k} u_{\infty},\ j\in\Jscr\setminus\Jscr_0,
    \ k\in\Iscr \big\}.$$
To keep the notation simple, we do not indicate explicitly the dependence of
$P_z$ on $m$. Recall $\tilde u_m^1$ from \eqref{defn_functions}. Then \eqref{splteq_inf} can be written as
\begin{align*}
  0 =& (-\Delta+1+R_m^2 V_{R_m})(\tilde u_m^1+W_m+v_m^1-P_z v_m^0) \\[2mm]
  &+ \sigma|\tilde u_m^1+W_m+v_m^1-P_z v_m^0|^{2p} (\tilde u_m^1+W_m+v_m^1-P_z v_m^0).
\end{align*}
This equation is equivalent to
\begin{align*}
  0=&(-\Delta+1+R_m^2 V_{R_m})v_m^1 - R_m^2 V_{R_m}\sum_{k=1}^{n}\sum_{j=
  d_0+1}^{d}\big\langle \Tscr_{z_m^j} \partial_{x_k}u_{\infty},v_m^0  \big\rangle \Tscr_{z_m^j}\partial_{x_k}u_{\infty} \\
  &+\sigma \left|\tilde u_m^1+W_m+v_m^1-P_z v_m^0\right|^{2p}\left(\tilde
  u_m^1+W_m+v_m^1-P_z v_m^0\right)-\sigma \sum_{j=d_0+1}^d \Tscr_{z_m^j}|u_{\infty}|^{2p}u_{\infty}\\
  & -\sigma |W_m|^{2p} W_m+\sum_{j=0}^{d_0}\sum^n_{i=1} \beta_i^j\Tscr_{z_m^j}
  \partial_{x_i} u_{\infty}+R_m^2 V_{R_m} \tilde u_m^1\\
  & +\sum_{k=1}^{n}\sum_{j=d_0+1}^d (2p+1)\sigma\big\langle \Tscr_{z_m^j} \partial_{x_k}u_{\infty},v_m^0 \big\rangle \Tscr_{z_m^j}|u_{\infty}|^{2p}
  \partial_{x_k} u_{\infty}
\end{align*}
which, using the operators $L_+\left(\tilde u_m^1+W_m- P_z v_m^0, R_m\right)$ and $N\left(\tilde u_m^1+ W_m - P_z v_m^0,v_m^1\right)$, can be rewritten as
\begin{align}
  0=&L_+\left(R_m,\tilde u_m^1+W_m- P_z v_m^0\right)v_m^1 + N\left(\tilde
  u_m^1+ W_m - P_z v_m^0,v_m^1\right)+\sum_{j=0}^{d_0}\sum^n_{i=1}
  \beta_i^j \Tscr_{z_m^j}\partial_{x_i}u_{\infty}\nonumber\\
  &+\sigma \left|\tilde u_m^1+ W_m- P_z v_m^0\right|^{2p}\left(\tilde
  u_m^1+ W_m-P_z v_m^0\right) - \sigma \sum_{j=d_0+1}^d \Tscr_{z_m^j} |u_{\infty}|^{2p}u_{\infty}-\sigma |W_m|^{2p} W_m\nonumber\\
  &+R_m^2 V_{R_m} \tilde u_m^1 - R_m^2 V_{R_m} \sum_{k=1}^{n}\sum_{j=
  d_0+1}^{d}\big\langle \Tscr_{z_m^j}\partial_{x_k}u_{\infty},v_m^0 \big
  \rangle\Tscr_{z_m^j}\partial_{x_k} u_{\infty}\nonumber\\
  &+\sum_{k=1}^{n}\sum_{j=d_0+1}^{d}(2p+1)\sigma\big\langle \Tscr_{z_m^j}\partial_{x_k}u_{\infty},v_m^0 \big\rangle \Tscr_{z_m^j}|u_{\infty}|^{2p}\partial_{x_k}u_{\infty}. \label{onesign}
\end{align}
The argument based on Proposition \ref{prop:MS} and spectral perturbation theory (see discussion above \eqref{vm_com_inf}) used to show that $P_{m}^{\perp}L_+(\tilde u_m,R_m)P_{m}^{\perp}$ is invertible, with a uniform in $m$ bound for its inverse, can be used to show that the same is true for $P_m^\perp L_+\left(\tilde u_m^1+W_m- P_z v_m^0,R_m\right)P_m^\perp$. Indeed, let $L_+\left(\tilde u_m^1+W_m- P_z v_m^0,R_m\right) =\widetilde L_m+\widetilde W_m$, where $\widetilde L_m$ is as defined above \eqref{vm_com_inf} and
$$ \widetilde W_m= R_m^2 V_{R_m} +\sigma(2p+1) \Big(|\tilde u_m+v_m^0-P_z v_m^0|^{2p}-\sum_{j=0}^d \big|\Tscr_{z_m^j} u_{\infty}\big|^{2p}\Big).$$
Using the fact that $\|v_m^0\|_{H^2}\to0$ and $|y_m^j-y_m^k|\to\infty$ as $m\to\infty$, we get $\|\widetilde W_m\|_{H^2\mapsto L^2}\to0$ as $m\to\infty$. Applying the argument above \eqref{vm_com_inf} to $\widetilde L_m$ and $\widetilde W_m$ defined here, we get $L_m=P_m^\perp L_+\left(\tilde u_m^1+W_m- P_z v_m^0,R_m\right)P_m^\perp$ is invertible for large $m$ and the norm of its inverse operator can be uniformly bounded. Therefore applying $P_m^\perp$ to \eqref{onesign} and rewriting it as $v_m=L_+^{-1}$[remaining terms] and using $P_m^\perp \sum_{j=0}^{d_0}\sum^n_{i=1}\beta_i^j \Tscr_{z_m^j}\partial_{x_i} u_{\infty}=0$, we can deduce via contraction mapping principle that
\begin{equation}\label{eq:vr1_estimate}
  \|v_m^1\|_{H^2}\leq C R_m^2 \|V_{R_m}\tilde u_m^1\|_{L^2} + \sum_{j=d_0
  +1}^{d}\ \sum_{k=0,\, j\neq k}^{d} O(e^{-\sqrt{1-\gamma}|z_m^j-
  z_m^k|}).
\end{equation}

In summary, we have
$$ v_m=P_m^\perp v_m^0+v_m^1=v_m^0-P_z v_m^0 +v_m^1$$
with $v_m^0(x)\leq \sum_{k=0}^{d_0} C e^{-\sqrt{1-\gamma}|x-z_m^k|}$ for all
$x\in\R^n$, which using the definition of $P_z$, gives
$$  \|P_z v_m^0\|_{H^2} + \|v_m^1\|_{H^2}\leq C R_m^2 \|V_{R_m}\tilde  u_m^1
    \|_{L^2} + \sum_{j=d_0+1}^{d}\ \sum_{k=0,\, j\neq k}^{d} O(e^{-
    \sqrt{1-\gamma}|z_m^j-z_m^k|}). $$

Next we will study the set of finite dimensional equations in
\eqref{eq:fincom_inf} obtained by projecting \eqref{eq:stationary_split_Elarge}
on the functions $\Tscr_{z_m^j}\partial_{x_k} u_{\infty}$. Explicitly, the set
of equations are
\begin{align*}
  \Big\langle \Tscr_{z_m^j}\partial_{x_k}u_{\infty}, R_m^2 V_{R_m}
  \sum_{i=0}^d \Tscr_{z_m^i}u_{\infty} + N(\tilde u_m,v_m)\Big\rangle
  +\Big\langle\Tscr_{z_m^j} \partial_{x_k}u_{\infty}, \sigma|\tilde u_m|^
  {2p}\tilde u_m - \sigma\sum_{i=0}^d \Tscr_{z_m^i}|u_{\infty}|^{2p} u_{\infty}\Big\rangle
\end{align*}
\begin{equation}\label{eq:case1_eqn_n}
  +\Big\langle \Tscr_{z_m^j}\partial_{x_k}u_{\infty} \big(R_m^2 V_{R_m}+
  (2p+1)\sigma\big[\tilde u_m^{2p}-\Tscr_{z_m^j}u_{\infty}^{2p}\big]
  \big), v_m \Big\rangle=0
\end{equation}
for $j\in\Jscr$ and $k\in\Iscr$. We will show that the set of equations in \eqref{eq:case1_eqn_n} has no solution under our assumption \eqref{contrad_split}. Our analysis will focus on two types of terms in \eqref{eq:case1_eqn_n} - terms involving interaction between $\Tscr_{z_m^j}u_\infty$ and $\Tscr_{z_m^k}u_\infty$
for $j\neq k$ and $j,k\in\Jscr\setminus\Jscr_0$ (such terms arise from
the second inner product in \eqref{eq:case1_eqn_n}) and terms involving interaction between the various $\Tscr_{z_m^i} u_\infty$ and the potential $V$ with $i\in\Jscr\setminus\Jscr_0$. We will consider two cases, one of which must occur. In the first case we assume that the former terms, which decay slower than $e^{-\sqrt {1+\gamma}|z_m^j-z_m^k|}$ dominate the latter terms. Then, by combining the equations in \eqref{eq:case1_eqn_n} appropriately we construct a new equation in which the former terms dominate all the other terms. This will
imply that there is no solution to \eqref{eq:case1_eqn_n}. In the second
case, we will combine the equations in \eqref{eq:case1_eqn_n} appropriately
to obtain a new equation in which the former terms (at least the important
ones) are eliminated. Under the assumption that the terms involving the
potential dominate terms decaying like $e^{-2\sqrt{1-\gamma}|z_m^j-z_m^k|}$,
we can apply the argument from the proof of the compactness case to conclude
that \eqref{eq:case1_eqn_n} has no solution. We will now discuss the two cases rigorously. In the proof below, we assume for simplicity that the set $\Jscr\setminus\Jscr_0$ has at least two elements. If it has only one element, then the proof of contradiction is similar to the proof in the compactness case, see remark below \eqref{vacucase}.

Let $M_m=\min\{|z_m^j-z_m^k|\big| j,k\in\Jscr\setminus\Jscr_0, j\neq k\}$.  We
assume without of loss of generality, by considering subsequences if necessary,
that each of the bounded sequences $(M_m/|z_m^j-z_m^k|)_{m\in\N}$ is converging and that $\|V_{R_m}\Tscr_{z_m^{d_0+1}}u_\infty\|_{L^2}$ is the maximum of the set $\{\|V_{R_m}\Tscr_{z_m^j}u_\infty\|_{L^2}, j\in\Jscr\setminus \Jscr_0\}$ for each $m$ and also that each of the sequences $|z_m^{d_0+1}-z_m^k|/|z_m^{d_0+1}|$ converges either to a finite limit or to $\infty$. Partition $\{z_m^0,z_m^1,\ldots z_m^d\}$ into sets $Z^1_m$ and $Z_m^2$ such that for each $z_m^k\in Z_m^1$ and no $z_m^k \in Z_m^2$ we have
\begin{equation} \label{defnzm1}
 \lim_{m\to\infty} \frac{|z_m^k-z_m^{d_0+1}|}{|z_m^{d_0+1}|}=0.
\end{equation}
Define
$$ \hat u_m = \sum_{z_m^k\in Z_m^1}\Tscr_{z_m^k} u_\infty \quad \textrm
    {and} \quad \hat w_m = \sum_{z_m^k\in Z_m^2} \Tscr_{z_m^k} u_\infty .$$
In Case (i) we suppose that
\begin{equation}\label{eq:case1_condition_new_n}
  \limsup_{m\to\infty} \frac{-\ln \|R_m^2 V_{R_m} \Tscr_{z_m^j} u_\infty\|
  _{L^2}}{M_m}>1 \qquad \forall j\in\Jscr\setminus\Jscr_0
\end{equation}
holds and in Case (ii) we suppose that
\begin{equation}\label{eq:case2_condition_new_n}
  \liminf_{m\to\infty} \frac{-\ln\left|R_m^2 \left\langle
  V_{R_m}, \partial_{x_m}\hat u_m^2 \right\rangle\right|} {M_m} < 2,
\end{equation}
where $x_m$ is the direction along the vector $z_m^{d_0+1}$, holds. One of the above two conditions must hold. Indeed, if \eqref{eq:case1_condition_new_n} does not hold for any $j\in\Jscr \setminus\Jscr_0$ then, since we have assumed that $\|V_{R_m} T_{z_m^{d_0+1}} u_\infty\|_{L^2}$ is the maximum of the set  $\{\|V_{R_m}T_{z_m^j} u_\infty\|_{L^2} \big| j\in\Jscr\setminus\Jscr_0\}$ for each $m$, it follows that
$$ \limsup_{m\to\infty} \frac{-\ln \|R_m^2 V_{R_m} T_{z_m^{d_0+1}} u_\infty
  \|_{L^2}}{M_m} \leq 1. $$
Using this and the estimate
\begin{equation} \label{shownbelow}
 \lim_{m\to\infty}\frac{\ln \left|R_m^2\left\langle\partial_{x_m}\hat u_m, V_{R_m}\hat u_m\right \rangle\right|}{\ln \|R_m^2 V_{R_m}T_{z_m^{d_0+1}} u_\infty\|_{L^2}^2} \m=\m 0\m,
\end{equation}
which is established below, it follows that
\begin{align*}
  &\liminf_{m\to\infty} \frac{-\ln \left|R_m^2\left\langle\partial_{x_m}\hat u_m, V_{R_m}\hat u_m\right\rangle\right|} {M_m} \\
  =&\ \liminf_{m\to\infty} \frac{-\ln \|R_m^2 V_{R_m} T_{z_m^{d_0+1}}u_\infty
  \|_{L^2}^2} {M_m} \frac{-\ln \left|R_m^2\left\langle\partial_{x_m}\hat u_m, V_{R_m}\hat u_m\right\rangle\right|}{-\ln \|R_m^2 V_{R_m}T_{z_m^{d_0+1}}u_\infty\|_{L^2}^2}\\
  =&\ 0,
\end{align*}
which implies that if \eqref{eq:case1_condition_new_n} does not hold, then \eqref{eq:case2_condition_new_n} holds, i.e. either Case (i) or Case (ii) must occur. We will now establish \eqref{shownbelow} by showing that
\begin{equation} \label{tearettea}
 \lim_{m\to\infty} \frac{\|R_m^2 V_{R_m}\hat u_m\|_{L^2}^2} {|R_m^2 \langle \partial_{x_m} \hat u_m, V_{R_m}\hat u_m \rangle|} \m=\m0\m.
\end{equation}
This limit and $\hat u_m>T_{z_m^{d_0+1}} u_\infty$ imply that $(\|R_m^2 V_{R_m}T_{z_m^{d_0+1}} u_\infty \|_{L^2}^2)/(|R_m^2 \langle \partial_{x_m} \hat u_m, V_{R_m}\hat u_m \rangle|)\to0$ as $m\to\infty$ which, along with $\lim_{m\to\infty} \|R_m^2 V_{R_m}T_{z_m^{d_0+1}} u_\infty\|_{L^2}=0$, implies \eqref{shownbelow}. To show \eqref{tearettea}, we fix $\theta\in(0,\pi/2)$ such that $2\Cscr\tan\theta<1$, where $\Cscr$ is the constant in the hypothesis \eqref{Hypo_largeE3}. Let $B_m$ be the closed ball in $\R^n$ with center $z_m^{d_0+1}$ and radius $|z_m^{d_0+1}| \sin\theta$. By adopting the approach used in the compactness case, see \eqref{eq:v'exp_decay}-\eqref{eq:v'estimate} and \eqref{eq:vsqr_exp1}-\eqref{eq:vsqr_v'2} and the associated discussion, we get using the exponential decay of $u_\infty$, \eqref{defnzm1}, $\lim_{m\to\infty} |R_mz_m^{d_0+1}| =\infty$ and the hypothesis in \eqref{Hypo_largeE1}-\eqref{Hypo_largeE3} that
\begin{align*}
 \lim_{m\to\infty} \frac{\|R_m^2 V_{R_m} \hat u_m\|_{L^2}^2} {|R_m^2\langle \partial_{x_m} \hat u_m, V_{R_m}\hat u_m\rangle|} \m\leq\m& C\lim_{m\to\infty} \frac{R_m\Big|\sum_{z_m^k\in Z_m^1}\int\limits_{B_m}u^2_\infty(x-z_m^k)V^2(R_m x) \mathrm{d}x\Big|}{\Big|\sum_{z_m^k\in Z_m^1} \int\limits_{B_m} u^2_\infty(x-z_m^k) \partial_{x_m}  V(R_m x)\mathrm{d}x\Big|} \\
 \m=\m& C  \lim_{m\to\infty} \frac{R_m\Big|\sum_{z_m^k\in Z_m^1} \int\limits_{B_m}u^2_\infty(x-z_m^k)\partial_{x_m}  V(R_m x)\frac{V^2(R_m x)}{\partial_{x_m} V(R_m x)} \mathrm{d}x\Big|}{\Big|\sum_{z_m^k\in Z_m^1} \int\limits_{B_m} u^2_\infty(x-z_m^k) \partial_{x_m}  V(R_m x)\mathrm{d}x\Big|}\\
 \m=\m&0.
\end{align*}
Hence \eqref{tearettea}, and therefore \eqref{shownbelow}, holds.

We will now consider the two cases. We say that the sequences $(z_m^j)_{m\in\N}$ and $(z_m^k)_{m\in\N}$ are  connected if and only if
$$  \lim_{m\to \infty} \frac{M_m}{|z_m^j-z_m^k|}=1. $$

\noindent
{\bf Case (i):} Assume that \eqref{eq:case1_condition_new_n} holds. We say that the sequences $(z_m^{a_1})_{m\in\N}, (z_m^{a_2})_{m\in\N}, \ldots (z_m^{a_r})_{m\in\N}$, where each $a_i\in\Jscr\setminus\Jscr_0$, form a connected component if two conditions are satisfied: (a) For any $i,j\in \{a_1,a_2, \ldots a_r\}$ with $i\neq j$, there exists a chain $(b_1, b_2, \ldots b_t)$, where each $b_k\in\{a_1,a_2,\ldots a_r\}$, $b_1=i$ and $b_t=j$, such that the sequences $(z_m^{b_k})_{m\in\N}$ and $(z_m^{b_{k+1}})_{m\in\N}$ are connected for each $k\in\{1,2,\ldots r-1\}$ and (b) For any $i \notin \{a_1,a_2,\ldots a_r\}$, there exists no $j\in\{a_1,a_2,\ldots a_r\}$ such that $(z_m^i)_{m\in\N}$ is connected to $(z_m^j)_{m\in\N}$.

Consider a connected component with at least two sequences, say $(z_m^{d_0+1})_{m\in\N}, (z_m^{d_0+2})_{m\in\N}, \ldots $ $(z_m^{d_0+r})_{m\in\N}$. For each $m$, the baricenter of these $z_m^j$s is
$$  \bar z_m = \frac {z_m^{d_0+1}+z_m^{d_0+2}+\ldots+z_m^{d_0+r}}{r}. $$
By going to a subsequence if necessary, suppose that $q\in\{d_0+1,d_0+2, \ldots d_0+r\}$ is such that $|z_m^q-\bar z_m|\geq |z_m^j-\bar z_m|$ for all $j\in\{d_0+1,d_0+2, \ldots d_0+r\}$ and all $m$. For each $z_m^l$ connected to $z_m^q$ (clearly $l>d_0$) it follows that
$$  \cos\alpha_m^l \geq \frac{|z_m^l-z_m^q|}{2|\bar z_m-z_m^q|},$$
where $\alpha_m^l$ is the angle between the vectors $z_m^l-z_m^q$ and
$\bar z_m - z_m^q$. Also, we have
\begin{align*}
  \limsup_{m\to\infty} \frac{|\bar z_m -z_m^q|}{M_m} =& \limsup_{m\to\infty} \frac{|z_m^{d_0+1}-z_m^q+\ldots+z_m^{q-1}-z_m^q+z_m^{q+1}-z_m^q +\ldots+
  z_m^{d_0+r}-z_m^q|}{r M_m} \\
  \leq& \limsup_{m\to\infty} \sum_{i=d_0+1}^{d_0+r}\frac{|z_m^i-z_m^q|}
  {r M_m} \leq \frac{(r-1)^2}{r} \leq \frac{(d-1)^2}{d},
\end{align*}
which implies that for each $z_m^l$ connected to $z_m^q$
$$  \liminf_{m\to\infty} \frac{|z_m^l-z_m^q|}{2|\bar z_m-z_m^q|} \geq
    \liminf_{m\to\infty} \frac{r|z_m^l-z_m^q|}{2(r-1) \sum_{i=d_0+1}^
    {d_0+r-1}|z_m^i-z_m^{i+1}|} \geq \frac{r}{2(r-1)^2} \geq \frac{d}
    {2(d-1)^2}.$$
By letting $j=q$ in \eqref{eq:case1_eqn_n}, we get that for some for a fixed $\epsilon>0$
\begin{align}
  0 = &\Big\langle \Tscr_{z_m^q}\partial_{x_k}u_\infty, \sigma(2p+1)\Tscr_{z_m^q} [u_\infty^{2p}]\sum_{\stackrel {l:z_m^l, z_m^q \textrm{ is}}
  {\textrm{connected}}}\Tscr_{z_m^l}u_\infty\Big\rangle\nonumber\\
  &+ C R_m^2 \sum_{j=d_0+1}^d \|V_{R_m}\Tscr_{z_m^j} u_\infty\|_{L^2} + O\Big(\sum_{i\neq j,\ i,j=d_0+1}^{d} e^{-(1+\epsilon)\sqrt{1-
  \gamma}|z_m^i-z_m^j|}\Big). \label{eq:fin_dim_eq_case1_largeE}
\end{align}
Here we have used the hypothesis in \eqref{Hypo_largeE2}, which implies that $|V(x)|>Ce^{-\beta|x|}$ for almost all large $|x|$, to dominate some exponentially decaying terms (such as the product $\Tscr_{z_m^q}u_\infty \Tscr_{z_m^j} u_\infty$ for $j\in\Jscr_0$). By choosing $x_k\parallel \bar z_m-
z_m^q$ in \eqref{eq:fin_dim_eq_case1_largeE} and by denoting the direction
parallel to $(z_m^l-z_m^q)$ by $x_{lq}$, we get for $m$ large
\begin{align}
  &\Big|\big\langle \Tscr_{z_m^q}\partial_{x_k}u_\infty, \sigma(2p+1)\Tscr_{z_m^q}[u_\infty^{2p}]\sum_{\stackrel {l:z_m^l, z_m^q \textrm{ is}} {\textrm{connected}}} \Tscr_{z_m^l} u_\infty\big\rangle\Big|
  \nonumber\\
  =\ &\Big|\sigma(2p+1)\sum_{\stackrel{l:z_m^l,z_m^q \textrm{ is}} {\textrm{connected}}}
  \cos\alpha_m^l \big\langle \Tscr_{z_m^q}\partial_{x_{lq}} u_\infty,   \Tscr_{z_m^q} [u_\infty^{2p}] \Tscr_{z_m^l}u_\infty\big\rangle  \Big|\nonumber\\
  \geq\ & \frac{C(\gamma)d}{2(d-1)^2} \exp^{-\sqrt{1+\gamma}M_m}, \label{eq:fin_dim_merm_case1_largeE}
\end{align}
for any $\gamma>0$. While deriving the above expression, we have used two estimates. The first is the easily verifiable estimate that if $x_i\perp(z_m^l-z_m^q)$, then
$$  \big\langle\Tscr_{z_m^q} \partial_{x_i}u_\infty, \sigma(2p+1)\Tscr_{z_m^q} [u_\infty^{2p}]\Tscr_{z_m^l}u_\infty \big\rangle=0. $$
This is used to obtain the equality in the second line of \eqref{eq:fin_dim_merm_case1_largeE}. The second estimate is if $z_m^l-z_m^q$ is along the direction of $x_i$ and $m$ is large, then for every $\gamma>0$, there exists $C(\gamma)>0$ such that
\begin{equation} \label{tranrun}
  -\big\langle\Tscr_{z_m^q} \partial_{x_i}(u_\infty)^{2p+1}, \Tscr_{z_m^l}u_\infty \big\rangle \m\geq\m C(\gamma) \exp^{-\sqrt{1+\gamma}|z_m^l-z_m^q|}.
\end{equation}
This is used to derive the inequality in the third line of \eqref{eq:fin_dim_merm_case1_largeE}. The estimate in \eqref{tranrun} can be established as follows. Clearly $z_m^l-z_m^q$ has all coordinates 0, except the $i^{th}$ coordinate which is positive. We denote it by $z_m^\delta$. Let $y=(x_1,x_2,\ldots x_{i-1},x_{i+1},\ldots x_n)$ and $\partial_i$ denote the derivative in the $x_i$-direction.  For any $y\in\R^{n-1}$, consider the integral
$$ \int_\R \partial_i(u_\infty)^{2p+1}(x-z_m^q) u_\infty(x-z_m^l) \dd x_i = \int_\R \partial_i(u_\infty)^{2p+1}(x_i,y) u_\infty (x_i-z_m^\delta,y) \dd x_i\m. $$
From the radial symmetry of $u_\infty$ and the monotone decay of $u_\infty$ along the radial direction, it follows that for any $a>0$,
$$ \partial_i(u_\infty)^{2p+1}(a,y) = -\partial_i (u_\infty)^{2p+1} (-a,y)\leq0\m, \qquad u_\infty(a-z_m^\delta,y) \geq u_\infty(-a-z_m^\delta,y)>0\m.$$
From these estimates it is easy to see that for any $y\in\R^{n-1}$
\begin{equation} \label{julchan}
 -\int_\R \partial_{x_i}(u_\infty)^{2p+1}(x_i,y) u_\infty (x_i-z_m^\delta,y) \dd x_i > - \int_{A_m} \partial_{x_i}(u_\infty)^{2p+1}(x_i,y) u_\infty (x_i-z_m^\delta,y) \dd x_i > 0\m.
\end{equation}
Here $A_m=\{a\m\big|\m |a\pm z_m^\delta|<1\}$. For each $\epsilon\in(0,1)$, there exists $C_1(\epsilon),C_2(\epsilon)>0$ such that for $|x|$ large
$$ C_1(\epsilon)e^{-\sqrt{1+\epsilon}|x|} \leq |u_\infty(x)| \leq C_2(\epsilon) e^{-\sqrt{1-\epsilon}|x|}\m, \qquad |\partial_r u_\infty(x)| \geq C_1(\epsilon)e^{-\sqrt{1+\epsilon}|x|}\m. $$
Here $\partial_r$ is the radial derivative. Also, since $z_m^\delta\to\infty$ as $m\to\infty$, it follows that there exists $C_3(\epsilon)>0$ for $m$ large such that if $x_i\in A_m$ and $|y|<1$, then
$$ |\partial_i u_\infty(x)| \geq C_3(\epsilon)e^{-\sqrt{1+\epsilon}|x|}\m. $$
Using the above estimates for $u_\infty$ and its derivative, we get that for $m$ large and $|y|<1$
\begin{align*}
 -\int_{A_m} \partial_{x_i}(u_\infty)^{2p+1}(x_i,y) u_\infty (x_i-z_m^\delta,y) \dd x_i >\ & - \frac{1}{2}\int_{-1+z_m^\delta}^{1+z_m^\delta} \partial_{x_i} (u_\infty)^{2p+1} (x_i,y) u_\infty (x_i-z_m^\delta,y) \dd x_i \\
 >\ &  C_4(\epsilon)e^{-\sqrt{1+2\epsilon}|z_m^\delta|}\m,
\end{align*}
for some $C_4(\epsilon)>0$. This, along with \eqref{julchan}, implies that for $m$ large
\begin{align*}
-\big\langle\Tscr_{z_m^q} \partial_{x_i}(u_\infty)^{2p+1}, \Tscr_{z_m^l}u_\infty \big\rangle &\m=\m -\int_{\R^{n-1}}\int_\R \partial_{x_i}(u_\infty)^{2p+1} (x_i,y) u_\infty (x_i-z_m^\delta,y) \dd x_i\m \dd y\\
&\m\geq\m -\frac{1}{2}\int_{|y|<1}\int_{-1+z_m^\delta}^{1+z_m^\delta} \partial_{x_i}(u_\infty)^{2p+1} (x_i,y) u_\infty (x_i-z_m^\delta,y) \dd x_i \dd y\\
&\m\geq\m C_4(\epsilon)e^{-\sqrt{1+2\epsilon}|z_m^\delta|}\m,
\end{align*}
for some $C_5(\epsilon)>0$, i.e. \eqref{tranrun} holds.

It follows from \eqref{eq:case1_condition_new_n} that as $m\to\infty$, for
a sufficiently small $\gamma>0$ and each $j\in\Jscr\setminus\Jscr_0$, we have
(at least on a subsequence of $m$) that
\begin{align*}
  &M_m \left[\sqrt{1+\gamma} + \frac{\ln\|R_m^2 V_{R_m}\Tscr_{z_m^j} u_\infty\|_{L^2}}{M_m}\right] \to -\infty\\
  \iff&\ \sqrt{1+\gamma} M_m + \ln\|R_m^2 V_{R_m} \Tscr_{z_m^j}u_\infty\|_{L^2}
  \to -\infty,\\
  \iff&\ e^{\sqrt{1+\gamma}M_m}\|R_m^2 V_{R_m}\Tscr_{z_m^j}u_\infty\|_{L^2}\to 0.
\end{align*}
The last expression above implies that the term considered in \eqref{eq:fin_dim_merm_case1_largeE}
cannot be canceled in \eqref{eq:fin_dim_eq_case1_largeE}. Therefore
\eqref{eq:fin_dim_eq_case1_largeE} and consequently the set of equations
\eqref{eq:case1_eqn_n} has no solution in this case.

\noindent
{\bf Case (ii):} Assume that \eqref{eq:case2_condition_new_n} holds. By adding the equations in \eqref{eq:case1_eqn_n} corresponding to each of the $z_m^i\in
Z_m^1$, we obtain
\begin{align}
  0 =& \Big\langle \partial_{x_m}\hat u_m, R_m^2 V_{R_m} (\hat u_m+
  \hat w_m) + \sigma(\hat u_m + \hat w_m)^{2p+1} - \sigma\hat u_m^{2p+1} -\sigma\sum_{i=0}^d T_{z_m^i}u_\infty^{2p+1}\Big\rangle \nonumber\\
  &+O\Big(\sum_{j=d_0+1}^d \|R_m^2 V_{R_m}T_{z_m^j}u_\infty\|_{L^2}^2
  \Big) + \sum_{i=d_0+1}^{d} \sum_{\stackrel{k=0}{i\neq k}}^d O\left(e^{-
  2\sqrt{1-\gamma}|z_m^i-z_m^k|}\right). \label{eq:fin_dim_eq_case2_largeE}
\end{align}
Since the functions $u_\infty$ are radially symmetric, one can check that for
any $i,j\in\Jscr\setminus\Jscr_0$ and any direction $x_k$
$$  \left\langle T_{z_m^i}\partial_{x_k}u_\infty, T_{z_m^j}u_\infty^{2p+1} \right\rangle + \left\langle T_{z_m^j}\partial_{x_k}u_\infty, T_{z_m^i} u_\infty^{2p+1}\right\rangle=0. $$
In particular the above inequality implies that
\begin{equation}\label{par_est_inf}
  \bigg\langle \partial_{x_m}\hat u_m, \sum_{i\in Z_1} T_{z_m^i}u_\infty^{2p+1}
  \bigg\rangle=0.
\end{equation}
Note that for some $\delta>0$
$$  \lim_{m\to\infty} \frac{|z_m^i-z_m^{d_0+1}|}{|z_m^{d_0+1}|}>\delta
    \quad \textrm{for each} \quad z_m^i\in Z_2. $$
Therefore
$$  \lim_{m\to\infty} \frac{|z_m^i-z_m^k|}{|z_m^{d_0+1}|}>\delta \quad
    \textrm{for each} \quad z_m^k\in Z_1, z_m^i\in Z_2. $$
From the above discussion we get the estimate
\begin{equation} \label{cross_est_inf}
  \Big\langle \partial_{x_m}\hat u_m, R_m^2V_{R_m}\hat w_m +
  \sigma(\hat u_m + \hat w_m)^{2p+1} - \sigma\hat u_m^{2p+1} -
  \sigma\sum_{l\in Z_2} T_{z_m^l} u_\infty^{2p+1} \Big\rangle
  = O\left(e^{-\sqrt{1-\gamma}\delta |z_m^{d_0+1}|}\right).
\end{equation}
The hypothesis in \eqref{Hypo_largeE2} implies that for some $C>0$
\begin{equation}\label{Turkish_icecream}
 \|R_m^2 V_{R_m} T_{z_m^{d_0+1}}u_\infty\|^2_{L^2}>C e^{-\sqrt{1-\gamma}\delta |z_m^{d_0+1}|}
\end{equation}
Using \eqref{par_est_inf}, \eqref{cross_est_inf} and \eqref{Turkish_icecream} we can rewrite \eqref{eq:fin_dim_eq_case2_largeE} as
\begin{equation} \label{eq:case2_eqn_n}
  0 = \left\langle \partial_{x_m}\hat u_m, R_m^2 V_{R_m}\hat u_m
  \right\rangle + O(\|R_m^2 V_{R_m} T_{z_m^{d_0+1}} u_\infty\|_{L^2}^2) +\sum_{i=d_0+1}^{d} \sum_{\stackrel{k=0}{i\neq k}}^d O\left(e^{-2
  \sqrt{1-\gamma}|z_m^i-z_m^k|}\right)  .
\end{equation}
We will next show that the first term in the above equation dominates the other two terms. Comparing the first term with the third term, it follows from \eqref{eq:case2_condition_new_n} that for a sufficiently small $\gamma>0$, as $m\to\infty$ (considering subsequences if necessary)
\begin{align*}
  & M_m\left(\frac {-\ln\left|R_m^2\left\langle\partial_{x_m}\hat u_m, V_{R_m}\hat u_m \right\rangle\right|}{M_m} - 2\sqrt{1-\gamma}\right) \to
  -\infty \\
  \iff&\ -\ln\left|R_m^2\left\langle\partial_{x_m}\hat u_m, V_{R_m} \hat u_m
  \right\rangle\right| - 2\sqrt{1-\gamma}M_m \to -\infty \\
  \iff&\ \frac{e^{-2\sqrt{1-\gamma}M_m}}{\left|R_m^2\left\langle\partial_
  {x_m}\hat u_m, V_{R_m}\hat u_m\right\rangle\right|} \to 0.
\end{align*}
We next compare the first term with the second term of \eqref{eq:case2_eqn_n}. Using the hypothesis \eqref{Hypo_largeE1} we get, as in the compactness case, that
\begin{equation} \label{vacucase}
 \lim_{m\to\infty} \frac{\left| R_m^4\left\langle T_{z_m^{d_0+1}}u_\infty
  V_{R_m}, T_{z_m^{d_0+1}}u_\infty V_{R_m} \right\rangle\right|} {\left|R_m^2\left\langle\partial_{x_m}\hat u_m, V_{R_m}\hat u_m \right\rangle\right|} \leq \lim_{m\to\infty}\frac{\left| R_m^4\left\langle \hat u_m V(R_mx), \hat u_m V(R_mx) \right\rangle\right|} {\left|R_m^2\left\langle\partial_{x_m}\hat u_m, V_{R_m}\hat u_m\right\rangle\right|} = 0.
\end{equation}
Hence the term $|R_m^2\left\langle\partial_{x_m}\hat u_m, V_{R_m}\hat u_m\right\rangle|$ in \eqref{eq:case2_eqn_n} cannot be canceled
by the remaining two terms. Thus we get that \eqref{eq:fin_dim_eq_case2_largeE}, and consequently the set of equations \eqref{eq:case1_eqn_n}, has no solution in this case.

We remark that if the set $\Jscr\setminus\Jscr_0$ has only one element, then we need not consider the two cases. Indeed, using \eqref{Hypo_largeE2} the first term in \eqref{eq:case2_eqn_n} can be shown to be larger than $C R_m^3 e^{-2\beta R_m |z_m^d|}$ (see \eqref{useinsplit} in compactness case). Therefore it dominates the third term which will be $O(e^{-2\sqrt{1-2\gamma} |z_m^d|})$. The first term also dominates the second term (see \eqref{vacucase}). Hence \eqref{eq:fin_dim_eq_case2_largeE}, and consequently the set of equations \eqref{eq:case1_eqn_n}, has no solution.

Since either Case (i) or Case (ii) must occur, we get the contradiction that the set of equations \eqref{eq:case1_eqn_n} has no solution if our assumption \eqref{contrad_comp} is true. Hence it must be false and $(R_m y_m^j)_{m\in\N}$ is a bounded sequence for all $j\in\Jscr$. Next we will show that each accumulation point of the sequence $(R_m z_m^j)_{m\in\N}$ at which $V$ is continuously differentiable must be a critical point of the potential. This, according to the discussion below \eqref{new_split_soln_inf}, will complete the proof of this theorem.

Suppose that, by considering subsequences if required, that for each $j\in\Jscr$, the sequence $(R_m z_m^j)_{m\in\N}$ is convergent. To show that if the potential is continuously differentiable at a limit of these sequences, then that limit must be a critical point of the potential, it is sufficient to assume that there exists (by relabeling the sequences if needed) a $\bar d$ such that $\lim_{m\to\infty} R_m z_m^j=x^0$ for each $j\in\{0,1,\ldots\bar d\}$ and $\lim_{m\to\infty} |x^0-R_m z_m^j|>0$ for each $j\in\{\bar d+1,\bar d+2,\ldots d\}$ and prove that $x^0$ is a critical point of the potential whenever $V$ is continuously differentiable at $x^0$. Such a proof follows.

We write $u_{E_m}$ as
$$  u_{E_m} = \sum_{j=0}^{d} T_{z_m^j} u_\infty + v_m = \tilde u_m + v_m =
    \bar u_m^0+\bar u_m^1+v_m, $$
where
$$ \tilde u_m = \sum_{j=0}^d T_{z_m^j} u_\infty, \qquad \bar u_m^0 = \sum_{j=\bar d+1}^d T_{z_m^j}u_\infty, \qquad \bar u_m^1 = \sum_{j=0}^{\bar d} T_{z_m^j}u_\infty.$$
Earlier, using the decomposition $\tilde u_m=\tilde u_m^0 + \tilde u_m^1$, we obtained a decomposition of $v_m$ in terms of the pair of functions $(v_m^0,v_m^1)$. Similarly, using the decomposition $\tilde u_m= \bar u_m^0+\bar u_m^1$, we can obtain another estimate for $v_m$ in terms of a different
pair of functions $(\bar v_m^0,\bar v_m^1)$. Then, defining  $P_{\bar z}$
analogous to $P_z$, we have $v_m=\bar v_m^0-P_{\bar z}\bar v_m^0 +
\bar v_m^1$, where $\bar v_m^0(x)\leq \sum_{k=\bar d+1}^d Ce^{-\sqrt{1-
\gamma}|x-z_m^k|}$ and
$$  \|P_{\bar z}\bar v_m^0\|_{H^2} + \|\bar v_m^1\|_{H^2} \leq CR_m^2
    \left\|V_{R_m} \bar u_m^1\right\|_{L^2} + \sum_{j=0}^{\bar d}
    \sum_{\stackrel{k=0}{j\neq k}}^d O(e^{-\sqrt{1-\gamma}|z_m^j-z_m^k|}). $$
We will now consider two distinct cases to show that $x^0$ must be a critical
point of the potential whenever $V$ is continuously differentiable at $x^0$. Since these cases are similar to Case (i) and Case (ii) discussed above, we will keep our presentation concise and avoid detailed explanations. Define $D_m=\min\{ |z_m^j-z_m^k|\big|j,k=0,1,\ldots \bar d, j\neq k\}$ and assume that (by considering subsequences if necessary) that each of the bounded sequences $D_m/|z_m^j-z_m^k|$ is converging. Our proof is by contradiction. Hence we assume that $V$ is continuously differentiable at $x^0$, but $x^0$ is not a critical point of the potential and
\begin{equation} \label{high_tide}
 |\partial_{x_1} V(x^0)|>0.
\end{equation}
In Case (a) we assume that
\begin{equation}\label{eq:crit_pt_case1_condition}
  1 < \limsup_{m\to\infty} \frac{-\ln \|R_m^2 V_{R_m}T_{z_m^j}u_\infty\|
  _{L^2}}{D_m} \quad \forall j\in\{0,1,\ldots \bar d\}
\end{equation}
holds and in Case (b) we assume that
\begin{equation}\label{eq:crit_pt_case2_condition}
  \liminf_{m\to\infty} \frac{-\ln\left|R_m^2 \left\langle V_{R_m}, \partial_{x_1} (\bar u_m^1)^2 \right\rangle\right|} {D_m} < 2
\end{equation}
holds. One of these two cases must occur. Indeed, if \eqref{eq:crit_pt_case1_condition} does not hold, then for some $r\in\{0,1, \ldots\bar d\}$ we have
\begin{equation}\label{contr_crit}
  \limsup_{m\to\infty} \frac{-\ln \|R_m^2 V_{R_m}T_{z_m^r} u_\infty\|
  _{L^2}}{D_m} \leq 1.
\end{equation}
Since $\lim_{m\to\infty}\|R_m^2 V_{R_m}T_{z_m^j} u_\infty\|_{L^2}=0$ for all $j\in\{0,1,\ldots\bar d\}$, we can assume with no loss of generality that $\|R_m^2 V_{R_m}T_{z_m^r} u_\infty\|_{L^2}\geq \|R_m^2 V_{R_m}T_{z_m^j} u_\infty\|_{L^2}$ for all $j\in\{0,1,\ldots\bar d\}$. Using \eqref{contr_crit} and the estimate
\begin{equation} \label{shownbelow_ab}
 \lim_{m\to\infty}\frac{\ln |R_m^2\langle V_{R_m}, \partial_{x_1}(\bar u_m^1)^2 \rangle|}{\ln \|R_m^2 V_{R_m}T_{z_m^r} u_\infty\|_{L^2}^2} \m=\m 0\m,
\end{equation}
which is established below, it follows that
\begin{align*}
  &\liminf_{m\to\infty} \frac{-\ln\left|R_m^2 \left\langle V_{R_m}, \partial_{x_1} (\bar u_m^1)^2 \right\rangle\right|} {D_m} \\
  =&\ \liminf_{m\to\infty} \frac{-\ln \|R_m^2 V_{R_m} T_{z_m^r}u_\infty\|_
  {L^2}^2}{D_m} \frac{-\ln\left|R_m^2 \left\langle V_{R_m}, \partial_{x_1} (\bar u_m^1)^2 \right\rangle\right|} {-\ln \|R_m^2 V_{R_m} T_{z_m^r} u_\infty\|_{L^2}^2}\\
  =&\ 0,
\end{align*}
which implies that if \eqref{eq:crit_pt_case1_condition} does not hold, then
\eqref{eq:crit_pt_case2_condition} holds, i.e. either Case (a) or Case (b) must occur. We will now establish \eqref{shownbelow_ab} by showing that
\begin{equation} \label{tearettea_ab}
 \lim_{m\to\infty} \frac{\|R_m^2 V_{R_m} T_{z_m^r} u_\infty\|_{L^2}^2} {|R_m^2 \langle \partial_{x_1} (\bar u_m^1)^2, V_{R_m} \rangle|} \m=\m0\m.
\end{equation}
This limit and $\lim_{m\to\infty} \|R_m^2 V_{R_m}T_{z_m^r} u_\infty\|_{L^2}=0$ imply \eqref{shownbelow_ab}. Note that
\begin{align}
R_m^2 \langle \partial_{x_1} (\bar u_m^1)^2, V_{R_m} \rangle \m=\m& R_m^3 \sum_{j=0}^{\bar d}\int_{\R^n}u_\infty^2(x) \partial_{x_1}V(R_mx+R_m z_m^j)\dd x \nonumber\\
&+R_m^3 \sum_{j=0}^{\bar d}\sum_{ k\neq j,\,k=0}^{\bar d} \int_{\R^n} u_\infty(x-z_m^j) u_\infty(x-z_m^k) \partial_{x_1}V(R_mx) \dd x \m. \label{aug15}
\end{align}
Since $|z_m^j-z_m^k|\to\infty$ as $m\to\infty$ for all $j,k\in\{0,1,\ldots \bar d\}$ with $j\neq k$, $\nabla V\in L^\infty(\R^n)$ and $|u_\infty(x)|< Ce^{-\sqrt{1-\gamma}|x|}$ for all $x\in\R^n$ and for some $C>0$ and $\gamma\in(0,1/2)$, it follows that the integrals in the second term on the right hand side of the above equation converge to 0 as $m\to\infty$. Next consider the integrals in the first term on the right hand side of the above equation. For every $x\in\R^n$ and $j\in\{0,1,\ldots \bar d\}$, we have $\partial_{x_1} V(R_m x+R_m z_m^j) u^2_\infty(x)\to \partial_{x_1} V(x^0) u^2_\infty(x)$ as $m\to\infty$ and $|\partial_{x_1} V(R_m x+R_m z_m^j) u^2_\infty(x)|\leq \|\nabla V\|_{L^\infty}u^2_\infty(x)$ for all $m$. Since $\|\nabla V\|_{L^\infty} u^2_\infty\in L^1(\R^n)$, it follows using the Lebesgue dominated convergence theorem that
$$ \lim_{m\to\infty} \int_{\R^n}u_\infty^2(x) \partial_{x_1}V(R_mx+R_m z_m^j)\dd x \m=\m \partial_{x_1} V(x^0)\|u_\infty\|_{L^2}^2 \m.$$
It therefore follows from \eqref{aug15} that
\begin{equation} \label{bittkin}
  \lim_{m\to\infty} \frac{\langle \partial_{x_1} (\bar u_m^1)^2, V_{R_m} \rangle}{R_m} \m=\m (\bar d+1) \partial_{x_1} V(x^0)\|u_\infty\|_{L^2}^2 \m.
\end{equation}
This along with $R_m\|V_{R_m} T_{z_m^r} u_\infty\|_{L^2}^2\to0$ as $m\to\infty$ implies \eqref{tearettea_ab}, which in turn implies \eqref{shownbelow_ab}. We will now consider the two cases.

\noindent
{\bf Case (a):} Suppose that \eqref{eq:crit_pt_case1_condition} holds.
Let $\bar z_m$ be the baricenter of the set $\{z_m^0,z_m^1,\ldots z_m^{\bar d}\}$. For some $q\in\{0,1,\ldots,\bar d\}$, let $z_m^q$ be located at a maximal distance from $\bar z_m$. For each $l\in\{0,1,\ldots,\bar d\}$ with
$l\neq q$ we have
$$  \cos \alpha_m^l \geq \frac{|z_m^l-z_m^q|}{2|\bar z_m-z_m^q|},$$
where $\alpha_m^l$ is the angle between the vectors $z_m^l-z_m^q$ and $\bar
z_m-z_m^q$. It can be shown (as in Case (i) earlier) that
$$  \liminf_{m\to\infty} \frac{|z_m^l-z_m^q|}{2|\bar z_m-z_m^q|} \geq
    \frac{d+1}{2d^2}. $$
Let $j=q$ in \eqref{eq:case1_eqn_n}. Then
\begin{align}
  0 = &\Big\langle T_{z_m^q}\partial_{x_k}u_\infty, \sigma(2p+1)T_{z_m^q} u_\infty^{2p} \sum_{l\neq q,\ l=0}^{\bar d} T_{z_m^l}u_\infty\Big\rangle \nonumber\\
  &+ C R_m^2 \sum_{j=0}^{\bar d} \left\|V_{R_m}T_{z_m^j}u_\infty\right\|_{L^2} + O\Big(\sum_{i=0}^{\bar d}\sum_{i\neq j,\, j=0}^d e^{-(1+\epsilon)
  \sqrt{1-\gamma}|z_m^i-z_m^j|}\Big) \label{eq:crit_pt_case1_eq}
\end{align}
for some fixed $\epsilon>0$. 
By choosing $x_k\parallel\bar z_m-z_m^q$ in \eqref{eq:crit_pt_case1_eq} we
get (by adopting the approach used in Case (i)) that for $m$ large
\begin{equation}\label{eq:crit_pt_case1_merm}
  \Big|\Big\langle T_{z_m^q} \partial_{x_k}u_\infty, \sigma(2p+1) T_{z_m^q}  u_\infty^{2p}\sum_{l\neq q,\ l=0}^{\bar d} T_{z_m^l}u_\infty\Big\rangle\Big| \geq \frac{d+1}{2d^2} C(\gamma)\exp^{-\sqrt{1+\gamma} D_m}.
\end{equation}
It now follows from \eqref{eq:crit_pt_case1_condition} that as $m\to\infty$,
for a sufficiently small $\gamma>0$ and each $j\in\{0,1,\ldots \bar d\}$, we
have (at least on a subsequence of $m$) that
$$ e^{\sqrt{1+\gamma}D_m} \|R_m^2 V_{R_m} T_{z_m^j} u_\infty\|_{L^2} \to 0. $$
This implies that the term on the left hand side of \eqref{eq:crit_pt_case1_merm} cannot be canceled in \eqref{eq:crit_pt_case1_eq}. But since $u_{E_m}=\tilde u_m+v_m$ is assumed to be a solution to the equations in \eqref{eq:case1_eqn_n} and so  \eqref{eq:crit_pt_case1_eq}, we can conclude that \eqref{eq:crit_pt_case1_condition} cannot hold.

\noindent
{\bf Case (b):}  Suppose that \eqref{eq:crit_pt_case2_condition} holds. By adding all the equations in \eqref{eq:case1_eqn_n} for which $x_k=x_1$ and
$j\in\{0,1,\ldots \bar d\}$, we obtain
\begin{align}
  0 =&\Big\langle \partial_{x_1}\bar u_m^1, R_m^2V_{R_m}(\bar u_m^0 + \bar u_m^1) + \sigma(\bar u_m^0 + \bar u_m^1)^{2p+1} - \sigma(\bar u_m^1)^{2p+1} - \sigma\sum_{j=0}^d T_{z_m^j}u_\infty^{2p+1}\Big\rangle\nonumber\\
  &+O\bigg(\sum_{j=0}^{\bar d} \left\|R_m^2 V_{R_m}T_{z_m^j} u_\infty\right\|
  _{L^2}^2\bigg)\nonumber\\
  &+O\bigg(\sum_{i=0}^{\bar d} \sum_{j=\bar d +1}^d e^{-\sqrt{1-\gamma}|z_m^i-z_m^j|}+\sum_{i=0}^{\bar d} \sum_{j\neq i,\ j=0}^d e^{-2\sqrt{1-\gamma}|z_m^i-z_m^j|}\bigg). \label{eq:crit_pt_case2_eq}
\end{align}
As in Case (ii) above, we have
$$  \Big\langle \partial_{x_1}\bar u_m^1, \sum_{j=0}^{\bar d} T_{z_m^j} u_\infty^{2p+1}\Big\rangle=0 $$
and it can be verified that
\begin{align*}
  &\Big\langle \partial_{x_1}\bar u_m^1, R_m^2V_{R_m}\bar u_m^0
  + (\bar u_m^0 + \bar u_m^1)^{2p+1}-(\bar u_m^1)^{2p+1}-
  \sum_{l=\bar d+1}^d T_{z_m^l}u_m^{2p+1} \Big\rangle\\
  =&\ O\bigg(\sum_{i=0}^{\bar d} \sum_{j=\bar d+1}^d e^{-\sqrt{1-\gamma}
  |z_m^i-z_m^j|}\bigg).
\end{align*}
We can now write \eqref{eq:crit_pt_case2_eq} as follows:
\begin{align}
  0 =& -\frac{1}{2}\int_{\R^n} (\bar u_m^1(x))^2 \partial_{x_1} V(R_m x)  dx
  + O\bigg(R_m\sum_{j=0}^{\bar d} \left\|V_{R_m} T_{z_m^j}
  u_\infty\right\|_{L^2}^2\bigg)\nonumber\\
  &+\frac{1}{R_m^3}\ O\bigg(\sum_{i=0}^{\bar d}\sum_{j=\bar d +1}^d e^
  {-\sqrt{1-\gamma}|z_m^i-z_m^j|}\bigg) + \frac{1}{R_m^3}\ O\bigg(\sum_{i=0}
  ^{\bar d} \sum_{j\neq i,\ j=0}^d e^{-2\sqrt{1-\gamma}|z_m^i-z_m^j|}\bigg).
  \label{eq:crit_pt_case2_simple_eq}
\end{align}
As $m\to\infty$, since \eqref{eq:crit_pt_case2_condition} holds, it follows
that the first term in \eqref{eq:crit_pt_case2_simple_eq} is much larger
than the last term. Moreover this first term converges to $(\bar d+1)\partial_{x_1} V(x^0)\|u_\infty\|_{L^2}^2/2$, see \eqref{bittkin}, and is bounded away from zero (since  $|\partial_{x_1} V(x^0)|>0$) and the second and third terms tend to zero as $m\to\infty$. Therefore we obtain the contradiction that $u_{E_m}=\tilde u_m+v_m$ cannot be a solution to \eqref{eq:case1_eqn_n}. Thus \eqref{eq:crit_pt_case2_condition} cannot hold.

If $V$ is continuously differentiable at $x^0$ and \eqref{high_tide} holds, then both Case (a) and Case (b) give the contradiction that $u_{E_m}=\tilde u_m+v_m$ does not solve \eqref{eq:case1_eqn_n}. Since one of these cases must occur it follows that \eqref{high_tide} cannot hold and $x^0$ is a critical point of the potential.

Theorem \ref{th:brelarge} is now completely proven.
\end{proof}

Again, the reason we cannot obtain the above result for arbitrary branches of bound states is the same as in finite $E$ case, see the discussion at the end of Section \ref{se:bif}.

The next theorem proves the existence of $C^1$ branches of ground states considered in the previous theorem. We show that given a finite set of $m$ critical points of the potential, there exists a unique $C^1$ branch of ground states consisting of $m$ humps. Each of these humps is a re-scaling of $u_\infty$ using \eqref{uelarge} and localizes at one of the $m$ critical points.

\begin{theorem}\label{th:bfelarge}
Let $\sigma<0$. For each $E>E_0$, consider the nonlinear equation
\begin{equation}\label{pal_nlsE}
 -\Delta u(x)+E^{-1}V(E^{-\frac{1}{2}}\m x)u(x)+u(x)+\sigma|u|^{2p}u(x)
 \m=\m 0 \FORALL x\in\R^n\m,
\end{equation}
where the potential $V:\R^n\to\R$ satisfies Hypothesis (H1) and (H2). Let $\{x_1,x_2,\ldots x_m\}$ be a finite subset of all the critical points of $V$ at which $V$ is twice differentiable and has a nonsingular Hessian. Fix either $\mu_i=1$ or $\mu_i=-1$ for each $i\in\{1,2,\ldots m\}$. Then there exists an $\e>0$ and an $E_\e>0$ such that for each $E>E_\e$, \eqref{pal_nlsE} has a unique real-valued solution $\Uscr_E\in H^2(\R^n)$ that satisfies the following estimate:
\begin{equation}\label{pal_solnest}
 \|\m\Uscr_E-\sum_{i=1}^m\mu_i u_\infty(\cdot-x_i\sqrt{E}\m)\|_{H^2(\R^n)} \m<\m\e\m.
\end{equation}
Here $u_\infty$ is the unique, radially symmetric and positive solution to \eqref{uinf}. Moreover, the map $E\mapsto\Uscr_E$ is a $C^1$ curve and along this curve
\begin{equation} \label{smarat}
 \lim_{E\to\infty} \|\m\Uscr_E-\sum_{i=1}^m\mu_i u_\infty(\cdot-x_i\sqrt{E}\m) \|_{H^2(\R^n)} \m=\m0\m.
\end{equation}
In addition, any solution $(U,E)$ of \eqref{pal_nlsE} satisfying $E>E_\e$ and $\inf_{\theta\in(0,2\pi]}\|U-e^{i\theta}\Uscr_E\|_{H^2}<\e$ is of the form $e^{i\theta}\Uscr_E$ for some $\theta\in(0,2\pi].$

Furthermore when $p\geq1/2$, for large $E$, one can count the number of negative eigenvalues of the linearized operator $L_+(\Uscr_E,E): H^2(\R^n)\mapsto L^2(\R^n)$:
$$ L_+(\Uscr_E,E)[v]=(-\Delta+E^{-1} V_{E^{-\frac{1}{2}}}+1)v +\sigma(2p+1) |\Uscr_E|^{2p}v, $$
where $V_{E^{-\frac{1}{2}}}(x)=V(E^{-\frac{1}{2}}x)$ for all $x\in\R^n$:
\begin{equation} \label{nege}
\#neg(L_+(\Uscr_E,E))=m+\sum_{i=1}^m n_i, 
\end{equation}
where $n_i$ is the number of negative eigenvalues of the Hessian of $V$ at $x_i$.

Consequently, if the nonlinearity is super-critical, $p>2/n,$ then all branches are orbitally unstable, while for critical and subcritical nonlinearities, $p\leq 2/n,$ the branches emerging from a single profile $u_\infty$ centered at a  local minima of the potential are the only stable ones.
\end{theorem}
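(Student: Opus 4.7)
The plan is to construct $\Uscr_E$ via a modulated Lyapunov--Schmidt reduction around the natural ansatz
$$
\Phi_E^\tau(x) \m:=\m \sum_{i=1}^m \mu_i\, u_\infty(x-(x_i+\tau_i)\sqrt{E})\m,\qquad \tau=(\tau_1,\ldots,\tau_m)\in\R^{mn}\m,
$$
writing $\Uscr_E=\Phi_E^\tau+v$ under the orthogonality condition $\langle v,\partial_{x_k}u_\infty(\cdot-(x_i+\tau_i)\sqrt{E})\rangle=0$ for all $i\in\{1,\ldots,m\}$, $k\in\{1,\ldots,n\}$, which isolates the translation modes as the finite-dimensional parameters. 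The associated $L^2$ projection $P_{E,\tau}^\perp$ splits \eqref{pal_nlsE} into an infinite-dimensional part to be solved for $v$ and an $mn$-dimensional system to be solved for $\tau$. Uniform invertibility of $P_{E,\tau}^\perp L_+(\Phi_E^\tau,E) P_{E,\tau}^\perp$ is obtained, exactly as in the Lyapunov--Schmidt steps of the proofs of Theorems~\ref{th:comp} and~\ref{th:brelarge}, by applying Proposition~\ref{prop:MS} to the potentials $\{\sigma(2p+1)|u_\infty(\cdot-(x_i+\tau_i)\sqrt{E})|^{2p}\}_{i=1}^m$ (which are separated by distance $\sim\sqrt{E}$) and absorbing the small term $E^{-1}V_{E^{-1/2}}$ by spectral perturbation. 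Contraction mapping combined with the quadratic bound \eqref{est_nl} of Lemma~\ref{lm:nuv} then produces a unique $C^1$ map $v(E,\tau)$ on a fixed $\tau$-ball with $\|v(E,\tau)\|_{H^2}=O(E^{-1})$.

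Substituting $v(E,\tau)$ into the finite-dimensional system and integrating by parts the nonlinear residue (using $L_{+,\infty}\partial_{x_k}u_\infty=0$ where $L_{+,\infty}=-\Delta+1+\sigma(2p+1)|u_\infty|^{2p}$, together with the radial symmetry of $u_\infty$, as in the two-scale arguments of Theorems~\ref{th:comp} and~\ref{th:brelarge}), the $(i,k)$-th equation reduces at leading order to
$$
-\tfrac{\mu_i}{2}\|u_\infty\|_{L^2}^2\, E^{-3/2}\, \partial_{x_k}V(x_i+\tau_i) + o(E^{-3/2}) \m=\m 0\m.
$$
Applying the implicit function theorem to the rescaled system $E^{3/2} G_{i,k}(E,\tau)=0$ at $(E^{-1},\tau)=(0,0)$ yields a unique $C^1$ map $E\mapsto\tau(E)\to 0$, since the Jacobian is block-diagonal with nonsingular blocks $-\tfrac{\mu_i}{2}\|u_\infty\|_{L^2}^2\,\nabla^2 V(x_i)$. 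Setting $\Uscr_E:=\Phi_E^{\tau(E)}+v(E,\tau(E))$ produces the claimed $C^1$ curve satisfying \eqref{pal_solnest} and \eqref{smarat}. Uniqueness among nearby $(U,E)$ is obtained by first modulating out the complex phase $\theta$ (exactly as in Theorem~\ref{th:max}(iii)) to reduce to real-valued $U$, and then invoking uniqueness of $(v,\tau)$ from the Lyapunov--Schmidt construction.

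For the eigenvalue count \eqref{nege} (with $p\geq 1/2$ so that \eqref{est_nl} is available), I would decompose $L_+(\Uscr_E,E)=\widetilde L_E+\widetilde W_E$ with $\widetilde L_E=-\Delta+1+\sigma(2p+1)\sum_i |u_\infty(\cdot-(x_i+\tau_i(E))\sqrt{E})|^{2p}$ and $\|\widetilde W_E\|_{H^2\to L^2}=O(E^{-1})$. Proposition~\ref{prop:MS} identifies the discrete spectrum of $\widetilde L_E$ as $E\to\infty$ with $m$ copies of the spectrum of $L_{+,\infty}$: each copy contributes exactly one simple strictly negative eigenvalue (via Remark~\ref{rm:lpme0} and the fact that $u_\infty$ is the positive ground state of $L_{-,\infty}$), accounting for the $m$ in \eqref{nege}, plus an $n$-dimensional kernel $\mathrm{span}\{\partial_{x_k}u_\infty\}$. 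The $m$ strictly negative eigenvalues persist under $\widetilde W_E$; the $mn$-dimensional near-kernel splits via a secondary Lyapunov--Schmidt reduction on $\mathrm{span}\{\partial_{x_k}u_\infty(\cdot-(x_i+\tau_i(E))\sqrt{E})\}$, whose effective matrix, once the scalar shifts $E^{-1}V(x_i)\,\mathrm{Id}$ from the potential are cancelled against the matching shifts produced by the amplitude correction $v\sim -E^{-1}V(x_i)\,\mu_i\, L_{+,\infty}^{-1}u_\infty$ (the identity $L_{+,\infty}^{-1}u_\infty=-\partial_E U_E\big|_{E=1}$ coming from \eqref{schr_no_pot}, together with integration by parts on the resulting Taylor expansion $E^{-1}V(x_i+\tau_i(E)+E^{-1/2}y)=E^{-1}V(x_i)+\tfrac{1}{2}E^{-2}\,y^\top\nabla^2V(x_i)y+\cdots$, being the key inputs), reduces at leading order to $c_{n,p}\, E^{-2}\,\nabla^2 V(x_i)+o(E^{-2})$ with $c_{n,p}>0$, contributing $n_i$ negative eigenvalues per site and establishing \eqref{nege}. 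Orbital stability then follows from the Grillakis--Shatah--Strauss criterion along the $C^1$ curve $E\mapsto\Uscr_E$: by \eqref{2normelarge}, the Vakhitov--Kolokolov slope $\tfrac{d}{dE}\|\psi_E\|_{L^2}^2$ has, for large $E$, the sign of $(1/p-n/2)$, positive in the sub/critical regime $p\leq 2/n$ and negative for $p>2/n$, so instability is forced whenever $p>2/n$ or $m+\sum n_i>1$, leaving only single-profile branches at local minima of $V$ ($m=1$, $n_1=0$) as stable in the sub/critical regime.

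The main obstacle is the spectral count in the last step: while the Lyapunov--Schmidt existence and uniqueness arguments in the first two paragraphs are direct adaptations of the machinery already developed in Sections~\ref{se:bif} and~\ref{se:large} (relying on the same uniform invertibility provided by Proposition~\ref{prop:MS}), the emergence of the Hessian structure $\nabla^2 V(x_i)$ in the effective per-site matrix requires carefully tracking how the $O(E^{-1})$ amplitude correction $v$ conspires with the first-order potential shift $E^{-1}V(x_i)\,\mathrm{Id}$ to cancel the common scalar contribution to the splitting of the near-kernel of $\widetilde L_E$, so that only the tensorial $E^{-2}$ Hessian information survives to determine the signatures.
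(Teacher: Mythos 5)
Your proposal is correct and follows the same overall strategy as the paper: a Lyapunov--Schmidt reduction around translated profiles with modulated centers, uniform invertibility of the projected linearization via Proposition~\ref{prop:MS}, a finite-dimensional equation solved through the nonsingular Hessian at each critical point, a spectral count splitting into $m$ deep negative eigenvalues plus an $mn$-dimensional near-kernel governed by $\nabla^2 V(x_i)$, and Grillakis/GSS with the slope from \eqref{2normelarge} for stability. The one substantive difference is your choice of ansatz: you superpose plain copies of $u_\infty$, so the residual source is $E^{-1}V_{E^{-1/2}}\sum\mu_i T u_\infty$ and the correction is only $\|v\|_{H^2}=O(E^{-1})$, which forces you to verify in the spectral step that the scalar shift $E^{-1}V(x_i)$ of the near-kernel eigenvalues is exactly cancelled by the $O(E^{-1})$ part of $v$ (the identity you invoke via $L_{+,\infty}^{-1}u_\infty$ does work, and is the analogue of the paper's computation in \eqref{compr_ortho}--\eqref{sitter}). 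The paper instead builds $V(x_i)/E$ into the profiles $u_{i,E}$ of \eqref{vis_rain}, so that the source term is $E^{-1}(V_{E^{-1/2}}-V(x_i))T u_{i,E}$, the correction is $O(E^{-2})$, the near-kernel of the unperturbed separated operator is exactly zero, and the $E^{-2}$ Hessian matrix \eqref{chinta} emerges without any cancellation to track; your parametrization of the modulation in physical rather than scaled coordinates (leading-order equation $\nabla V(x_i+\tau_i)=0$ rather than $\nabla^2V(x_i)s_i=o(1)$) is an equivalent bookkeeping choice. In short, your route buys a more elementary ansatz at the cost of making the hardest step --- the extraction of the Hessian signature, which you correctly flag as the main obstacle --- require an explicit cancellation that the paper's choice of profiles renders automatic; only the critical case $p=2/n$ of the stability claim needs the same extra remark on the sign of $\frac{d}{dE}\Nscr(E)$ that the paper itself defers.
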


\begin{proof}
Using Lemma \ref{lm:newdec_gen} and Remarks \ref{decay_not needed} and \ref{svn_ram} it can be shown that the following two claims are equivalent:
\begin{enumerate}[label={(\arabic*)}] \setlength\itemsep{0em}
\item there exists $\e>0$ and an $E_\e>0$ such that for each $E>E_\e$, \eqref{pal_nlsE} has a unique real-valued solution $\Uscr_E$ satisfying \eqref{pal_solnest},
\item there exists $\e>0$ and an $E_\e>\|V\|_{L^\infty}$ such that for each $E>E_\e$, \eqref{pal_nlsE} has a unique solution $\Uscr_E$ of the form
 \begin{equation} \label{pal_decom}
  \Uscr_E \m=\m \sum_{i=1}^m \mu_i T_{x_i\sqrt{E}+s_i}u_{i,E}+v_E\m,
 \end{equation}
 where $v_E$, $u_{i,E}$ and $s_i$ satisfy the following\m: $v_E\in
 H^2(\R^n)$ with $\|v_E\|_{H^2(\R^n)}<\e$ and for each $i\in\{1,2,
 \ldots m\}$, $|s_i|<\e$  and $u_{i,E}$ is the unique radially
 symmetric positive function in $H^2(\R^n)$ satisfying
 \begin{equation} \label{vis_rain}
  -\Delta u_{i,E} + u_{i,E} + \frac{V(x_i)}{E} u_{i,E} + \sigma|
  u_{i,E}|^{2p}u_{i,E} \m=\m 0\m,
 \end{equation}
 and furthermore the following orthogonality relations hold:
 \begin{equation} \label{pal_fd}
  \langle\m v_E, T_{x_i\sqrt{E}+s_i}\partial_{x_k} u_{i,E} \rangle
  \m=\m 0 \FORALL k\in\{1,2,\ldots n\},\quad\forall\;i\in\{1,2,\ldots
  m\} \m.
 \end{equation}
 (To simplify the notation, the dependence of $s_i$ on $E$ is not
 explicitly shown.)
\end{enumerate}

To show that claim (1) implies claim (2), note that: (a) the $x_i$'s are all distinct and so for $i\neq j$, $\sqrt{E}|x_i-x_j|\to \infty$ as $E\to\infty$, (b) each $u_{i,E}$ is a scaled version of $u_\infty$ i.e.,
\begin{equation} \label{uiscaling}
 u_i(x) \m=\m \bigg(1+\frac{V(x_i)}{E}\bigg)^{\frac{1}{2p}} u_\infty\bigg(\sqrt{1+\frac{V(x_i)}{E} }\m x\bigg) \FORALL x\in\R^n,
\end{equation}
and so
$$ \lim_{E\to\infty}\sum_{i=1}^m \|u_{i,E}-u_\infty\|_{H^2} \m=\m0, $$
and hence we can replace $T_{x_i\sqrt{E}}u_\infty$ with $T_{x_i\sqrt{E}} u_{i,E}$ in \eqref{pal_solnest} to get a new estimate for $\Uscr_E$ instead of \eqref{pal_solnest}, which will be valid for large $E$ and (c) Lemma \ref{lm:newdec_gen} can be applied with $u=\Uscr_E$ (satisfying the new estimate), $y_i=\sqrt{E} x_i$, $u_i=u_{i,E}$ and $\psi=\phi_i=0$ to get the
decomposition in \eqref{vis_rain} (which is similar to the decomposition
in \eqref{inf_decom}). Showing that claim (2) implies claim (1) is
simpler. We omit the details.

We will complete the proof of this theorem below by establishing claim (2). For this we will assume that a function $\Uscr_E$ that can be decomposed as in \eqref{pal_decom} and satisfying \eqref{pal_fd} solves \eqref{pal_nlsE} and then establish the existence of $s_i\m$s and $v_E$, with appropriate properties, whenever $E$ is sufficiently large. Since we are interested in finding $s_i\m$s in a small neighborhood of the origin in $\R^n$, while deriving all our estimates below we will assume that
\begin{equation} \label{dan_pro}
 \sup_{i=1,2,\ldots m} |s_i| \m\leq\m1\m.
\end{equation}
In the rest of this proof, to simplify our notation, the dependence of $u_{i,E}$ on $E$ will not be shown explicitly and $u_{i,E}$ will instead be written as $u_i$. We will use the notation $s=[\m s_1,s_2, \ldots s_m \m]^\top$.

Substituting $\Uscr_E$ from \eqref{pal_decom} into \eqref{pal_nlsE} and using the fact that $u_i$ solves \eqref{vis_rain}, we get that
\begin{align}
 &\hspace{30mm}-\Delta v_R + R^2\sum_{i=1}^m(V_R-V_i)\mu_i T_{x_i/R+s_i} u_i + R^2 V_R v_R + v_R  \nonumber\\[2pt]
 & + \sigma\bigg|\sum_{i=1}^m\mu_i T_{x_i/R+s_i} u_i +v_R\bigg|^{2p}
 \bigg(\sum_{i=1}^m\mu_i T_{x_i/R+s_i} u_i+v_R\bigg)-\sigma\sum_{i=1}^m
 \mu_i T_{x_i/R+s_i}|u_i|^{2p} u_i \m=\m 0\m.
 \label{pal_lunch}
\end{align}
Here we have introduced the notation
$$ V_i\m=\m V(x_i)\m, \qquad R\m=\m \frac{1}{\sqrt{E}}\m, \qquad
   V_R(x)\m=\m V(Rx)\m, \qquad v_R(x)\m=\m v_E(x) \FORALL x\in\R^n\m.$$
Recall the nonlinear map $N(U,v)$ introduced in \eqref{bound_N}:
$$ N(U,v)=\sigma|U+v|^{2p}(U+v)-\sigma|U|^{2p}U-\sigma(2p+1)|U|^{2p}v,$$
which satisfies the bound in \eqref{psmall} and \eqref{est_nl}. Like in the proof of Theorem \ref{th:brelarge} (see \eqref{eq:bif_Elarge} and \eqref{eq:stationary_split_Elarge}), we will rewrite \eqref{pal_lunch} using the linear operator $L_+(u,R)$, which is denoted differently in this proof for convenience, and the nonlinear operator $N$. Define the linear operator $L_\infty^{R,s}:H^2(\R^n)\to L^2(\R^n)$ as follows:
for all $v\in H^2(\R^n)$,
$$ L_\infty^{R,s} v \m=\m -\Delta v+R^2V_R\m v + v + \sigma(2p+1)
   \bigg|\sum_{i=1}^m\mu_i T_{x_i/R+s_i} u_i\bigg|^{2p} v \m. $$
Using $L_\infty^{R,s}$ and $N$, \eqref{pal_lunch} can be rewritten as
\begin{align}
 &L_\infty^{R,s}\m v_R + R^2\sum_{i=1}^m(V_R-V_i)\mu_i T_{x_i/R+s_i}u_i+ N\bigg(\sum_{i=1}^m\mu_i T_{x_i/R+s_i}u_i,v_R\bigg)
 \nonumber\\[2pt]
 & \hspace{-12mm}+\sigma\bigg|\sum_{i=1}^m\mu_i T_{x_i/R+s_i}u_i
 \bigg|^{2p} \bigg(\sum_{i=1}^m\mu_i T_{x_i/R+s_i}u_i\bigg)-\sigma
 \sum_{i=1}^m\mu_i T_{x_i/R+s_i}|u_i|^{2p}u_i \m=\m 0\m.
 \label{pal_topp}
\end{align}
We can write $L_\infty^{R,s}=L_+^{R,s}+L_\delta^{R,s}$, where $L_+^{
R,s}, L_\delta^{R,s}:H^2(\R^n)\to L^2(\R^n)$ are defined as follows:
for all $v\in H^2(\R^n)$
$$ L_+^{R,s} v \m=\m -\Delta v+ v + \sigma(2p+1) \sum_{i=1}^m |T_{x_i/R+s_i}
   u_\infty|^{2p} v \m, $$
$$ L_\delta^{R,s} v \m=\m R^2V_R\m v + \sigma(2p+1) \bigg|\sum_{i=1}^m
 \mu_i T_{x_i/R+s_i}  u_i\bigg|^{2p} v -\sigma(2p+1) \sum_{i=1}^m |T_{x_i/R+s_i} u_\infty|^{2p} v\m. $$
Using \eqref{uiscaling} we get $\|L_\delta^{R,s}\|_{H^2\to L^2}\to0$ as $R\to0$, uniformly in $s$ satisfying \eqref{dan_pro}. For each $i\in\{1,2,\ldots m\}$, we have
$$ \ker\big[-\Delta+1+\sigma(2p+1)|T_{x_i/R+s_i} u_\infty|^{2p}\m
   \big]\m=\m \m {\rm span}\{T_{x_i/R+s_i} \partial_{x_k} u_\infty\m\big|
   \m k\m=\m1,2,\ldots m\}\m.$$
Applying Proposition \ref{prop:MS} in the Appendix to $L_+^{R,s}$, with $E(R)$, $s_k(R)$ and $V_k$ in the proposition being $1$, $x_k/R+s_k$ and $\sigma(2p+1)|u_\infty|^{2p}$, respectively, and then applying the spectral perturbation theory to $L_+^{R,s} + L_\delta^{R,s}$ by regarding $L_\delta^{R,s}$ as a perturbation, it follows that there exists $C, R_1>0$ such that $\widetilde P_{R,s}^\perp L_\infty^{R,s} \widetilde P_{R,s}^\perp$ is invertible and
\begin{equation} \label{spare_tyre}
 \|(\widetilde P_{R,s}^\perp L_\infty^{R,s} \widetilde P_{R,s}^\perp)^{-1} \|_{L^2\mapsto H^2}<C
\end{equation}
for all $R\in(0,R_1)$ and all $s$ satisfying \eqref{dan_pro}. Here $\widetilde P_{R,s}$ is the orthogonal projection operator onto
$$ {\rm span}\{T_{x_i/R+s_i}\partial_{x_j} u_\infty\m \big|\m i\m=\m 1,2,\ldots m,\ j\m=\m1,2, \ldots n\}\m. $$
Let $P$ (by suppressing its dependence on $R$ and $s$) be the orthogonal projection operator onto
$$ {\rm span}\{T_{x_i/R+s_i} \partial_{x_k} u_i\m
   \big|\m i\m=\m 1,2,\ldots m,\ k\m=\m1,2,\ldots m\}\m.$$
Using \eqref{uiscaling}, it follows that $\|\widetilde P_{R,s}-P\|_{L^2\mapsto L^2}\to0$ and $\|\widetilde P_{R,s}-P\|_{H^2\mapsto H^2}\to0$ as $R\to0$. Hence from \eqref{spare_tyre} we get that there exists $R_2\in(0,R_1)$ such that $P^\perp L_\infty^{R,s}P^\perp:P^\perp L^2\cap H^2\to P^\perp L^2$ is boundedly invertible for all $R\in(0,R_2)$ and $s$ satisfying \eqref{dan_pro} and there exists $K>0$ such that
\begin{equation} \label{bose_st}
 \sup_{ \begin{array}{c} R\in(0,R_2),\\ s\ \textrm{satisfying}
 \ \eqref{dan_pro}\end{array}}\| (P^\perp L_\infty^{R,s} P^\perp)^{-1}
 \|_{P^\perp L_2\to P^\perp L^2 \cap H^2} \m<\m K\m.
\end{equation}

Recall from \eqref{pal_fd} that $P^\perp v_R=v_R$. Define
$$ D(R,s) \m=\m \sigma\bigg|\sum_{i=1}^m\mu_i T_{x_i/R+s_i} u_i\bigg|^{2p} \bigg(\sum_{i=1}^m\mu_i T_{x_i/R+s_i} u_i\bigg)-\sigma \sum_{i=1}^m \mu_i T_{x_i/R+s_i} |u_i|^{2p} u_i\m.$$
It is easy to see that \eqref{pal_topp} is equivalent to the
following set of equations:
\begin{align}
 &P^\perp L_\infty^{R,s}P^\perp v_R + P^\perp R^2\sum_{i=1}^m(V_R-V_i)
 \mu_i T_{x_i/R+s_i} u_i + P^\perp N\bigg(\sum_{i=1}^m\mu_i T_{x_i/R+s_i} u_i, P^\perp v_R\bigg)\nonumber\\
 &\hspace{50mm}+P^\perp D(R,s) \m=\m 0 \label{pal_topp_a}
\end{align}
and for $k=1,2,\ldots n$ and $i=1,2,\ldots m$
\begin{align}
 &\bigg\langle L_\infty^{R,s}\m v_R + R^2\sum_{j=1}^m(V_R-V_j)\mu_j
 T_{x_j/R+s_j} u_j + N\bigg(\sum_{j=1}^m\mu_j T_{x_j/R+s_j} u_j, v_R
 \bigg), T_{x_i/R+s_i}\partial_{x_k} u_i\bigg\rangle \nonumber\\[2pt]
 &\hspace{35mm} +\bigg\langle D(R,s), T_{x_i/R+s_i}\partial_{x_k} u_i
 \bigg\rangle \m=\m 0 \m. \label{pal_topp_b}
\end{align}
A tedious but elementary calculation gives that for some $\gamma\in(0,1)$,
$$ \|D(R,s)\|_{L^2} {\rm \ \ is \ \ } O(e^{-(1-\gamma)\min\{|x_i-x_j|/R\,|\m i,j\m=\m1,2,\ldots m\m,\ i\m\neq\m j\}})\m.$$
In particular, $\|D(R,s)\|_{L^2}$ is $O(R^m)$  for any integer $m>0$, uniformly in $s$ satisfying \eqref{dan_pro}.

We will complete the proof of claim (2) in two steps. In the first step we will solve \eqref{pal_topp_a} for $v_R$, for each $s$ satisfying \eqref{dan_pro} and all $R$ sufficiently small. We will see that the dependence of $v_R$ on $R$ and $s$ is $C^1$ and $v_R\to0$ as $R\to0$, uniformly in $s$ satisfying \eqref{dan_pro}. In the second step, using some estimates for $v_R$ derived in the first step, we will solve \eqref{pal_topp_b} for $s$, for all $R$ sufficiently small. We will see that the dependence of $s$ on $R$ is $C^1$ and $s\to0$ as $R\to0$. This will complete the proof of claim 2 and also establish the claim in the theorem that the map $E\mapsto\Uscr_E$ is $C^1$ and satisfies \eqref{smarat}.

We now proceed with the first step. So as to explicitly indicate the dependence of $v_R$ on $s$, below we use the notations $v_{R,s}$ instead of $v_R$. But we will continue to use the notation $P^\perp$ by suppressing its dependence on $R$ and $s$. We can now rewrite \eqref{pal_topp_a} as
\begin{align}
 v_{R,s} &\m=\m (P^\perp L_\infty^{R,s}P^\perp)^{-1} \bigg[P^\perp
 R^2\sum_{i=1}^m(V_R-V_i)\mu_i T_{x_i/R+s_i}u_i + P^\perp N\bigg(
 \sum_{i=1}^m \mu_i T_{x_i/R+s_i}u_i, P^\perp v_{R,s}\bigg) \bigg.
 \nonumber\\
 &\hspace{70mm}\bigg.+P^\perp D(R,s)\bigg] \nonumber\\
 &\m=\m v_{R,s}^0+(P^\perp L_\infty^{R,s}P^\perp)^{-1} P^\perp N\bigg(
 \sum_{i=1}^m \mu_i  T_{x_i/R+s_i}u_i, P^\perp v_{R,s}\bigg)\m.
 \label{room_st}
\end{align}
Here
$$ v_{R,s}^0 \m=\m (P^\perp L_\infty^{R,s}P^\perp)^{-1}\big[P^\perp
   R^2\sum_{i=1}^m(V_R-V_i)\mu_i  T_{x_i/R+s_i}u_i + P^\perp D(R,s)
   \big] \m. $$
Since $V\in L^\infty(\R^n)$, $V$ is twice differentiable at $x=x_i$
and $x_i$ is a critical point for $V$ for every $i\in\{1,2,\ldots m\}$,
there exists $c_1>0$ such that $|V(x)-V(x_i)| \m\leq\m c_1|x-x_i|^2$
for all $x\in\R^n$ and every $i$. Therefore if $R\in(0,R_2)$ and
\eqref{dan_pro} holds, then
\begin{align}
  \sum_{i=1}^m\|(V_R-V_i)T_{x_i/R+s_i}u_i\|_{L^2}^2 &\m=\m \sum_{i=1
  }^m \int_{\R^n}[(V(Rx)-V(x_i)) u_i(x-x_i/R-s_i)]^2\dd x\nonumber\\[2pt]
  &\m=\m \sum_{i=1}^m\int_{\R^n} [(V(Rx+x_i+Rs_i)-V(x_i)) u_i(x)]^2\dd
  x \nonumber\\[2pt]
  &\m\leq\m c_2 R^4 \label{modal_analysis}
\end{align}
for some constant $c_2>0$. The above estimate, the fact that $\|D(R,
s)\|_{L^2}$ is $O(R^m)$ and \eqref{bose_st} imply that $\|v_{R,s}^0
\|_{H^2}$ is $O(R^4)$. We will now apply the contraction mapping
principle to \eqref{room_st}. From Lemma \ref{lm:nuv} we get that there exists a constant $C>0$ (independent of $s_i$, since \eqref{dan_pro} holds) such that
\begin{equation} \label{one_year}
  \bigg\|\m N\bigg(\sum_{i=1}^m \mu_i T_{x_i/R+s_i} u_i, P^\perp v_1\bigg)- N\bigg(\sum_{i=1}^m \mu_i T_{x_i/R+s_i} u_i, P^\perp v_2\bigg) \bigg\|_{L^2}\m\leq \m C\|v_1-v_2\|^{\min\{2,2p+1\}}_{H^2}
\end{equation}
for all $v_1,v_2\in H^2(\R^n)$ with $\|v_1\|_{H^2},\|v_2\|_{H^2}<1$ and all $R\in(0,R_2)$. Fix $r\in(0,1)$ and $R_3\in(0,R_2)$ such that
$$ \|v_{R,s}^0\|_{H^2}\m<\m \frac{r}{2} \FORALL R\in(0,R_3)\quad {\rm and}
   \quad  4KCr^{\min\{1,2p\}}\m<\m1\m. $$
Let $B(0,r)$ denote the closed ball of radius $r$ centered at the
origin in $H^2(\R^n)$. Then for each $R\in(0,R_3)$ and $s$ satisfying
\eqref{dan_pro}, the map
$$ \Nscr_{R,s}(v) \m=\m v_{R,s}^0 + (P^\perp L_\infty^RP^\perp)^{-1}
   P^\perp N\bigg(\sum_{i=1}^m \mu_i T_{x_i/R+s_i} u_i, P^\perp
   v\bigg) $$
is a contraction on $B(0,r)$. It now follows from the contraction mapping principle that for each $R\in(0,R_3)$ and $s$ satisfying \eqref{dan_pro}, there exists a unique solution $v_{R,s}\in B(0,r)$, with $P^\perp v_{R,s}=v_{R,s}$, that solves \eqref{room_st}. Since the dependence of $\Nscr_{R,s}$ on $R$ and $s$ is $C^1$ and $\Nscr_{R,s}$ are uniform contractions on the set $\{(R,s)\m\big|\m R\in(0,R_3), s\ {\rm satisfies\ \eqref{dan_pro}}\}$, it follows that the dependence of $v_{R,s}$ on $R$ and $s$ is also $C^1$. Furthermore, for each $R\in(0,R_3)$ and $s$ satisfying \eqref{dan_pro}, $\Nscr_{R,s}$ is a contraction on $B(2\|v_{R,s}^0\|,0)$ as well, meaning that $\|v_{R,s}\|_{H^2}$ is also $O(R^4)$. This completes step 1. We now proceed with step 2.

Differentiating \eqref{vis_rain} with respect to $x_k$ we get that for each $i\in\{1,2,\ldots m\}$
$$ -\Delta\m\partial_{x_k}u_i + \partial_{x_k}u_i + R^2V_i\m
    \partial_{x_k}u_i + \sigma(2p+1)|u_i|^{2p} \partial_{x_k} u_i
    \m=\m 0\m.$$
Using the above expression and the definition of $L_\infty^{R,s}$, it
is easy to see that \eqref{pal_topp_b} can be rewritten as follows:
for $k=1,2,\ldots n$ and $i=1,2,\ldots m$
\begin{align}
 &\bigg\langle R^2 (V_R-V_i) T_{x_i/R+s_i}u_i, T_{x_i/R+s_i}\partial_{x_k}
 u_i\bigg\rangle + \bigg\langle R^2\sum_{j=1,\ j\neq i}^m
 (V_R-V_j)\mu_j T_{x_j/R+s_j}u_j,\m \mu_i T_{x_i/R+s_i} \partial_{x_k} u_i
 \bigg\rangle\nonumber\\
 &\hspace{5mm}+\bigg\langle R^2(V_R-V_i) v_{R,s} + N\bigg(\sum_{j=1}^m
 \mu_j T_{x_j/R+s_j} u_j,v_{R,s}\bigg)+D(R,s),\m\mu_iT_{x_i/R+s_i}
 \partial_{x_k} u_i\bigg\rangle \nonumber\\
 &-\bigg\langle \sigma(2p+1)|T_{x_i/R+s_i}u_i|^{2p} v_{R,s}-\sigma(2p
 +1) \bigg|\sum_{j=1}^m\mu_j T_{x_j/R+s_j} u_j\bigg|^{2p} v_{R,s},\m
 \mu_i T_{x_i/R+s_i}\partial_{x_k} u_i\bigg\rangle \m=\m0\m.
 \label{prez_ele}
\end{align}
We will now simplify the first term in \eqref{prez_ele}. Denote the
Hessian of $V$ at $x$ (if it exists) by $\nabla^2 V(x)$ and its $k^{\rm th}$
row by $\nabla^2_k V(x)$.  Using the fact that each $x_i$ is a critical point
of $V$ at which $V$ is twice differentiable, we get that
\begin{align}
 \bigg\langle R^2(V_R-V_i)T_{x_i/R+s_i} u_i,T_{x_i/R+s_i} \partial_{x_k} u_i\bigg\rangle &\m=\m \frac{R^3}{2}\int_{\R^n} \partial
 V_{x_k}(Rx+x_i+Rs_i) \m u_i^2(x) \m\dd x \nonumber\\
 &\m=\m \frac{R^4}{2}\int_{\R^n} \nabla^2_k V(x_i).(x+s_i) \m u_i^2(x) \m\dd
 x + R^4 g(R,s_i) \nonumber\\
 &\m=\m \frac{R^4}{2} (\nabla^2_k V(x_i).\m s_i) \|u_i\|_{L^2}^2 + R^4
 g(R,s_i)\m. \label{2knotes}
\end{align}
Here, using Taylor's theorem and the exponential decay of $u_i$, we
have $\lim_{R\to0}g(R,s_i)=0$ uniformly in $s_i$ satisfying
\eqref{dan_pro}. Next we will show that the rest of the terms in
\eqref{prez_ele} (except the first one) are $o(R^4)$. Using the fact
that the functions $u_i$ and $\partial_{x_k}u_i$ are in $L^\infty$ and
decay exponentially, it can be shown via elementary estimates that
$$ \bigg\|\bigg(T_{x_i/R+s_i}|u_i|^{2p}-\bigg|\sum_{j=1}^m\mu_j T_{x_j/R+s_j} u_j\bigg|^{2p}\bigg) T_{x_i/R+s_i}\partial_{x_k} u_i\m\bigg\|_{L^2} $$
is $O(R^m)$, uniformly in $s$ satisfying \eqref{dan_pro}, for any
integer $m$. Hence the same is true for
$$ \bigg|\bigg\langle |T_{x_i/R+s_i}u_i|^{2p} v_{R,s}- \bigg|\sum_{j=1}^m\mu_j T_{x_j/R+s_j}u_j\bigg|^{2p} v_{R,s}, T_{x_i/R+s_i} \partial_{x_k} u_i\bigg\rangle\bigg|\m. $$
Finally, using \eqref{2knotes}, the above estimate and the estimates
$\|v_{R,s}\|_{H^2}$ is $O(R^4)$, $\|D(R,s)\|_{L^2}$ is $O(R^m)$ for any
integer $m$ and $\|N\big(\sum_{j=1}^m\mu_j T_{x_j/R+s_j}u_j,v_{R,s}
\big)\|_{L^2}$ is $o(R^4)$ (which follows from \eqref{one_year}), all
three valid uniformly in $s$ satisfying \eqref{dan_pro}, we get
that \eqref{prez_ele} can be written as
\begin{equation} \label{sch_drum}
 (\nabla^2_k V(x_i).\m s_i) \|u_i\|_{L^2}^2 + \Gscr^{ik}_R(s) \m=\m0
 \FORALL k\in\{1,2,\ldots n\}\m,\quad \forall\, i\in\{1,2,\ldots m\},
\end{equation}
where $|\Gscr_R^{ik}(s)|$ is $o(1)$ in $R$, uniformly in $s$ satisfying
\eqref{dan_pro}. Furthermore, since $v_{R,s}$ is differentiable with
respect to $s$, we can show that $\Gscr_R^{ik}(s)$ is differentiable
with respect to $s$ and its derivative is also $o(1)$ in $R$, uniformly
in $s$ satisfying \eqref{dan_pro}.
For each $i$, denote $\Gscr_R^i=[\m \Gscr_R^{i1}, \Gscr_R^{i2}, \ldots
\Gscr_R^{in}\m]^\top$ and let
$$ \Gscr_R \m=\m \bigg[\m \frac{[\nabla^2 V(x_1)]^{-1}\Gscr_R^1}{\|u_1\|_{L^2
   }^2}\m, \frac{[\nabla^2 V(x_2)]^{-1}\Gscr_R^2}{\|u_2\|_{L^2}^2}\m, \ldots
   \frac{[\nabla^2 V(x_m)]^{-1}\Gscr_R^m}{\|u_m\|_{L^2}^2}\m\bigg]^\top. $$
Then we can rewrite \eqref{sch_drum} as
$$ \|u_i\|_{L^2}^2 \nabla^2 V(x_i)\m s_i + \Gscr_R^i(s) \m=\m0 \FORALL
   i\in\{1,2,\ldots m\}\m, $$
which in turn can be rewritten as follows:
\begin{equation} \label{fin_cont}
 s \m=\m \Gscr_R(s)\m,
\end{equation}
where $\Gscr_R(s)$ and its derivative with respect to $s$ are both
$o(1)$ in $R$, uniformly in $s$ satisfying \eqref{dan_pro}. This means
that there exists $R_4\in(0,R_3)$ such that for each $R\in(0,R_4)$,
the map $\Gscr_R$ is a contraction on the closed unit ball in $\R^{n
\times m}$, with a contraction constant independent of $R$. Hence for
each $R\in(0,R_4)$ there exists a unique $s\in\R^n$, with $|s|\leq1$,
that solves \eqref{fin_cont}. Furthermore, since the dependence of
$\Gscr_R$ on $R$ is $C^1$, $\Gscr_R$ are uniform contractions for
$R\in(0,R_4)$ and $\Gscr_R$ is $o(1)$ in $R$, uniformly in $s$
satisfying \eqref{dan_pro}, it follows that the dependence of $s$ on
$R$ is $C^1$ and $s$ is also $o(1)$ in $R$. This completes the proof
of step 2.

We have now shown that there exists $\e>0$ and an $E_\e>0$ such that for each $E>E_\e$, \eqref{pal_nlsE} has a unique real-valued solution $\Uscr_E$ satisfying \eqref{pal_solnest} and the map $E\mapsto \Uscr_E$ is a $C^1$ curve satisfying \eqref{smarat}. Next we will count the number of negative eigenvalues of the linearized operator $L_+(\Uscr_E,E)$ along this curve. This will complete the proof of this theorem. We use the notation $\Uscr_R=\Uscr_E$ and $v_R=v_E$ and write $L_+(\Uscr_E,E)$ in terms of $R$ i.e., for all $v\in H^2(\R^n)$
$$ L_+(\Uscr_R,R)[v] \m=\m -\Delta v + R^2V_R v +v+ \sigma(2p+1) |\Uscr_R|^{2p} v\m.$$

First we will discuss the spectral properties of $L_+(\Uscr_R,R)$. From \eqref{uiscaling} we have $\lim_{R\to0} \|u_i-u_\infty\|_{H^2}=0$. Using this and the fact that $\|v_R\|_{H^2}$ is $O(R^4)$, it can be verified that $L_+(\Uscr_R,R)$ can be expressed as $\widetilde L_R + \widetilde W_R$, where
$$ \widetilde L_R[v] \m=\m -\Delta v + v+ \sigma(2p+1) \sum_{i=1}^m |T_{x_i/R+s_i}u_\infty|^{2p} v\m \FORALL v\in H^2(\R^n) $$
and $\lim_{R\to0}\|\widetilde W_R\|_{H^2\mapsto L^2}=0$. The first eigenvalue $\gamma$ of $(-\Delta +1)+\sigma(2p+1)|u_\infty|^{2p}$ is simple and negative and its second eigenvalue is zero with multiplicity $n$. Applying the spectral theory for operators with potentials separated by large distances (see proof of Proposition \ref{prop:MS} in the Appendix) and Proposition \ref{prop:MS} to $\widetilde L_R$, with $E(R)$, $V_k$ and $s_k$ in the proposition being $1$, $\sigma(2p+1)|u_\infty|^{2p}$ and $x_k/R+s_k$, respectively, and then applying the spectral perturbation theory to $\widetilde L_R + \widetilde W_R$ by regarding $\widetilde W_R$ as a perturbation, it follows that for $R$ small there exist exactly $m$ eigenvalues $\gamma_1^R, \gamma_2^R, \ldots \gamma_m^R$ of $L_+(\Uscr_R,R)$ which converge to $\gamma<0$ as $R\to0$ and exactly $mn$ eigenvalues $\lambda_1^R, \lambda_2^R,\ldots \lambda_{mn}^R$ of $L_+(\Uscr_R,R)$ with corresponding eigenfunctions $\phi_1^R,\phi_2^R,\ldots \phi_{mn}^R$ (we can suppose that these functions form an orthonormal set in $L^2$ as $L_+(\Uscr_R,R)$ is self-adjoint) such that for each $k\in\{1,2,\ldots mn\}$
$$ \lim_{R\to0}\lambda_k^R=0, \qquad L_+(\Uscr_R,R) \phi_k^R=\lambda_k^R\phi_k^R, \qquad \phi_k^R = \sum_{i=1}^m\sum_{j=1}^n a^k_{ij}(R)T_{x_i/R+s_i} \partial_{x_j} u_i + \Phi_k^R, $$
where $\langle\Phi_k^R,T_{x_i/R+s_i}\partial_{x_j} u_i\rangle=0$ and $a_{ij}^k(R)$ is bounded for all $i,j,k$ and $R$ small. All other eigenvalues of $L_+(\Uscr_R,R)$ remain positive for all sufficiently small $R$. Furthermore, $\widetilde P_R^\perp L_+(\Uscr_R,R) \widetilde P_R^\perp$ is invertible for all $R$ sufficiently small and there exists $C>0$ independent of $R$ small such that
\begin{equation} \label{stravels}
 \|(\widetilde P_R^\perp L_+(\Uscr_R,R) \widetilde P_R^\perp)^{-1} \|_{L^2\mapsto H^2}<C.
\end{equation}
Here $\widetilde P_R$ is the orthogonal projection operator onto
$$ {\rm span}\{T_{x_i/R+s_i}\partial_{x_j} u_\infty\m \big|\m i\m=\m 1,2,\ldots m,\ j\m=\m1,2, \ldots n\}\m. $$
To complete the proof of this theorem we need to identify the signs of the eigenvalues $\lambda_k^R$ for $R$ small. This is done below.

Let $P_R$ be the orthogonal projection operator onto
$$ {\rm span}\{T_{x_i/R+s_i}\partial_{x_j} u_i\m \big|\m i\m=\m 1,2,\ldots m,\ j\m=\m1,2, \ldots n\}\m. $$
Using \eqref{uiscaling}, it follows that $\|\widetilde P_R-P_R\|_{L^2\mapsto L^2}\to0$ and $\|\widetilde P_R-P_R\|_{H^2\mapsto H^2}\to0$ as $R\to0$. Hence it follows from \eqref{stravels} that $P_R^\perp L_+(\Uscr_R,R) P_R^\perp$ is invertible for all $R$ sufficiently small and there exists $C>0$ independent of $R$ small such that
\begin{equation} \label{siddhitravels}
 \|(P_R^\perp L_+(\Uscr_R,R) P_R^\perp)^{-1} \|_{L^2\mapsto H^2}<C.
\end{equation}
Define the operator $L_R, W_R:H^2(\R^n)\mapsto L^2(\R^n)$ as follows: For each $v\in H^2(\R^n)$,
$$ L_R v \m=\m -\Delta v+ v + \sigma(2p+1)\Big|\sum_{i=1}^m \mu_i T_{x_i/R+s_i}u_i\Big|^{2p} v,$$
$$ W_R v= R^2V_R v+ \sigma(2p+1)\Big(\Big|\sum_{i=1}^m \mu_i T_{x_i/R+s_i} u_i + v_R\Big|^{2p} - \Big|\sum_{i=1}^m \mu_i T_{x_i/R+s_i}u_i\Big|^{2p}\Big)v .$$
Then clearly
$$ L_+(\Uscr_R,R)v = L_R v + W_R v.$$
Using the approach used to obtain \eqref{est_nl} in Section \ref{se:bif} we get
\begin{equation} \label{Bbakshi}
 \Big\|\Big|\sum_{i=1}^m \mu_i T_{x_i/R+s_i} u_i + v_R\Big|^{2p} - \Big|\sum_{i=1}^m \mu_i T_{x_i/R+s_i}u_i\Big|^{2p}\Big\|_{L^2}=O(\|v_R\|_{H^2})
\end{equation}
and since $\|v_R\|_{H^2}$ is $O(R^4)$ it follows that $\|W_R\|_{H^2\mapsto L^2}$ is $O(R^2)$. Using $L_R$ and $W_R$, \eqref{pal_topp_a} can be rewritten as
\begin{align}
 &P_R^\perp L_R P_R^\perp v_R  +P_R^\perp R^2\sum_{i=1}^m(V_R-V_i)
 \mu_i T_{x_i/R+s_i} u_i  \nonumber\\
 \m=\m& - P_R^\perp W_R P_R^\perp v_R-P_R^\perp N\bigg(\sum_{i=1}^m\mu_i T_{x_i/R+s_i} u_i, P_R^\perp v_R\bigg)-P_R^\perp D(R,s). \label{celerioE}
\end{align}
Using the estimate $\|v_R\|_{H^2}$ is $O(R^4)$ and \eqref{est_nl} it follows that the right hand side of the above equation is $O(R^6)$. From \eqref{siddhitravels} and $\lim_{R\to0}\|W_R\|_{H^2\mapsto L^2}=0$, we get that $P_R^\perp L_R P_R^\perp$ is invertible for all $R$ sufficiently small and there exists a $C>0$ independent of $R$ small such that $\|(P_R^\perp L_R P_R^\perp)^{-1} \|_{L^2\mapsto H^2}<C$. Therefore, for $R$ small, using \eqref{celerioE} we can write $v_R=v_R^0+v_R^1$ so that $v_R^0$ is the unique solution of
\begin{equation} \label{many_changes}
 P^\perp_R L_R v_R^0= -P^\perp_R R^2\sum_{i=1}^m (V_R-V_i)\mu_i T_{x_i/R+s_i}u_i
\end{equation}
and $\|v_R^1\|_{H^2}$ is $O(R^6)$.

Apply $P_R^\perp$ to $\lambda_k\phi_k^R=L_R\phi_k^R+W_R\phi_k^R$ to obtain
\begin{align*}
 &\big(P^\perp_R L_R P^\perp_R - \lambda_k^R + P^\perp_R W_R P^\perp_R)\Phi_k^R = P^\perp_R \sum_{i=1}^m\sum_{j=1}^n a^k_{ij}(R)(R^2V_i-W_R) T_{x_i/R+s_i} \partial_{x_j} u_i \\
 &-P^\perp_R \sigma(2p+1) \sum_{i=1}^m\sum_{j=1}^n a^k_{ij}(R) \Big(\Big|\sum_{t=1}^m \mu_t T_{x_t/R+s_t} u_t\Big|^{2p}- |T_{x_i/R+s_i}u_i|^{2p}\Big) T_{x_i/R+s_i}\partial_{x_j} u_i \m.
\end{align*}
The operator (from $H^2\to L^2$) on the left hand side of the above equation is invertible for $R$ small with uniform in $R$ bound for the inverse. This follows from our earlier discussion on the invertibility of $P^\perp_R L_R P^\perp_R$ and the limits $\lambda_k^R\to0$ and $\|W_R\|_{H^2\mapsto L^2}\to0$ as $R\to0$. Using \eqref{modal_analysis} and \eqref{Bbakshi} it follows that the term on the right hand side of the above equation is $O(R^4)$ in $L^2$. Hence we get that $\|\Phi_k^R\|_{H^2}$ is $O(R^4)$. Taking the innerproduct of $L_+(\Uscr_R,R)\phi_k^R=\lambda_k^R\phi_k^R$ with $T_{x_i/R+s_i}\partial_{x_j} u_i$ we get
\begin{align}
  &a_{ij}^k(R) \lambda_k^R\|\partial_{x_j}u_i\|_{L^2}^2 = \Big\langle \sum_{t=1}^m\sum_{q=1}^n a_{tq}^k(R)(W_R-R^2V_t) T_{x_t/R+s_t} \partial_{x_q} u_t, T_{x_i/R+s_i}\partial_{x_j} u_i \Big\rangle + o(R^4)\nonumber\\
  &= \Big\langle \sum_{q=1}^n a_{iq}^k(R)(W_R-R^2V_i)T_{x_i/R+s_i} \partial_{x_q} u_i, T_{x_i/R+s_i} \partial_{x_j} u_i \Big\rangle + o(R^4)\nonumber\\
  &= \Big\langle \sum_{q=1}^n a_{iq}^k(R)(W_R-R^2V_R)T_{x_i/R+s_i} \partial_{x_q} u_i, T_{x_i/R+s_i} \partial_{x_j} u_i \Bigr\rangle \nonumber\\
  &\hspace{10mm}+ \Big\langle \sum_{q=1}^n  a_{iq}^k(R) R^2[V_R-V_i]) T_{x_i/R+s_i}\partial_{x_q} u_i, T_{x_i/R+s_i} \partial_{x_j} u_i \Big\rangle + o(R^4).  \label{pasta}
\end{align}
We will now derive estimates for the terms containing $W_R-R^2V_R$ in the above expression.
Consider the sets
$$ S_+^R =\Big\{x\in\R^n \m\Big|\m \sum_{j=1}^m \mu_j u_j(x-x_j/R-s_j)-|v_R(x)|>0\Big\},$$
$$ S_-^R =\Big\{x\in\R^n \m\Big|\m \sum_{j=1}^m \mu_j u_j(x-x_j/R-s_j)+|v_R(x)|<0\Big\}$$
and define
\begin{align*}
  \HHH^R=&\ \Big(\Big|\sum_{t=1}^m \mu_t T_{x_t/R+s_t} u_t + v_R\Big|^{2p} - \Big|\sum_{t=1}^m \mu_t T_{x_t/R+s_t}u_t\Big|^{2p}\Big)\\
  & - 2p\ \textrm{sign}\Big(\sum_{t=1}^m \mu_t T_{x_t/R+s_t} u_t\Big)\Big|\sum_{t=1}^m \mu_t T_{^{(x_t/R+s_t)}} u_t\Big|^{2p-1} v_R.
\end{align*}
Adapting the approach used to derive \eqref{est_nl}, it can be shown that $\|\HHH^R\|_{L^2(S_+^R\cup S_-^R)}$ is $O(\|v_R\|_{H^2}^{\min\{2p,2\}})$ if $p>1/2$ and 0 if $p=1/2$. So $\|\HHH^R\|_{L^2(S_+^R\cup S_-^R)}$ is $o(R^4)$. Since $\|v_R\|_{H^2}$ is $O(R^4)$, it follows for $n=1,2,3$ that $v_R \stackrel {L^\infty}{\to}0$ as $R\to0$. For $n\geq 4$, the same conclusion can be arrived at by first using regularity to show that $v_R$ converges to $0$ in $W^{2,q}$ for any $q\geq2$ (see Appendix). Using this and the exponential decays of $u_i$ and its derivative, we have
$\|\partial_{x_q}T_{x_i/R+s_i}u_i\|_{L^2(\R^n\setminus\{S_+^R\cup S_-^R\})}$ is $O(R)$ and so
\begin{equation} \label{new_caterer}
\Bigg|\int_{\R^n\setminus\{S_+^R\cup S_-^R\}} \HHH^R(x)  \partial_{x_q} u_i(x-x_i/R-s_i) \dd x\Bigg| \leq C \|\partial_{x_q} T_{x_i/R+s_i}u_i \|_{L^2(\R^n\setminus\{S_+^R\cup S_-^R\})} \|v_R\|_{L^2}=o(R^4).
\end{equation}

Clearly there exist coefficients $c_{tq}(R)$ which are $O(R^4)$ such that \eqref{many_changes} can be written as
$$ L_R v_R^0= -R^2\sum_{t=1}^m (V_R-V_t)\mu_t T_{x_t/R+s_t}u_t + \sum_{t=1}^m\sum_{r=1}^n c_{tr}(R)T_{x_t/R+s_t}\partial_{x_r} u_t. $$
Taking the derivative of the above equation along the $x_q$ direction we get
\begin{align}
 &L_R \partial_{x_q} v_R^0 + \sigma(2p+1)2p\ \textrm{sign}\Big(\sum_{t=1}^m \mu_t T_{x_t/R+s_t} u_t\Big)\Big|\sum_{t=1}^m \mu_t T_{x_t/R+s_t} u_t\Big|^{2p-1} \Big(\sum_{t=1}^m \mu_t T_{x_t/R+s_t}  \partial_{x_q}u_t\Big) v_R^0 \nonumber\\
 =&\ -R^2 \partial_{x_q} V_R \sum_{t=1}^m \mu_t T_{x_t/R+s_t}u_t - R^2\sum_{t=1}^m (V_R-V_t)\mu_t T_{x_t/R+s_t} \partial_{x_q} u_t + \sum_{t=1}^m\sum_{r=1}^n c_{tr}(R) T_{x_t/R+s_t}\partial^2_{x_q x_r} u_t.
 \label{compr_ortho}
\end{align}
Taking the innerproduct of the above equation with $T_{x_i/R+s_i}\partial_{x_j} u_i$
and $v_R=v_R^0+v_R^1$ with $\|v_R^1\|_{H^2}$ is $O(R^6)$ we get that
\begin{align}
 &\Big\langle\sigma(2p+1)2p\ \textrm{sign}\Big(\sum_{t=1}^m \mu_t T_{x_t/R+s_t} u_t\Big)\Big|\sum_{t=1}^m \mu_t T_{x_t/R+s_t} u_t\Big|^{2p-1} v_R T_{x_i/R+s_i}\partial_{x_q} u_i, T_{x_i/R+s_i} \partial_{x_j} u_i \Big\rangle \nonumber\\
 &\hspace{40mm} +\big\langle R^2[V_R-V_i] T_{x_i/R+s_i}\partial_{x_q} u_i, T_{x_i/R+s_i} \partial_{x_j} u_i \big\rangle \nonumber\\
 =&\ - R^3 \int_{\R^n} \partial_{x_q} V(Rx) u_i(x-x_i/R-s_i) \partial_{x_j} u_i(x-x_i/R-s_i) \dd x + o(R^4)\nonumber\\
 =&\ \frac{R^4}{2} \partial^2_{x_q x_j} V(x_i)\|u_i\|_{L^2}^2 + o(R^4). \label{sitter}
\end{align}
Recall that
$$ W_R- R^2 V_R =   \sigma(2p+1) \Big(\Big|\sum_{t=1}^m \mu_t T_{x_t/R+s_t} u_t + v_R\Big|^{2p} - \Big|\sum_{t=1}^m \mu_tT_{x_t/R+s_t}u_t\Big|^{2p}\Big).$$
Using  $\|\HHH^R\|_{L^2(S_+^R\cup S_-^R)}$ is $o(R^4)$ and \eqref{new_caterer}, it follows from \eqref{pasta} and \eqref{sitter} that
$$ a_{ij}^k(R) \lambda_k^R\|\partial_{x_j}u_i\|^2 = \frac{R^4}{2} \sum_{q=1}^n a_{iq}^k \partial^2_{x_q x_j} V(x_i)\|u_i\|_{L^2}^2 + o(R^4).$$
Recall that $\lim_{R\to0}\|u_i-u_\infty\|_{H^2}=0$. Using this, the above equation yields
\begin{equation} \label{baskin_robbins}
 a_{ij}^k(R) \lambda_k^R\|\partial_{x_j}u_\infty\|^2 = \frac{R^4}{2} \sum_{q=1}^n a_{iq}^k \partial^2_{x_q x_j} V(x_i)\|u_\infty\|_{L^2}^2 + o(R^4).
\end{equation}

Define
$$ A_k(R)=\begin{bmatrix} A_k^1(R) &  A_k^2(R) & \ldots &  A_k^m(R) \end{bmatrix},$$
where
$$ A_k^i(R)=\begin{bmatrix} a_{i1}^k(R) & a_{i2}^k(R)&\ldots & \ldots & a_{in}^k(R) \end{bmatrix}.$$
Let $\nabla^2 {\bf V}$ be the block diagonal matrix with diagonal entries $\nabla^2 V(x_1), \nabla^2 V(x_2), \ldots \nabla^2 V(x_m)$. Here $\nabla^2 V(x_i)$ is the Hessian of $V$ at $x_i$.
Multiplying \eqref{baskin_robbins} by $a_{ij}^l(R)$ and summing the result over $i$ and $j$ gives
\begin{equation} \label{Hetu_GMO}
 \lambda_k^R\|\partial_{x_j}u_\infty\|_{L^2}^2 A_k(R) A_l^\top(R) = \frac{R^4}{2} \|u_\infty\|_{L^2}^2 A_k(R)\nabla^2{\bf V}A_l^\top(R) + o(R^4).
\end{equation}
Let ${\bf A}_R$ be the $nm\times nm$ matrix whose $k^{\rm th}$ row is $\|\partial_{x_j}u_\infty\|_{L^2} A_k(R)$ and $\Lambda_R$ be the $nm\times nm$ diagonal matrix with diagonal entries $\lambda_1^R,\lambda_2^R, \ldots \lambda_{nm}^R$. Then it follows from \eqref{Hetu_GMO} that
\begin{equation} \label{purohit}
 \Lambda_R {\bf A}_R {\bf A}^\top_R = \frac{R^4}{2} \frac{\|u_\infty\|_{L^2}^2}{\|\partial_{x_j}u_\infty\|_{L^2}} {\bf A}_R\nabla^2{\bf V}\m {\bf A}_R^\top + o(R^4).
\end{equation}
Since the $\phi_k^R$s form an orthonormal set in $L^2$ and each $\|\Phi_k^R \|_{L^2}$ is $O(R^4)$, it follows that $\lim_{R\to0}{\bf A}_R{\bf A}^\top_R-I_{2m\times 2m}=0$. Let ${\bf E}_R$ be the unitary matrix obtained by applying the Gram-Schmidt orthonormalization process to the rows of ${\bf A}_R$. Then $\lim_{R\to0}{\bf A}_R-{\bf E}_R=0$ and \eqref{purohit} can written as
\begin{equation} \label{chinta}
 \Lambda_R = \frac{R^4}{2} \frac{\|u_\infty\|_{L^2}^2}{\|\partial_{x_j}u_\infty\|_{L^2}} {\bf E}_R\nabla^2{\bf V}\m {\bf E}_R^\top + o(R^4).
\end{equation}
Hence the sign of the eigenvalues $\{\lambda_1,\lambda_2,\ldots \lambda_{nm}\}$ are determined by the sign of the eigenvalues of the Hessian at the critical points $\{x_1,x_2,\ldots x_m\}$. 

Finally we discuss the orbital stability with respect to the dynamical system \eqref{GP} of the branches $(\psi_E,E)$ of solutions of \eqref{stationary} obtained from $(\Uscr_E,E)$ via the renormalization \eqref{uelarge}:
$$ \psi_E(x)=E^{2p}\Uscr_E(x\sqrt{E}). $$
The linearized operators $L_+(\psi_E,E)$ and $L_+(\Uscr_E,E)$ have the same spectra. Consequently, if the number of profiles $m\geq2$ formula \eqref{nege} shows that $L_+$ has at least two negative eigenvalues. Since along this branches $L_-$ is nonnegative, by the result in \cite{Gr}, we get that they are orbitally unstable. If there is a single profile $m=1$ centered at a local maxima or saddle point the branch is orbitally unstable by the same argument. If the profile at a local minima we have that $L_+$ has exactly one negative eigenvalue. The result in \cite{GSS} now implies that the orbital stability is determined by $\frac{d}{dE}\Nscr(E)$. From the formula \eqref{2normelarge} we deduce that the slope is positive for subcritical nonlinearity $p<2/n$ and hence these branches are orbitally stable. But the slope is negative for $p>2/n$ and this branches are then orbitally unstable. One can actually show that for $p=2/n$ the slope remains positive and hence these branches are also stable.

This completes the proof of the theorem.
\end{proof}

\section{Symmetry Breaking Bifurcation}\label{se:sb}

In this section we show how our results on the maximal extension of branches of bound states and their behavior at the $E=\infty$ limit, see Theorems \ref{correct_scaling} and \ref{th:bfelarge}, can be put together to show that, in the presence of symmetry, there is a symmetry breaking bifurcation point along the ground state branch emerging from $(0,E_0)$ where $-E_0$ is the lowest eigenvalue of $-\Delta+V,$ see Proposition \ref{th:ex}. We start by showing that if the potential in the Schr\"{o}dinger equation \eqref{stationary} is invariant under a subgroup of Euclidian symmetries, then this branch and its maximal extension must be invariant under the same subgroup.

\begin{proposition}\label{pr:sym} Consider the time independent Schr\"{o}dinger equation \eqref{stationary} with potential $V$ satisfying (H1)-(H3) and
$$V(Rx)=V(x)\mbox{ a.e. }x\in\R^n,\qquad \forall\ R\in G\preceq E(n),$$
where $G$ is a subgroup of the Euclidian group of isometries on $\R^n,$ denoted here by $E(n).$ Then we have:
$$\psi_E(Rx)=\psi_E(x)\mbox{ a.e. }x\in\R^n,\qquad \forall\ R\in G\preceq E(n)\mbox{ and }\forall E\in\Iscr$$
where $\{(\psi_E,E)\ |\ E\in\Iscr\}$ is the maximal extension, see Corollary \ref{cor:max}, of the real valued curve of solutions of \eqref{stationary} given in Proposition \ref{th:ex}, which bifurcates from the lowest eigenvalue
$-E_0$ of $-\Delta+V.$ Moreover, for all $E\in\Iscr$ there is
$\epsilon,\ \delta >0$ such that any solution $(\psi,E_*)$ of
\eqref{stationary}
 satisfying $|E_*-E|<\delta$ and $\min_{0\leq\theta<2\pi}\|\psi-e^{i\theta}\psi_{E}\|_{H^2}<\epsilon,$ where $(\psi_E,E)$ is on the above maximal curve, is also invariant under the group $G.$
\end{proposition}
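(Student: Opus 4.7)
The plan is to exploit the $G$-equivariance of the map $F$ at every stage of the construction: local bifurcation at $(0,E_0)$, uniqueness of the real-valued branch provided by Theorem \ref{th:cont}, and the uniqueness modulo phase for nearby solutions given by Corollary \ref{cor:max}(iii). For each $R\in G$, let $\Tscr_R:H^2(\R^n,\C)\to H^2(\R^n,\C)$ denote the pullback $(\Tscr_R\psi)(x)=\psi(Rx)$. Since $R$ is a Euclidean isometry, $\Tscr_R$ is an isometry of $H^2$ that commutes with $-\Delta$, and the assumption $V\circ R=V$ a.e.\ together with $|\Tscr_R\psi|^{2p}=\Tscr_R|\psi|^{2p}$ yields the equivariance $F(\Tscr_R\psi,E)=\Tscr_R F(\psi,E)$. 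Consequently, whenever $(\psi,E)$ solves \eqref{stationary}, so does $(\Tscr_R\psi,E)$.

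First I would handle the base of the bifurcating branch. The eigenfunction $\psi_0$ of $-\Delta+V$ associated with the simple lowest eigenvalue $-E_0$ is $G$-invariant: $\Tscr_R\psi_0$ is another positive, $L^2$-normalized eigenfunction for $-E_0$, and by simplicity equals $\psi_0$. Then in the Lyapunov--Schmidt reduction used in the proof of Proposition \ref{th:ex}, the projections $P_\parallel,P_\perp$ commute with $\Tscr_R$ (because $\mathrm{span}_{\R}\{\psi_0,i\psi_0\}$ is $\Tscr_R$-invariant), so the solution $h(a,E)$ of the perpendicular equation inherits $\Tscr_R h(a,E)=h(a,E)$ for real $a$. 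Hence the small $C^1$ curve $(\psi_E,E)$ constructed in Proposition \ref{th:ex} is $G$-invariant in a right neighborhood (or left, depending on $\sigma$) of $E_0$.

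Next I would run an open/closed argument on the maximal interval $\Iscr$. Define $S=\{E\in\Iscr : \Tscr_R\psi_E=\psi_E \text{ for every }R\in G\}$; by the previous step $S$ is nonempty. The set $S$ is closed in $\Iscr$ because $E\mapsto\psi_E$ is continuous into $H^2$, and $\Tscr_R$ is continuous, so the identity $\Tscr_R\psi_E=\psi_E$ passes to limits. For openness, fix $E_*\in S$. Along the maximal curve, zero is not in the spectrum of $L_+(\psi_{E_*},E_*)$ and $\ker L_-(\psi_{E_*},E_*)=\mathrm{span}\{\psi_{E_*}\}$ (by the ground-state property). Theorem \ref{th:cont}(i) then supplies $\epsilon,\delta>0$ such that the real-valued solution of $F(\cdot,E)=0$ in the $\epsilon$-ball around $\psi_{E_*}$ is unique for each $E$ with $|E-E_*|<\delta$. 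Since $\Tscr_R\psi_E$ is real valued, solves $F(\cdot,E)=0$, and satisfies $\|\Tscr_R\psi_E-\psi_{E_*}\|_{H^2}=\|\psi_E-\Tscr_R^{-1}\psi_{E_*}\|_{H^2}=\|\psi_E-\psi_{E_*}\|_{H^2}$ (using $\Tscr_{E_*}$-invariance of $\psi_{E_*}$ and the isometry property of $\Tscr_R$), uniqueness forces $\Tscr_R\psi_E=\psi_E$ for $|E-E_*|$ small, uniformly in $R\in G$. Thus $S$ is open, and connectedness of $\Iscr$ gives $S=\Iscr$.

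Finally, the statement about nearby solutions is an immediate consequence of Corollary \ref{cor:max}(iii): any $(\psi,E_*)$ with $|E_*-E|<\delta$ and $\min_\theta\|\psi-e^{i\theta}\psi_E\|_{H^2}<\epsilon$ must be of the form $(e^{i\theta}\psi_{E_*},E_*)$ on the same maximal curve, and then $\Tscr_R(e^{i\theta}\psi_{E_*})=e^{i\theta}\Tscr_R\psi_{E_*}=e^{i\theta}\psi_{E_*}$ by the $G$-invariance of $\psi_{E_*}$ just established. The main technical point to be careful about is verifying that $\Tscr_R$ is a genuine isometry of $H^2$ so that $H^2$-smallness is preserved under the $G$-action; this is routine for Euclidean isometries but must be stated explicitly so that the uniqueness theorem can be invoked uniformly in $R\in G$.
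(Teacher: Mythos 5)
Your proposal is correct and follows essentially the same open--closed--nonempty argument on the maximal interval that the paper uses, including the same use of Theorem \ref{th:cont} for openness and Corollary \ref{cor:max}(iii) for the final statement. The only (immaterial) difference is at the base: you re-run the Lyapunov--Schmidt reduction equivariantly, whereas the paper simply invokes the uniqueness clause of Proposition \ref{th:ex}(i) together with the $G$-invariance of $\psi_0$ and the sign of the $L^2$ projection onto $\psi_0$ to conclude $\psi_E(Rx)=\psi_E(x)$ near $E_0$.
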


\begin{proof} Since the Laplacian is invariant under Euclidian transformations, and $V$ is invariant under the action of $G$ we deduce that, for all $R\in G,$ $(\psi(Rx) , E)$ is a solution of \eqref{stationary} provided $(\psi(x), E)$ is a solution. Consider now
$$\Gscr=\{E\in\Iscr\ |\ \psi_E\mbox{ is invariant under the action of }G\}\subseteq\Iscr .$$
We will show that $\Gscr$ is non-empty, open and closed in $\Iscr$ and, since the later is an interval, we conclude that $\Gscr=\Iscr .$ Then the last part of the theorem follows from part (iii) in Corollary \ref{cor:max}.

We first prove $\Gscr\not=\emptyset .$ We know from Corollary \ref{cor:max} that a small subinterval $\Iscr_\delta=(E_0,E_0+\delta)$ if $\sigma<0$ or $\Iscr_\delta=(E_0-\delta,E_0)$ if $\sigma>0$ of the maximal interval $\Iscr$ is obtained via the local bifurcation result in Proposition \ref{th:ex}. Using the fact that the Sobolev norms are invariant under Euclidian transformations, with the notations in that proposition, for each $E\in\Iscr_\delta,$ and all $R\in G$ we have:
$$\|\psi_E(Rx)\|_{H^2}=\|\psi_E(x)\|_{H^2}\leq \epsilon.$$
Since $\psi_E(Rx)$ is also a real valued solution of \eqref{stationary}, by part (i) of the proposition we infer that $\psi_E(Rx)=\pm\psi_E(x).$ Now the eigenvector $\psi_0$ solves \eqref{linear-mode} and, consequently is invariant under the action of $G.$ Indeed $\psi_0(Rx)$ also solves the same equation, and, since $-E_0$ is a simple eigenvalue we have $\psi_0(Rx)=a\psi_0(x)$ for some $a\in\C.$ But both have the same $L^2$ norm and are real valued, hence $a=\pm 1.$ Finally they are both positive, hence $a=1.$ Consequently, by the invariance of the $L^2$ scalar product under Euclidian transformations we have:
$$\langle \psi_0(x),\psi_E(x)\rangle=\langle \psi_0(Rx),\psi_E(Rx)\rangle=\langle \psi_0(x),\psi_E(Rx)\rangle.$$
Hence, both $\psi_E(x)$ and $\psi_E(Rx)$ have the same $\psi_0(x)$ projection. By part (i) of Proposition \ref{th:ex} we have  $\psi_E(Rx)=\psi_E(x).$ Since this is valid for all $R\in G$ and all $E\in\Iscr_\delta$ we deduce $\emptyset\not=\Iscr_\delta\subseteq\Gscr.$

We now show that $\Gscr$ is open in $\Iscr.$ Fix $E_1\in\Gscr.$ From Corollary \ref{cor:max} $L_+(\psi_{E_1},E_1)$ is non-singular, therefore we can apply Theorem \ref{th:cont} at $(\psi_{E_1},E_1).$ With its notations, for each $E\in (E_1-\delta, E_1+\delta)$ we have
$$\|\psi_E(Rx)-\psi_{E_1}(x)\|_{H^2}=\|\psi_E(Rx)-\psi_{E_1}(Rx)\|_{H^2}=\|\psi_E(x)-\psi_{E_1}(x)\|_{H^2}\leq \epsilon.$$
By the uniqueness part of Theorem \ref{th:cont} we must have $\psi_E(Rx)=\psi_{E}(x)$ for all $E\in (E_1-\delta, E_1+\delta),$ hence $\Gscr$ is open in $\Iscr .$

Finally, we show that $\Gscr$ is closed in $\Iscr.$ Consider $\{E_k\}_{k\in\N}\subset\Gscr$ such that $\lim_{k\rightarrow\infty}E_k=E\in\Iscr.$ By triangle inequality and the invariance of $H^2$ norm under Euclidian transformations we have, for any $R\in G:$
$$\|\psi_{E}(Rx)-\psi_{E}(x)\|_{H^2}\leq \|\psi_{E}(Rx)-\psi_{E_k}(Rx)\|_{H^2}+\|\psi_{E_k}(Rx)-\psi_{E}(x)\|_{H^2}=2\|\psi_{E_k}(x)-\psi_{E}(x)\|_{H^2}. $$
By passing to the limit $k\rightarrow\infty$ and using the continuity in $H^2$ of the maximal branch we get $\lim_{k\rightarrow\infty}\|\psi_{E_k}(x)-\psi_{E}(x)\|_{H^2}.$ Therefore $\psi_{E}(Rx)=\psi_{E}(x)$ for all $R\in G$ and $\Gscr$ is closed in $\Iscr .$

The proof is now complete.
\end{proof}

Note that this proposition combined with Theorem \ref{th:defocusing} shows that, in the case of defocusing nonlinearity $(\sigma>0)$, all ground states of \eqref{stationary} are symmetric. However, in the case of focusing nonlinearity $(\sigma<0)$, symmetric ground states are generally unstable for large values of the parameter $E,$ and the ground state branch emerging from $(0,E_0)$ undergoes bifurcations:

\begin{theorem} \label{th:sb}
Consider the time independent Schr\"{o}dinger equation \eqref{stationary} with potential $V$ satisfying (H1)-(H3), \eqref{Hypo_largeE1}-\eqref{Hypo_largeE3},  $$V(Rx)=V(x)\mbox{ a.e. }x\in\R^n,\qquad \forall\ R\in G\preceq E(n),$$
and assume that the critical points of $V$ invariant under the action of $G$ are non-degenerate and are not local minima. Then any $C^1$ curve of ground states $E\mapsto\psi_E,\ E\in(E_\epsilon,\infty)$ with no singularity points and invariant under the action of $G$ must be orbitally unstable. In particular the maximal extension of the ground state branch bifurcating from $(0,E_0)$ has a bifurcation point (singularity) at a finite $E=E_+,\ E_0<E_+<\infty$.
\end{theorem}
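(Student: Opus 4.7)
The plan is to exploit the asymptotic description at $E = \infty$ provided by Theorems \ref{th:brelarge} and \ref{th:bfelarge}, combined with the propagation of $G$-invariance from Proposition \ref{pr:sym}, in order to force $L_+(\psi_E,E)$ to have at least two negative eigenvalues along any $G$-invariant ground-state branch reaching $E = \infty$; orbital instability then follows from the classical index criterion \cite{Gr}, and the existence of a singularity at finite $E_+$ is obtained by contradiction using the fact that near $E_0$ the negative-eigenvalue count of $L_+$ equals one.

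First I would prove the orbital-instability claim. Fix a $C^1$ curve $E \mapsto \psi_E$, $E \in (E_\epsilon, \infty)$, of $G$-invariant ground states with no singular points, so that $L_+(\psi_E,E)$ is invertible on the whole interval and hence, by continuous dependence of the discrete spectrum on $E$, the count $\#\mathrm{neg}\, L_+(\psi_E, E)$ is constant in $E$. Applying Theorem \ref{th:brelarge}, any sequence $E_m \to \infty$ admits a subsequence along which the renormalized profile $u_{E_m}(x) = E_m^{-1/(2p)} \psi_{E_m}(E_m^{-1/2} x)$ converges in $H^2$ to $\sum_{i=1}^M u_\infty(\cdot - x_k^i)$, with limit centers $x^i := \lim_k x_k^i/\sqrt{E_{m_k}}$ being critical points of $V$. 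Because $\psi_E$, and hence $u_E$, is $G$-invariant and $H^2$-norms are preserved by isometries $R \in G$ (which must fix the origin, since Hypothesis (H2) rules out a translation component in $G$), substituting $x \mapsto Rx$ in this decomposition and using the radial symmetry of $u_\infty$ yields a competing approximation $u_{E_{m_k}} \approx \sum_{i=1}^M u_\infty(\cdot - R^{-1} x_k^i)$; the uniqueness assertion of Theorem \ref{th:bfelarge} then forces the multiset $\{x^i\}_{i=1}^M$ to be $G$-invariant. Two cases arise: either $M \geq 2$, or $M = 1$ and $x^1$ is a $G$-invariant critical point, in which case the hypothesis guarantees that $x^1$ is non-degenerate and not a local minimum, so its Hessian contributes $n_1 \geq 1$ negative eigenvalues. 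In either case the count formula \eqref{nege} of Theorem \ref{th:bfelarge} gives $\#\mathrm{neg}\, L_+(\psi_E,E) = M + \sum_{i=1}^M n_i \geq 2$ for large $E$, and hence along the entire branch. Since $\psi_E>0$ makes $L_-(\psi_E,E)$ non-negative (Remark \ref{rmk:spgs}), the Grillakis instability criterion \cite{Gr} yields orbital instability of the branch.

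For the second statement, take the maximal extension $(E_0, E_+)$ of the ground-state branch bifurcating from $(0, E_0)$ given by Corollary \ref{cor:max}(i); by Proposition \ref{pr:sym} this branch is $G$-invariant, while by Remark \ref{rm:lpme0} we have $\#\mathrm{neg}\, L_+(\psi_E, E) = 1$ for $E$ close to $E_0^+$. If $E_+ = \infty$, the first part of the theorem applies to this branch and forces $\#\mathrm{neg}\, L_+ \geq 2$; however, on the maximal interval $L_+$ has no zero eigenvalue, so the negative-eigenvalue count is continuous in $E$ and hence constant, contradicting the jump from $1$ to $\geq 2$. Therefore $E_+ < \infty$, and Corollary \ref{cor:max}(i)(b) produces the promised bifurcation point at $E_+$.

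The main obstacle is the rigorous identification of the multiset of limit centers $\{x^i\}$ as $G$-invariant: Theorem \ref{th:brelarge} a priori asserts convergence only along a subsequence, and distinct subsequences could in principle select different profile configurations. The key here is the uniqueness in Theorem \ref{th:bfelarge}: for each fixed choice of critical points and signs, there is a locally unique $C^1$ family of solutions; so whichever configuration is attained at large $E$ must persist along the branch, and comparing the decompositions of $u_E$ and of $u_E \circ R$ within this uniqueness statement yields the multiset invariance. The remaining arguments—continuity of the eigenvalue count on a maximal branch and invocation of \cite{Gr}—are standard.
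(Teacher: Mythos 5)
Your proposal is correct and follows essentially the same route as the paper: Theorem \ref{th:brelarge} to identify the limit configuration at $E=\infty$, $G$-invariance of $\psi_E$ (propagated via Proposition \ref{pr:sym}) to force the set of limit centers to be $G$-invariant, the count formula \eqref{nege} of Theorem \ref{th:bfelarge} to get at least two negative eigenvalues of $L_+$, the criterion of \cite{Gr} for instability, and a contradiction with the single negative eigenvalue near $E_0$ (Remark \ref{rm:lpme0}) to produce the finite bifurcation point. Your extra remarks — that (H2)–(H3) exclude translations from $G$ so the rescaling commutes with the symmetry, and that the uniqueness in Theorem \ref{th:bfelarge} pins down the configuration independently of the subsequence — are minor refinements of steps the paper treats implicitly.
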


\begin{proof} From Theorem \ref{th:brelarge} we deduce that,
$$u_E(x)=\frac{1}{E^{2p}}\psi_E\left(\frac{x}{\sqrt{E}}\right)$$
has on a sequences $\{E_k\}_{k\in\N}\subset\R,\ \lim_{k\rightarrow 0}E_k=\infty$ the property:
$$\lim_{k\rightarrow\infty}\|u_{E_k}(x)-\sum_{j=1}^du_\infty(x-x_j\sqrt{E_k})\|_{H^2}=0,$$
for some integer $d\in\N$ and a subset of critical points of the potential $V,$ $\Jscr=\{x_j,\ j=\overline{1,d}\},$ where $u_\infty$ is  the unique, positive, radially symmetric solution of \eqref{uinf}. Using the invariance of $\psi_E$ under the action of $G$ we get for all $E\in(E_\epsilon,\infty)$ and all $R\in G:$
$$u_E(Rx)=\frac{1}{E^{2p}}\psi_E\left(\frac{Rx}{\sqrt{E}}\right)=\frac{1}{E^{2p}} \psi_E\left(\frac{x}{\sqrt{E}}\right)=u_E(x)\ a.e.$$
Consequently
\begin{align*}
 &\|\sum_{j=1}^du_\infty(Rx-Rx_j\sqrt{E_k})-\sum_{j=1}^du_\infty(x-x_j\sqrt{E_k}) \|_{H^2}\\
 \leq& \|\sum_{j=1}^du_\infty(Rx-Rx_j\sqrt{E_k})-u_{E_k}(Rx)\|_{H^2}+ \|u_{E_k}(Rx)-\sum_{j=1}^du_\infty(x-x_j\sqrt{E_k})\|_{H^2}\\
 =&2\|u_{E_k}(x)-\sum_{j=1}^du_\infty(x-x_j\sqrt{E_k})\|_{H^2}
\end{align*}
and, for large $k,$ we deduce that
$$\sum_{j=1}^du_\infty(Rx-Rx_j\sqrt{E_k})=\sum_{j=1}^du_\infty(x-x_j\sqrt{E_k}),\qquad \forall R\in G.$$
In particular, if $x_j\in\Jscr$ then $Rx_j\in\Jscr$ for all $R\in G.$

In conclusion ${\rm card}\Jscr\geq 2$ or $\Jscr $ has a single critical point of the potential which is invariant under the action of $G.$ In the latter case the critical point is either a saddle or a local maxima, by hypotheses. In both cases, Theorem \ref{th:bfelarge} implies that $L_+(\psi_E,E)$ has at least two negative eigenvalues and the branch is orbitally unstable.

Finally we prove the last part by contradiction. Assume that Case (b) in Corollary \ref{cor:max} does not hold. Then by the same corollary the positive ground state curve $(\psi_E,E)$ bifurcating from $(0,E_0)$ forms a $C^1$ curve in $E\in(E_0,\infty)$. By the proof above $L_+(\psi_E,E),\ E\in (E_0,\infty)$ has at least two negative eigenvalue in contradiction to Remark \ref{rm:lpme0}.

Consequently Case (b) in Corollary \ref{cor:max} occurs showing that the maximal interval for this curve is $(E_0,E_+)$, $E_0<E_+<\infty$ and an eigenvalue of $L_+(\psi_E,E)$ approaches zero as $E\rightarrow E_+$. Now, if we assume the stronger hypothesis on the potential \eqref{hypo_smallE1}-\eqref{hypo_smallE3}, then there is actually a limit point $(\psi_{E_+},E_+)$ of this curve which is still a ground state and at which $L_+(\psi_{E_+},E_+)$ has non-trivial but finite dimensional kernel, see Corollary \ref{cor:comp}.

The theorem is now completely proven.
\end{proof}

The result generalizes to higher dimensions the one in \cite{KKP:sb}. We emphasize that this generalization is not at all trivial and relies on the delicate arguments of Theorem \ref{th:brelarge}. For example, if we consider a potential symmetric under reflections with respect to a hyperplane, it could be possible that the ground states on the branch bifurcating from $(0,E_0)$ approach one profile $u_\infty$ sliding along the hyperplane towards infinity. More precisely, in Theorem \ref{correct_scaling} $M=1$ and $x_k^1$ is on the hyperplane with $\lim_{k\to\infty}\frac{x_k^1}{\sqrt{E_{m_k}}}=\infty$. In this case, $L_+$ will have only one negative eigenvalue and the contradiction cannot be obtained. However, our local bifurcation from $|x|=\infty$ argument in Theorem \ref{th:brelarge} excludes this phenomenon and leads to the symmetry breaking bifurcation above.

There are other results identifying such symmetry breaking bifurcations, see for example \cite{Kirr, KK:sb}, but they rely on perturbative techniques and require special potentials, such as double wells with very large separations, to show that the bifurcation happens at small amplitudes.

Our result is comparable to the ones obtained via variational techniques, see for example \cite{AFGST:02,GS:bec}, but has the advantage of actually showing, besides the change in shape of the ground states, the existence of a singularity point which can then be analyzed with local bifurcation techniques. Note that if one of the eigenvector in the kernel of $L_+(\psi_{E_+}, E_+)$ is not invariant under the action of $G$, then some branches emerging from this point are no longer symmetric, see \cite{GSS:sgb}. Moreover, our Theorem \ref{th:bfelarge}, shows that all ground states emerging from one single profile centered at a local minima of a potential are orbitally stable while they might not be global minimizers of the energy with respect to fixed mass constraint. For example, such global minima will not exist when nonlinearity is $L^2$ supercritical i.e., $p>2/n$. Finally, we believe that our results can be extended to other equations including the Hartree equation, see Section \ref{sse:conc}, while the results in \cite{AFGST:02} can only be adapted to our problem with $p>2/n$ by changing the minimization problem to functionals unrelated to the dynamics of the Hamiltonian structure of the equation, see for example \cite{RW:88}. In this case they will carry no information about the stability of the minimizer, see the discussion in \cite{Kir:dcs}.

In the following section we show that our results can be put together to obtain the global picture of all ground state branches besides showing the existence of bifurcation points along symmetric branches.

\section{Conclusions and Examples}\label{se:examples}

To summarize our paper, we have completely determined the ground states of the Schr\"{o}dinger equation \eqref{GP} for defocusing nonlinearity $\sigma>0,$ see Theorem \ref{th:defocusing} and Remark \ref{rmk:defocusing}. We also analyzed the behavior of all bound states i.e., solutions of the time independent Schr\"{o}dinger equation \eqref{stationary} at both small and large $H^1$ (or equivalently $H^2$) norms. More precisely Proposition \ref{pr:nosol} shows that there are no nontrivial $(\psi_E,E)$ bound states with $-E$ away from the spectrum of $-\Delta+V$ and $\|\psi_E\|_{H^1}$ sufficiently small, while for a simple eigenvalue $-E_*$ of $-\Delta+V$ the nearby, small bound states form a $C^1$ two dimensional manifold obtained by rotating a curve $E\mapsto\psi_E,\ |E-E_*|<\delta$ of real valued solutions, see Proposition \ref{th:ex}. We have also shown that the bound states group themselves in $C^1$ manifolds, see Theorem \ref{th:cont}, and there are no such manifolds formed from very large bound states, see Theorem \ref{th:boundedness}, except when $E\mapsto\infty.$ In the latter case, up to a re-normalization, the bound states converge to a finite superposition of solutions of a limiting equation, see Theorem \ref{correct_scaling}. Again, we can be more precise when it comes to ground states manifolds, which, modulo rotations, converge to finitely many translations of the unique, positive and radially symmetric solution of the limiting equation, with each translation centering the radially symmetric solution at a critical point of the potential, see Theorem \ref{th:brelarge}. Moreover, in a neighborhood of any such finite superposition of positive, radially symmetric solutions of the limiting equation, each translated to a different critical point of the potential, the bound states form a unique $C^1$ manifold obtained by rotating a curve $E\mapsto\psi_E,\ E\in (E_*,\infty)$ of positive solutions, hence ground states, see Theorem \ref{th:bfelarge}. We now show, on a few examples, how to combine these results with basic tools in global bifurcation theory in order to find information about all bound states of a given Schr\"{o}dinger equation, far beyond the symmetry breaking phenomenon described in the previous section.

\subsection{Defocusing Nonlinearity}\label{sse:exdef}

Consider the Schr\"{o}dinger equation \eqref{GP} with $\sigma>0$ and assume hypothesis (H1), (H2) hold. If the spectral condition (H3) is satisfied then our Theorem \ref{th:defocusing} states that all ground states form a unique, two dimensional, $C^1$ manifold generated by rotating a real valued curve which starts at $(\psi_{E_0}\equiv 0,E_0)$ and is defined on the whole interval $(0, E_0)$ i.e., it approaches the boundary $E=0,$ where the linearization fails to be Fredholm. Note that ours is a much stronger result compared to the ones presented in \cite{jls:gsd}. It is true that we are dealing with a simpler nonlinearity however we are hoping to extend our technique to a much larger class of nonlinearities, hence fully generalizing the results in \cite{jls:gsd}. Moreover, if (H3) does not hold i.e., $-\Delta+V\geq 0,$ then Proposition \ref{pr:endp} part (ii) implies there are no ground states of \eqref{stationary} for $E>0.$ For $E\leq 0,$ see 
\cite{BL:nbs} for non-existence of ground states or any other bound states provided they are required to decay sufficiently fast (but polynomially in $|x|$) as $|x|\rightarrow\infty .$

Proposition \ref{pr:endp} also implies that there are no excited states (solutions of \eqref{stationary} which are not ground states) that can be rotated to a real valued one if $-E_0$ is the only negative e-value of $-\Delta+V.$ Now let us assume that a second negative eigenvalue $0>-E_1>-E_0$ exists and is simple. By Proposition \ref{th:ex} there is a curve $E\mapsto\psi_E,\ E\in(E_1-\delta, E_1)$ of real valued excited states. This curve can undergo bifurcations with interesting implications about its dynamical stability, see \cite{KK:sb} for the particular case of a double well potential with large separation. However, if the wells are not well separated for the bifurcation to occur at small amplitude or, if we want to work with more general potentials we run into the problem of not knowing the structure of solutions of \eqref{stationary} at the $E=0$ boundary of the Fredholm domain. Indeed, assume that case (b-) in part (ii) of our maximal extension Theorem \ref{th:max} does not hold. Then it follows from the same theorem that the curve can be uniquely continued (no bifurcations) on the interval $(0, E_1).$ We would need to know what kind of limit points can the curve approach as $E\searrow 0$ and the number of negative eigenvalues of the linearized operator near these limits to determine if our assumption is contradicted and bifurcation points emerge at $E_*>0,$ see the previous section for a similar but rigorous argument. The problem is that at $E=0$ the linearization of our map $DF_\psi(\psi,0)$ ceases to be Fredholm as zero is at the edge of the continuous spectrum. Hence, any curve approaching the $E=0$ boundary bifurcates from the edge of the continuous spectrum of the linearization at its limit. Such bifurcations are notoriously difficult to handle, and, while some progress has been made in the case of periodic potentials, see \cite{DVW:bcs}, the problem remains open for our type of potentials.

Moving now to analyzing all excited states that bifurcate from the trivial solution at a simple eigenvalue of $-\Delta+V,$ see Proposition \ref{th:ex}, global bifurcation theory implies that a connected component of the emerging curve ends up at $E=0,$ or converges back to $(0,E_*),$ where $-E_*<0$ is a another eigenvalue of $-\Delta+V,$ or blows up in $H^2$ norm, see \cite{jls:bsd, Rab:gbt, BT:gbt}. With our techniques we can exclude the latter case with one exception. Indeed, let's assume that $p$ is an integer, hence the nonlinearity is real analytic. If we start with the branch bifurcating from $(0,E_+)$ via proposition \ref{th:ex} and extend it on its maximal interval $(E_,E_+)$ via Theorem \ref{th:max}, we have the following dichotomy: either $E_-=0$ and we are done or $E_->0$ and there exists a sequence $E_n\searrow E_-$ such that $L_+(\psi_{E_n},E_n)$ has an eigenvalue $\lambda_n$ with $\lim_{n\rightarrow\infty}\lambda_n=0.$ But $\psi_{E_n}$ is bounded in $H^2,$ see Theorem \ref{th:boundedness}, and, by using the compactness results in Corollary \ref{cor:defocuscomp} with a possible passing to a subsequence we get a limit point in $H^2\times\R,$ $(\psi_{E_-},E_-)$ which, by continuity, is still a solution of \eqref{stationary} and $0$ is an eigenvalue of $L_+(\psi_{E_-},E_-).$ Now the structure zeroes of the analytic $F$ at a singular point, see \cite{BT:gbt}, implies that the whole curve has the limit point $(\psi_{E_-},E_-)$ as $E\searrow E_-$ and it can be analytically continued past the singularity with $L_+$ also becoming non-degenerate past the singularity. It is possible that $E$ is now increasing along the continuation of the initial curve i.e., $(\psi_{E_-},E_-)$ is a turning point. In this case we again consider the maximal interval $(E_,E'_+)$ on which the curve can be extended without any singularity. Note that by Corollary \ref{cor:endp} we must have $E'_+\leq E_0.$ On such a maximal curve it is possible that the limit point is $(0,E'_+)$ in which case $-E'_+$ must be an eigenvalue of $-\Delta+V,$ and we are done. By repeating the above argument we either get to $E_-=0$ or to a limit point $(0,E_*)$ after finitely many steps, or we have a countable number of bifurcations. In the latter case we project the analytically continued curve on the $(E, \Nscr=\|\cdot\|_{L^2}^2)$ plane. If we get something like in the Figure \ref{fig:badex} i.e., infinite many loops around which the curve turns counterclockwise and the sum of the encompassed areas is infinite, then it is possible that the curve blows up in $H^2$ norm before reaching $E=0.$ However, if the sum of the area is finite then we are guaranteed to have $E\rightarrow 0.$ Indeed, in this case the $L^{2p+2}$ norm remains bounded and the argument in Theorem \ref{th:boundedness} can be reproduced as follows. First note that when the projection of the curve on the $(E, {\cal N}(E)=\|\psi_E\|_{L^2}^2)$ plane forms a loop the self intersection point is not a bifurcation point, instead the actual solutions projected at this point differ in $L^{2p+2}$ norm to the power $2p+2$ by the factor $\frac{p+1}{-\sigma p}$ multiplying the area encompassed by the loop, see \eqref{eq:dnormp1}. If the motion around the loop is counterclockwise this norm jumps up when the curve returns to the same projected point while if the motion around the loop is clockwise the norm jumps down. In between these loops we can estimate the change in  $L^{2p+2}$ norm by breaking the curve into $C^1$ parts parameterized by $E,$ and using the differential equation \eqref{eq:dnormp1}. If the up jumps add up to a finite number and there is $0<E_-\leq E$ for all $E$ along the curve (note that we also have $E\leq E_0$) the end result is a uniform bound of the $L^{2p+2}$ norm along the entire curve. As in the proof of Theorem \ref{th:boundedness} we then get uniform bounds of the $L^2$ norms and, via regularity theory, uniform bounds for the $H^2$ norm. This means we have infinitely many bifurcation points in a bounded set in $H^2\times (0,\infty)$ which is bounded away from zero in $E.$ By the compactness result Corollary \ref{cor:defocuscomp} these bifurcation points will have an accumulation point which is excluded by the structure of zeroes of $F$ at singularity points, see \cite{BT:gbt}. Hence our assumption that there exists $E_->0$ such that $E_-\leq E$ for all $E$ along the curve is false and the curve approaches the $E=0$ boundary.

With this result and provided we know the limit points of solutions of \eqref{stationary} at $E=0$ we can fully analyze the excited states. We will show how to do it for attractive nonlinearity because, in this case, the curves ``end up" at $E=\infty$ where our results in Section \ref{se:large} have already determined the limit points at least for ground state branches.

\begin{figure}
\begin{center}
\includegraphics[scale=0.2]{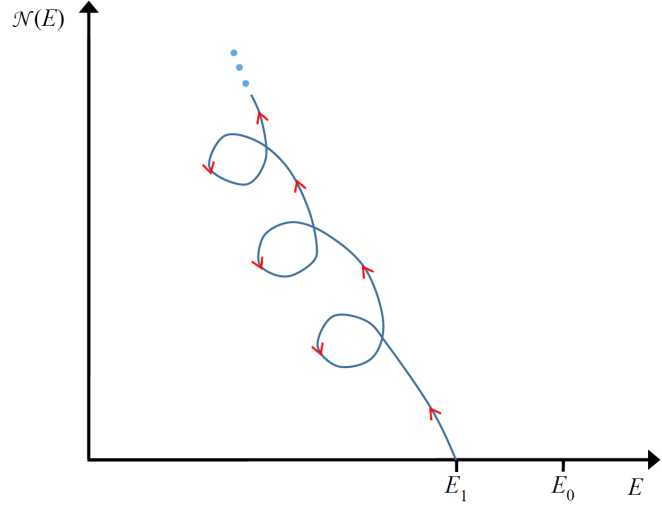}
\end{center}
\caption{The curve shown above is the projection of a branch of excited bound states $(E,\psi_E)$ of \eqref{stationary} with $\sigma>0$ to the $(E,\Nscr=\|\cdot\|^2_{L^2})$ plane. If the sum of the areas encompassed by the counterclockwise loops of this curve is finite, then the curve approaches $E=0$. If not, it is possible that $0<E_-\leq E$ for all $E$ on the curve. Note that $E\leq E_0$ for any $E$ on the curve by Proposition \ref{pr:endp}}
\label{fig:badex}
\end{figure}

\subsection{One Well Potentials with Focusing Nonlinearity}\label{sse:1well}

Consider the time independent Schr\"{o}dinger equation \eqref{stationary} with $\sigma<0$ and assume the potential $V$ satisfies hypothesis (H1), (H2) and, in addition it has only one critical point $x_0\in\R^n$ which is a non-degenerate minima i.e., $V$ is twice differentiable at $x_0$ with the gradient equal to zero and the Hessian strictly positive definite. From Theorem \ref{th:bfelarge} we know that here is a unique curve of ground states $E\mapsto \psi_E,\ E>E_\e$ which via the renormalization \eqref{uelarge} converges in $H^2,$ as $E\rightarrow\infty,$ to the unique, positive, radially symmetric solution of \eqref{uinf} centered at $x_0,$ $u_\infty(x-x_0).$ If we add the spectral hypothesis (H3) we also have the unique curve of ground states emerging from $(0,E_0),$ see Proposition \ref{th:ex}. Our {\em conjecture} is that these two curves can be smoothly extended until they connect to form a unique $C^1$ curve, which, together with its rotations give all the ground states for this problem. Indeed the conjecture has been proven in the particular case of one dimension, $n=1,$ for an even potential that is strictly increasing for $x>0,$ see \cite{js:ubs}. With the results developed in this paper we can prove the conjecture in any dimension and without any symmetry assumptions except for two obstacles: infinitely many bifurcations creating a similar phenomenon with the one described in Figure \ref{fig:badex}, see now Figure \ref{fig:badex2}, and canceling bifurcations, see Figure \ref{fig:1well}.

\begin{figure}
\begin{center}
\includegraphics[scale=0.2]{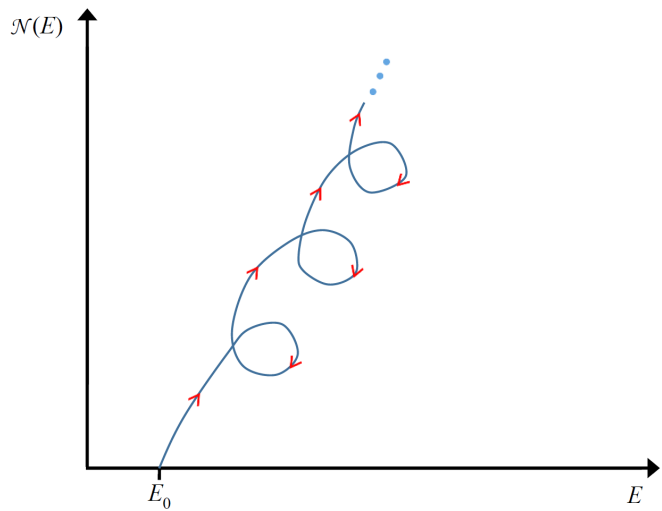}
\end{center}
\caption{The curve shown above is the projection of a branch of ground states $(E,\psi_E)$ of \eqref{stationary} with $\sigma<0$ to the $(E,\Nscr=\|\cdot\|^2_{L^2})$ plane. If the sum of the areas encompassed by the clockwise loops in this curve is finite, then the curve approaches $E=\infty$. If not, it is possible that $E\leq E_+<\infty$ for all $E$ on the curve. Note that $E\geq E_0$ for any $E$ on the curve by Proposition \ref{pr:endp}}
\label{fig:badex2}
\end{figure}

Indeed, let's start at the curve of positive ground states emerging from $(0,E_0).$ Use Theorem \ref{th:max} to extend it on its maximal interval $(E_0,E_+).$ If $E_+=\infty$ then Theorem \ref{th:brelarge} implies that the curve, modulo the renormalization \eqref{uelarge} converges in $H^2$ to $u_\infty(x-x_0),$ i.e. the two curves are one and the same and the conjecture is proven (we will discuss the fact that there are no other ground states except the rotations of this curve later). If $E_+<\infty$ then Corollary \ref{cor:comp} implies that the curve has a limit point $(\psi_{E_+},E_+)$ which is still a ground state, provided $V$ satisfies the additional hypotheses \eqref{hypo_smallE1}-\eqref{hypo_smallE3}. Assuming now that $p$ is an integer, hence $F$ is real analytic, we can use the structure of zeroes of an analytic map at a singular point to uniquely extend the curve past the singularity at $E_+.$ As in the previous subsection, it is possible that the extended curve moves to the left i.e., $(\psi_{E_+},E_+)$ is a turning point. In this case we again consider the maximal interval $(E_,E_+)$ on which the curve can be extended without any singularity. Note that by Corollary \ref{cor:endp} we must have $E_-\geq E_0.$ On such a maximal curve it is not possible that the limit point is $(0,E_-)$ because there are no eigenvalues of $-\Delta+V$ smaller than $-E_0$ and if $E_-=E_0$ then we get a contradiction with the fact that the only positive ground states near $(0, E_0)$ are the ones on the initial $(E_0, E_+)$ curve, see Proposition \ref{th:ex}. By repeating the above argument we either get to $E_+=\infty$ after finitely many steps or we have an infinite but countable number of bifurcations. In the latter case we  project the analytically continued curve on the $(E, {\cal N}(E)=\|\psi_E\|_{L^2}^2)$ plane. If we get something like in the Figure \ref{fig:badex2} i.e., infinite many loops around which the curve turns clockwise and the sum of the encompassed areas is infinite, then it is possible that the curve blows up in $H^2$ norm before reaching $E=\infty.$ However, if there are only finitely many such loops or the areas encompassed by them adds up to a finite number then, as in the previous subsection we first get uniform bounds of the $L^{2p+2}$ norms along the entire curve, then, using the arguments in Theorem \ref{th:boundedness} we get uniform bounds for the $L^2$ norms followed by uniform bounds in $H^1$ and $H^2.$ If we now assume that there exists $E_+<\infty$ such that for all $E$ on the curve $E\leq E_+$ then the curve lies in a bounded set of $H^2\times\R$ (recall that $E\geq E_0$ by Corollary \ref{cor:endp}) and, by Theorem \ref{th:comp}, the infinitely many bifurcation points along the curve will have an accumulation point. This is in contradiction with the structure of zeroes of $F$ at singularity points, see \cite{BT:gbt}. Hence, there is a sequence of ground states along this curve $(\psi_{E_k},E_k)$ with $\lim_{k\rightarrow\infty}E_k=\infty.$ In between these $E_k$ points we can estimate the change in  $L^{2p+2}$ norm by breaking the curve into $C^1$ parts parameterized by $E,$ and using the differential inequalities \eqref{ineq:dnorm}. If the up jumps in $L^{2p+2}$ norm (at the endpoints of the loops in Figure \ref{fig:badex2}) add up to a finite number we get the same results as for a single $C^1$ segment of the curve parameterized by $E,$ see Theorem \ref{correct_scaling}, namely that there is a number $b>0$ such that:
$$\lim_{k\rightarrow\infty}\frac{\|\psi_{E_k}\|_{L^{2p+2}}^{2p+2}}{E_k^{1-\frac{n}{2}+\frac{1}{p}}}=b,$$
from which, as in the proof of Theorem \ref{correct_scaling}, it follows that:
\begin{eqnarray}
\lim_{k\rightarrow\infty}\frac{\|\psi_{E_k}\|_{L^2}^2}{E_k^{\frac{1}{p}-\frac{n}{2}}}&=&\frac{-\sigma}{2}\left(\frac{2p+2-np}{p+1}\right)b,\nonumber\\
\lim_{k\rightarrow\infty}\frac{\|\nabla\psi_{E_k}\|_{L^2}^2}{E_k^{1-\frac{n}{2}+\frac{1}{p}}}&=&\frac{-\sigma}{2}\left(\frac{np}{p+1}\right)b.\nonumber
\end{eqnarray}
Hence, after the renormalization \eqref{uelarge} the sequence converges to a superposition of solutions of \eqref{uinf}. But, since we already assumed that $V$ satisfies the hypotheses \eqref{hypo_smallE1}-\eqref{hypo_smallE3} which are stronger than \eqref{Hypo_largeE1}-\eqref{Hypo_largeE3}, we deduce that the solutions of \eqref{uinf} must be centered at critical points of the potential, therefore they are centered at $x_0.$ By Theorem \ref{th:bfelarge} they are on the unique curve emerging from $u_\infty (x-x_0),$ therefore the two curves are connected.

\begin{figure}[t]
\begin{center}
\includegraphics[scale=0.2]{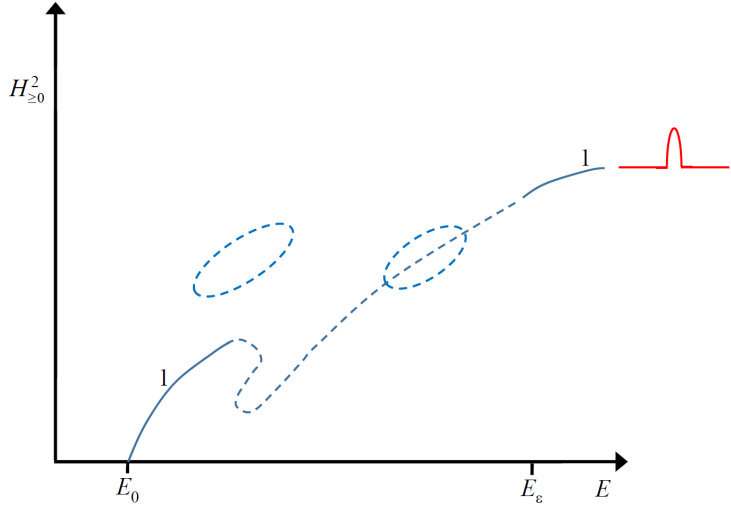}
\end{center}
\caption{A sketch of ground state branches in the case of one well potentials. The number on the branches indicate the number of negative eigenvalues of $L_+(\psi_E,E)$ along it. The positive ground state branch near $E=E_0$ can be continued to $E=E_\infty$ where it connects to the one profile branch bifurcating from the minima of the potential. We conjecture that modulo rotations, it contains all the ground states of this problem. However, at present, we cannot exclude the occurrence of loops. We also conjecture that the branch connecting $E=E_0$ to $E=E_\infty$ has no singularities along it. But at present we cannot exclude the represented canceling bifurcations.}
\label{fig:1well}
\end{figure}

We have finished proving a weaker form of our conjecture, namely that the small amplitude, positive, ground states for this problem form a curve which can be uniquely continued to $E=\infty,$ where it converges, modulo the renormalization \eqref{uelarge} to the unique, positive, radially symmetric solution of \eqref{uinf} centered at the minimum point $x_0.$ We now turn to the question of existence of other ground states outside the above curve. 
Suppose there is a positive ground state $(\psi_{E_*},E_*)$ not on the above curve. Then from Proposition \ref{pr:endp} we have $E_*\geq E_0$ and, depending on the kernel of $L_+(\psi_{E_*},E_*)$ we have two cases. If it is trivial, then Theorem \ref{th:cont} implies that nearby ground states from a two dimensional, $C^1$ manifold obtained by rotating a curve of real valued (and positive) solutions of \eqref{stationary}. By applying the above argument to this curve as opposed to the one emerging from $(0,E_0)$ we obtain that its unique analytic continuation either forms a closed loop or reaches $E=\infty$ where, modulo renormalization \eqref{uinf}, it converges in $H^2$ to $u_\infty(x-x_0).$ By the uniqueness of the curve emerging from $u_\infty(x-x_0),$ $(\psi_{E_*},E_*)$ must, in this situation, be on the curve connecting $(0,E_0)$ with $u_\infty(x-x_0)$ which contradicts the choice of $(\psi_{E_*},E_*).$ It remains that $(\psi_{E_*},E_*)$ and nearby solutions of \eqref{stationary} are part of a curve that forms a loop and its rotations. Now, if the kernel of $L_+(\psi_{E_*},E_*)$ is non-trivial then, by the structure of zeroes of an analytic $F$ near a singularity we have that either $(\psi_{E_*},E_*)$ is an isolated zero i.e., solution of \eqref{stationary}, in which case we can say it forms a degenerate loop, or there are finitely many, finite dimensional, analytical manifolds of zeroes of $F$ passing through $(\psi_{E_*},E_*).$ Now, if there is a sequence of points on these manifolds accumulating to $(\psi_{E_*},E_*)$ at which the kernel of $L_+$ is trivial, then there is a $C^1$ curve $E\mapsto\psi_E,\ E$ sufficiently close to $E_*$ which ends at $E_*.$ Choosing one point on this curve we are back to previous case and the first maximal extension ends up at $(\psi_{E_*},E_*).$ By repeating the above argument, we again get that $(\psi_{E_*},E_*)$ must be on a loop.

In conclusion, for a single well potential satisfying (H1)-(H3) and \eqref{hypo_smallE1}-\eqref{hypo_smallE3}, the ground state emerging from $(0,E_0)$ form, modulo rotations, a curve that can be continued analytically for all $E>0$ and at the $E=\infty$ limit it converges in $H^2,$ modulo renormalization \eqref{uelarge} to the unique, positive, radially symmetric solution of \eqref{uinf} centered at $x_0.$ Along this curve there can be only canceling bifurcations i.e., the curve experiences ``snaking'' or if curves of other solutions of \eqref{stationary} emerge at a bifurcation point they must form a closed loop, see Figure \ref{fig:1well}. All other ground states if they exist must form closed loops modulo rotations. If we assume that (H3) does not hold then the same argument, starting now with the curve emerging from $u_\infty (x-x_0)$ at $E=\infty,$ shows that it ends at $E=0.$ However, we can no longer say that there are no bifurcations along this curve generating new branches of solutions that also end at $E=0,$ and we can no longer exclude other branches, not connected to this one that instead of forming loops they end up at $E=0.$ All these are due to the fact that we do not yet know the limit points at the $E=0$ boundary.

\subsection{Double Well Potentials with Focusing Nonlinearity}\label{sse:2well}

As in the previous subsection we consider the time independent Schr\"{o}dinger equation \eqref{stationary} with $\sigma<0$ and assume the potential $V$ satisfies hypothesis (H1), (H2) and, in addition it has exactly three critical points, two minima and a maxima or saddle, all non-degenerate. Such potentials can be obtained by adding two one well potentials described in the previous subsection each centered at a different point. For simplicity we will start with (symmetric) potentials that are invariant under the reflection with respect to a hyperplane. Without loss of generality we can assume that the hyperplane is $\{x_1=0\}$ in which case we have $V(-x_1,x_2,\ldots, x_n)=V(x_1, x_2,\ldots, x_n).$ From Theorem \ref{th:bfelarge} we know that there are seven curves of non-trivial ground states $E\mapsto \psi_E,\ E>E_\infty$ which via the renormalization \eqref{uelarge} converge in $H^2,$ as $E\rightarrow\infty,$ to a superposition of positive, radially symmetric solution of \eqref{uinf} each centered at one critical point. By restricting the argument in the proof of Theorem \ref{th:bfelarge} to the Banach space of $H^2$ functions which are even in $x_1$ we deduce that three of these curves are formed by symmetric (even in $x_1$) ground states, namely the ones converging to $u_\infty$ centered at the maxima/saddle point, or centered at the two minima, or centered at all three critical points. The other four are asymmetric and converge to a superposition of $u'_\infty$s with one centered at a minima but none at the other minima.

If we add the spectral hypothesis (H3) we also have the unique curve of ground states emerging from $(0,E_0)$ which is also formed by symmetric (even in $x_1$) functions, see Proposition \ref{th:ex}. We can now redo the argument in the previous subsection but restricted to symmetric functions to show that this curve can be extended to $E=\infty,$ therefore connect to one of the three symmetric curves described above. Note that the global bifurcation results we used in the previous subsection are valid in any Banach spaces, hence in $H^2_{{\rm even}},$ but, in order to verify their hypotheses we need the compactness result in Theorem \ref{th:comp} and real analyticity of $F$ defined in \eqref{def:F}, hence we assume that the potential satisfies \eqref{hypo_smallE1}-\eqref{hypo_smallE3} and that $p$ is an integer. Now, numerical simulations suggest that in most cases the branch starting at $(0,E_0)$ connects to the one converging, as $E\rightarrow\infty,$ towards one profile ($u_\infty$) at each minima, see Figure \ref{fig:2well1}. We will discuss the exceptions later and remark that the next argument can be adapted to any of the three possible connections.

If the branch starting at $(0,E_0)$ connects to the profiles at each minima the number of negative eigenvalues of $L_+$ jumps from one to two, therefore there is at least one bifurcation point along it where the second eigenvalue of $L_+$ crosses zero. This is a particular case of Theorem \ref{th:sb} when the Euclidian subgroup of symmetries is the one generated by reflections with respect to the hyperplane $\{x_1=0\}.$ But now we can go further and analyze this bifurcation, see for example \cite{KK:sb} or, for the one dimensional version, \cite{KKP:sb}. At this point another branch of asymmetric ground states emerges along which $L_+$ has one negative eigenvalue and, depending on the direction along the branch, the ground states localize more and more in one of the two wells. By comparing with branches at $E=\infty,$ where this branch has to end, we see see that it will have each endpoint at the profile localized at one of the minima, see Figure \ref{fig:2well2}. Then we look at the curve of positive ground states emerging from one profile at the maxima/saddle. By the same global analytic argument enhanced by the fact that along it $E\geq E_0,$ see Corollary \ref{cor:endp}, and the fact that it cannot blow up in $H^2$ norm at a finite $E,$ we deduce that this branch ends up back at $E=\infty.$ Here it could connect to one of the negative branches obtained by rotating the curves described above by $e^{i\pi},$ however, since the branch started with positive ground states, this will require that it reaches a trivial ground state at some $E_*\geq E_0.$ By Proposition \ref{pr:nosol} $-E_*\leq -E_0$ must be an eigenvalue of $-\Delta+V,$ hence $E_*=E_0$ which contradicts the fact that the branch emerging from $(0,E_0)$ connects to the one converging, as $E\rightarrow\infty,$ towards one profile at each minima. Therefore, by the uniqueness of the branch emerging at $E=\infty$ from one profile at the maxima, this branch must connect to the one converging, as $E\rightarrow\infty,$ towards one profile at each critical point, see Figure \ref{fig:2well1}. Therefore, along this branch, the number of negative eigenvalues of $L_+$ jumps by two, the turning point accounts for one while the second one leads to a bifurcation from which another branch of asymmetric ground states emerges and has each of its endpoints at $E=\infty$ with two profiles, one localized at one of the minima the other one at the maxima/saddle, see Figure \ref{fig:2well2}. The branches and their connections in Figure \ref{fig:2well2} are fully explained and, modulo rotations, they should contain all ground states for this problem except maybe loops, as in the case of the previous subsection.

\begin{figure}[t]
\begin{center}
 \includegraphics[scale=0.13]{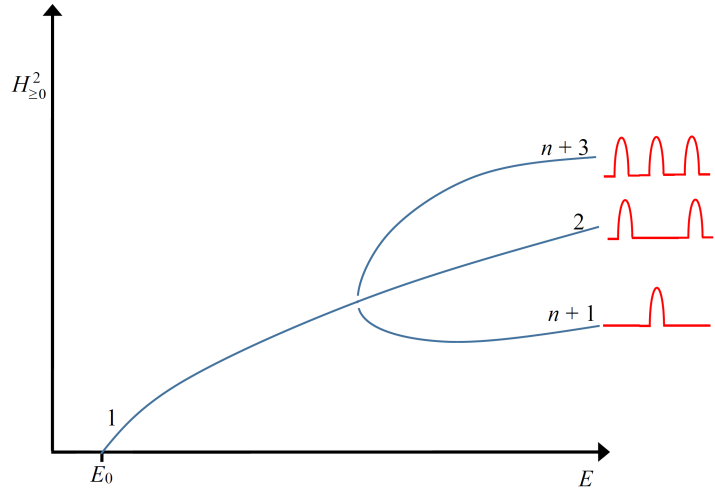}
\end{center}
\caption{A sketch of symmetric ground states in the case of double well potentials. The numbers on the branches indicate the number of negative eigenvalues of $L_+(\psi_E,E)$ along it. The only assumption made is that the branch near $E=E_0$ connects to the one near $E=\infty$ which has one profile at each minima. The rest of the connections including all the bifurcations along them and the asymmetric branches in Figure \ref{fig:2well2} are determined by our arguments. }
\label{fig:2well1}
\end{figure}

\begin{figure}
\begin{center}
\includegraphics[scale=0.13]{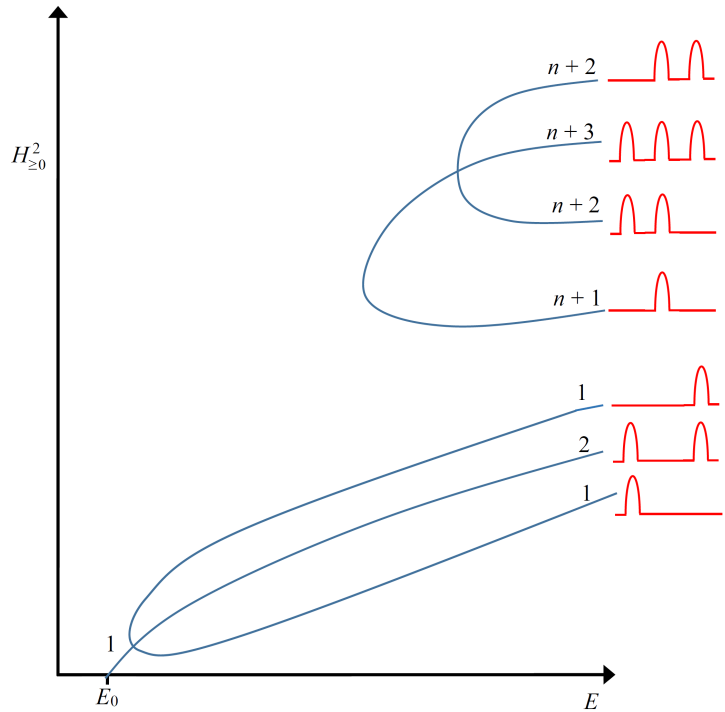}
\end{center}
\caption{A sketch of all ground states in the case of double well potentials. The numbers on the branches indicate the number of negative eigenvalues of $L_+(\psi_E,E)$ along it. The only assumption made is that the branch near $E=E_0$ connects to the one near $E=\infty$ which has one profile at each minima. The rest of the connections including all the bifurcations along them and the asymmetric branches are determined by our arguments.}
\label{fig:2well2}
\end{figure}

We now discuss the exceptions, when the branch starting at $(0,E_0)$ does not connect to the one converging, as $E\rightarrow\infty,$ towards one profile ($u_\infty$) at each minima. One dimensional $(n=1)$ simulations show that this branch connects to the one converging, as $E\rightarrow\infty,$ towards one profile at the maxima (there is no saddle point in one dimensions) in the ``small separation" case i.e. when all three critical points are very close to each other. However, the branch emerging at $E=\infty$ from two profiles, each localized at one of the minima, no longer connects to the one with three profiles, each localized at one critical point, but instead it connects to the one converging, as $E\rightarrow\infty,$ towards two profiles {\em each} approaching the maxima, see Figure \ref{fig:multilumps1}. 
\begin{figure}[h]
\begin{center}
\includegraphics[scale=0.13]{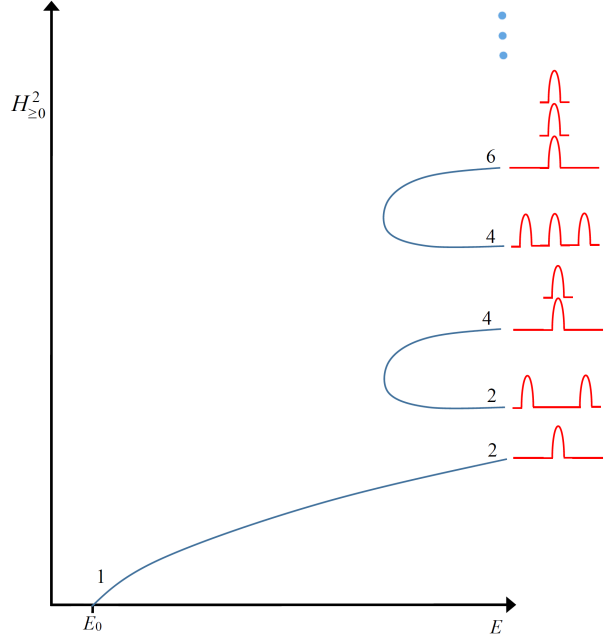}
\end{center}
\caption{In the case of double well potentials in {\em one space dimension} with small separation between the wells, numerical study indicates that the symmetric branches of positive ground states connect as shown above. The branch from $E=E_0$ connects to the one that localizes (as $E\to\infty$) to one profile at the maxima. The branch that localizes (as $E\to\infty$) to a profile at both minima connects to the one that localizes to two profiles at the maxima. The branch that localizes (as $E\to\infty$) to a profile at each critical point connects to the one that localizes to three profiles at the maxima. This is different from Figure \ref{fig:2well1} and the consequences are explained in Section \ref{sse:2well}. }
\label{fig:multilumps1}
\end{figure}
As discussed at the beginning of Section \ref{se:large} we have not yet analyzed these types of critical points. In the one dimensional case we do it in \cite{KKLN:sb} and explain why this connection happen and identify all the other ground state branches and their bifurcation points. We also explain what happens as we increase the the distance between the critical points: the bifurcation diagram is stable until the two curves above touch tangentially, then the one starting at $(0,E_0)$ switches and connects to the one converging, as $E\rightarrow\infty,$ towards one profile at each minima, while the one with an $E=\infty$ endpoint given by one profile at the maxima now connects to the one ending with two profiles at the maxima, see Figure \ref{fig:multilumps2}. By further increasing the distance between the wells the bifurcation diagram remains stable until this curve touches tangentially the one that connects one $E=\infty$ endpoint given by three profiles each at a critical point with the other $E=\infty$ endpoint given by three profiles {\em each} approaching the maxima. After that the  $E=\infty$ endpoint given by one profile connect to the $E=\infty$ endpoint given by three profiles each at a critical point, and, in this ``large separation" of wells in the potential, the bifurcation diagram settles into the one in Figure \ref{fig:2well1}, which we discussed above. Of course, the figure should be completed by adding  $E=\infty$ endpoints given by two or more profiles each approaching the maxima which will now be connected to $E=\infty$ endpoints having the same number of profiles at the maxima and two additional profiles each at a minima.

\begin{figure}[h]
\begin{center}
\includegraphics[scale=0.13]{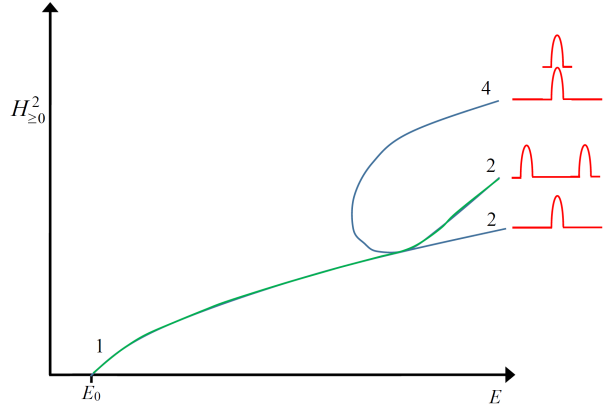}
\end{center}
\caption{The connection of positive symmetric ground state branches depicted in Figure \ref{fig:multilumps1} remains stable as the distance between the wells is increased until the branch from $E_0$ becomes tangential at some point to the branch that localizes (as $E\to\infty$) to two profiles. As the distance between the wells is increased further, the branch from $E_0$ connects to the branch that localizes (as $E\to\infty$) to a profile at both minima and  the branch that localizes (as $E\to\infty$) to one profile at the maxima connects to the one that localizes to two profiles at the maxima.
}
\label{fig:multilumps2}
\end{figure}

Finally, we discuss the case of non-symmetric potentials. The number and shape of the $E=\infty$ endpoints do not change but we can no longer separate them into symmetric and non-symmetric. Hence, in principle, the branch starting at $(0, E_0)$ can now connect to any of the seven branches at $E=\infty$ (or more if we add the ones which have multiple profiles approaching the maximum). Instead of analyzing each of these cases we could consider a continuous deformation connecting the current potential to a symmetric one. Starting now from the symmetric potential, the bifurcation diagram in Figure \ref{fig:2well2} remains unchanged due to the stability of (non-degenerate) bifurcations under perturbations. There are two ways in which a bifurcation and the connections it determines can disappear. One is when it is moved to $E=\infty$ but this will imply that we change the number of critical points of the potential. The second is when the bifurcation becomes degenerate i.e., the curves intersect at it tangentially then separate. This phenomenon was described in the previous paragraph albeit without loss of symmetry when the distance between wells is increased. In a future paper we will show that when the depth of one well is decreased, the bifurcation along the branch starting at $(0,E_0),$ see Figure \ref{fig:2well2}, eventually degenerates and this branch ends up connecting to $E=\infty$ endpoint with a single profile at the deeper minima while the endpoint with two profiles each at a minima will now connect with the one with one profile at the higher local minima.

\subsection{Conclusions}\label{sse:conc}

Besides describing all ground states for defocusing nonlinearity, we are on the verge of doing the same for focusing nonlinearity where their number and bifurcation points depends on the critical points of the potential. To finish the latter we still need to overcome three obstacles. The first one is illustrated in Figure \ref{fig:badex2} and consists of infinitely many bifurcations which, when the extended curve of bound states is projected onto the $(E,\Nscr =\|\cdot\|_{L^2}^2)$ plane, lead to infinitely many loops around which the curve turns clockwise and encompasses areas which add up to infinity. In this case we do not know yet how to control the $L^{2p+2}$ norm of the bound states on the curve and show, as in Theorem \ref{th:boundedness}, that the curve cannot blow up in $H^2$ norm if the parameter $E$ is in a compact subset of the interval $(0,\infty).$ The second obstacle is the analysis of $E=\infty$ limit points with profiles approaching the same critical point of the potential. Note that the partial results we already have show that they cannot occur at minima, hence the results presented in the examples above are essentially correct. The third obstacle only occurs when hypothesis (H3) does not hold or we are dealing with excited states of the defocusing problem. In this case, as seen in Subsections \ref{sse:exdef} and \ref{sse:1well}, branches can end up at $E=0$ and we will need a complete analysis of the limit points at this boundary to fully describe all solutions of our problem. As for excited states in the focusing case, we have not yet shown that they form relatively compact sets when bounded in $H^2$ with parameter $E$ in a compact subset of $(0,\infty).$ This result which we proved for ground states in Theorem \ref{th:comp} is essential in continuing manifolds of solutions past their possible singular (bifurcation) points, see the arguments in the subsections above. In fact, the proof of Theorem \ref{th:comp} goes through for excited states except for the case of splitting when profiles, while separating from each other, still stick together sufficiently close to have strong interacting terms. It is here where we used positivity of the profiles, valid for ground states, to get a contradiction. Nevertheless, we hope to surmount this technical detail and the obstacles discussed above in a future paper.

In conclusion, this paper shows that the algorithm described in \cite{Kir:dcs}, for identifying all zeroes of maps $F$ between Banach spaces is doable. Namely
\begin{enumerate}
\item Identify the Fredholm domain of $F$ i.e., the open set (which can be then broken in connected components) on which the Frechet derivative is a Fredholm operator. In our case it is $(\psi,E)\in H^2(\R ^n,\C)\times (0,\infty).$
\item Find the limit points of zeroes of $F$ at the boundary of the Fredholm domain. In our case, for focusing nonlinearity Corollary \ref{cor:endp} and Theorem \ref{th:boundedness} combine to show that there are no limit points at the $\{E=0\}$ or $\{E>0,\ \|\cdot\|_{H^2}=\infty\}$ boundaries. The limit points at $E=\infty$ boundary are a subset of the set described in Theorem \ref{correct_scaling} and are precisely identified for ground states in Theorems \ref{th:brelarge} and \ref{th:bfelarge}. For defocusing nonlinearity we are still missing the limit points at the $E=0$ boundary.
\item Use global bifurcation techniques to continue branches of zeroes of $F$ inside the Fredholm domain from their limit point at the boundary to their other limit point on the boundary. By comparing the discrete spectrum of the Fr\'echet derivative of $F$ at the two limit points identify the bifurcations along these branches and use local bifurcation techniques to determine the other branches emerging at these bifurcation points. In the examples above we used the analytic global bifurcation theory as it only requires relative compactness of the zeroes of $F$ in bounded domains away from the boundary of the Fredholm domain, which we proved in Theorem \ref{th:comp}. Degree based global bifurcation results are also very useful but we only applied them to the defocusing case, see Subsection \ref{sse:exdef}, because they require stronger compactness results.
\end{enumerate}
Note that the algorithm should, and it actually did in our examples, identify all zeroes in the Fredholm domain except the ones that form closed manifolds which do not reach its boundary. It adds to the classical global bifurcation theory the step where all possible limit points at the boundary of the Fredholm domain are identified. We think that this is possible in many other problems besides the nonlinear Schr\"{o}dinger one. In fact we only used its Hamiltonian structure (energy, mass and the differential equation \eqref{eq:diffen}) and Pohozaev identity to control the relevant norms of zeroes as they approach the boundary, find the appropriate renormalization which will keep these norms bounded and then show that the limit points are actual solutions of a simpler equation, see Theorem \ref{correct_scaling} and \ref{th:boundedness}. All these tools are available to a large class of dispersive wave equations, see for example \cite{Kir:dcs}, except maybe the Pohozaev identity. The problem then reduces to identifying similar identities in the other problems or showing, as we strongly believe, that they are not necessary for finding the limit points and implementing the above algorithm.

\section{Appendix}\label{se:app}

In this appendix we first recall regularity and decay properties of solutions of nonlinear Schr\"odinger equations. Then we prove certain compactness results for the set of solutions of families of such equations and spectral properties of their linearized operators which we use throughout this paper.

Slight adaptations of the regularity argument in \cite[Section 8.1]{Caz:nls} leads to:
\begin{proposition}\label{prop:h1toh2}
Let $\psi_E$ be a $H^1$ weak solution of
\begin{equation}\label{a:stationary}
 (-\Delta + V + E) \psi + \sigma|\psi|^{2p} \psi = f
\end{equation}
where $E\in\R$, $V$ is real-valued and satisfies Hypotheses (H1)-(H2), $\sigma\in\R,$ $0<p<2/(n-2)$ and $f\in L^q,$ for all $2\leq q<\infty.$ Then $\psi_E$ is in $W^{2,q}(\R^n)$ for all $2\leq q<\infty,$ in particular it is in $L^\infty(\R^n)$.
\end{proposition}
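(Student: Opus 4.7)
The plan is to carry out a standard elliptic bootstrap, rewriting the equation as $-\Delta \psi_E = g$ with $g := f - (V+E)\psi_E - \sigma|\psi_E|^{2p}\psi_E$, and then iteratively upgrading the integrability of $\psi_E$ by applying $(-\Delta+1)^{-1}: L^q \to W^{2,q}$ boundedness combined with Sobolev embedding. First I would observe that the Sobolev embedding gives $\psi_E \in L^{q_0}$ for $q_0 = 2n/(n-2)$ (with the obvious modifications for $n\le 2$), so that $|\psi_E|^{2p}\psi_E \in L^{q_0/(2p+1)}$. The condition $0<p<2/(n-2)$ ensures $q_0/(2p+1) > 2n/(n+2)$, which is exactly the threshold needed to apply $L^r$-elliptic regularity meaningfully; combined with the fact that $V\in L^\infty$ so $V\psi_E$ has the same integrability as $\psi_E$, and $f\in L^q$ for every finite $q$, we get $g\in L^{r_1}$ for some $r_1>2n/(n+2)$.

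Next, using the bounded inverse $(-\Delta+1)^{-1}: L^{r} \to W^{2,r}$ for any $1<r<\infty$ (Calderón-Zygmund theory) — which is applicable after rewriting $-\Delta\psi_E + \psi_E = g + \psi_E$ with $\psi_E \in L^2 \cap L^{r_1}$ — I would conclude $\psi_E \in W^{2,r_1}$. Then the Sobolev embedding $W^{2,r_1} \hookrightarrow L^{r_2}$ with $\tfrac{1}{r_2} = \tfrac{1}{r_1} - \tfrac{2}{n}$ (provided $r_1 < n/2$) yields a strictly larger exponent $r_2$. One iterates: at each step the nonlinearity $|\psi_E|^{2p}\psi_E$ belongs to $L^{r_k/(2p+1)}$, whose exponent keeps growing because the gain per iteration is $\tfrac{2}{n} - \tfrac{2p}{r_k}$, which stays bounded below by a positive constant as long as $r_k$ is not too small; the subcriticality $p<2/(n-2)$ is precisely what guarantees this positive gain throughout the iteration.

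After finitely many steps the exponent $r_k$ exceeds $n/2$, at which point the Sobolev embedding becomes $W^{2,r_k} \hookrightarrow L^\infty$, so in particular $\psi_E \in L^\infty(\R^n)$. Once $\psi_E\in L^\infty$, one has $|\psi_E|^{2p}\psi_E \in L^\infty \cap L^2 \subset L^q$ for every $2\le q<\infty$, and likewise $V\psi_E, E\psi_E \in L^q$ for every such $q$. Together with the assumption $f\in L^q$ for all finite $q$, this gives $g\in L^q$ for all $2\le q<\infty$, and another application of the Calderón-Zygmund estimate yields $\psi_E\in W^{2,q}$ for every $2\le q<\infty$, as claimed.

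The main obstacle — which is really only a bookkeeping issue — is to verify carefully that the bootstrap gains a definite amount of integrability at each step and terminates after finitely many iterations, handling separately the low-dimensional cases $n=1,2$ (where $H^1$ already embeds into $L^q$ for every finite $q$, making the argument much shorter and in fact giving $\psi_E\in L^\infty$ almost immediately) and the case $n\ge 3$ where the iteration is genuinely needed. The subcriticality condition $p<2/(n-2)$ ensures the exponent produced by the nonlinearity stays strictly below the critical scale at every step, which is the mechanism making the bootstrap succeed. This is essentially the argument in \cite[Section~8.1]{Caz:nls}, adapted to accommodate the inhomogeneity $f$ and the bounded potential $V$, both of which enter only as harmless additive contributions to $g$.
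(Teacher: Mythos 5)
Your bootstrap is correct and is precisely the argument the paper relies on: the paper offers no proof of this proposition beyond the remark that it follows from a slight adaptation of the regularity argument in \cite[Section 8.1]{Caz:nls}, which is exactly the iteration you describe. Your key quantitative point --- that the gain per step, $\tfrac{2}{n}-\tfrac{2p}{r_k}$, stays bounded below by a positive constant because $r_0=2n/(n-2)>pn$ under the subcriticality assumption, followed by one final pass once $\psi_E\in L^\infty$ --- is the right bookkeeping, and the low-dimensional cases are handled as you indicate.
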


As we will mostly be interested in families of equations \eqref{a:stationary} indexed by $E,$ the same argument also shows:
\begin{remark}\label{rmk:h1toh2b} If $(\psi_E,E)$ is a bounded set in $H^1\times\R$ of solutions of \eqref{a:stationary}, then the set is also bounded in $W^{2,q}\times\R$ for all $q\geq2$, in particular is bounded in $L^\infty\times\R.$
\end{remark}

\begin{proposition}\label{prop:exdecay}
Under the hypothesis of Proposition \ref{prop:h1toh2} with $E>0$ and $f\equiv 0,$ we have that for any $\gamma\in(0,E)$ there exists a $C(\gamma)>0$ such that for all $x\in\R^n$ we have:
\begin{eqnarray} |\psi_E(x)| & \leq & C(\gamma)e^{-\sqrt{E-\gamma}|x|} \nonumber\\
|\nabla\psi_E(x)| & \leq & C(\gamma)e^{-\sqrt{E-\gamma}|x|} \nonumber
\end{eqnarray}
\end{proposition}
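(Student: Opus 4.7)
\begin{proof1}[Proposal.]
The plan is a standard comparison argument based on Kato's inequality and an exponentially decaying supersolution, followed by interior elliptic regularity to transfer the decay to the gradient.

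First I would upgrade regularity. Proposition \ref{prop:h1toh2} (applied with $f\equiv 0$) gives $\psi_E\in W^{2,q}(\R^n)$ for every $q\geq2$; in particular $\psi_E\in C^0(\R^n)\cap L^\infty(\R^n)$ and $|\psi_E(x)|\to0$ as $|x|\to\infty$. Rewrite \eqref{a:stationary} as
$$
\Delta\psi_E \m=\m \bigl(V(x)+E+\sigma|\psi_E|^{2p}\bigr)\psi_E\m.
$$
Since $V\in L^\infty$, $\psi_E\in L^\infty$ and $p>0$, the coefficient $Q(x)=V(x)+E+\sigma|\psi_E(x)|^{2p}$ is real-valued, bounded, and, by (H2) and the decay of $\psi_E$, satisfies $Q(x)\to E$ as $|x|\to\infty$.

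Next I would apply Kato's inequality. For $\psi\in L^1_{\mathrm{loc}}$ with $\Delta\psi\in L^1_{\mathrm{loc}}$ one has, in the distributional sense,
$$
\Delta|\psi|\m\geq\m\mathrm{Re}\Bigl(\overline{\mathrm{sgn}(\psi)}\,\Delta\psi\Bigr)\m=\m Q(x)\,|\psi|\m,
$$
where the equality uses that $Q$ is real. Fix $\gamma\in(0,E)$ and, using $Q(x)\to E$ at infinity, pick $R>0$ so large that $|Q(x)-E|\leq\gamma$ for all $|x|\geq R$; this yields
$$
-\Delta|\psi_E|+(E-\gamma)|\psi_E|\m\leq\m 0 \FORALL |x|>R\m.
$$
For the comparison function set $W(x)=C\m e^{-\sqrt{E-\gamma}\,|x|}$. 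A direct computation gives
$$
\Delta W \m=\m (E-\gamma)W-\frac{(n-1)\sqrt{E-\gamma}}{|x|}W\m\leq\m (E-\gamma)W \FORALL x\neq0\m,
$$
so $-\Delta W+(E-\gamma)W\geq 0$ on $\{|x|>R\}$. Choose $C$ large enough that $W\geq\|\psi_E\|_{L^\infty}\geq|\psi_E|$ on $\partial B_R$; since both $|\psi_E|$ and $W$ vanish at infinity, the weak maximum principle applied to $|\psi_E|-W$ on the exterior domain $\{|x|>R\}$ yields $|\psi_E(x)|\leq W(x)$ there. On the compact set $\{|x|\leq R\}$ the bound $|\psi_E(x)|\leq\|\psi_E\|_{L^\infty}e^{\sqrt{E-\gamma}R}\,e^{-\sqrt{E-\gamma}|x|}$ is trivial, so enlarging $C(\gamma)$ proves the pointwise bound for $|\psi_E|$ on all of $\R^n$.

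Finally, to obtain the matching bound on $|\nabla\psi_E|$, I would run the previous step with a slightly smaller parameter $\gamma'\in(0,\gamma)$ to get $|\psi_E(x)|\leq C(\gamma')\m e^{-\sqrt{E-\gamma'}|x|}$, and then apply interior elliptic regularity (Calder\'on--Zygmund or Schauder estimates) to $\Delta\psi_E=Q\psi_E$ on unit balls. Since the coefficient $Q$ is uniformly bounded, one obtains
$$
|\nabla\psi_E(x)|\m\leq\m C\bigl(\|\psi_E\|_{L^\infty(B(x,1))}+\|\Delta\psi_E\|_{L^\infty(B(x,1))}\bigr)\m\leq\m C'\|\psi_E\|_{L^\infty(B(x,1))}\m,
$$
and hence $|\nabla\psi_E(x)|\leq C''(\gamma')\m e^{-\sqrt{E-\gamma'}(|x|-1)}$, which is dominated by $C(\gamma)\m e^{-\sqrt{E-\gamma}|x|}$ for $|x|$ large because $\sqrt{E-\gamma'}>\sqrt{E-\gamma}$; on a fixed compact set the gradient is bounded (by the Sobolev embedding $W^{2,q}\hookrightarrow C^1$ for $q>n$), so one absorbs this into $C(\gamma)$.

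The only delicate step is the distributional application of Kato's inequality together with the exterior maximum principle, since $\psi_E$ may be complex-valued and can vanish; this is standard but needs the regularity $\psi_E\in W^{2,q}$ and the decay at infinity, both of which were secured in the first step.
\end{proof1}
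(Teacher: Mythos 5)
Your argument is correct. Note that the paper does not actually supply a proof of Proposition \ref{prop:exdecay}: the Appendix explicitly ``recalls'' this decay estimate as a known result (in the spirit of \cite[Section 8]{Caz:nls}), so there is no in-paper argument to compare against. What you wrote is the standard Agmon-type proof and it holds together: Proposition \ref{prop:h1toh2} gives $\psi_E\in W^{2,q}$ for all $q$, hence continuity, boundedness and vanishing at infinity of $\psi_E$; Kato's inequality turns the complex-valued equation into the real differential inequality $\Delta|\psi_E|\geq Q|\psi_E|$ with $Q=V+E+\sigma|\psi_E|^{2p}\to E$; the exterior comparison with $W=Ce^{-\sqrt{E-\gamma}|x|}$ then gives the decay of $|\psi_E|$, and interior $W^{2,q}$ estimates on unit balls transfer it to $\nabla\psi_E$. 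Two small remarks. First, the detour through a smaller $\gamma'$ in the gradient step is unnecessary: the shift $|x|\mapsto|x|-1$ only costs the constant factor $e^{\sqrt{E-\gamma}}$, so the same $\gamma$ works. Second, since $|\psi_E|$ is only a distributional subsolution, it is worth recording the one-line justification of the exterior maximum principle: if $v=|\psi_E|-W$ had positive supremum $m$, then $(v-m/2)^+$ is a compactly supported $H^1_0$ test function in the exterior domain (because $v\leq0$ on $\partial B_R$ and $v\to0$ at infinity), and testing $\Delta v\geq(E-\gamma)v$ against it forces $(v-m/2)^+\equiv0$, a contradiction. With that spelled out the proof is complete.
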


When applied to families of Schr\"odinger equations indexed by $E$ the above result leads to uniform decay on bounded sets for the {\em defocusing nonlinearity}. Consequently the first compactness tool in \cite[Section 1.7]{Caz:nls} implies relative compactness on bounded sets, see also \cite{jls:bsd}:

\begin{corollary} \label{cor:defocuscomp}
Assume that there exist constants $E_-,E_+,M>0$ and a sequence $\{\psi_{E_k},E_k\}_{k\in\N}$ of solutions for \eqref{a:stationary} with $\sigma>0$ and $f\equiv 0$ such that
$$ E_k\in[E_-,E_+]\m, \qquad \|\psi_{E_k}\|_{H^1} \leq M. $$
Then for any $0<\gamma<E_-$ there exists a constant $C(\gamma)$ independent of $k$ such that
$$ |\psi_{E_k}(x)| \m\leq\m C(\gamma)e^{-\sqrt{E_- -\gamma}|x|} \FORALL x\in\R^n, \FORALL k\in\N \m.$$
Consequently, there is a subsequence of $(\psi_{E_k},E_k)_{k\in\N}$, denoted again by  $(\psi_{E_k},E_k)_{k\in\N}$, and a solution $(\psi_{E_*},E_*)\in H^2(\R^n)\times[E_-,E_+]$ of \eqref{a:stationary} such that
$$ \lim_{k\to\infty} E_k \m=\m E_*\m, \qquad \lim_{k\to\infty} \|\psi_{E_k}-\psi_{E_*}\|_{H^2}=0 \m. $$
\end{corollary}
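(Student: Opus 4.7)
The plan is to first establish the uniform exponential decay, which is the crux of the argument, and then use it together with standard compactness tools to extract an $H^2$-convergent subsequence.

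For the decay estimate, I would observe that by Remark \ref{rmk:h1toh2b} the $H^1$ bound implies a uniform-in-$k$ bound $\|\psi_{E_k}\|_{L^\infty} \leq M_\infty < \infty$, and moreover each $\psi_{E_k}$ lies in $W^{2,q}(\R^n)$ for every $q \geq 2$, hence is continuous and vanishes at infinity. Writing \eqref{a:stationary} (with $f\equiv 0$) as $-\Delta\psi_{E_k} + W_k\psi_{E_k}=0$ with $W_k=V+E_k+\sigma|\psi_{E_k}|^{2p}$, the key observation from the defocusing assumption $\sigma>0$ is that $\sigma|\psi_{E_k}|^{2p}\geq 0$ pointwise. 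Fix $0<\gamma<E_-$. Since $V\to 0$ at infinity by (H2), there exists $R>0$ independent of $k$ with $|V(x)|\leq \gamma$ for $|x|\geq R$, and consequently $W_k(x)\geq E_- -\gamma$ on $\{|x|\geq R\}$ uniformly in $k$. Kato's inequality applied to the real-valued $\psi_{E_k}$ then gives $\Delta |\psi_{E_k}|\geq \mathrm{sgn}(\psi_{E_k})\Delta\psi_{E_k}=W_k|\psi_{E_k}|\geq (E_--\gamma)|\psi_{E_k}|$ in the distributional sense on $\{|x|>R\}$. Setting $\Phi(x)=C(\gamma)e^{-\sqrt{E_--\gamma}|x|}$ with $C(\gamma)=M_\infty e^{\sqrt{E_--\gamma}R}$, we have $\Delta\Phi\leq(E_--\gamma)\Phi$, so the function $u_k=|\psi_{E_k}|-\Phi$ satisfies $\Delta u_k\geq(E_--\gamma)u_k$ on $\{|x|>R\}$, is $\leq 0$ on the sphere $\{|x|=R\}$ by the choice of $C(\gamma)$, and vanishes at infinity. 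A standard maximum principle for subsolutions (any positive interior maximum $x^*$ gives the contradiction $0\geq\Delta u_k(x^*)\geq(E_--\gamma)u_k(x^*)>0$) forces $u_k\leq 0$ everywhere, so $|\psi_{E_k}(x)|\leq C(\gamma)e^{-\sqrt{E_--\gamma}|x|}$ uniformly in $k$.

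Armed with the decay, the compactness argument is then routine. By the compactness of $[E_-,E_+]$ and the weak compactness of $H^1$-bounded sequences, I would pass to a subsequence, still denoted $(\psi_{E_k},E_k)$, with $E_k\to E_*\in[E_-,E_+]$ and $\psi_{E_k}\rightharpoonup \psi_{E_*}$ weakly in $H^1(\R^n)$. The uniform exponential decay provides tightness, i.e., $\|\psi_{E_k}\|_{L^q(|x|>R)}\to 0$ as $R\to\infty$ uniformly in $k$ for any $q\geq 2$; combined with the compact embedding $H^1(B_R)\hookrightarrow L^q(B_R)$ for $2\leq q<2n/(n-2)$ (Rellich--Kondrachov), this upgrades the weak convergence to strong convergence
\begin{equation*}
 \psi_{E_k}\stackrel{L^q}{\longrightarrow}\psi_{E_*} \qquad \text{for all }2\leq q<\tfrac{2n}{n-2},
\end{equation*}
which in particular covers $q=2p+2$ since $2p<4/(n-2)$. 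Passing to the limit in the weak formulation of \eqref{a:stationary} (using $V\in L^\infty$ to handle the linear term and the $L^{2p+2}$-convergence together with the elementary inequality $||a|^{2p}a-|b|^{2p}b|\leq C(|a|^{2p}+|b|^{2p})|a-b|$ to handle the nonlinear term) shows that $\psi_{E_*}$ is an $H^1$ weak solution of \eqref{a:stationary} at $E_*$, hence an $H^2$ strong solution by Remark \ref{rmk:h1toh2}.

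Finally, to promote the convergence from $H^1$ (weak) and $L^q$ (strong) to $H^2$ (strong), I would write
\begin{equation*}
 -\Delta(\psi_{E_k}-\psi_{E_*})=-V(\psi_{E_k}-\psi_{E_*})-E_k(\psi_{E_k}-\psi_{E_*})-(E_k-E_*)\psi_{E_*}-\sigma\bigl(|\psi_{E_k}|^{2p}\psi_{E_k}-|\psi_{E_*}|^{2p}\psi_{E_*}\bigr).
\end{equation*}
Each term on the right converges to $0$ in $L^2$: the first two by $V\in L^\infty$ and the strong $L^2$ convergence of $\psi_{E_k}$, the third since $E_k\to E_*$, and the last because $\psi_{E_k}\to\psi_{E_*}$ in $L^{2p+2}$ together with the uniform $L^\infty$ bound (which controls the nonlinear difference via the same elementary inequality above, passing through the intermediate $L^q$ norms and Hölder). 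Consequently $\|\Delta(\psi_{E_k}-\psi_{E_*})\|_{L^2}\to 0$, and combined with $\|\psi_{E_k}-\psi_{E_*}\|_{L^2}\to 0$ this yields $\|\psi_{E_k}-\psi_{E_*}\|_{H^2}\to 0$. The main obstacle is the uniform exponential decay step, where the sign condition on $\sigma$ is essential to keep the effective Schrödinger potential $W_k$ bounded below without assuming anything more than an $H^1$ bound.
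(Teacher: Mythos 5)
Your proof is correct and follows essentially the route the paper intends but only sketches: the uniform exponential decay is exactly the content of Proposition \ref{prop:exdecay} made uniform over the family (the key point you correctly isolate being that $\sigma>0$ keeps the effective potential $V+E_k+\sigma|\psi_{E_k}|^{2p}$ bounded below by $E_--\gamma$ outside a fixed ball, using only the $L^\infty$ bound from Remark \ref{rmk:h1toh2b}), and the decay-plus-Rellich compactness step is the ``first compactness tool'' of Cazenave combined with the bootstrap of Proposition \ref{lm:lqtoh2}. The only cosmetic points are that the maximum principle should be invoked in its weak (distributional subsolution) form since $|\psi_{E_k}|$ need not be $C^2$, and Kato's inequality covers complex-valued $\psi_{E_k}$ as well, so the restriction to real-valued solutions is unnecessary.
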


For attractive nonlinearity we have the much weaker result which already requires strong convergence in $L^q$ spaces:

\begin{proposition}\label{lm:lqtoh2} Assume that there exists $E_*\in(0,\infty)$, $\psi_*\in H^1$, $2\leq q<\frac{2n}{n-2}$, $\tilde\psi\in L^q(\R^n)$ and a sequence $\{\psi_{E_k},E_k\}_{k\in\N},\ E_k\in(0,\infty)$ of solutions of \eqref{a:stationary} with $\sigma<0$ such that
\begin{align}
  \lim_{k\to\infty} E_k &= E_*\m, \label{econv}\\
  \psi_{E_k} &\stackrel{H^1}{\rightharpoonup} \psi_*\ ,\label{weakconv}\\
  \psi_{E_k} &\stackrel{L^q}{\rightarrow} \tilde\psi\ \ \ \mbox{for some} \
  \ \ 2\leq q<\frac{2n}{n-2}\ (2\leq q<\infty\ \ \mbox{if}\ \ n=2,\ 2 \leq q\leq\infty  \ \ \mbox{if}\ \ n=1), \ \ \ \label{lqconv}
\end{align}
then $\psi_*=\tilde \psi\in H^2(\R^n)$ and $\psi_{E_k}\stackrel{H^2}{\rightarrow} \psi_*$. Moreover $ (\psi_*,E_*)$ is a solution of \eqref{a:stationary}.
\end{proposition}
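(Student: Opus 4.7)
The plan is to first identify $\tilde\psi$ with $\psi_*$: extracting a subsequence along which both the weak $H^1$ convergence (via Rellich--Kondrachov on each bounded ball and a diagonal argument) and the strong $L^q$ convergence yield pointwise a.e.\ convergence forces $\psi_*=\tilde\psi$ a.e. Next I would pass to the limit in the weak formulation
$$\langle\nabla\psi_{E_k},\nabla\phi\rangle+\langle(V+E_k)\psi_{E_k},\phi\rangle+\sigma\langle|\psi_{E_k}|^{2p}\psi_{E_k},\phi\rangle=0$$
tested against $\phi\in C_c^\infty(\R^n)$: the first two terms pass by weak $H^1$ convergence together with $V\phi\in L^2$, the $E_k$ factor by $E_k\to E_*$, and the nonlinear term by dominated convergence on $\mathrm{supp}\,\phi$, using the uniform $L^r$ bound for $r\in[2,2n/(n-2)]$ coming from $H^1$ boundedness and Sobolev embedding together with the a.e.\ convergence just obtained. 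Proposition \ref{prop:h1toh2} then promotes $\psi_*$ to an $H^2$ solution and Remark \ref{rmk:h1toh2b} provides a uniform $H^2$ (hence $L^\infty$) bound on $\{\psi_{E_k}\}$.

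The pivotal step is strong $L^2$ convergence. Set $u_k=\psi_{E_k}-\psi_*$. Subtracting the two equations, pairing with $\bar u_k$, and taking real parts gives the identity
$$\|\nabla u_k\|_{L^2}^2+\int V|u_k|^2\,dx+E_k\|u_k\|_{L^2}^2+(E_k-E_*)\,\Re\langle\psi_*,u_k\rangle+\sigma\,\Re\langle|\psi_{E_k}|^{2p}\psi_{E_k}-|\psi_*|^{2p}\psi_*,u_k\rangle=0.$$
The map $z\mapsto|z|^{2p}z$ is monotone on $\C\cong\R^2$ (its Jacobian $|z|^{2p}I+2p|z|^{2p-2}zz^{T}$ is positive semidefinite), so $\Re\langle|\psi_{E_k}|^{2p}\psi_{E_k}-|\psi_*|^{2p}\psi_*,u_k\rangle\ge0$, and since $\sigma<0$ the nonlinear contribution in the identity is non-positive. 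Interpolating the given strong $L^q$ convergence against the uniform $L^{2n/(n-2)}$ bound yields $\|u_k\|_{L^{q'}}\to 0$ for some $q'\in(2,2n/(n-2))$, and Lemma \ref{lm:nuv} applied with $U_k\equiv\psi_*$ and $v_k=u_k$, combined with $\psi_*\in L^\infty$ to dispose of the linear correction $(2p+1)|\psi_*|^{2p}u_k$, gives $\||\psi_{E_k}|^{2p}\psi_{E_k}-|\psi_*|^{2p}\psi_*\|_{L^2}\to0$, so by Cauchy--Schwarz the nonlinear summand in the identity is $o(\|u_k\|_{L^2})$. For the potential term, hypothesis (H2) lets me fix $R$ so that $|V|<E_*/4$ on $B_R^c$, while Rellich--Kondrachov forces $\|u_k\|_{L^2(B_R)}\to0$; rearranging, the identity yields
$$\tfrac{E_*}{4}\|u_k\|_{L^2}^{2}\le o(1)+\bigl(o(1)+|E_k-E_*|\,\|\psi_*\|_{L^2}\bigr)\|u_k\|_{L^2},$$
and boundedness of $\|u_k\|_{L^2}$ then forces $\|u_k\|_{L^2}\to0$.

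With strong $L^2$ convergence in hand, the equation for the difference
$$(-\Delta+1)u_k=(1-V-E_k)u_k+(E_*-E_k)\psi_*-\sigma\bigl(|\psi_{E_k}|^{2p}\psi_{E_k}-|\psi_*|^{2p}\psi_*\bigr)$$
combined with the fact that $(-\Delta+1)^{-1}:L^2\to H^2$ is an isomorphism, gives $\|u_k\|_{H^2}\le C\|\mathrm{RHS}\|_{L^2}$; the first summand is $O(\|u_k\|_{L^2})$, the second is $O(|E_k-E_*|)$, and the third has already been shown to vanish in $L^2$, so $\psi_{E_k}\to\psi_*$ in $H^2$. The hard part is the $L^2$ step: on an unbounded domain, neither weak $H^1$ convergence nor strong $L^q$ convergence for $q>2$ alone rules out a loss of mass to spatial infinity, and the identity above can only be closed because three distinct ingredients cooperate -- the sign $\sigma<0$ together with monotonicity donates a favorable sign to the nonlinear term, the decay of $V$ at infinity turns the quadratic form $\int(E_k+V)|u_k|^2$ into an essentially coercive one outside a large ball for $k$ large, and the strong $L^q$ convergence (filtered through Lemma \ref{lm:nuv}) reduces the nonlinear difference to $o(\|u_k\|_{L^2})$; any attempt to weaken these hypotheses would reopen the door to concentration at infinity and invalidate the closure of the bootstrap.
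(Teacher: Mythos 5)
Your proof is correct in outline, but it takes a genuinely different route through the pivotal step than the paper does. The paper never forms an energy identity for the difference $u_k=\psi_{E_k}-\psi_*$. Instead it writes $\psi_{E_k}=(-\Delta+E_k)^{-1}\left[-V\psi_{E_k}-\sigma|\psi_{E_k}|^{2p}\psi_{E_k}+f\right]$, shows that the bracket converges in $L^2$ (the nonlinear term via the uniform $W^{2,q}$/$L^\infty$ bounds of Remark \ref{rmk:h1toh2b} and interpolation, the term $V\psi_{E_k}$ via exactly the split you use for $V$: smallness of $V$ outside a large ball plus local $L^2$ convergence coming from \eqref{lqconv}), and then uses convergence of the resolvents $(-\Delta+E_k)^{-1}\to(-\Delta+E_*)^{-1}$ in $\Lscr(L^2,H^2)$ to obtain $H^2$ convergence in one stroke; strong $L^2$ convergence of $\psi_{E_k}$ is a \emph{byproduct} rather than an input, and no energy identity, monotonicity, or coercivity argument is needed. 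Your version needs $\|u_k\|_{L^2}\to0$ as an input only because your final resolvent identity leaves the bare term $(1-V-E_k)u_k$ on the right-hand side; had you applied $(-\Delta+E_k)^{-1}$ to the original equation as the paper does, the entire middle section could be deleted. What your longer route buys is an explicit accounting of where mass could escape to infinity; what it costs is the extra machinery below.

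Two points in your $L^2$ step need repair. First, the monotonicity remark is not merely unused, it points the wrong way: after moving the nonlinear term to the right-hand side of the identity it becomes $-\sigma\,\Re\langle|\psi_{E_k}|^{2p}\psi_{E_k}-|\psi_*|^{2p}\psi_*,u_k\rangle\geq0$, so its sign cannot be exploited to discard it; what actually closes the argument is the quantitative bound $o(\|u_k\|_{L^2})$, so the sign claim should be deleted. Second, "disposing of" the linear correction $(2p+1)|\psi_*|^{2p}u_k$ in $L^2$ using only $\psi_*\in L^\infty$ gives $\|(2p+1)|\psi_*|^{2p}u_k\|_{L^2}\leq C\|u_k\|_{L^2}$, which is $O(\|u_k\|_{L^2})$, not $o(1)$; feeding that back into the identity produces a term $C\|u_k\|_{L^2}^2$ with $C$ possibly exceeding $E_*$, and the bootstrap would not close (indeed, using $\|u_k\|_{L^2}\to0$ here would be circular). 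The fix is a H\"older interpolation: since $\psi_*\in L^2\cap L^\infty$ one has $\|\,|\psi_*|^{2p}u_k\|_{L^2}\leq\|\psi_*\|_{L^{4pr}}^{2p}\|u_k\|_{L^{2r'}}$ with $2r'>2$ chosen so that $\|u_k\|_{L^{2r'}}\to0$, or alternatively the same large-ball split you already use for $V$, exploiting the decay of $\psi_*$. With these repairs the argument is sound.
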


\begin{proof} Note that by the standard Sobolev embedding theorem, weak convergence in $H^1$ implies weak convergence to the same limit in $L^q$ with $q$ given in \eqref{lqconv}. Since norm convergence implies weak convergence we get $\psi_*=\tilde\psi$ by uniqueness of the weak limit.

We are now going to employ the regularity result in Remark \ref{rmk:h1toh2b}. Since the sequence $\{\psi_{E_k}\}_{k\in\N}$ is weakly convergent in $H^1(\R^n)$, it is norm bounded in $H^1.$ From \eqref{econv} and $0<E_*<\infty$ we also have that, $\{E_k\}_{k\in\N}$ is bounded and bounded away from zero. Hence, by Remark \ref{rmk:h1toh2b} we have that the sequence $\{\psi_{E_k}\}_{k\in\N}$ is bounded in {\em each} $W^{2,q}, 2\leq q<\infty.$ In particular it is bounded in $L^2\bigcap L^\infty.$ Now, by the convergence \eqref{lqconv} and interpolation we get that the sequence $\{\psi_{E_k}\}_{k\in\N}$ converges to $\psi_*$ in {\em all} $L^q,\ q>2$ spaces.

Finally, by rewriting \eqref{a:stationary} we get:
\begin{equation}\label{h2stationary}
  \psi_{E_k} = (-\Delta+E_k)^{-1} \left[-V\psi_{E_k} - \sigma|\psi_{E_k}|^{2p}\psi_{E_k}+f\right].
\end{equation}
Now, from the convergence above, we have
$$ |\psi_{E_k}(x)|^{2p}\psi_{E_k}(x) \stackrel{L^2}{\rightarrow} |\psi_*(x)|
   ^{2p}\psi_*(x).$$
Since $V\in L^\infty$ we definitely have $V(x) \psi_{E_k}(x)$ converges in $L^2(\R^n)$ provided $q=2$ in hypothesis \eqref{lqconv}. If $q>2$ we use the
following compactness argument. Fix $\epsilon>0$ and choose $L>0$ sufficiently
large such that
$$\|V(x)\|_{L^\infty (\{|x|\geq L\})}<\frac{\epsilon}{4M}.$$
Then, it follows from \eqref{lqconv} that
$$\lim_{m\rightarrow\infty} \|\psi_{E_m} - \psi_*\|_{L^2(\{|x|<L\})} = 0,$$
and hence there exists $m_0$ such that
$$\|\psi_{E_m}-\psi_*\|_{L^2(\{|x|<L\})} < \frac{\epsilon}{2\|V\|_{L^\infty
  (\R^n)}} \qquad \mbox{for}\ \ \ m\geq m_0.$$
Consequently for each $m>m_0$
\begin{align*}
  \|V\psi_{E_m}-V\psi_*\|_{L^2(\R^n)} &\leq \|V\|_{L^\infty(\R^n)}\|\psi_
  {E_m}-\psi_*\|_{L^2(\{|x|<L\})}+\|V\|_{L^\infty(\{|x|\geq L\})}\|\psi_{E_m}
  -\psi_*\|_{L^2(\R)}\\
  &< \frac{\epsilon}{2}+\frac{\epsilon}{4M}2M\leq \epsilon,
\end{align*}
which proves that $V\psi_{E_k} \stackrel{L^2}{\rightarrow} V\psi_*.$ Now, as
$E_k \rightarrow E_*$, both terms in the square bracket on the right side of
\eqref{h2stationary} converge in $L^2(\R^n)$ and $(-\Delta+E_k)^{-1}$
converges to $(-\Delta+E_*)^{-1}$ in the space of bounded linear operators
from $L^2(\R^n)$ to $H^2(\R^n).$ Hence we can conclude that the sequence $\{\psi_{E_k}\}_{k\in\N}$
converges in $H^2(\R^n)$. Since $H^2(\R^n)$ is continuously embedded in
$L^q(\R^n)$ for $2\leq q<2n/(n-4)$, it follows from \eqref{lqconv} that
the sequence $\{\psi_{E_k}\}_{k\in\N}$  converges to $\psi_*$ in $H^2(\R^n)$. Moreover, by passing to
the limit in \eqref{h2stationary}, we get
$$ \psi_* = (-\Delta+E_*)^{-1}\left[-V(x)\psi_*(x) - \sigma|\psi_*(x)|^{2p}
   \psi_* + f\right].$$
Therefore $(\psi_*,E_*)$ is a solution of the
equation \eqref{a:stationary}.
\end{proof}

The above result will be combined with the following concentration compactness one, see \cite[Section 1.7]{Caz:nls}. For functions $\psi\in L^2(\R^n)$ and numbers $r>0,$ define the concentration function:
$$\rho(\psi,r)=\sup_{y\in\R^n}\int_{|x-y|\leq r}||\psi(x)|^2dx.$$
For bounded sequences in $H^1$ we have:
\begin{proposition}\label{prop:comp} Let $\{\psi_{k}\}_{k\in\N}\subset H^1(\R^n)$ such that there exist $a,\ M>0$ with the properties:
\begin{eqnarray}
\|\psi_{k}\|_{L^2}^2&=&a,\qquad\forall k\in\N\nonumber\\
\|\psi_{k}\|_{H^1}&\leq &M,\qquad\forall k\in\N\nonumber
\end{eqnarray}
Let
$$\mu=\lim_{r\rightarrow\infty}\liminf_{k\rightarrow\infty}\rho(\psi_k,r)\leq a.$$
Then one of the following holds.
\begin{itemize}
\item[(i)] If $\mu=a$ then there is a subsequence re-denoted by $\{\psi_{k}\}_{k\in\N}$ and a sequence $\{y_{k}\}_{k\in\N}\subset\R^n$ such that
$$\psi_k(x-y_k)\stackrel{L^q}{\rightarrow}\psi_*,\qquad\forall 2\leq q<\frac{2n}{n-2}\ (2\leq q<\infty\mbox{ if }n=2,\ 2\leq q\leq\infty\mbox{ if }n=1).$$
\item[(ii)] If $\mu=0$ then there is a subsequence re-denoted by $\{\psi_{k}\}_{k\in\N}$ such that
$$\psi_k\stackrel{L^q}{\rightarrow}0,\qquad\forall 2< q<\frac{2n}{n-2}\ (2< q<\infty\mbox{ if }n=2,\ 2< q\leq\infty\mbox{ if }n=1).$$
\item[(iii)] If $0<\mu <a$ then there is a subsequence re-denoted by $\{\psi_{k}\}_{k\in\N}$ and the sequences $\{v_{k}\}_{k\in\N},$ $\{w_{k}\}_{k\in\N},$ $\{z_{k}\}_{k\in\N}$ all bounded in $H^1$ such that
    \begin{eqnarray}
   \psi_k&=&v_k+w_k+z_k\quad\mbox{and}\quad |v_k|+|w_k|\leq |\psi_k|,\nonumber\\
   \lim_{k\rightarrow\infty}\|v_k\|_{L^2}^2&=&\mu,\mbox{ and }v_k(x-y_k)\stackrel{L^q}{\rightarrow}v_*,\ \forall 2\leq q<\frac{2n}{n-2}\ (2\leq q\leq\infty\mbox{ if }n=1),\label{vconv}\\
   \lim_{k\rightarrow\infty}\|w_k\|_{L^2}^2&=&a-\mu\quad\mbox{and}\quad\lim_{k\rightarrow\infty}{\rm dist}({\rm supp}\ w_k,\ {\rm supp}\ v_k)=\infty,\nonumber\\
   \lim_{k\rightarrow\infty}\|z_k\|_{L^q}&=&0\qquad \forall 2\leq q<\frac{2n}{n-2}\ (2\leq q\leq\infty\mbox{ if }n=1),\nonumber
   \end{eqnarray}
\end{itemize}
\end{proposition}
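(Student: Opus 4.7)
The plan is to implement the classical concentration--compactness argument of Lions. Since $r\mapsto\liminf_k\rho(\psi_k,r)$ is non-decreasing and bounded above by $a$, it admits a limit $\mu\in[0,a]$, and exactly one of the three cases in the statement must hold. Throughout I would pass to a diagonal subsequence (re-indexed as $\{\psi_k\}$) along which $\rho(\psi_k,r)$ actually converges for a countable dense set of radii.

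For the compactness case ($\mu=a$), choose radii $r_j\nearrow\infty$ and, for each $j$, select $y_k^j\in\R^n$ realizing the supremum defining $\rho(\psi_k,r_j)$ up to error $1/k$. A diagonal extraction yields points $y_k$ such that $\int_{|x-y_k|\le r_j}|\psi_k|^2\,dx\to a$ for every $j$, so the tail mass $\int_{|x-y_k|>r}|\psi_k|^2\,dx$ vanishes uniformly in $k$ as $r\to\infty$. The translates $\psi_k(\cdot-y_k)$ remain bounded in $H^1$, so by Rellich--Kondrachov a subsequence converges strongly in $L^q_{\rm loc}$ to some $\psi_*\in H^1$ for $2\le q<2n/(n-2)$ (respectively $2\le q<\infty$ if $n=2$, $2\le q\le\infty$ if $n=1$). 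Combining local strong convergence with the uniform tail estimate and Sobolev interpolation in the $L^2$--$H^1$ scale delivers global $L^q$ convergence. For the vanishing case ($\mu=0$), the standard Lions lemma applies: cover $\R^n$ by balls $B(z_i,1)$ of uniformly bounded overlap, use Sobolev on each ball together with the interpolation $\|f\|_{L^q(B)}\le\|f\|_{L^2(B)}^{1-\theta}\|f\|_{L^{2n/(n-2)}(B)}^{\theta}$, and the uniform bound $\sup_i\|\psi_k\|_{L^2(B(z_i,1))}^2\le\rho(\psi_k,1)\to0$, then sum over $i$ to conclude $\|\psi_k\|_{L^q(\R^n)}\to0$ in the stated range.

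The dichotomy case ($0<\mu<a$) is the main obstacle and the most technical part. Fix a small $\e>0$. By the definition of $\mu$ there exist $R_0$ large and $y_k\in\R^n$ with $\int_{|x-y_k|\le R_0}|\psi_k|^2\,dx\in(\mu-\e,\mu]$ for $k$ large, while for every $R\ge R_0$ the annular mass $\int_{R_0\le |x-y_k|\le R}|\psi_k|^2\,dx$ is bounded by $a-\mu+2\e$. I would then extract, by a Fubini/slicing argument in $r$ combined with a diagonal sequence $R_k'\to\infty$, intermediate radii $R_k\in[R_0,R_k']$ for which
\[ \int_{R_k-1\le|x-y_k|\le R_k+1}|\psi_k|^2\,dx\to 0 \]
and $R_k'-R_k\to\infty$. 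Introducing smooth cutoffs $\chi_k,\eta_k$ adapted respectively to $|x-y_k|\le R_k$ and $|x-y_k|\ge R_k+1$, set
\[ v_k=\chi_k\psi_k,\qquad w_k=\eta_k\psi_k,\qquad z_k=\psi_k-v_k-w_k. \]
Then $|v_k|+|w_k|\le|\psi_k|$, the supports of $v_k$ and $w_k$ are separated by a distance tending to infinity, $\|v_k\|_{L^2}^2\to\mu$, $\|w_k\|_{L^2}^2\to a-\mu$, and $\|z_k\|_{L^2}\to0$. The bound $\|z_k\|_{H^1}\le C$ (the cutoff gradients contribute a bounded amount since one can arrange $|\nabla\chi_k|,|\nabla\eta_k|\le C$) combined with the same interpolation as above gives $\|z_k\|_{L^q}\to0$ in the required range. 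Applying the compactness case to the sequence $v_k(\cdot-y_k)$, whose mass tends to $\mu$, produces the strong $L^q$ limit $v_*$ of \eqref{vconv}.

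The hard part will be the extraction of the separating radii $R_k$ in the dichotomy case: choosing them so that the annular mass vanishes, so that $R_k\to\infty$, and so that the separation $R_k'-R_k$ tends to infinity simultaneously requires a careful nested use of the definition of $\mu$ together with a diagonal subsequence argument. All the other steps are essentially applications of Rellich--Kondrachov and Sobolev interpolation, which are standard on $\R^n$ provided the localization is handled via the cutoffs above.
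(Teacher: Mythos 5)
Your treatment of cases (i) and (ii), and your construction of the splitting in case (iii) via cutoffs at radii $R_k$, $R_k'$ with vanishing annular mass and $R_k'-R_k\to\infty$, all match the intended (Cazenave/Lions-style) argument. The genuine gap is the last sentence of your dichotomy case: ``Applying the compactness case to the sequence $v_k(\cdot-y_k)$, whose mass tends to $\mu$, produces the strong $L^q$ limit $v_*$.'' Knowing $\|v_k\|_{L^2}^2\to\mu$ does \emph{not} place $\{v_k\}$ in case (i); for that you need $\lim_{r\to\infty}\liminf_k\rho(v_k,r)=\mu$, i.e.\ that the mass of $v_k$ neither vanishes nor splits further, and the construction does not guarantee this. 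The reason is that the near-optimal center $y_k$ is chosen at a radius $r_0(\e_{j(k)})$ that \emph{grows} along the diagonal subsequence, so the mass $\mu$ captured near $y_k$ may itself be spread over an expanding region. Concretely, take $\psi_k=g+g(\cdot-\ell_k e_1)+F(\cdot-d_k e_1)$ with $\|g\|_{L^2}^2=\mu/2$, $\|F\|_{L^2}^2=\mu$, $1\ll\ell_k\ll r_0(\e_{j(k)})\ll d_k$. Then $a=2\mu$ and the concentration parameter is $\mu$ (realized by $F$), but $y_k$ may legitimately be chosen in the $g$-region, since a ball of radius $r_0(\e_{j(k)})$ there also captures mass $\mu$. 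The resulting $v_k$ then consists of two separating bumps of mass $\mu/2$, so $\lim_r\liminf_k\rho(v_k,r)=\mu/2<\mu$ and no translation of $v_k$ converges strongly in $L^q$. This is exactly the point the paper singles out as the nontrivial part of (iii).

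The paper's repair is an iteration you would need to add. If $\lim_r\liminf_k\rho(v_k,r)=\mu$, apply case (i) to $v_k$ and you are done. Otherwise one shows $\lim_r\liminf_k\rho(w_k,r)=\mu$; since $\lim_k\|w_k\|_{L^2}^2=a-\mu$, this is impossible when $a-\mu<\mu$ (so in that regime your $v_k$ \emph{is} automatically compact — this covers $\mu>a/2$, but not the general case), it makes $w_k$ compact when $a-\mu=\mu$ (then swap the roles of $v_k$ and $w_k$), and when $a-\mu>\mu$ one re-splits $w_k$, which has smaller total mass but the same concentration number $\mu$. The procedure terminates after at most $\lceil a/\mu\rceil$ steps with a piece in case (i), which is then relabeled $v_k$; since every further split is again performed with cutoffs, the pointwise bound $|v_k|+|w_k|\le|\psi_k|$ and the divergence of the support separation are preserved. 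Without this (or an equivalent argument forcing $y_k$ to track a genuinely concentrating ball of \emph{fixed} radius), the convergence \eqref{vconv} is not established.
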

The proof follows from the one in \cite[Section 1.7]{Caz:nls} except for the splitting part (iii). The latter requires a slight adjustment in the choice of the cutoff function to insure large separation for the supports of $v_k$ and $w_k$ as $k\rightarrow\infty.$ The fact that $v_k$ can be chosen to satisfy the convergence properties \eqref{vconv} is very important in our analysis and follows from the following argument. With the notations in \cite{Caz:nls}, if $\lim_{r\rightarrow\infty}\liminf_{k\rightarrow\infty}\rho(v_k,r)=\mu$ then we can apply part (i) to the sequence $\{v_k\}_{k\in\N}$ instead of $\{\psi_k\}_{k\in\N},$ see \cite[Section 1.7]{Caz:nls} to get the convergence on a subsequence. If $\lim_{r\rightarrow\infty}\liminf_{k\rightarrow\infty}\rho(v_k,r)<\mu$ then one can show that $\lim_{r\rightarrow\infty}\liminf_{k\rightarrow\infty}\rho(w_k,r)=\mu.$ Of course, this leads to a contradiction if $a-\mu<\mu,$ hence $\{v_k\}_{k\in\N}$ must be in case (i), or leads to $\{w_k\}_{k\in\N}$ being in case (i) if $a-\mu=\mu.$ If the latter holds we switch the roles of $v_k$ and $w_k.$ Finally, if $a-\mu >\mu$ we repeat the splitting argument for $\{w_k\}_{k\in\N}$ instead of $\{\psi_k\}_{k\in\N}$ which now has $L^2$ norm convergent to $a-\mu<a$ but the same ``concentration" number $\mu.$  After no more then $m$ steps where $m\geq\frac{a}{\mu}$ the algorithm stops and a split with one function in case (i) is produced. Note that a similar result is obtained in \cite{Maris}.

We now present with full proof a result on uniform bounds for the resolvent of families of linear Schr\"{o}dinger operators with large separation potential. This is applied to obtain and control Lyapunov-Schmidt type decompositions when splitting, see (iii) above, occurs for sequences of ground states.

\begin{proposition} \label{prop:MS}
Let $V_1,V_2,\ldots V_m:\R^n\mapsto\R$ such that for each $k$, $V_k\in L^\infty(\R^n)$ and $\lim_{|x|\to\infty}V_k(x)=0$. Let $s_1,s_2,\ldots s_m$ be $\R^n$-valued functions of $R\in(0,1)$
such that $\lim_{R\to 0}|s_i(R)-s_j(R)|=\infty$ whenever $i\neq j$ and $E$ be a $\R$-valued function of $R\in(0,1)$ such that $\lim_{R\to 0}E(R)=E_*\in (0,\infty)$. Consider the family of operators
\begin{align}
 &L_R:H^2\mapsto L^2 \nonumber\\
 &L_R \m=\m-\Delta+E(R)+\sum_{k=1}^m T_{s_k(R)}V_k\m, \qquad R\in(0,1) \m ,
 \label{L_R_spec}
\end{align}
where we used the notation $T_yV(x)=V(x-y).$ Denote by $P^\perp_0(R)$ the projection operator in $L^2$ on to the
orthogonal complement of the set of all eigenvectors $\varphi$ that
satisfy $(-\Delta+E(R)+T_{s_k(R)}V_k)\varphi=0$ for some $k$. Then
there exists $0<r<1$ and $C>0$ such that for each $R\in(0,r)$, $P^\perp_0(R)L_R P^\perp_0(R)$ is invertible and
$$ \|(P^\perp_0(R)L_R P^\perp_0(R))^{-1}\|_{H^2\mapsto L^2}<C$$
\end{proposition}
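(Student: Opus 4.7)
The plan is to prove this via an IMS-type localization combined with regular spectral perturbation, exploiting the fact that for small $R$ the potentials $T_{s_k(R)}V_k$ live on essentially disjoint regions. The guiding principle is that on the spectral window near zero, $L_R$ behaves like the direct sum of the individual operators $L_R^{(k)} := -\Delta + E(R) + T_{s_k(R)}V_k$; removing their kernels via $P_0^\perp(R)$ therefore leaves only spectrum bounded away from zero. First I would fix a smooth partition of unity $1=\chi_0^2+\sum_{k=1}^m \chi_k^2$, where $\chi_k$ (for $k\geq 1$) is supported in the ball $B(s_k(R),d(R)/3)$ with $d(R):=\tfrac{1}{2}\min_{j\neq k}|s_j(R)-s_k(R)|\to\infty$, $\chi_0$ is supported at distance at least $d(R)/6$ from every $s_k(R)$, and $\|\nabla\chi_k\|_\infty=O(1/d(R))$. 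The IMS formula then gives
$$\langle L_R\psi,\psi\rangle \m=\m \sum_{k=0}^m \langle L_R(\chi_k\psi),\chi_k\psi\rangle -\sum_{k=0}^m \big\||\nabla\chi_k|\m\psi\big\|_{L^2}^2,$$
and the last term is $O(1/d(R)^2)\|\psi\|_{L^2}^2=o(1)\|\psi\|_{L^2}^2$.

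On $\operatorname{supp}\chi_k$ with $k\geq 1$, every $T_{s_j(R)}V_j$ with $j\neq k$ is evaluated at points at distance $\geq d(R)/3$ from $s_j(R)$, so by $\lim_{|x|\to\infty}V_j(x)=0$ we get $\langle L_R(\chi_k\psi),\chi_k\psi\rangle=\langle L_R^{(k)}(\chi_k\psi),\chi_k\psi\rangle+o(1)\|\chi_k\psi\|_{L^2}^2$; on $\operatorname{supp}\chi_0$ all potentials are uniformly small, so $\langle L_R(\chi_0\psi),\chi_0\psi\rangle\geq (E_*/2)\|\chi_0\psi\|_{L^2}^2$ for small $R$. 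Next I would establish a uniform spectral gap at zero: since $L_R^{(k)}$ is unitarily equivalent by translation to $-\Delta+E(R)+V_k$ and $E(R)\to E_*$, the discrete eigenvalues depend continuously on $R$ (regular perturbation for relatively compact $V_k$), so there exists $c>0$ with $\sigma(L_R^{(k)})\cap([-c,c]\setminus\{0\})=\emptyset$ for all small $R$ and all $k$. Combining Proposition \ref{prop:exdecay} (the kernel elements of $L_R^{(k)}$ decay exponentially away from $s_k(R)$) with the fact that $\chi_k\equiv 1$ on $B(s_k(R),d(R)/3)$, the orthogonality of $\psi\in\operatorname{Ran}P_0^\perp(R)$ to the translated kernel elements transfers, up to an error exponentially small in $d(R)$, into orthogonality of $\chi_k\psi$ to $\ker L_R^{(k)}$. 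This yields $\langle L_R^{(k)}(\chi_k\psi),\chi_k\psi\rangle\geq (c-o(1))\|\chi_k\psi\|_{L^2}^2$, and summing gives $\langle L_R\psi,\psi\rangle\geq (c/2)\|\psi\|_{L^2}^2$ for all small $R$, hence $\|(P_0^\perp L_R P_0^\perp)^{-1}\|_{L^2\to L^2}\leq 2/c$.

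The upgrade to the $L^2\to H^2$ bound then follows from elliptic regularity: if $P_0^\perp L_R P_0^\perp u=f$ with $u\in\operatorname{Ran}P_0^\perp(R)$, then $L_R u-f\in\ker$-span, and the standard estimate $\|u\|_{H^2}\leq C(\|L_R u\|_{L^2}+\|u\|_{L^2})$ (uniform because $\|V_k\|_{L^\infty}$ and $E(R)$ are bounded) together with the $L^2$ bound on $u$ gives the claim. The main obstacle I anticipate is the bookkeeping for the approximate-orthogonality step: the kernels of the individual $L_R^{(k)}$, once translated to the $s_k(R)$, are nearly but not exactly orthogonal in $L^2$, so $P_0(R)$ as defined in the statement is not precisely the sum of the projections onto each translated individual kernel. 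Controlling this overlap, together with the commutator $[\chi_k,P_0(R)]$, requires using the exponential decay from Proposition \ref{prop:exdecay} to show that the Gram matrix of the translated kernel elements is the identity up to exponentially small corrections as $R\to 0$, making the two projections agree modulo a negligible error. A further mild subtlety is that if $E(R)\neq E_*$ then $0$ need not be an eigenvalue of $-\Delta+E(R)+V_k$ at all, in which case $P_0^\perp(R)=I$ for that component and the corresponding piece of the estimate simplifies; continuity of the eigenvalues under the $E(R)\to E_*$ perturbation handles the transition uniformly.
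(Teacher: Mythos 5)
The central step of your argument --- the coercivity bound $\langle L_R\psi,\psi\rangle\geq (c/2)\|\psi\|_{L^2}^2$ for $\psi\in\operatorname{Ran}P_0^\perp(R)$ --- is false in the generality of the proposition, and this sinks the approach. The projection $P_0^\perp(R)$ removes only the \emph{kernels} of the individual operators $L_R^{(k)}=-\Delta+E(R)+T_{s_k(R)}V_k$, not their negative spectral subspaces. If some $-\Delta+E_*+V_k$ has a negative eigenvalue $-\lambda$ with eigenfunction $\varphi$ (as happens in every application of this proposition in the paper: for instance $-\Delta+1+\sigma(2p+1)|u_\infty|^{2p}$ with $\sigma<0$ has a simple negative ground-state eigenvalue $\gamma$, which the proof of Theorem \ref{th:bfelarge} explicitly tracks), then $T_{s_k(R)}\varphi$ is exactly orthogonal to $\ker L_R^{(k)}$ and lies in $\operatorname{Ran}P_0^\perp(R)$ up to an exponentially small correction, yet $\langle L_R\, T_{s_k(R)}\varphi,T_{s_k(R)}\varphi\rangle\to-\lambda\|\varphi\|_{L^2}^2<0$. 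The precise place the error enters is the inference ``$\sigma(L_R^{(k)})\cap([-c,c]\setminus\{0\})=\emptyset$ plus orthogonality to the kernel implies $\langle L_R^{(k)}w,w\rangle\geq(c-o(1))\|w\|^2$'': a two-sided gap around zero does not exclude spectrum below $-c$, and the quadratic form can be as negative as the bottom of the spectrum permits. The conclusion of the proposition is invertibility with the spectrum of $P_0^\perp L_RP_0^\perp$ bounded away from zero on \emph{both} sides, not positivity, so no form lower bound can deliver it.

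What is needed instead (and what the paper does) is a statement about the location of the entire low-lying spectrum of $L_R$: by the Morgan--Simon theory of Schr\"odinger operators with widely separated potentials, the discrete eigenvalues of $L_R$ converge to the union of the discrete eigenvalues of the operators $-\Delta+E_*+V_k$, and the spectral projection $\widetilde P_0(R)$ of $L_R$ onto the cluster of eigenvalues converging to $0$ satisfies $\|\widetilde P_0(R)-P_0(R)\|\to0$. Invertibility of $\widetilde P_0^\perp(R) L_R\widetilde P_0^\perp(R)$ with a uniform bound then holds because on its range the spectrum of $L_R$ stays in $(-\infty,-c]\cup[a,\infty)$ --- negative eigenvalues are permitted, merely bounded away from $0$ --- and a perturbation argument transfers this to $P_0^\perp L_RP_0^\perp$. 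Your IMS partition of unity is a perfectly reasonable tool for proving that spectral localization (e.g.\ via min--max applied on both sides, or a resolvent comparison with the direct sum $\oplus_k L_R^{(k)}$), but it must be used to locate eigenvalues, not to prove positivity of the form. A secondary point: your remark that the case where $0$ fails to be an eigenvalue of $-\Delta+E(R)+V_k$ ``simplifies'' the estimate is backwards --- if $-\Delta+E_*+V_k$ has a nontrivial kernel but $E(R)\neq E_*$, then $L_R$ acquires an eigenvalue near $E(R)-E_*\to0$ whose eigenvector is \emph{not} removed by $P_0^\perp(R)$, so uniform invertibility would actually fail; the proposition is only invoked in situations where the kernels at $E(R)$ coincide with those at $E_*$.
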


\begin{proof}
Note that $L_R$ is a self-adjoint operator on $L^2(\R^n)$ with domain $H^2(\R^n)$ and
$$ K_1\m=\m\sup_{R\in(0,1)}\|L_R\|_{\Lscr(H^2,L^2)}<\infty.$$ Since each $V_k$ is a relatively compact operator with respect to $(-\Delta+E_*)$, it follows from Weyl's theorem that the essential spectrum of $(-\Delta +E_*+V_k)$ for all $k$ and of $L_R$ is $[1,\infty)$. Let the increasing sequence of numbers $\mu_1,\mu_2,\ldots \mu_p$, with $\mu_{q+1}=\mu_{q+2}=\ldots\mu_p=0$ for some $0\leq q\leq p$, be the  collection of all the non-positive discrete eigenvalues of the operators $(-\Delta+ E_*+V_k)$, $1\leq k\leq m$.

Using the spectral theory for operators with potentials separated by large distances in [Morgan and Simon, 1980] and the spectral perturbation theory (to handle the fact that $E(R)$ may not be a constant) we get that as $R\to0$, the discrete eigenvalues of $L_R$ approach precisely the union of the discrete eigenvalues of the family of operators $\{(-\Delta+E_*+V_k)\m\big|\m 1 \leq k\leq m\}$. Furthermore, the spectral projection operator $\widetilde P_\mu(R)$ associated with the set of all the eigenvalues of $L_R$ which converge to some $\mu$ in the above union, converges in $\Lscr(L^2)$ to $P_\mu (R)$  which is the projection operator on to the span of the set $\{\textrm{Range\m}(P_{k,\mu}(R))\m\big|\m 1\leq k\leq m\}$. Here $P_{k,\mu}(R)$ is the spectral projection operator corresponding to $\mu$ for the operator $(-\Delta+E_*+T_{s_k(R)}V_k)$. From all this we can conclude that there exist $r_1\in(0,1)$ such that for each $R\in(0,r_1)$, $\sigma(L_R)$ is the union of $\{\mu_1^R,\mu_2^R, \ldots\mu_p^R\}$ and $\sigma^c(L_R)$, where $\mu_1^R,\mu_2^R,\ldots
\mu_p^R$ are the $p$ smallest eigenvalues of $L_R$ counting multiplicities and $\sigma^c(L_R) \subset(a,\infty)$ for some $a>0$, and
\begin{equation} \label{ev_conv}
 \lim_{R\to 0}\mu^R_j \m=\m \mu_j \FORALL j\in\{1,2,\ldots p\}\m,
 \qquad \lim_{R\to 0}\|P_0^\perp(R)-\widetilde P_0^\perp(R)\|_{\Lscr
 (L^2)}\m=\m0\m.
\end{equation}
For each $j\in\{1,2,\ldots p\}$, let $\varphi_j^R$ be the eigenvector associated with the eigenvalue $\mu_j^R$ of $L_R$.

Using \cite[Theorem XII.5]{RS:bk4} and the fact that $L_R$ is a self-adjoint operator (which implies that the projection operator in that theorem is orthogonal), it follows from the first limit in \eqref{ev_conv} that there exists $r_2\in(0,r_1)$ such that $\widetilde
P_0^\perp(R)L_R\widetilde P_0^\perp(R): \widetilde P_0^\perp(R) L^2\cap H^2\to \widetilde P_0^\perp(R) L^2$ is a boundedly invertible operator for all $R\in(0,r_2)$. We claim that $r_2$ can be chosen such that $\|(\widetilde P_0^\perp(R) L_R\widetilde P_0^\perp(R))^{-1}\|$ has a uniform bound on $(0,r_2)$. Suppose that this is not true. Then there exists a sequence $(R_k)_{k=1}^\infty$ in $(0,r_2)$ converging to $0$ and a sequence of functions $(u_k)_{k=1}^\infty$ with $u_k\in \widetilde P_0^\perp(R_k) L^2\cap H^2$ and $\|u_k\|_{H^2}=1$ such that
$$ \lim_{k\to\infty} \widetilde P_0^\perp(R_k)L_{R_k}\widetilde
   P_0^\perp(R_k)u_k \m=\m 0\m, \qquad \langle u_k, \varphi_j^{R_k}
   \rangle \m=\m 0 \FORALL k\m\geq\m1\m, \FORALL j\in\{1,2, \ldots q\}
   \m. $$
Hence $u_k\in \widetilde P^\perp(R_k) L^2\cap H^2$ for each $k$, where $\widetilde P^\perp(R_k)$ is the projection operator in $L^2$ onto the orthogonal complement of the set
$$ {\rm span}\{\varphi_1^{R_k}, \varphi_2^{R_k}, \ldots \varphi_p^{R_k}\}
   \m. $$
Note that $\liminf\|u_k\|_{L^2}>0$. Indeed, if this were not true, then using $\lim_{k\to\infty}L_{R_k} u_k=0$ and $\lim_{k\to\infty} E(R_k)=E_*\in(0,\infty)$ it follows that along a subsequence $\lim_{k\to\infty}(-\Delta+E(R_k)) u_k= \lim_{k\to\infty}\sum_{j=1}^m T_{s_j(R)}V_j u_k=0$, which contradicts the fact that $\|u_k\|_{H^2}=1$ for all $k$. It now follows from the operator form of the min-max principle [Theorem XIII.1, Reed and Simon] that there exists an eigenvalue $\mu_{p+1}^{R_k}$ for $L_{R_k}$, with eigenvector $\varphi_{p+1}^{R_k}$ different from $\varphi_j^{R_k}$ for $1\leq j\leq p$, such that $\limsup_{{R_k}\to 0} \mu_{p+1}^{R_k}\leq0$. This contradicts our conclusion about the spectrum of $L_{R_k}$ (see discussion above \eqref{ev_conv}). Hence we can fix $r_2\in
(0,r_1)$ such that $(\widetilde P_0^\perp(R) L_R\widetilde P_0^\perp(R))^{-1}$ has a uniform bound on $(0,r_2)$. Define
$$ K_2=\sup_{R\in(0,r_2)}\| (\widetilde P_0^\perp(R) L_R \widetilde P_0^\perp(R))^{-1}\|_{L^2\mapsto H^2}.$$

Define the operator $\Delta P(R)=\m P_0^\perp(R)-\widetilde P_0^\perp(R)$ which maps $L^2$ to $L^2$ and $H^2$ to $H^2$. The second expression in \eqref{ev_conv} implies easily that the ranges of $P_0(R)$ and $\widetilde P_0(R)$ have the same dimension (for small $R$) and also that
\begin{equation} \label{nan_the}
 \lim_{R\to 0}\|P_0(R)-\widetilde P_0(R)\|_{\Lscr(H^2)} \m=\m0.
\end{equation}
To establish \eqref{nan_the} it is sufficient to show that given $\varepsilon>0$, there exists $r_\varepsilon\in(0,r_2)$ such that for each $R\in(0,r_\varepsilon)$ and every eigenvector $\psi\in \textrm{Range}(P_{k,0}(R))$ satisfying $\|\psi\|_{L^2}=1$, for some $k$, there exists a $\varphi\in\textrm{Range}(\widetilde P_0(R))$ such that $\|\psi-\varphi\|_{H^2}<\varepsilon$. Hence given $\varepsilon\in(0,1)$, fix $r_\varepsilon\in(0,r_2)$ such that for each $R\in(0,r_\varepsilon)$
\begin{equation} \label{change_smallE}
 |E(R)-E_*|<\frac{\varepsilon}{4+\varepsilon} \m.
\end{equation}
\begin{equation} \label{latte1}
 \sup_{1\leq k\leq m}\sup_{\begin{array}{c}\psi\in \textrm{Range}
 (P_{k,0}(R))\\ \|\psi\|_{L^2} =1\end{array}}\bigg\|\sum_{j=1,\m j\neq
 k}^m (T_{s_j(R)}V_j) \psi\bigg\|_{L^2} \m<\m \frac{\varepsilon}{4}\m, \vspace{-2mm}
\end{equation}
\begin{equation} \label{latte2}
 \|P_0(R)-\widetilde P_0(R)\|_{\Lscr(L^2)}\bigg(1+\sum_{j=1}^m \|V_j\|_{
 L^\infty}\bigg) \m<\m \frac{\varepsilon}{4}\m,
\end{equation}
\begin{equation} \label{latte3}
 \sup\big\{\m|\mu_j^R|\m\big|\m 1 \leq j\leq p, \lim_{R\to0} \mu_j^R=0
 \big\}<\frac{\varepsilon}{4+\varepsilon} \m.
\end{equation}
Let $R\in(0,r_\varepsilon)$. Consider a $\psi\in \textrm{Range}(P_{k,0}(R))$ satisfying $\|\psi\|_{L^2}=1$. From \eqref{latte2} it follows that there exists a $\varphi\in\textrm{Range}(\widetilde P_0(R))$ such that
\begin{equation} \label{latte4}
 \|\varphi-\psi\|_{L^2} \bigg(1+\sum_{j=1}^m \|V_j\|_{L^\infty}\bigg)
 \m<\m \frac{\varepsilon}{4}
\end{equation}
and $\varphi=\sum_{j=1}^{t} \alpha_j\varphi_j^R$, where $\{\varphi_1^R,
\varphi_2^R,\ldots \varphi_t^R\}$ is an orthonormal family of eigenvectors of $L_R$ corresponding to its eigenvalues that converge to $0$. Hence $\|\varphi\|_{L^2}<(4+\varepsilon)/4$ and we have
$$ (-\Delta +E_*+T_{s_k(R)}V_k)\psi\m=\m0\m, \qquad \Big(-\Delta +E(R)+\sum_{
   j=1}^m T_{s_j(R)}V_j\Big)\varphi\m=\m\sum_{j=1}^t \mu_j^R\alpha_j
   \varphi_j^R\m. $$
From these equations, using \eqref{change_smallE}, \eqref{latte1}, \eqref{latte3} and
\eqref{latte4}, we get
$$ \|(-\Delta +E_*)(\psi-\varphi)\|_{L^2}=\Big\|(E(R)-E_*)\varphi-\sum_{j=1}^m V_j^{s_j(R)}
   (\varphi-\psi)+ \sum_{j=1,\m j\neq k}^m V_j^{s_j(R)}\psi -\sum_{j=1}^t \mu_j\alpha_j\varphi_j^R\Big\|_{L^2}\m\leq\m \varepsilon\m.$$
Since the choice $\varepsilon\in(0,1)$ is arbitrary, \eqref{nan_the} follows.

From the second limit in \eqref{ev_conv} and \eqref{nan_the} it follows that there exists $r\in(0,r_2)$ such that
\begin{equation} \label{PDE_talk}
  \sup\{\|\Delta P(R)\|_{\Lscr(L^2,L^2)}, \|\Delta P(R)\|_{\Lscr(H^2,
  H^2)}\} \m<\m \inf\bigg\{\frac{1}{4 K_1K_2},\frac{1}{3}\bigg\}
  \FORALL R\in(0,r) \m.
\end{equation}
Consider the operator $P_0^\perp(R)L_R P_0^\perp(R):P_0^\perp(R)L^2\cap H^2
\to P_0^\perp(R)L^2$. For any $u\in H^2$, we have
\begin{equation} \label{new_shelf}
 P_0^\perp(R) L_R P_0^\perp(R) u \m=\m P_0^\perp(R) L_R\Delta P(R)u +
 \Delta P(R) L_R \widetilde P_0^\perp(R)u + \widetilde P_0^\perp(R) L_R
 \widetilde P_0^\perp(R) u\m.
\end{equation}
It follows from \eqref{new_shelf}, using the definitions of $K_1$ and $K_2$
and \eqref{PDE_talk}, that for each $R\in(0,r)$,
\begin{align}
 \|P_0^\perp(R) L_R P_0^\perp(R) u\|_{L^2} &\m\geq\m \|\widetilde P_0^\perp
 (R) L_R\widetilde P_0^\perp(R) u\|_{L^2} - \|P_0^\perp(R) L_R\Delta P(R)u
 \|_{L^2} - \|\Delta P(R) L_R \widetilde P_0^\perp(R)u\|_{L^2} \nonumber\\
 &\m\geq\m \frac{1}{2K_2}\|u\|_{H^2}\m. \label{carp_AC}
\end{align}
The range of $P_0^\perp(R)L_R P_0^\perp(R)$ is dense in $P_0^\perp(R)L^2$
for each $R\in(0,r)$. Indeed, suppose that for some $v\in L^2$
with $\|P_0^\perp(R)v\|_{L^2}=1$,
\begin{equation} \label{driving}
 \big\langle P_0^\perp(R)L_R P_0^\perp(R)u, P_0^\perp(R) v \big\rangle
 \m=\m 0 \FORALL  u\in H^2(\R^n)\m.
\end{equation}
Fix $u\in H^2$ such that $\widetilde P_0^\perp(R)L_R \widetilde P_0^\perp
(R) u=\widetilde P_0^\perp P_0^\perp(R) v$. Then, using \eqref{PDE_talk}
we get that
$$ \|u\|_{H^2} \m\leq\m K_2\|\widetilde P_0^\perp P_0^\perp(R) v\|_{L^2} \m
   \leq\m K_2 \|P_0^\perp(R) v-\Delta P(R) P_0^\perp(R) v\|_{L^2} \m\leq\m
   4K_2/3 \m. $$
Furthermore, using \eqref{PDE_talk} and \eqref{new_shelf}, we have
\begin{align*}
 & \big|\big\langle P_0^\perp(R)L_R P_0^\perp(R)u, P_0^\perp(R) v \big
 \rangle\big|\\
 \geq\m& \big|\big\langle \widetilde P_0^\perp(R)L_R \widetilde P_0^\perp
 (R)u, P_0^\perp(R) v \big\rangle\big|-\big|\big\langle P_0^\perp(R)L_R
 \Delta P(R)u, P_0^\perp(R) v \big \rangle \big|-\big|\big\langle \Delta
 P(R)L_R \widetilde P_0^\perp(R)u, P_0^\perp(R) v \big\rangle \big|\\
 >\m& \big|\big\langle \widetilde P_0^\perp(R) P_0^\perp(R)v,P_0^\perp(R)
 v \big\rangle\big| -\frac{2}{3}\m>\m 0
\end{align*}
which contradicts \eqref{driving}, thereby establishing that the range of
$P_0^\perp(R)L_R P_0^\perp(R)$ is dense in $P_0^\perp(R)L^2$ for each
$R\in(0,r)$. This, along with \eqref{carp_AC} and the fact that
$P_0^\perp(R)L_R P_0^\perp(R)\in\Lscr(H^2,L^2)$, implies that $P_0^\perp(R)
L_R P_0^\perp(R)$ is invertible for each $R\in(0,r)$ with $\|(P_0^\perp(R)
L_R P_0^\perp(R))^{-1}\|<2K_2$.
\end{proof}

Finally, we present with full proof the existence and properties of a Lyapunov-Schmidt type decompositions when splitting, see (iii) in Proposition \ref{prop:comp}, occurs for sequences of ground states.

\begin{proposition}\label{lm:newdec_gen}
Consider the two sets of real valued $C^1$ functions $\{u_1,u_2,\ldots u_m\}$
and $\{\phi_1,\phi_2,\ldots \phi_d\}$ in $H^2(\R^n)$ which satisfy
\begin{equation} \label{doc_vis}
 |\partial_{x_k} u_j(x)|\m\leq\m C e^{-\alpha|x|} \FORALL x\in\R^n\m,
 \FORALL j\in\{1,2,\ldots m\}\m, \FORALL k\in\{1,2,\ldots n\}\m,
\end{equation}
\begin{equation} \label{doc_vis_add}
 |\phi_i(x)|\m\leq\m C e^{-\alpha|x|} \qquad \FORALL x\in\R^n\m, \FORALL i\in\{1,2,\ldots d\},
\end{equation}
for some $C,\alpha>0$ and also satisfy
\begin{equation} \label{vistara}
 \langle \partial_{x_i} u_j, \partial_{x_k} u_j\rangle \m=\m0
 \FORALL j\in\{1,2,\ldots m\}\m, \FORALL i,k\in\{1,2,\ldots n\}\ \
 \textrm{with} \ \ i\neq k\m.
\end{equation}
and
\begin{equation} \label{vistara_add}
 \langle \phi_r, \phi_q \rangle \m=\m0 \FORALL r,q\in\{1,2,\ldots d\}\ \
 \textrm{with} \ \ r\neq q\m.
\end{equation}
Let $\psi\in H^2(\R^n)$ be given. With the notation $T_yu(x)=u(x-y)$ we have that there exist constants $L,\e>0$ and unique $C^1$ functions $\Sscr$ and $\Ascr$ defined on the set of all $y=[\m y_1,y_2,\ldots y_m\m]^\top$ in $\R^{nm}$ and all $u\in H^1(\R^n)$ which satisfy the conditions
$$ |y_j-y_k|\m>\m L \quad\forall\m j,k\in\{1,2,\ldots m\}\ \ with\ \ j\neq k\m, \qquad \quad \|u-\psi-\sum_{j=1}^m T_{y_j} u_j\|_{L^2}
   \m<\m\e\m, $$
\begin{equation} \label{not_necessary}
 |y_j|>L \FORALL j\in\{1,2,\ldots m\}\m,
\end{equation}
such that using $s=\Sscr(u,y)\in\R^{nm}$, written as $[\m s_1,s_2,\ldots
s_m \m]^\top$, and $a=\Ascr(u,y)\in\R^d$, written as $[\m a_1,a_2,\ldots
a_d \m]^\top$, $u$ can be decomposed as follows:
\begin{equation}\label{inf_decom}
  u \m=\m \psi+ \sum_{j=1}^m T_{y_j+s_j}u_j+v+\sum_{i=1}^d a_i\phi_i\m,
\end{equation}
where $v$ satisfies
$$ \langle v, T_{y_j+s_j} \partial_{x_k} u_j\rangle \m=0\m \FORALL
   k\in\{1,2,\ldots n\}\m, \FORALL j\in\{1,2,\ldots m\}\m,$$
$$ \langle v, \phi_i \rangle \m=0\m \FORALL i\in\{1,2,\ldots d\}\m.$$
In addition, if we let $A(y)=\inf\{|y_j|, |y_j-y_k|\m\big|\m j,k\in\{1,2,\ldots
m\}, j\neq k\}$ and $B(u,y)= \|u-\psi-\sum_{j=1}^m T_{y_j} u_j\|_{L^2}$,
then
\begin{equation} \label{nsttest}
  \lim_{A(y)\to\infty\m,\ B(u,y)\to0}\Sscr(u,y) \m=\m 0\m, \qquad  \lim_{A(y)\to\infty\m,\ B(u,y)\to0}\Ascr(u,y) \m=\m 0\m.
\end{equation}
\end{proposition}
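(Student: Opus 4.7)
The plan is to encode the decomposition \eqref{inf_decom} as the zero set of a $C^1$ map from $\R^{nm+d}$ to itself (at each fixed $(u,y)$) and solve it via a quantitative implicit function theorem, generalizing the single-profile argument of Lemma \ref{lm_dec_finite_c} to the joint $(s,a)$ unknowns. Concretely, define
$$\mathbf{F}:H^1(\R^n)\times\R^{nm}\times\R^{nm}\times\R^d\longrightarrow\R^{nm}\times\R^d$$
with components, for $j\in\{1,\dots,m\}$, $k\in\{1,\dots,n\}$, $i\in\{1,\dots,d\}$,
$$F_{j,k}(u,y,s,a) \m=\m \bigg\langle u-\psi-\sum_{l=1}^m T_{y_l+s_l}u_l-\sum_{r=1}^d a_r\phi_r,\;\frac{T_{y_j+s_j}\partial_{x_k}u_j}{\|\partial_{x_k}u_j\|_{L^2}^2}\bigg\rangle,$$
$$G_i(u,y,s,a) \m=\m \bigg\langle u-\psi-\sum_{l=1}^m T_{y_l+s_l}u_l-\sum_{r=1}^d a_r\phi_r,\;\frac{\phi_i}{\|\phi_i\|_{L^2}^2}\bigg\rangle.$$
A zero $(s,a)$ of $\mathbf{F}(u,y,\cdot,\cdot)$ is equivalent to \eqref{inf_decom} with residual $v:=u-\psi-\sum_l T_{y_l+s_l}u_l-\sum_r a_r\phi_r$ satisfying all required orthogonality relations. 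Because each $u_j,\phi_i\in H^2$ and is $C^1$, the map $\mathbf{F}$ is $C^1$ on the indicated spaces; differentiation in $s_j$ under the inner product produces $L^2$ pairings involving second weak derivatives of $u_j$, which are well defined.

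First I would linearize at the base point $u_y:=\psi+\sum_{l=1}^m T_{y_l}u_l$, $s=0$, $a=0$. Since the residual of $u_y$ vanishes identically, $\mathbf{F}(u_y,y,0,0)=0$. Differentiating in $(s,a)$ and invoking \eqref{vistara} and \eqref{vistara_add} for the diagonal blocks gives
$$D_{(s,a)}\mathbf{F}(u_y,y,0,0) \m=\m -I + R(y),$$
where $I$ is the identity on $\R^{nm+d}$ and $R(y)$ collects the off-diagonal pairings of the forms $-\langle T_{y_{j'}}\partial_{x_{k'}}u_{j'},T_{y_j}\partial_{x_k}u_j\rangle/\|\partial_{x_k}u_j\|_{L^2}^2$ (for $j\neq j'$), $-\langle\phi_i,T_{y_j}\partial_{x_k}u_j\rangle/\|\partial_{x_k}u_j\|_{L^2}^2$, and $-\langle T_{y_l}\partial_{x_{k'}}u_l,\phi_i\rangle/\|\phi_i\|_{L^2}^2$. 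Cauchy--Schwarz combined with the pointwise exponential estimates \eqref{doc_vis} and \eqref{doc_vis_add}, applied by splitting the $\R^n$-integral into neighborhoods of $y_j$, $y_{j'}$, or the origin, bounds each entry of $R(y)$ by $Ce^{-\alpha' A(y)}$ for some $\alpha'>0$. Enlarging $L$ we can force $\|R(y)\|<1/2$, and consequently
$$\big\|[D_{(s,a)}\mathbf{F}(u_y,y,0,0)]^{-1}\big\|<2$$
uniformly over all $y$ satisfying \eqref{not_necessary} and the separation hypothesis $\min_{j\neq k}|y_j-y_k|>L$.

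Next I would mimic the IFT procedure of \eqref{inf_ineq1_sp}--\eqref{inf_ineq2_sp}: choose $\gamma,\epsilon>0$ and (possibly larger) $L>0$ so that, uniformly over admissible $(u,y)$ and $|s|+|a|<\gamma$, one has (i)~$|\mathbf{F}(u,y,0,0)|<\gamma/4$, which follows from $\mathbf{F}(u_y,y,0,0)=0$, $\|u-u_y\|_{L^2}<\epsilon$, and the uniform Lipschitz dependence of $\mathbf{F}$ on $u$; (ii)~the inverse bound of the previous paragraph; and (iii)~$\|D_{(s,a)}\mathbf{F}(u_y,y,0,0)-D_{(s,a)}\mathbf{F}(u,y,s,a)\|<1/4$. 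With these three estimates the standard contraction-mapping proof of the IFT on closed balls of radii $\epsilon,\gamma$ produces the unique $C^1$ solution $(\mathcal{S}(u,y),\mathcal{A}(u,y))$. The main technical obstacle will be verifying (iii) uniformly in $y$: differentiating $T_{y_j+s_j}\partial_{x_k}u_j$ in $s_j$ creates second derivatives of $u_j$ (available in $L^2$ from $u_j\in H^2$), while the residual term contributes a pairing of $(u-u_y-\text{shifts}-a\phi)$ with $T_{y_j+s_j}\partial^2 u_j$. Both can be controlled by expanding differences as integrals over $|x-y_j|<R$ plus its complement and invoking \eqref{doc_vis}--\eqref{doc_vis_add} on the complement and smallness of $\epsilon$, $\gamma$ on the ball, giving a bound $O(|s|+|a|+\|u-u_y\|_{L^2})+O(e^{-\alpha' A(y)})$ which is below $1/4$ once $\epsilon,\gamma$ are small and $L$ is large.

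Finally, the uniform limits \eqref{nsttest} are obtained exactly as \eqref{nsttest_sp}: given any prescribed $\gamma'\in(0,\gamma)$, repeat the construction above with $\gamma$ replaced by $\gamma'$ to produce $\epsilon(\gamma'),L(\gamma')>0$ such that uniqueness of the IFT solution forces $|\mathcal{S}(u,y)|+|\mathcal{A}(u,y)|<\gamma'$ whenever $B(u,y)<\epsilon(\gamma')$ and $A(y)>L(\gamma')$, which is the desired uniform convergence statement.
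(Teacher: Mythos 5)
Your proposal is correct and follows essentially the same route as the paper's proof: the same map $\FFF$ with components $F_{jk}$ and $G_r$, linearization at the base point $(u_y,y,0,0)$ using \eqref{vistara}--\eqref{vistara_add} for the diagonal and the exponential decay \eqref{doc_vis}--\eqref{doc_vis_add} for the off-diagonal entries, and a quantitative implicit function theorem with uniform-in-$y$ bounds mirroring \eqref{inf_ineq1}--\eqref{inf_ineq2}, followed by the same shrinking-$\gamma$ argument for \eqref{nsttest}. No gaps.
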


\begin{proof}
Define the map $\FFF:L^2(\R^n)\times\R^{nm}\times\R^{nm}\times\R^d\mapsto \R^{nm}\times\R^d$ as follows:
$$ \FFF(u,y,s,a) \m=\m [\m\FFF_1(u,y,s,a),\FFF_2(u,y,s,a),\ldots\FFF_m(u,y,s,a), \GGG(u,y,s,a) \m]^\top\m,$$
where for each $j\in\{1,2,\ldots m\}$
$$ \FFF_j(u,y,s,a) \m=\m [\m F_{j1}(u,y,s,a),F_{j2}(u,y,s,a),\ldots F_{jn}
   (u,y,s,a)\m]^\top $$
with
$$ F_{jk}(u,y,s,a) \m=\m \bigg\langle\frac{T_{y_j+s_j}\partial_{x_k}u_j}
  {\|\partial_{x_k}u_j\|^2_{L^2}}, u-\psi-\sum_{t=1}^m T_{y_t+s_t} u_t- \sum_{i=1}^d a_i\phi_i\bigg \rangle_{L^2} \FORALL k\in\{1,2,\ldots n\} $$
and
$$ \GGG(u,y,s,a) \m=\m [\m G_1(u,y,s,a),G_2(u,y,s,a),\ldots G_d(u,y,s,a) \m]^\top $$
with
$$ G_r(u,y,s,a) \m=\m \bigg\langle\frac{\phi_r}{\|\phi_r\|_{L^2}^2}, u-\psi-\sum_{t=1}^m T_{y_t+s_t} u_t-\sum_{i=1}^d a_i\phi_i \bigg  \rangle_{L^2} \FORALL r\in\{1,2,\ldots d\}. $$
We will establish the proposition by solving
\begin{equation}\label{inf_eqdec}
  \FFF(u,y,s,a) \m=\m 0
\end{equation}
for $(s,a)\in\R^{nm+d}$, given $u$ and $y$. Note that an $(s,a)$ that solves \eqref{inf_eqdec} for a given $u$ and $y$ implies the decomposition in \eqref{inf_decom}. The function $\FFF$ is $C^1$ on $L^2(\R^n)\times\R^{nm} \times\R^{nm}\times\R^d$. Its $Fr\acute echet$ derivative with respect to $(s,a)$, denoted as $D_{(s,a)}\FFF(u,y,s,a)$, is a $\R^{nm+d}\times\R^{nm+d}$ matrix whose elements are as follows: For $j,l\in\{1,2,\ldots m\}$ and $i,k\in\{1,2,\ldots n\}$, the $((j-1)n+k, (l-1)n+i)^{th}$ element is
$$ \frac{\partial F_{jk}}{\partial s_{li}} \m=\m \bigg\langle\frac{
   T_{y_j+s_j}\partial_{x_k} u_j}{\|\partial_{x_k}u_j \|^2_{L^2}},
   T_{y_l+s_l}\partial_{x_i}u_l\bigg\rangle -\delta_{jl}\bigg\langle
   \frac{T_{y_j+s_j}\partial^2_{x_k x_i}u_j}{\|\partial_{x_k}u_j\|^
   2_{L^2}}, u-\psi-\sum_{t=1}^m T_{y_t+s_t}u_t-\sum_{t=1}^d a_t\phi_t\bigg\rangle. $$
Here $s_{li}$ is the $i^{th}$ element of $s_l$.  For $j\in\{1,2,\ldots m\}$, $k\in\{1,2,\ldots n\}$ and  $q\in\{1,2,\ldots d\}$, the $((j-1)n+k,nm+r)^{th}$ element of $D_{(s,a)}\FFF(u,y,s,a)$ is
$$ \frac{\partial F_{jk}}{\partial a_q} \m=\m \bigg\langle\frac{T_{y_j+s_j}
   \partial_{x_k} u_j}{\|\partial_{x_k}u_j \|^2_{L^2}},
   \phi_q\bigg\rangle. $$
For $l\in\{1,2,\ldots m\}$, $i\in\{1,2,\ldots n\}$ and $r\in\{1,2,\ldots d\}$,  the $(nm+r,(l-1)n+i)^{th}$ element is
$$ \frac{\partial G_r}{\partial s_{li}} \m=\m \bigg\langle\frac{\phi_r} {\|\phi_r\|_{L^2}^2}, T_{y_l+s_l} \partial_{x_i}u_l\bigg\rangle . $$
Finally for $r,q\in\{1,2,\ldots d\}$,
$$ \frac{\partial G_r}{\partial a_q} \m=\m \delta_{rq}. $$
It is easy to verify using the above expression and the decay estimates for $\partial_{x_k}u_j$ and $\phi_i$ that if $u$ and $y$ satisfy the conditions in the proposition for some $L>1$ and $\e$ and $|s_j|<L/4$ for each $j\in\{1,2,\ldots d\}$, then there exists a constant $C(n,\alpha)>0$ such that
\begin{align*}
 \left|\frac{\partial F_{jk}}{\partial s_{li}}\right| \m\leq\m&
 C(n,\alpha)L^n e^{-\alpha L/2} \qquad {\rm if} \quad j\m\neq\m l\m,\\[2pt]
 \left|\frac{\partial F_{jk}}{\partial s_{li}}\right| \m\leq\m&
 \delta_{ik}+\big(\e+|a|\sum\limits_{t=1}^d \|\phi_t\|_{L^2}+\sum\limits_{t=1}^m \|T_{y_t+s_t}u_t-T_{y_t}u_t\|_{L^2}\big) \frac{\|u_j\|_{H^2}}{\|\partial_{x_k}u_j\|^2_{L^2}}\qquad {\rm if}\quad j\m=\m l\m,
\end{align*}
$$ \left|\frac{\partial F_{jk}}{\partial a_q}\right| \m\leq\m C(n,\alpha)L^n e^{-\alpha L/2} \m,\qquad \left|\frac{\partial G_r}{\partial s_{li}}\right| \m\leq\m  C(n,\alpha)L^n e^{-\alpha L/2} \m. $$
For any $y$, define $u_y=\psi+\sum_{j=1}^m T_{y_j}u_j$. Choose a $\delta>0$.
Fix $L,\e,\eta>0$ and $\gamma\in(0,\delta)$ such that the following
estimates hold: for any $u$ and $y$ that satisfy the conditions in
the proposition, $s$ that satisfies $\sum_{j=1}^m |s_j|<\gamma$, $a$ that satisfies $|a|<\gamma$ and $\tilde y$ that satisfies $|\tilde y-y|<\eta$,
\begin{equation}
 |\FFF(u,\tilde y,0,0)|\m<\m\frac{\gamma}{4}\m, \qquad \|[D_{(s,a)}\FFF(u_y,
 y,0,0)]^{-1}\|<2\m, \label{inf_ineq1}
\end{equation}
\begin{equation}
 \|D_{(s,a)}\FFF(u_y,y,0,0)-D_{(s,a)}\FFF(u,\tilde y, s,a)\| \m<\m \frac{1}{4}\m. \label{inf_ineq2}
\end{equation}
The existence of constants $L,\e,\eta,\gamma$ follows from the definition of $\FFF$ and the estimates for the elements of $D_{(s,a)}\FFF$. Given $u$ and $y$ that satisfy the conditions in the proposition, we now apply the implicit function theorem to \eqref{inf_eqdec} at its solution $(u_y,y,0,0)$ and conclude, using
the estimates in \eqref{inf_ineq1} and \eqref{inf_ineq2}, that there exists a unique $s$ and $a$ satisfying $\sum_{j=1}^m |s_j|<\gamma$ and $|a|<\gamma$ such that $\FFF(u,y,s,a)=0$. The dependence of $s$ and $a$ on $(u,y)$ is $C^1$ since $\FFF$ is $C^1$. We define $\Sscr(u,y)=s$ and $\Ascr(u,y)=a$. The estimate in \eqref{nsttest} follows from the fact that we can choose $\delta>0$ to be arbitrarily small, in which case the corresponding $L$ will be sufficiently large and $\e, \eta,\gamma$ will be sufficiently small. The uniqueness of $s$ and $a$ satisfying $\sum_{j=1}^m |s_j|<\gamma$ and $|a|<\gamma$ (as mentioned above), along with the requirement that $\Sscr$ and $\Ascr$ be $C^1$ and satisfy the estimate in \eqref{nsttest}, implies the uniqueness of $\Sscr$ and $\Ascr$ we have constructed.
\end{proof}

We remark that in the above proof, by applying the implicit function theorem at $(u_y,y,0,0)$, we can conclude that there exists $L, \e_y>0$ ($\e_y$ depends on $y$) such that if $u$ and $y$ satisfy the conditions in the statement of the proposition with $\e=\e_y$, then there is a unique $s$ and a unique $a$ for which $\FFF(u,y,s,a)=0$. But to establish the existence of an $\e$ independent of $y$, we need to use the estimates in \eqref{inf_ineq1} and \eqref{inf_ineq2} which are used (in some form) in the proof of the implicit function theorem to construct closed balls for applying the contraction mapping principle.

\begin{remark} \label{decay_not needed}
When $\psi=0$ and $\phi_r=0$ for all $r\in\{1,2,\ldots d\}$, the conclusions of the above proposition continue to hold (with $\Ascr=0$ of course) with $A$ defined as $A(y)=\inf\{|y_j-y_k|\m\big|\m j,k\in\{1,2,\ldots m\}, j\neq k\}$ and without the restriction \eqref{not_necessary}. In particular one of the $y_j$s can be near 0.
\end{remark}

\begin{remark} \label{svn_ram}
It is evident from the proof of Proposition \ref{lm:newdec_gen} that the constants $L,\e>0$ can be chosen such that they are independent of the functions $\{u_1,u_2,\ldots u_m\}$ and the convergence of the maps $\Sscr$ and $\Ascr$ in \eqref{nsttest} is uniform in the same functions, as long as these functions satisfy the decay estimates \eqref{doc_vis}, the conditions \eqref{vistara} and the inequalities
$$ \|\partial_{x_k}u_j\|_{L^2}\m>\m c_1\m, \qquad \|u_j\|_{H^2}\m<\m
   c_2 \FORALL j\in\{1,2,\ldots m\}\m, \FORALL k\{1,2,\ldots n\}\m, $$
for some fixed $c_1,c_2,C,\alpha>0$. Furthermore, the dependence of $\Sscr$ and $\Ascr$ on $\{u_1,u_2,\ldots u_m\}$ is $C^1$. Indeed, consider the function $\FFF(u_1,u_2,\ldots u_m,u,y,s,a)$ defined in the proof of Proposition \ref{lm:newdec_gen}, where this function is denoted as $\FFF(u,y,s,s)$ (in Proposition \ref{lm:newdec_gen} $\{u_1,u_2,\ldots u_m\}$ are fixed). We can apply the implicit function theorem to this function, by repeating the steps
in the proof of Proposition \ref{lm:newdec_gen}, and use the fact that $\FFF$ is
$C^1$ to conclude that the dependence of $\Sscr$ and $\Ascr$ on $(u_1,u_2, \ldots u_d,u,y)$ is $C^1$.
\end{remark}

\end{document}